\newcommand{\nc}{\newcommand}
\numberwithin{equation}{subsection}
\newcommand{\er}{\end{red}}
\theoremstyle{plain}
\newtheorem{lemma}{Lemma}[subsection]
\newtheorem{prop}[lemma]{Proposition}
\newtheorem{theorem}[lemma]{Theorem}
\newcommand{\Prop}{\begin{prop}}
\newcommand{\enprop}{\end{prop}}
\newcommand{\Lemma}{\begin{lemma}}
\newcommand{\enlemma}{\end{lemma}}
\newcommand{\Th}{\begin{theorem}}
\newcommand{\enth}{\end{theorem}}
\newtheorem{corollary}[lemma]{Corollary}
\newcommand{\Cor}{\begin{corollary}}
\newcommand{\encor}{\end{corollary}}
\newtheorem{definition}[lemma]{Definition}
\newcommand{\Def}{\begin{definition}}
\newcommand{\edf}{\end{definition}}
\newtheorem{sublemma}[lemma]{Sublemma}
\newcommand{\Sublemma}{\begin{sublemma}}
\newcommand{\ensub}{\end{sublemma}}
\theoremstyle{definition}
\newtheorem{remark}[lemma]{Remark}
\newtheorem{example}[lemma]{Example}
\newtheorem{Convention}[lemma]{Convention}
\newcommand{\Conv}{\begin{Convention}}
\newcommand{\enconv}{\end{Convention}}
\nc{\Rem}{\begin{remark}}
\nc{\enrem}{\end{remark}}
\newcommand{\C}{{\mathbb C}}
\newcommand{\Q}{\mathbb {Q}}
\newcommand{\Z}{{\mathbb Z}}
\newcommand{\B}{{\mathbf{B}}}
\newcommand{\A}{{\mathbf A}}
\newcommand{\R}{{\rm R}}
\newcommand{\CC}{{\mathscr{C}}}
\newcommand{\one}{{\bf{1}}}
\newcommand{\seteq}{\mathbin{:=}}
\newcommand{\hd}{{\operatorname{hd}}}
\newcommand{\soc}{{\operatorname{soc}}}
\newcommand{\g}{{\mathfrak{g}}}
\newcommand{\Tor}{\operatorname{Tor}}
\newcommand{\Hom}{\operatorname{Hom}}
\newcommand{\End}{\operatorname{End}}
\newcommand{\isoto}[1][]{\mathop{\xrightarrow%
[{\raisebox{.3ex}[0ex][.3ex]{$\scriptstyle{#1}$}}]%
{{\raisebox{-.6ex}[0ex][-.6ex]{$\mspace{2mu}\sim\mspace{2mu}$}}}}}
\newcommand{\tensor}{\otimes}
\newcommand{\eq}{\begin{eqnarray}}
\newcommand{\eneq}{\end{eqnarray}}
\newcommand{\hs}{\hspace*}
\newcommand{\To}[1][{\hs{2ex}}]{\xrightarrow{\,#1\,}}
\newcommand{\eqn}{\begin{eqnarray*}}
\newcommand{\eneqn}{\end{eqnarray*}}
\newcommand{\on}{\operatorname}
\newcommand{\Ker}{\on{Ker}}
\newcommand{\bni}{\be[{\rm(i)}]}
\newcommand{\bna}{\be[{\rm(a)}]}
\newcommand{\QED}{\end{proof}}
\newcommand{\Proof}{\begin{proof}}
\newcommand{\soplus}{\mathop{\mbox{\normalsize$\bigoplus$}}\limits}
\newcommand{\id}{\on{id}}
\newcommand{\ba}{\begin{array}}
\newcommand{\ea}{\end{array}}
\newcommand{\Coker}{{\operatorname{Coker}}}
\newcommand{\bi}{\begin{enumerate}[{\rm(i)}]}
\newcommand{\monoto}{\rightarrowtail}
\newcommand{\indlim}{\varinjlim\limits}
\newcommand{\set}[2]{\left\{#1 \mid #2 \right\}}
\newcommand{\Mod}{\operatorname{Mod}}
\newcommand{\Modg}{\operatorname{{Mod}_{\mathrm{gr}}}}
\newcommand{\eqsub}{\begin{subequations}\begin{eqnarray}}
\newcommand{\eneqsub}{\end{eqnarray}\end{subequations}}
\newcommand{\ol}{\overline}
\nc{\la}{\lambda}
\nc{\lam}{\lambda}
\nc{\U}[1][\g]{U_q(#1)}
\nc{\te}{\tilde{e}}
\nc{\tei}{\tilde{e}_i}
\nc{\tf}{\tilde{f}}
\nc{\tfi}{\tilde{f}_i}
\nc{\tU}{\widetilde U_q(\g)}
\nc{\tE}{\tilde{E}}
\nc{\tF}{\widetilde{\F}}
\nc{\tK}{\widetilde{K}}
\nc{\tk}{\tilde{k}}
\nc{\tkone}{\tk_{\ol{1}}}
\nc{\teone}{\tilde{e}_{\ol{1}}}
\nc{\tfone}{\tilde{f}_{\ol{1}}}
\nc{\teibar}{\tilde{e}_{\ol{i}}} \nc{\tfibar}{\tilde{f}_{\ol{i}}}
\nc{\tki}{{\tk}_{\ol {i}}}
\nc{\BZ}{{\mathbb{Z}}}
\nc{\al}{\alpha}
\nc{\qs}{{q}}
\nc{\lan}{\langle}
\nc{\ran}{\rangle}
\nc{\re}{{\mathrm{re}}}
\nc{\wt}{\operatorname{wt}}
\nc{\ch}{\operatorname{ch}}
\nc{\Uf}[1][\g]{U^-_q(#1)}
\nc{\Ue}{U^+_q(\g)}
\nc{\eps}{\varepsilon}
\nc{\vphi}{\varphi}
\nc{\sphi}{\varphi^*}
\nc{\seps}{\varepsilon^*}
\nc{\nn}{\nonumber}
\def\max{{\mathop{\mathrm{max}}}}
\nc{\vp}{\varpi}
\nc{\cls}{{\operatorname{cl}}}
\nc{\Wt}{{\operatorname{Wt}}}
\nc{\Us}{U'_q(\g)}
\nc{\La}{\Lambda}
\nc{\ro}{{\rm(}}
\nc{\rf}{{\rm)}}
\nc{\norm}{{\mathrm{norm}}}
\nc{\qbox}{\quad\mbox}
\nc{\braid}{{\mathfrak{B}}}
\nc{\Ad}{\operatorname{Ad}}
\nc{\Aut}{\operatorname{Aut}}
\nc{\dt}[1]{\tilde{\tilde #1}}
\nc{\Sn}{S^{{\mathrm{norm}}}}
\nc{\aff}{{\rm{aff}}}
\nc{\rk}{{\mathrm{rk}}}
\nc{\tP}{\widetilde{P}}
\nc{\tW}{\widetilde{W}}
\nc{\Dyn}{\mathrm{Dyn}}
\nc{\tD}{\widetilde{\Delta}}
\nc{\height}{{\operatorname{ht}}}
\nc{\bl}{\bigl(}
\nc{\br}{\bigr)}
\nc{\Hecke}{\mathrm{H}}
\nc{\HA}{\Hecke^{\mathrm{A}}}
\nc{\HB}{\Hecke^{\mathrm{B}}}
\newcommand{\scbul}{{\,\raise1pt\hbox{$\scriptscriptstyle\bullet$}\,}}
\nc{\vac}{{\phi}}
\nc{\Bt}{\B_\theta(\g)}
\nc{\be}{\begin{enumerate}}
\nc{\ee}{\end{enumerate}}
\nc{\low}{{\mathrm{low}}}
\nc{\upper}{{\mathrm{up}}}
\nc{\Zodd}{\Z_{\mathrm{odd}}}
\nc{\Ft}[1][n]{\mathbb{P}\mathrm{ol}_{#1}}
\nc{\Ftf}[1][n]{\widetilde{\mathbb{P}\mathrm{ol}}_{#1}}
\nc{\KA}{\on{K}^{\mathrm{A}}}
\nc{\KB}{\on{K}^{\mathrm{B}}}
\nc{\Res}{\on{Res}}
\nc{\Fc}[1][{n,m}]{\mathbf{F}_{#1}}
\nc{\tphi}{\tilde{\varphi}}
\nc{\CO}{\mathscr{O}}
\nc{\inte}{\mathrm{int}}
\nc{\Oint}{\mathcal{O}^{\ge0}_{\inte}}
\nc{\vs}{\vspace*}
\nc{\tL}{\widetilde{L}}
\nc{\tu}{\tilde{u}}
\nc{\noi}{\noindent}
\nc{\heigh}{\mathfrak{t}}
\nc{\lowest}{\mathfrak{l}}
\nc{\rootl}{\mathsf{Q}}
\nc{\cl}{{\rm{cl}}}
\nc{\uqpg}{U'_q(\mathfrak g)}
\nc{\Oh}{\widehat{\mathcal{O}}}
\nc{\KLR}{Khovanov-Lauda-Rouquier algebra}
\nc{\KLRs}{Khovanov-Lauda-Rouquier algebras}
\nc{\cor}{\mathbf{k}}
\nc{\cora}{{\cor(A)}}
\nc{\haut}{\mathrm{ht}}
\nc{\tens}{\mathop\otimes}
\nc{\gmod}{\mbox{-$\mathrm{gmod}$}}
\nc{\proj}{\mbox{-$\mathrm{proj}$}}
\nc{\gproj}{\mbox{-$\mathrm{gproj}$}}
\nc{\smod}{\mbox{-$\mathrm{mod}$}}
\nc{\h}{\mathfrak h}
\nc{\Rnorm}{R^{\rm{norm}}}
\nc{\K}{\C(q)}
\nc{\Vhat}{\widehat{V}}
\nc{\F}{\mathcal{F}}
\def\AA{{\mathcal A}}
\def\T{{\mathcal T}}
\nc{\fd}[1][A]{\on{\mathrm{flat.dim}_{#1}}}
\nc{\bP}{{\mathbb{P}}}
\nc{\bPh}{\widehat{\mathbb{P}}}
\nc{\bK}[1][{n}]{\widehat{\mathbb{K}}_{#1}}
\nc{\bV}[1][{n}]{\widehat{V}^{\otimes{#1}}}
\nc{\bVK}[1][{n}]{\widehat{V}^{\otimes{#1}}_{\widehat{\mathbb{K}}}}
\nc{\hV}{\widehat{V}}
\nc{\opp}{\mathrm{opp}}
\nc{\col}{\colon}
\nc{\bnum}{\be[{\rm(i)}]}
\nc{\oep}{\epsilon}
\nc{\qtext}{\quad\text}
\nc{\qtextq}[1]{\quad\text{#1}\quad}
\nc{\longtwoheadrightarrow}[1][]{\xymatrix{\ar@{->>}[r]^-{{#1}}&}}
\nc{\epiTo}[1][]{\longtwoheadrightarrow[{#1}]}
\nc{\epito}{\twoheadrightarrow}
\nc{\monoTo}[1][]{\xymatrix{\ar@{>->}[r]^-{{#1}}&}}
\nc{\sym}{\mathfrak{S}}
\nc{\inp}[1]{{({#1})_{\mathrm{n}}}}
\nc{\rtl}{\rootl}
\nc{\wtd}{\widetilde}
\nc{\etens}{\boxtimes}
\nc{\ds}[1]{\mathrm{d}(#1)}
\nc{\rmat}[1]{{r}_{\mspace{-2mu}\raisebox{-.5ex}{${\scriptstyle{#1}}$}}}
\nc{\shc}{\mathcal{C}}
\nc{\Fct}{{\on{Fct}}}
\nc{\tC}{\widetilde{\shc}}
\nc{\Zp}{\Z_{\ge0}}
\nc{\tPhi}{\widetilde{\Phi}}
\nc{\tT}{{\widetilde{\T}}}
\nc{\Ob}{\on{Ob}}
\nc{\bwr}{\mbox{\large$\wr$}}
\nc{\Img}{\on{Im}}
\nc{\Ab}{\mathcal{A}^{\mathrm{big}}}
\nc{\Sb}{\mathcal{S}^{\mathrm{big}}}
\nc{\As}{\mathcal{A}}
\nc{\Ss}{\mathcal{S}}
\nc{\ntens}{\widetilde{\otimes}}
\nc{\hR}{\widehat{R}}
\nc{\nconv}{\star}
\nc{\ts}{\tilde{s}}
\nc{\sho}{\mathcal{O}}
\nc{\bc}{\begin{cases}}
\nc{\ec}{\end{cases}}
\nc{\slnh}{{\widehat{\mathfrak{sl}}_N}}
\nc{\UA}{U_q'(\slnh)}
\nc{\KR}{R_K}
\nc{\cQ}{\mathcal{Q}}
\nc{\Irr}{\mathcal{I}rr}
\nc{\tQ}{\widetilde{\cQ}}
\nc{\bs}{\mathbf{s}}
\nc{\bL}{\mathbb{L}}
\nc{\tg}{\tilde{g}}
\newlength{\mylength}
\title[Symmetric quiver Hecke algebras and R-matrices]
{Symmetric quiver Hecke algebras\\ and \\R-matrices of quantum affine algebras}
\author[S.-J. Kang, M. Kashiwara, M. Kim]{Seok-Jin Kang$^{1}$, Masaki Kashiwara$^{2}$, Myungho Kim$^{3}$}
\address[S.-J. Kang]{Korea Research Institute of Arts and Mathematics, Asan, Korea}
         \email{soccerkang@hotmail.com}
\address[M. Kashiwara]{%
Kyoto University Institute for Advanced Study, Research Institute
for Mathematical Sciences, Kyoto University, Kyoto 606-8502, Japan
\& Korea Institute for Advanced Study, Seoul 02455, Korea }
 \address[M. Kim]{Department of Mathematics, Kyung Hee University, Seoul 02447, Korea}
\email{mkim@khu.ac.kr}
\thanks{$^1$ This work was supported by NRF Grant \# 2012-005700 and
 NRF Grant \# 2011-0027952.}
\thanks{$^2$ This work was partially supported by Grant-in-Aid for
Scientific Research (B) 22340005, Japan Society for the Promotion of
Science.}
\thanks{$^{3}$ This work was partially supported by NRF Grant \# 2012-005700.}
\keywords{Quantum affine algebra, Khovanov-Lauda-Rouquier algebra,
Quantum group, R-matrix, Schur-Weyl duality}
\subjclass[2010]
{81R50, 16G, 16T25,17B37}
\date{\today}
\begin{document}

\begin{abstract}
Let $J$ be a set of pairs consisting of good $\uqpg$-modules and
invertible elements in the base field $\mathbb C(q)$. The
distribution of poles of normalized R-matrices yields
Khovanov-Lauda-Rouquier algebras $R^J(\beta)$ for each $\beta \in \rootl^+$.
 We define a functor $\mathcal F_\beta$  from the category of graded
$R^J(\beta)$-modules to the category of $\uqpg$-modules. The functor
$\mathcal F= \bigoplus_{\beta \in \rootl^+} \mathcal F_\beta$ sends convolution products of finite-dimensional graded
$R^J(\beta)$-modules to tensor products of finite-dimensional
$\uqpg$-modules. It is exact if $R^J$ is of finite type $A,D,E$. If
$V(\varpi_1)$ is the fundamental representation of $\UA$ of weight
$\varpi_1$ and $J=\set{\bl V(\varpi_1), q^{2i} \br}{i \in \mathbb
Z}$, then $R^J$ is the \KLR\ of type $A_{\infty}$.
The corresponding functor $\F$ sends
a finite-dimensional graded $R^J$-module to a module in $\mathcal C_J$, where
$\mathcal C_J$ is the category of finite-dimensional integrable $\UA$-modules $M$ such that
every composition factor of $M$ appears as a composition factor of a
tensor product of modules of the form $V(\varpi_1)_{q^{2s}}$ $(s \in
\Z)$.
Focusing on this
case, we obtain an abelian rigid graded tensor category $\T_J$ by
localizing the category of finite-dimensional graded $R^J$-modules.
The functor $\mathcal F$ factors through $\T_J$. Moreover, the
Grothendieck ring of the category $\mathcal C_J$ is isomorphic to the Grothendieck ring of $\T_J$ at
$q=1$.
\end{abstract}

\maketitle

\tableofcontents

\section*{Introduction}

The {\em \KLRs}\ (sometimes called the {\em quiver Hecke algebras})  are
a family of $\mathbb Z$-graded algebras which {\em categorifies}
the negative half of a quantum group (\cite{KL09, KL11, R08}). More
precisely, if $U_q(\g)$ is a quantum group associated with a
symmetrizable Cartan datum, then there exists a  family of algebras
$\{R(n)\}_{n \in \Z_{\ge 0}}$ such that the Grothendieck group of
the direct sum $\soplus\nolimits_{n\in\Z_{\ge0}}R(n)\gproj$ of
the categories of finitely generated projective graded $R(n)$-modules
is isomorphic to the integral form
$U^-_{\A}(\g)$ of the negative half of $U_q(\g)$. Here, the
multiplication of the
Grothendieck group is given by the convolution product and
the action of $q$ is given by the
grading shift functor. Moreover, the cyclotomic
quotient $R^{\Lambda}(n)$ of $R(n)$ provides a categorification of the
integrable highest weight module $V(\Lambda)$ of $U_q(\g)$
(\cite{KK12}).

One of the motivations of these categorification theorems originated
from the so-called {\em LLT-Ariki theory}. In 1996,
Lascoux-Leclerc-Thibon (\cite{LLT}) conjectured that the irreducible
representations of Hecke algebras of type $A$ are controlled by the
upper global basis (\cite{Kas90, Kas91})
 (or dual canonical basis (\cite{Lus93}))
 of the basic representation of the quantum affine algebra $U_q(A^{(1)}_{N-1})$.
Soon after, Ariki (\cite{Ariki})  proved this conjecture by showing that the
cyclotomic quotients of affine Hecke algebras categorify the
irreducible highest weight modules over $U(A^{(1)}_{N-1})$, the
universal enveloping algebra of
 affine Kac-Moody algebra of type $A^{(1)}_{N-1}$.
 In \cite{BK09, R08}, Brundan-Kleshchev and Rouquier showed that
the affine Hecke algebra of type $A$ is isomorphic to the \KLR\ of
type A or of type $A_\infty$ up to a specialization and a
localization. Thus the \KLRs\ can be understood as a graded version
of the affine Hecke algebras of type $A$ and Kang-Kashiwara's
cyclotomic categorification theorem is a generalization of Ariki's
theorem on type $A$ and $A_\infty$ to all symmetrizable Cartan
datum.

The purpose of this paper is to show that the \KLR\ can be regarded
as a generalization of the affine Hecke algebra of type $A$ in
another context: the {\em quantum affine Schur-Weyl duality}.
In \cite{Jimbo86}, M.\ Jimbo  extended the classical
Schur-Weyl duality to the quantum case:
the duality between the
category of finite-dimensional modules over the Hecke algebra $H_q(n)$ and
the category of finite-dimensional modules over the quantum group
$U_q(\mathfrak{gl}_N)$.
In
\cite{CP96, Che, GRV94}, Chari-Pressley, Cherednik and
Ginzburg-Reshetikhin-Vasserot introduced a quantum affine version of
Schur-Weyl duality: they
defined a functor between the category of
finite-dimensional modules over the affine Hecke algebra $H_q^{\aff}(n)$
and the category of finite-dimensional
integrable modules over $\UA$.

There are two key ingredients in defining this functor: (1) the
R-matrices on the $n$-fold tensor product of the fundamental
representation $V(\varpi_1)$ which satisfy the  Yang-Baxter
equations,  (2) a set of elements in $H_q^{\aff}(n)$, called the {\em
intertwiners}, which satisfy the braid relations. Roughly speaking,
by assigning  intertwiners to  R-matrices, we obtain the quantum
affine Schur-Weyl duality functor.

Now it is quite natural to ask whether this functor can be
generalized to the case of quantum affine algebras $\uqpg$ of other
types or not.
Our answer to this question can be explained in the following way.

Let $\{V_s\}_{s\in \mathcal{S}}$ be a family of good $\uqpg$-modules
and let $J$ be a subset of $\mathcal{S}\times \C(q)^{\times}$. An
element $i \in J$ is denoted by $i=({S(i)}, X(i))$. We define a
quiver $\Gamma_J$ as follows:
\be[{1)}]
\item The set of vertices is taken to be $J$.

\item For $i,j\in  J $, let $$\Rnorm_{V_{S(i)},V_{S(j)}}\colon
(V_{S(i)})_u\tens (V_{S(j)})_v \to  (V_{S(j)})_v\tens(V_{S(i)})_u$$
be the normalized R-matrix. We put $d$ many arrows from $i$ to $j$
where $d$ is the order of poles of $\Rnorm_{V_{S(i)},V_{S(j)}}(v/u)$
at $v/u=X(j)/X(i)$.
\ee

\noi Then the quiver $\Gamma_J$ defines a Cartan datum
$(A=(a_{ij})_{i,j \in J}, P, \Pi, P^{\vee}, \Pi^{\vee})$, where $A$
is determined by \eqref{eq:quiver}. Hence we obtain a
Khovanov-Lauda-Rouquier algebra $R^J(\beta)$ $(\beta \in \rtl^+)$
with the parameters  $Q_{i,j}(u,v) =\delta(i\not=j)
(u-v)^{d_{ij}}(v-u)^{d_{ji}} \in \cor[u,v]$. Here,
$\rtl^+=\sum_{i\in J} \Z_{\ge 0} \alpha_i$ denotes the positive root
lattice.

Let $\beta = \sum_{i \in J} k_i \alpha_i \in \rtl^{+}$ with
$|\beta|:=\sum_{i \in J} k_i =n$ and let $J^{\beta} = \{ \nu =
(\nu_1, \ldots, \nu_n) \in J^n \,;\, \alpha_{\nu_1} + \cdots +
\alpha_{\nu_n} = \beta \}$. For each sequence
$\nu=(\nu_1,\ldots,\nu_n) \in J^{\beta}$, set
$$V_\nu = (V_{S(\nu_1)})_{\rm aff} \otimes \cdots \otimes (V_{S(\nu_n)})_{\rm aff}$$
and let $\widehat{V}^{\otimes \beta}$ be a certain completion of
$\soplus_{\nu \in J^{\beta}} V_{\nu}$. Then $\widehat{V}^{\otimes
\beta}$ is endowed with a structure of $(U'_q(\g),
R^J(\beta))$-bimodule. Hence we can define a functor
$$\mathcal{F}_{\beta} \colon  \Modg(R^J(\beta)) \rightarrow \Mod(U'_q(\g)) $$
by
$$M \longmapsto \widehat{V}^{\otimes \beta}  \otimes_{R^J(\beta)} M,$$
where $\Modg(R^J(\beta))$ denotes the category of graded
$R^J(\beta)$-modules and $\Mod(U'_q(\g))$ denotes the category of
$U'_q(\g)$-modules. We would like to emphasize that, in general, the
type of the quiver $\Gamma_J$ is  irrelevant to  the type of
the affine Lie algebra $\g$ or its underlying finite-dimensional
simple Lie algebra $\g_0$.

We denote by $R^{J}(\beta)\gmod$  the category of finite-dimensional
graded $R^{J}(\beta)$-modules and set $R^{J}\gmod = \soplus_{\beta
\in \rtl^{+}} R^{J}(\beta)\gmod$. We also denote by $\CC_{\g}$ the
category of finite-dimensional integrable $U'_q(\g)$-modules.

The functor $\mathcal F_{\beta}$ enjoys two important properties:

\begin{itemize}
\item[(1)] $\F:=\soplus_{\beta \in \rtl^{+}}\F_{\beta}$ is a tensor functor; i.e.,
$\F$ sends a convolution product of objects in $R^{J}\gmod$ to a
tensor product of $\uqpg$-modules in $\CC_{\g}$.

\item[(2)] $\mathcal F_{\beta}$ is exact if $R^J(\beta)$ is of finite type $A,D,E$.
\end{itemize}

\vskip1em

One of the most important features of the functor $\mathcal F$ is that the category $R^{J}\gmod$, domain of the functor, has a $\Z$-grading.
For many years, it has been asked if the category  $\CC_{\g}$, the codomain of $\mathcal F$, has a $\Z$-grading or not.
This question has arisen after the works of Varagnolo-Vasserot \cite{VV02}, Nakajima \cite{Nak04} and Hernandez \cite{Her04},
in which it was shown that the Grothendieck ring $K(\CC_{\g})$ of $\CC_{\g}$  has interesting $t$-deformations. In particular, Nakajima provided an algorithm of calculating  {\em $q$-characters} of simple representations \cite{Nak04} using $t$-deformation.
Thus one would expect that there might be a $\Z$-graded tensor category whose Grothendieck ring is isomorphic to one of such  $t$-deformations of $K(\CC_{\g})$.
We will see in the last half of this paper that the functor $\mathcal F$ helps us construct such a category  in type $A_{N-1}^{(1)}$ case.

 Precisely speaking, this is  the case when $J=\set{(V(\varpi_1), q^{2i})}{i\in \Z}$,
where $V(\varpi_1)$ is the fundamental representation of
$\UA$ of weight $\varpi_1$. Then, by the construction, the
corresponding \KLR\  is of type $A_\infty$ and the functor
$\F_{n}:=\soplus_{|\beta|=n} \F_{\beta}$ recovers the quantum affine
Schur-Weyl duality functor if  we identify the objects in
$R^{J}(n)\gmod:= \soplus_{|\beta|=n} R^{J}(\beta)\gmod$ with
finite-dimensional  $H_q^{\rm aff}(n)$-modules in an appropriate way
(\cite{Kim12, R08}).

 Let $\As = R^{J}\gmod$ and let ${\mathcal C}_{J}$ be the full
subcategory of $\CC_{\g}$ consisting of $\UA$-modules $M$ such that
every composition factor of $M$ appears as a composition factor of a
tensor product of modules of the form $V(\varpi_1)_{q^{2s}}$ $(s \in
\Z)$. Then ${\mathcal C}_{J}$ is an abelian category and is stable
under taking submodules, quotients, extensions and tensor products.
Moreover, ${\mathcal C}_{J}$ contains all the modules of the form
$V(\varpi_i)_{(-q)^{s}}$ for $1 \le i \le N-1$ and $s \in i-1+2\Z$
(see \S\;\ref{subsec:CJ}). Then  $\F$ restricts to a functor from
$\As$ to $\mathcal C_J$.

Next,  we construct a tensor category $\T_J$
in two steps.
 Let $\Ss$ be the kernel of the functor $\F$.
Then $\Ss$ is a \ Serre subcategory of $\AA$. 
 The quotient category $\As/\Ss$
has a structure of a tensor category  with the convolution as tensor product.
Secondly, we localize the category $\AA
/\Ss$ one step further to obtain a category $\T_J$, where the
$R^J$-modules mapped  to  the trivial representations of $\UA$
 by $\F$  are  isomorphic to the unit object in $\T_J$.
 We then show that the category $\T_J$ is an abelian
rigid tensor category; i.e., every object in $\T_J$ has a right dual
and a left dual. To prove our assertions, we develop a general
process of localization of tensor categories through a {\em
commuting family of central objects}.  We summarize this process in
 Appendix A.

Moreover, the functor  $\mathcal F\col \As\to\mathcal C_J$
factors through the category $\T_J$. Hence, roughly speaking, $\T_J$
can be regarded as a graded version of $\mathcal C_J$. Note that the
category $\T_J$ is $\Z$-graded (i.e., endowed with a grading shift
functor), while the category $\mathcal C_J$ is not. Thus the
structure of the category $\T_J$ is definitely richer than that of
$\mathcal C_J$.

 One of our main results is  that $\mathcal F$ induces  a ring isomorphism
between $K(\T_J)_{q=1}$, the Grothendieck ring of $\T_J$ forgetting
the grading and  the Grothendieck ring $K(\mathcal C_J)$ of the
category $\mathcal C_J$.  
 Indeed, as proved in Theorem~\ref{th:def},
$K(\T_J)$ is isomorphic to the $t$-deformation $\mathcal K_t$ of 
$K(\mathcal C_J)$ introduced by Hernandez-Leclerc in \cite{HL11}
(cf.\ \cite{VV02, Nak04, Her04}). 
Therefore $\T_J$ provides a
$\Z$-graded lifting of ${\mathcal C}_{J}$ as a rigid
tensor category.

As we have seen so far, even the simplest case yields many
interesting and deep applications of the functor $\mathcal F$. We
expect  that a lot more exciting developments will follow as
we take various choices of  the affine algebra $\g$  and $J$.

This paper is organized as follows. In the first section, we recall
the notions of quantum groups and \KLRs.\ Next, we introduce the
R-matrices for \KLRs\ and study their basic properties. In the
second section, we recall the quantum affine algebras and their
representation theory. The notion of normalized R-matrices are also
recalled. In the third section, we define an $(R^J(\beta)),\uqpg
)$-bimodule $\widehat{V}^{\otimes \beta}$ and a functor $\F_{\beta}$
between the category of graded $R^J(\beta)$-modules and the category
of $\uqpg$-modules. In the last section, we construct a category
$\T_J$ and investigate its fundamental properties. In the
appendices, we gather necessary lemmas and propositions on the
localization of tensor categories and the quotient categories of an
abelian category by a Serre subcategory.

\smallskip

\noi
{\bf Acknowledgements.} We would like to express our gratitude to
Bernard Leclerc for his kind explanations of his works
and many fruitful discussions.
The first and the third author gratefully acknowledge
the hospitality of RIMS (Kyoto)
 during their visit in 2011 and 2012.

\section*{Convention}

\bnum
\item All the algebras and rings in this paper are assumed to have a unit,
and modules over them are unitary.
\item For a ring $A$, $A^\opp$ denotes the opposite ring of $A$.
\item For a ring $A$,  an $A$-module means a left $A$-module.
\item For a statement $P$, $\delta(P)$ is $1$ if $P$ is true and $0$
if $P$ is false.
\item For a ring $A$, we denote by $\Mod(A)$ the category of $A$-modules.
We denote by $A\proj$ the category of finitely generated projective $A$-modules.
When $A$ is an algebra over a field $\cor$, we denote by $A\smod$
the category of $A$-modules which are finite-dimensional over $\cor$.

If $A$ is a graded ring, then we denote by
$\Modg(A)$, $A\gproj$, $A\gmod$ their graded version with
homomorphism preserving the grading as morphisms.
They are also exact categories.
\item For a ring $A$, we denote by $A^\times$ the set of
invertible elements of $A$.
\ee

\section{Quantum groups and \KLRs}

\subsection{Cartan datum}\label{subsec:Cartan}
In this subsection, we recall the definition of quantum groups. Let $I$
be an index set. A \emph{Cartan datum} is a quintuple $(A,P,
\Pi,P^{\vee},\Pi^{\vee})$ consisting of
\begin{enumerate}[(a)]
\item an integer-valued matrix $A=(a_{ij})_{i,j \in I}$,
called the \emph{symmetrizable generalized Cartan matrix},
 which satisfies
\bni
\item $a_{ii} = 2$ $(i \in I)$,
\item $a_{ij} \le 0 $ $(i \neq j)$,
\item $a_{ij}=0$ if $a_{ji}=0$ $(i,j \in I)$,
\item there exists a diagonal matrix
$D=\text{diag} (\mathsf s_i \mid i \in I)$ such that $DA$ is
symmetric, and $\mathsf s_i$ are positive integers.
\end{enumerate}

\item a free abelian group $P$, called the \emph{weight lattice},
\item $\Pi= \{ \alpha_i \in P \mid \ i \in I \}$, called
the set of \emph{simple roots},
\item $P^{\vee}\seteq\Hom(P, \Z)$, called the \emph{co-weight lattice},
\item $\Pi^{\vee}= \{ h_i \ | \ i \in I \}\subset P^{\vee}$, called
the set of \emph{simple coroots},
\end{enumerate}

satisfying the following properties:
\begin{enumerate}
\item[(i)] $\langle h_i,\alpha_j \rangle = a_{ij}$ for all $i,j \in I$,
\item[(ii)] $\Pi$ is linearly independent,
\item[(iii)] for each $i \in I$, there exists $\Lambda_i \in P$ such that
           $\langle h_j, \Lambda_i \rangle =\delta_{ij}$ for all $j \in I$.
\end{enumerate}
We call $\Lambda_i$ the \emph{fundamental weights}.
The free abelian group $\rootl\seteq\soplus_{i \in I} \Z \alpha_i$ is called the
\emph{root lattice}. Set $\rootl^{+}= \sum_{i \in I} \Z_{\ge 0}
\alpha_i\subset\rootl$ and $\rootl^{-}= \sum_{i \in I} \Z_{\le0}
\alpha_i\subset\rootl$. For $\beta=\sum_{i\in I}m_i\al_i\in\rootl$,
we set
$|\beta|=\sum_{i\in I}|m_i|$.

Set $\mathfrak{h}=\Q \otimes_\Z P^{\vee}$.
Then there exists a symmetric bilinear form $(\quad , \quad)$ on
$\mathfrak{h}^*$ satisfying
$$ (\alpha_i , \alpha_j) =\mathsf s_i a_{ij} \quad (i,j \in I)
\quad\text{and $\lan h_i,\lambda\ran=
\dfrac{2(\alpha_i,\lambda)}{(\alpha_i,\alpha_i)}$ for any $\lambda\in\mathfrak{h}^*$ and $i \in I$}.$$

Let $q$ be an indeterminate. For each $i \in I$, set $q_i = q^{\,\mathsf s_i}$.

\begin{definition} \label{def:qgroup}
The {\em quantum group} $U_q(\g)$ associated with a Cartan datum
$(A,P,\Pi,P^{\vee}, \Pi^{\vee})$ is the  algebra  over
$\mathbb Q(q)$ generated by $e_i,f_i$ $(i \in I)$ and
$q^{h}$ $(h \in P^\vee)$ satisfying following relations:
\begin{equation*}
\begin{aligned}
& q^0=1,\ q^{h} q^{h'}=q^{h+h'} \ \ \text{for} \ h,h' \in P,\\
& q^{h}e_i q^{-h}= q^{\lan h, \alpha_i\ran} e_i, \ \
          \ q^{h}f_i q^{-h} = q^{-\lan h, \alpha_i\ran} f_i \ \ \text{for} \ h \in P^\vee, i \in
          I, \\
& e_if_j - f_je_i = \delta_{ij} \dfrac{K_i -K^{-1}_i}{q_i- q^{-1}_i
}, \ \ \mbox{ where } K_i=q^{\mathsf s_i h_i}, \\
& \sum^{1-a_{ij}}_{r=0} (-1)^r \left[\begin{matrix}1-a_{ij}
\\ r\\ \end{matrix} \right]_i e^{1-a_{ij}-r}_i
         e_j e^{r}_i =0 \quad \text{ if } i \ne j, \\
& \sum^{1-a_{ij}}_{r=0} (-1)^r \left[\begin{matrix}1-a_{ij}
\\ r\\ \end{matrix} \right]_i f^{1-a_{ij}-r}_if_j
        f^{r}_i=0 \quad \text{ if } i \ne j.
\end{aligned}
\end{equation*}
\end{definition}

Here, we set $[n]_i =\dfrac{ q^n_{i} - q^{-n}_{i} }{ q_{i} - q^{-1}_{i} },\quad
  [n]_i! = \prod^{n}_{k=1} [k]_i$ and
  $\left[\begin{matrix}m \\ n\\ \end{matrix} \right]_i= \dfrac{ [m]_i! }{[m-n]_i! [n]_i! }\;$
  for each $m,n \in \Z_{\ge 0}$, $i \in I$.

We have a comultiplication
$\Delta \colon U_q(\g) \rightarrow U_q(\g) \otimes U_q(\g) $
given by
 \begin{align*}
 & \Delta (q^h) = q^h \otimes q^h, &
 & \Delta (e_i) = e_i \otimes K_i^{-1} + 1 \otimes e_i, &
 & \Delta (f_i) = f_i \otimes 1 + K_i \otimes f_i.
  \end{align*}

Let $U_q^{+}(\g)$ (resp.\ $U_q^{-}(\g)$) be the subalgebra of
$U_q(\g)$ generated by $e_i$'s (resp.\ $f_i$'s), and let $U^0_q(\g)$
be the subalgebra of $U_q(\g)$ generated by $q^{h}$ $(h \in
P^{\vee})$. Then we have the \emph{triangular decomposition}
$$ U_q(\g) \simeq U^{-}_q(\g) \otimes U^{0}_q(\g) \otimes U^{+}_q(\g),$$
and the {\em weight space decomposition}
$$U_q(\g) = \bigoplus_{\beta \in \rootl} U_q(\g)_{\beta},$$
where $U_q(\g)_{\beta}\seteq\set{ x \in U_q(\g)}{\text{$q^{h}x q^{-h}
=q^{(h, \beta )}x$ for any $h \in P$}}$.

Let $\A= \Z[q, q^{-1}]$ and set
$$e_i^{(n)} = e_i^n / [n]_i!, \quad f_i^{(n)} =
f_i^n / [n]_i! \ \ (n \in \Z_{\ge 0}).$$
We define the $\A$-form
$U_{\A}(\g)$ to be the $\A$-subalgebra of $U_q(\g)$ generated by
$e_i^{(n)}$, $f_i^{(n)}$ $(i \in I, n \in \Z_{\ge 0})$, $q^h$ ($h\in P^\vee$).
Let $U_{\A}^{+}(\g)$ (resp.\ $U_{\A}^{-}(\g)$) be the
$\A$-subalgebra of $U_q(\g)$ generated by $e_i^{(n)}$ (resp.\
$f_i^{(n)}$) for $i\in I$, $n \in \Z_{\ge 0}$.

\subsection{\KLRs\ }
 \hfill

Now we recall the definition of \KLRs\ associated with a given
Cartan datum $(A, P, \Pi, P^{\vee}, \Pi^{\vee})$.

Let $\cor$ be a commutative ring.
For $i,j\in I$ such that $i\not=j$, set
$$S_{i,j}=\set{(p,q)\in\Z_{\ge0}^2}{(\al_i , \al_i)p+(\al_j , \al_j)q=-2(\al_i , \al_j)}.
$$
Let us take  a family of polynomials $(Q_{ij})_{i,j\in I}$ in $\cor[u,v]$
which are of the form
\begin{equation} \label{eq:Q}
Q_{ij}(u,v) = \begin{cases}\hs{5ex} 0 \ \ & \text{if $i=j$,} \\
\sum\limits_{(p,q)\in S_{i,j}}
t_{i,j;p,q} u^p v^q\quad& \text{if $i \neq j$}
\end{cases}
\end{equation}
with $t_{i,j;p,q}\in\cor$. We assume that
they satisfy $t_{i,j;p,q}=t_{j,i;q,p}$ (equivalently, $Q_{i,j}(u,v)=Q_{j,i}(v,u)$) and
$t_{i,j:-a_{ij},0} \in \cor^{\times}$.

We denote by
$\sym_{n} = \langle s_1, \ldots, s_{n-1} \rangle$ the symmetric group
on $n$ letters, where $s_i\seteq (i, i+1)$ is the transposition of $i$ and $i+1$.
Then $\sym_n$ acts on $I^n$ by place permutations.

For $n \in \Z_{\ge 0}$ and $\beta \in \rootl_+$ such that $|\beta| = n$, we set
$$I^{\beta} = \set{\nu = (\nu_1, \ldots, \nu_n) \in I^{n}}%
{ \alpha_{\nu_1} + \cdots + \alpha_{\nu_n} = \beta }.$$

\begin{definition}
For $\beta \in \rootl^+$ with $|\beta|=n$, the {\em
Khovanov-Lauda-Rouquier algebra}  $R(\beta)$  at $\beta$ associated
with a Cartan datum $(A,P, \Pi,P^{\vee},\Pi^{\vee})$ and a matrix
$(Q_{ij})_{i,j \in I}$ is the  algebra over $\cor$
generated by the elements $\{ e(\nu) \}_{\nu \in  I^{\beta}}$, $
\{x_k \}_{1 \le k \le n}$, $\{ \tau_m \}_{1 \le m \le n-1}$
satisfying the following defining relations:
\begin{equation*} \label{eq:KLR}
\begin{aligned}
& e(\nu) e(\nu') = \delta_{\nu, \nu'} e(\nu), \ \
\sum_{\nu \in  I^{\beta} } e(\nu) = 1, \\
& x_{k} x_{m} = x_{m} x_{k}, \ \ x_{k} e(\nu) = e(\nu) x_{k}, \\
& \tau_{m} e(\nu) = e(s_{m}(\nu)) \tau_{m}, \ \ \tau_{k} \tau_{m} =
\tau_{m} \tau_{k} \ \ \text{if} \ |k-m|>1, \\
& \tau_{k}^2 e(\nu) = Q_{\nu_{k}, \nu_{k+1}} (x_{k}, x_{k+1})
e(\nu), \\
& (\tau_{k} x_{m} - x_{s_k(m)} \tau_{k}) e(\nu) = \begin{cases}
-e(\nu) \ \ & \text{if} \ m=k, \nu_{k} = \nu_{k+1}, \\
e(\nu) \ \ & \text{if} \ m=k+1, \nu_{k}=\nu_{k+1}, \\
0 \ \ & \text{otherwise},
\end{cases} \\
& (\tau_{k+1} \tau_{k} \tau_{k+1}-\tau_{k} \tau_{k+1} \tau_{k}) e(\nu)\\
& =\begin{cases} \dfrac{Q_{\nu_{k}, \nu_{k+1}}(x_{k},
x_{k+1}) - Q_{\nu_{k}, \nu_{k+1}}(x_{k+2}, x_{k+1})} {x_{k} -
x_{k+2}}e(\nu) \ \ & \text{if} \
\nu_{k} = \nu_{k+2}, \\
0 \ \ & \text{otherwise}.
\end{cases}
\end{aligned}
\end{equation*}
\end{definition}

The above relations are homogeneous provided with
\begin{equation*} \label{eq:Z-grading}
\deg e(\nu) =0, \quad \deg\, x_{k} e(\nu) = (\alpha_{\nu_k}
, \alpha_{\nu_k}), \quad\deg\, \tau_{l} e(\nu) = -
(\alpha_{\nu_l} , \alpha_{\nu_{l+1}}),
\end{equation*}
and hence $R( \beta )$ is a $\Z$-graded algebra.

For an element $w$ of the symmetric group $\sym_n$,
let us choose  a  reduced expression $w=s_{i_1}\cdots s_{i_\ell}$, and
set $$\tau_w=\tau_{i_1}\cdots \tau_{i_\ell}.$$
In general, it depends on the choice of
reduced expressions of $w$ (see Lemma~\ref{lem:ind_red}).
Then we have
$$R(\beta)=\soplus_{\nu\in  I^{\beta} , \;w\in\sym_n}\cor[x_1,\ldots, x_n]e(\nu)\tau_w.$$

 For a graded $R(\beta)$-module $M=\bigoplus_{k \in \Z} M_k$, we define
$qM =\bigoplus_{k \in \Z} (qM)_k$, where
 \begin{align*}
 (qM)_k = M_{k-1} & \ (k \in \Z).
 \end{align*}
We call $q$ the \emph{grading-shift functor} on the category of
graded $R(\beta)$-modules.

We denote by $R(\beta)\gproj$
the category of finitely generated projective graded $R(\beta)$-modules and
denote by $R(\beta)\gmod$
the category of graded $R(\beta)$-modules
which are finite-dimensional over $\cor$.
We also denote by  $R(\beta)\smod$ the category of ungraded $R(\beta)$-modules which are
finite-dimensional over $\cor$.

For each $n \in \Z_{\ge 0}$, set
\eqn
&\displaystyle \Mod(R(n)) \seteq \bigoplus_{{\substack{\beta \in \rootl^+, \\ |\beta|=n}}} \Mod(R(\beta)),
&\displaystyle \Modg(R(n))\seteq \bigoplus_{{\substack{\beta \in \rootl^+, \\ |\beta|=n}}} \Modg(R(\beta)),\\
&\displaystyle R(n)\proj \seteq \bigoplus_{{\substack{\beta \in \rootl^+, \\ |\beta|=n}}} R(\beta) \proj,
&\displaystyle R(n)\gproj \seteq \bigoplus_{{\substack{\beta \in \rootl^+, \\ |\beta|=n}}} R(\beta) \gproj,\\
&\displaystyle R(n)\smod \seteq \bigoplus_{{\substack{\beta \in \rootl^+, \\ |\beta|=n}}} R(\beta) \smod,
&\displaystyle R(n)\gmod \seteq \bigoplus_{{\substack{\beta \in \rootl^+, \\ |\beta|=n}}} R(\beta) \gmod.
\eneqn
We sometimes say that an object of $\Mod(R(n))$ is an $R(n)$-module.
Similarly, we say that an object of $\soplus_{\beta \in \rootl^+} \Mod(R(\beta)) $ is an $R$-module, etc.

\medskip
For $\beta, \gamma \in \rootl^+$ with $|\beta|=m$, $|\gamma|= n$,
 set
$$e(\beta,\gamma)=\displaystyle\sum_{\substack{\nu \in I^{\beta+\gamma}, \\ (\nu_1, \ldots ,\nu_m) \in I^{\beta}}} e(\nu) \in R(\beta+\gamma). $$
Then $e(\beta,\gamma)$ is an idempotent.
Let
\eq R( \beta)\tens R( \gamma  )\to e(\beta,\gamma)R( \beta+\gamma)e(\beta,\gamma) \label{eq:embedding}
\eneq
be the $\cor$-algebra homomorphism given by
$e(\mu)\tens e(\nu)\mapsto e(\mu*\nu)$ ($\mu\in I^{\beta}$
and $\nu\in I^{\gamma}$)
$x_k\tens 1\mapsto x_ke(\beta,\gamma)$ ($1\le k\le m$),
$1\tens x_k\mapsto x_{m+k}e(\beta,\gamma)$ ($1\le k\le n$),
$\tau_k\tens 1\mapsto \tau_ke(\beta,\gamma)$ ($1\le k<m$),
$1\tens \tau_k\mapsto \tau_{m+k}e(\beta,\gamma)$ ($1\le k<n$).
Here $\mu*\nu$ is the concatenation of $\mu$ and $\nu$;
i.e., $\mu*\nu=(\mu_1,\ldots,\mu_m,\nu_1,\ldots,\nu_n)$.

For $a\in R(\beta)$ and $b\in R(\gamma)$, we denote by $a\etens b$
the image of $a\tens b$ under this homomorphism. Hence for example,
$\tau_1e(\beta)\etens\tau_1e(\gamma)
=\tau_1\tau_{m+1}e(\beta,\gamma)$. The whole image of $R(
\beta)\tens R( \gamma  )\to e(\beta,\gamma)R(
\beta+\gamma)e(\beta,\gamma)$ is denoted by $R(\beta,\gamma )$.

\medskip
For a graded $R(\beta)$-module $M$ and a graded $R(\gamma)$-module $N$,
we define the \emph{convolution product}
$M\circ N$ by
$$M\circ N=R(\beta + \gamma) e(\beta,\gamma)
\tens_{R(\beta )\otimes R( \gamma)}(M\otimes N). $$
We sometimes denote by $M\etens N$ the module
$M\tens N$ regarded as an
$R(\beta,\gamma)$-submodule of $M\circ N$.

For $M \in R( \beta) \smod$, the dual space
$$M^* \seteq \Hom_\cor (M, \cor)$$
admits an $R(\beta)$-module structure via
\begin{align*}
(r \cdot  f)(u) \seteq f(\psi(r) u) \quad (r \in R( \beta), \ u \in M),
\end{align*}
where $\psi$ denotes the $\cor$-algebra anti-involution on $R(\beta
)$ which fixes the generators $e(\nu)$, $x_m$ and $\tau_k$ for $\nu
\in I^{\beta}, 1 \leq m \leq  |\beta|$ and $1 \leq k \leq
|\beta|-1$.

It is known that (see \cite[Theorem 2.2 (2)]{LV11})
\eq
&&(M_1 \circ M_2)^* \simeq q^{(\beta,\gamma)}
(M_2^* \circ M_1 ^*)\label{eq:dualconv}
\eneq
for any $M_1 \in R(\beta) \gmod$ and $M_2 \in R(\gamma) \gmod$.

\medskip
Let us denote by $K(\R(\beta)\gproj)$ and $K(\R(\beta)\gmod)$ the corresponding Grothendieck groups. Then,
$\soplus_{\beta \in \rootl^+} K(R(\beta)\gproj)$ and
$\soplus_{\beta \in \rootl^+} K(R(\beta)\gmod)$
are $\A$-algebras with
 the multiplication induced by the convolution product and
the $\A$-action induced by the  grading shift functor $q$.

 In \cite{KL09, R08}, it is shown that
\KLRs\ \emph{categorify} the negative half of the corresponding quantum group. More precisely, we have the following theorem.

\begin{theorem}[{\cite{KL09, R08}}] \label{Thm:categorification}
 For a given  Cartan datum $(A,P, \Pi,P^{\vee},\Pi^{\vee})$,
we take a parameter matrix $(Q_{ij})_{i,j \in J}$ satisfying the conditions in \eqref{eq:Q}, and let $U_q(\mathfrak g)$ and $R(\beta) $ be
the associated quantum group   and \KLR, respectively.
Then there exists an $\A$-algebra isomorphism
\begin{align}
  U^-_\A(\mathfrak g) \simeq \bigoplus_{\beta \in \rootl^+} K(R(\beta) \gproj).
\end{align}
By duality, we have
\begin{align}
  U^-_\A(\mathfrak g)^{\vee} \simeq \bigoplus_{\beta \in \rootl^+} K(R(\beta) \gmod),
\end{align}
where $U^-_\A(\mathfrak g)^{\vee} \seteq \{x \in U_q^-(\g) \,|\, (x, U_\A^-(\g))_{-}\subset \A\}$
and $(\quad,\quad)_{-}$ denotes the non-degenerate bilinear form on $U^-_q(\g)$ defined in \cite[\S\;3]{Kas91}.

\end{theorem}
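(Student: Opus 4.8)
The plan is to construct an explicit $\A$-algebra homomorphism $\gamma\col U^-_\A(\g)\to\soplus_{\beta\in\rootl^+}K(R(\beta)\gproj)$ — where the product on the target is induced by the convolution product $\circ$ and the $\A$-action by the grading shift $q$ — to prove that $\gamma$ is bijective, and then to obtain the second isomorphism by a duality argument. One would define $\gamma$ on generators by sending the divided power $f_i^{(n)}$ to the class $[P(i^{(n)})]$ of a distinguished indecomposable graded projective $R(n\al_i)$-module. Such a module exists and is unique up to grading shift, because $R(n\al_i)$ is the nilHecke algebra $NH_n$, which is graded Morita equivalent to the graded polynomial ring $\cor[x_1,\dots,x_n]^{\sym_n}$; thus it has a single indecomposable graded projective up to shift, and $P(i^{(n)})$ is that module with the shift fixed by the convention $\deg x_k e(\nu)=(\al_{\nu_k},\al_{\nu_k})$. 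To prove $\gamma$ well defined one must check that the classes $[P(i^{(n)})]$ satisfy the defining relations of $U^-_\A(\g)$.

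The ``$\mathfrak{sl}_2$-type'' relations come directly from the nilHecke algebra: decomposing the convolution $P(i^{(m)})\circ P(i^{(n)})$ over $R((m+n)\al_i)\cong NH_{m+n}$ yields $[P(i^{(m)})]\cdot[P(i^{(n)})]=\left[\begin{matrix}m+n\\ n\end{matrix}\right]_i[P(i^{(m+n)})]$. The serious point — and the step I expect to be the principal obstacle — is the quantum Serre relation: for $i\neq j$, with $\beta=(1-a_{ij})\al_i+\al_j$, one must show $\sum_{r=0}^{1-a_{ij}}(-1)^r[P(i^{(r)})]\cdot[P(j)]\cdot[P(i^{(1-a_{ij}-r)})]=0$ in $K(R(\beta)\gproj)$. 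Here one analyzes the projective modules $P(i^{(r)})\circ P(j)\circ P(i^{(1-a_{ij}-r)})$ by an explicit module-theoretic computation inside $R(\beta)$, using the braid-type relations for the $\tau$'s and the precise shape of $Q_{ij}$ from \eqref{eq:Q}, producing short exact sequences linking consecutive terms that force the alternating sum of their classes to vanish. This is the technical heart of the Khovanov--Lauda/Rouquier argument, and it is where the combinatorics of $R(\beta)$ really enters.

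For surjectivity of $\gamma$ one uses the ``standard'' projectives: since $\sum_{\nu\in I^\beta}e(\nu)=1$ we have $R(\beta)=\soplus_{\nu}R(\beta)e(\nu)$ with $R(\beta)e(\nu)\simeq P_{\nu_1}\circ\cdots\circ P_{\nu_n}$ and $P_{\nu_k}=R(\al_{\nu_k})=\gamma(f_{\nu_k})$; analyzing how these decompose into indecomposable graded projectives (which requires the classification of indecomposable graded projectives of $R(\beta)$ together with a suitable triangularity of the multiplicity matrix) shows that the classes $[P_i]=\gamma(f_i)$ generate the target as an $\A$-algebra. For injectivity one introduces the Khovanov--Lauda bilinear pairing $(\ ,\ )\col K(R(\beta)\gproj)\times K(R(\beta)\gproj)\to\A$, $([P],[P'])\mapsto\on{gdim}_\cor(P^\psi\tens_{R(\beta)}P')$, where $\psi$ is the anti-involution fixing the generators and $\on{gdim}_\cor$ is the graded $\cor$-dimension, and shows that $\gamma$ carries it to Kashiwara's form $(\ ,\ )_-$ on $U^-_q(\g)$. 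By Frobenius reciprocity for the embedding \eqref{eq:embedding}, convolution $\circ$ is left adjoint to a restriction functor categorifying the comultiplication, so the pairing is multiplicative in the same way $(\ ,\ )_-$ is, and the compatibility reduces to the single computation $([P_i],[P_i])=\on{gdim}_\cor\cor[x_1]=(1-q_i^2)^{-1}=(f_i,f_i)_-$. Since $(\ ,\ )_-$ is non-degenerate on $U^-_q(\g)$, $\gamma$ is injective; with surjectivity this gives $U^-_\A(\g)\simeq\soplus_{\beta}K(R(\beta)\gproj)$.

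Finally, the second isomorphism follows by transposing along a second perfect pairing. Every object of $R(\beta)\gmod$ has a finitely generated projective cover in $R(\beta)\gproj$, and Krull--Schmidt holds because the indecomposables have graded-local endomorphism rings; hence the graded Cartan pairing $\langle[P],[M]\rangle=\on{gdim}_\cor\Hom_{R(\beta)}(P,M)$ identifies $K(R(\beta)\gmod)$ with the $\A$-dual of $K(R(\beta)\gproj)$. On the other side, by definition $U^-_\A(\g)^\vee$ is precisely the $\A$-dual lattice of $U^-_\A(\g)$ inside $U^-_q(\g)$ with respect to $(\ ,\ )_-$. Transposing the isomorphism $\gamma$ along these two perfect pairings — using that $\gamma$ intertwines the pairings, as established above — yields the $\A$-algebra isomorphism $U^-_\A(\g)^\vee\simeq\soplus_{\beta\in\rootl^+}K(R(\beta)\gmod)$, the multiplication on the target being again the one induced by convolution and respected by the map in view of \eqref{eq:dualconv}.
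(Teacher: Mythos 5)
The paper does not prove this theorem: it is quoted verbatim from Khovanov--Lauda and Rouquier, so there is no internal proof to compare against. Your outline is a faithful reconstruction of the standard argument from those references (nilHecke analysis for the divided-power relations, categorified Serre relations, the twisted bilinear form matching Kashiwara's $(\;,\;)_-$ via Frobenius reciprocity and $([P_i],[P_i])=(1-q_i^2)^{-1}$, and dualization through the graded Cartan pairing), and the strategy is correct. Two caveats: the two genuinely hard steps --- the categorified Serre relations and the surjectivity of $\gamma$ (your ``suitable triangularity of the multiplicity matrix'' is precisely the nontrivial point, handled in \cite{KL09} via the character map on $K(R(\beta)\gmod)$ and the perfect pairing with simples rather than by a direct triangularity claim) --- are described but not carried out, so this remains a sketch rather than a proof. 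Also, for the multiplicativity of the second isomorphism the relevant fact is that convolution on $\gmod$ is adjoint, under the Cartan pairing, to restriction on $\gproj$ (which categorifies the twisted coproduct dual to $(\;,\;)_-$); the isomorphism \eqref{eq:dualconv} concerns the contragredient dual of modules and is not the statement you need there.
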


 The \KLRs\ also categorify the global bases.
For the definition of global bases, see \cite{Kas91}.

\begin{theorem} [\cite{VV09, R11}] \label{thm:categorification 2}
Assume that $A$ is symmetric and that $Q_{i,j}(u,v)$ is a polynomial in $u-v$.
Assume further that $\cor$ is a field of characteristic $0$.
 Then under the isomorphism in {\rm Theorem \ref{Thm:categorification}},
the lower global basis \ro respectively, upper global basis\rf\ corresponds to
the set of isomorphism classes of indecomposable projective modules
\ro respectively, the set of isomorphism classes of simple modules\rf.
\end{theorem}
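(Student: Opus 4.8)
The plan is to follow the geometric route of Varagnolo--Vasserot \cite{VV09} and Rouquier \cite{R11}: identify $R(\beta)$ with a graded $\Ext$-algebra on the moduli space of quiver representations, and then transport Lusztig's geometric construction of the canonical basis across this identification. Since $A$ is symmetric (so every $\mathsf s_i=1$) and $Q_{ij}(u,v)$ is a polynomial in $u-v$, a rescaling automorphism of $R(\beta)$ reduces us to the \emph{standard} parameters attached to a chosen orientation $\Omega$ of the Dynkin diagram of $A$, i.e.\ $Q_{ij}(u,v)=(u-v)^{d_{ij}}(v-u)^{d_{ji}}$ where $d_{ij}$ is the number of arrows $i\to j$ in $\Omega$ and $d_{ij}+d_{ji}=-a_{ij}$. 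For $\beta=\sum_i m_i\alpha_i$ write $E_\beta=\soplus_{(i\to j)\in\Omega}\Hom(V_i,V_j)$ with $\dim V_i=m_i$, a representation of $G_\beta=\prod_i GL(V_i)$.

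First I would introduce the Lusztig sheaves. For $\nu=(\nu_1,\dots,\nu_n)\in I^\beta$ let $\mathfrak F_\nu$ be the variety of complete $\nu$-adapted flags of subrepresentations in $E_\beta$, with its proper $G_\beta$-equivariant projection $\pi_\nu\colon\mathfrak F_\nu\to E_\beta$; set $L_\nu=(\pi_\nu)_!\,\underline{\cor}_{\mathfrak F_\nu}[\dim\mathfrak F_\nu]$ and $L=\soplus_{\nu\in I^\beta}L_\nu\in D^{b}_{G_\beta}(E_\beta)$. The crucial step is the graded algebra isomorphism
\[
R(\beta)\;\simeq\;\soplus_{k\in\Z}\Ext^{k}_{G_\beta}(L,L),
\]
obtained by exhibiting the geometric avatars of the generators: each $e(\nu)$ is the idempotent projecting onto $L_\nu$; each $x_k e(\nu)$ is the first Chern class of the $k$-th tautological line bundle on $\mathfrak F_\nu$; and each $\tau_m e(\nu)$ is the correspondence operator $L_\nu\to L_{s_m\nu}$ induced by the partial-flag variety obtained by forgetting the $m$-th step of the flag. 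One verifies that these classes satisfy the defining relations of $R(\beta)$, and that the map they define is an isomorphism; the latter rests on the purity and parity vanishing of the $L_\nu$ (they are pointwise pure of weight $0$), which matches a geometric basis of $\soplus_k\Ext^k_{G_\beta}(L,L)$ with the standard monomial basis $\bigl\{\cor[x_1,\dots,x_n]e(\nu)\tau_w\bigr\}$ of $R(\beta)$.

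Next I would invoke the formalism of modules over the $\Ext$-algebra of a generator. The functor $F\mapsto\soplus_k\Hom_{D^b_{G_\beta}(E_\beta)}(L,F[k])$ induces an equivalence between the Karoubian category $\mathcal Q_\beta$ of semisimple complexes generated by $L$ and its cohomological shifts --- Lusztig's category --- and the category $R(\beta)\gproj$; under it, cohomological shift corresponds to the grading shift $q$ and indecomposable objects to indecomposable objects. Hence the simple perverse sheaves (intersection cohomology complexes) occurring as summands of the $L_\nu$ correspond precisely to the indecomposable projective graded $R(\beta)$-modules. Taking Grothendieck groups and using Lusztig's theorem \cite{Lus93} that the classes of these simple perverse sheaves form the canonical basis of $U^-_\A(\g)$ --- which coincides with Kashiwara's lower global basis --- we conclude that, under the isomorphism of Theorem~\ref{Thm:categorification}, the indecomposable projectives go to the lower global basis.

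The dual statement then follows by duality of Grothendieck groups. Under the perfect $\A$-bilinear pairing $K(R(\beta)\gproj)\times K(R(\beta)\gmod)\to\A$ sending $([P],[M])$ to the graded rank of $\Hom_{R(\beta)}(P,M)$, the basis of $K(R(\beta)\gmod)$ dual to the indecomposable projectives is exactly the basis of graded simple modules; on the quantum-group side this pairing is Kashiwara's form $(\ ,\ )_-$ of \cite{Kas91}, for which the lower and upper global bases are dual. Therefore the simple modules correspond to the upper global basis. The main obstacle is the geometric identification $R(\beta)\simeq\soplus_k\Ext^k_{G_\beta}(L,L)$: besides reconciling the signs in the braid-type KLR relations with the geometry of the correspondences defining $\tau_m$, one must prove the parity vanishing that forces the $\Ext$-algebra to have exactly the size of $R(\beta)$ --- this is the step that genuinely uses the symmetry of $A$ and the $(u-v)$-form of $Q_{ij}$, and it is the origin of the positivity of the canonical basis. (In finite types $A$, $D$, $E$ one can bypass the geometry, instead building standard modules from a convex order on positive roots and proving unitriangularity against them, then appealing to Kashiwara's characterization of the global basis; but the general symmetric case needs the geometric input.)
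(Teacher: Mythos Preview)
The paper does not give its own proof of this theorem: it is stated with the citation \cite{VV09, R11} and no argument is supplied, since the result is quoted from Varagnolo--Vasserot and Rouquier as background for the rest of the paper. Your outline is a faithful sketch of precisely the argument in those references (the geometric identification of $R(\beta)$ with the equivariant $\Ext$-algebra of Lusztig's sheaves, transport of the decomposition theorem, and the duality between projectives and simples), so there is nothing to compare against here beyond noting that your proposal matches the cited sources rather than anything done in this paper.
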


\subsection{R-matrices for \KLRs}
\subsubsection{Intertwiners}\label{subsec:int}
 For $|\beta|=n$ and $1\le a<n$,  we define $\vphi_a\in R( \beta)$
by
\eq&&\ba{l}
  \vphi_a e(\nu)=
\begin{cases}
  \bl\tau_ax_a-x_{a}\tau_a\br e(\nu)\\
\hs{4ex}= \bl x_{a+1}\tau_a-\tau_ax_{a+1}\br e(\nu)\\
\hs{8ex}  =\bl\tau_a(x_a-x_{a+1})+1\br e(\nu)\\
\hs{12ex}=\bl(x_{a+1}-x_{a})\tau_a-1\br e(\nu)
 & \text{if $\nu_a=\nu_{a+1}$,} \\[2ex]
\tau_ae(\nu)& \text{otherwise.}
\end{cases}
\label{def:int} \ea \eneq
They are called the {\em intertwiners}.

The following lemma is well-known (for example, it easily follows
 from  the polynomial representation of \KLRs\
(\cite[Proposition~2.3]{KL09}, \cite[Proposition~3.12]{R08}). \Lemma
\label{lem:ga} \hfill
\bnum
\item $\vphi_a^2e(\nu)=\bl
Q_{\nu_a,\nu_{a+1}}(x_a,x_{a+1})+\delta_{\nu_a,\nu_{a+1}}\br e(\nu)$.
\item
$\{\vphi_k\}_{1\le k<n}$ satisfies the braid relation.
\item
 For $w\in \sym_n$, let $w=s_{a_1}\cdots s_{a_\ell}$ be a reduced expression of $w$ and
set $\vphi_w=\vphi_{a_1}\cdots\vphi_{a_\ell}$.
Then $\vphi_w$ does not depend on the choice of reduced expressions of $w$.

\item For $w\in \sym_n$ and $1\le k\le n$, we have $\vphi_w x_k=x_{w(k)}\vphi_w$.
\item
For $w\in \sym_n$ and $1\le k<n$,
if $w(k+1)=w(k)+1$, then $\vphi_w\tau_k=\tau_{w(k)}\vphi_w$.\label{ga5}
\item
$\vphi_{w^{-1}}\vphi_we(\nu)=\prod\limits_{ \substack{a<b,\\ w(a)>w(b)} }
(Q_{\nu_a,\nu_b}(x_a,x_b)+\delta_{\nu_a,\nu_b})e(\nu)$. \label{ga6}
\ee
\enlemma

We define another
symmetric bilinear form $\inp{{\scbul,\scbul}}$ on $\rtl$ by
\eqn
&&\inp{\al_i,\al_j}=\delta_{ij}.
\eneqn
Then the intertwiner $\vphi_we(\nu)$ has degree
 $$\hs{-1ex}\sum\limits_{\substack{1\le a<b\le n,\\ w(a)>w(b)}}
\bl-(\al_{\nu_a},\al_{\nu_b})+2\inp{\al_{\nu_a},\al_{\nu_b}}\br.$$

For $m,n\in\Z_{\ge0}$,
let us denote by $w[{m,n}]$ the element of $\sym_{m+n}$  defined by
\eq
&&w[{m,n}](k)=\begin{cases}k+n&\text{if $1\le k\le m$,}\\
k-m&\text{if $m<k\le m+n$.}\end{cases}
\eneq

Let $\beta,\gamma\in \rtl^+$ with $|\beta|=m$, $|\gamma|=n$ and let
$M$ be an $R(\beta)$-module and $N$ an $R(\gamma)$-module. Then the
map
$$M\tens N\to q^{(\beta,\gamma)-2\inp{\beta,\gamma}}N\circ M$$ given by
$$u\tens v\longmapsto \vphi_{w[n,m]}(v\tens u)$$
is  $R( \beta,\gamma )$ -linear by the above lemma, and it extends
to an $R( \beta +\gamma)$- module  homomorphism \eq &&R_{M,N}\col
M\circ N\To q^{(\beta,\gamma)-2\inp{\beta,\gamma}}N\circ M. \eneq
 Then we obtain the following commutative diagrams:
$$\xymatrix@C=7ex{L\circ M\circ N\ar[r]^{R_{L,M}}\ar[dr]_{R_{L,M\circ N}}
&M\circ L\circ N\ar[d]^{R_{L.N}}\\
&M\circ N\circ L}\qtext{and}\quad
\xymatrix@C=7ex{L\circ M\circ N\ar[r]^{R_{M,N}}\ar[dr]_{R_{L\circ M,N}}
&L\circ N\circ M\ar[d]^{R_{L.N}}\\
&N\circ L\circ M\,.}
$$
Hence the homomorphisms $R_{M,N}$ satisfy the Yang-Baxter equation,
namely, the following diagram  is commutative:
\eq&&\ba{l}\xymatrix@R=2ex{
&L\circ M\circ N\ar[dl]_{R_{L,M}}\ar[dr]^{R_{M,N}}\\
M\circ L\circ N\ar[dd]_{R_{L,N}}&&L\circ  N\circ M\ar[dd]^{R_{L,N}}\\
\\
M\circ N\circ L\ar[dr]_{R_{M,N}}&&N\circ L\circ M\ar[dl]^{R_{L,M}}\\
&N\circ M\circ L \,.
}\ea\label{dia:RKLR}\eneq In the above
diagrams, we omit the grading shift.

\subsubsection{Spectral parameters}\label{subsec:spec}

\Def A \KLR\ $R( \beta )$ is {\em symmetric} if \eq &&\parbox{70ex}{
\bna\item the Cartan matrix  $A=(a_{ij})_{i,j\in I}$ is symmetric,
\item $(\al_i,\al_j)=a_{ij}$,
\item $Q_{i,j}(u,v)$ is a polynomial in $u-v$.
\ee}\label{cond:sym}
\eneq
\edf

\medskip
In this subsection, we assume that \KLRs\ are symmetric.
Let $z$ be an indeterminate  which is  homogeneous of degree $2$, and
let $\psi_z$ be the algebra homomorphism
\eqn
&&\psi_z\col R( \beta )\to \cor[z]\tens R( \beta )
\eneqn
given by
$$\psi_z(x_k)=x_k+z,\quad\psi_z(\tau_k)=\tau_k, \quad\psi_z(e(\nu))=e(\nu).$$

For an $R( \beta )$-module $M$, we denote by $M_z$
the $\bl\cor[z]\tens R( \beta )\br$-module
$\cor[z]\tens M$ with the action of $R( \beta )$ twisted by $\psi_z$.
Namely,
\eq&&\hs{3ex}e(\nu)(a\tens u)=a\tens e(\nu)u,\;
x_k(a\tens u)=(za)\tens u+a\tens (x_ku), \; \tau_k(a\tens u)=a\tens(\tau_k u)
\label{eq:spt1}
\eneq
for  $\nu\in I^\beta$,  $a\in \cor[z]$ and $u\in M$.
 For $u\in M$, we sometimes denote by $u_z$ the corresponding element $1\tens u$ of
the $R( \beta )$-module  $M_z$.

Observe that this construction is possible only when the \KLR\ is symmetric.

 We can reformulate  the above construction as follows.
Let $R( \beta )_z\seteq\bl\cor[z]\tens R( \beta )\br \one_z$ be the left free
$\cor[z]\tens R( \beta )$-module generated by $\one_z$.
We define the $(\cor[z]\tens R( \beta ),R( \beta ))$
-bimodule structure on $R( \beta )_z$
by
$$\one_ze(\nu)=e(\nu)\one_z,\quad\one_zx_k=(x_k-z)\one_z,
\quad \one_z\tau_k=\tau_k\one_z.$$
Then we have
$$M_z\simeq R( \beta )_z\tens_{R( \beta )}M.$$
The element $a\tens u$ in \eqref{eq:spt1} corresponds
to $a\one_z\tens u$ under this notation.

\subsection{R-matrices with spectral parameters}

\Lemma Let $\Delta^\pm$ be the set of positive
\ro resp.\ negative\rf\  roots associated
with a generalized Cartan matrix, as in {\rm\S\,\ref{subsec:Cartan}}.
Let $\set{\al_i}{i\in I}$ be the set of simple roots,
$W$ the Weyl group, and $s_i\in W$ the simple reflection.
Let $G$ be the monoid generated by $\ts_i$ with the defining relation:
\eq&&\text{$\ts_i\ts_j= \ts_j\ts_i   $ if  $(\al_i,\al_j)=0$.}\eneq
Assume that an element $w$ of $W$ satisfies the following conditions:
\eq&&
 \text{if $\al,\beta\in\Delta^+$, $\al \not=\beta$, and $w\al,\;w\beta\in \Delta^-$, then ${(\alpha,\beta)}\ge0$.}
\label{cond:tauw0}
\eneq
Then $\ts_{i_1}\cdots \ts_{i_\ell}\in G$ does not depend on the choice
of a reduced expression  $w=s_{i_1}\cdots s_{i_\ell}$.
\enlemma
\Proof
(a) We shall first prove
\eq&&
\text{if $w$ satisfies \eqref{cond:tauw0} and
$s_iw<w$, then $s_iw$ also satisfies \eqref{cond:tauw0}.}
\eneq
Set $w'\seteq  s_iw$.
Assume that $\al,\beta\in\Delta^+$, $w'\al,w'\beta\in \Delta^-$
and $(\alpha,\beta)<0$.
Then  either  $w\al$ or $w\beta$ belongs to
$\Delta^+$, say, $w\al\in \Delta^+$.
Then $w\al=\al_i$, and hence $w^{-1}(\al_i)\in\Delta^+$, which contradicts
$s_i w<w$.

\medskip\noi
(b)\quad  We will use  induction on the length of $w$.
Let $w=s_{i_1}\cdots s_{i_\ell}$ and $w=s_{j_1}\cdots s_{j_\ell}$ be
reduced expressions of $w$.
If $i_1=j_1$, then applying the induction hypothesis to
$s_{i_1}w$, we obtain the desired result.

Assume $i_1\not=j_1$.
Then $\al\seteq-w^{-1}(\al_{i_1})$ and $\beta\seteq-w^{-1}(\al_{j_1})$
belong to $\Delta^+$ and $w\al, w\beta$ belong to
$\Delta^-$.
Hence we have
$(\al_{i_1},\al_{j_1}) \ge 0 $ , which implies that $(\al_{i_1},\al_{j_1})=0$.

Note that $s_{i_1}w$ and $s_{j_1}w$ satisfy \eqref{cond:tauw0} by (a).
Set $w'=s_{i_1}s_{j_1}w$.
Then we have $w'<s_{j_1} w<w$ because $(s_{j_1}w)^{-1}\al_{i_1}=
w^{-1}\al_{i_1}\in\Delta^-$.
Taking a reduced expression $w'=s_{k_3}\cdots s_{k_\ell}$,
and applying the induction hypothesis to $s_{i_1}w$ and $s_{j_1}w$, we conclude
$$\ts_{i_1}\ts_{i_2}\cdots\ts_{i_\ell}=
\ts_{i_1}\ts_{j_1}\ts_{k_3}\cdots \ts_{k_\ell}
=\ts_{j_1}\ts_{i_1}\ts_{k_3}\cdots \ts_{k_\ell}
=\ts_{j_1}\ts_{j_2}\cdots \ts_{j_\ell}.
$$
\QED

 We apply the above lemma  to the case of the symmetric group.
\Lemma\label{lem:ind_red}
Assume that $w\in \sym_n$ satisfies the condition
\eq
&&\hs{10ex}\parbox{70ex}{there exists no triple of integers $(i,j,k)$ such that
$$\text{$1\le i<j<k\le n$ and $w(i)>w(j)>w(k)$.}$$}\label{cond:tauw}
\eneq
Then $\tau_w\seteq\tau_{i_1}\cdots \tau_{i_\ell}\in R( \beta )$ does
not depend on the choice of  a reduced expression
 of $w=s_{i_1}\cdots s_{i_\ell}$.
\enlemma

 For $m,n\in\Z_{\ge0}$,
let
\eqn
&&\sym_{m,n} \seteq \set{w\in\sym_{m+n}}{\text{ $w(i)<w(i+1)$ for any $i\not=m$}}.
\eneqn

This condition is equivalent to saying that $w\,\vert_{[1,m]}$
and $w\,\vert_{[m+1,m+n]}$ are increasing.
It is easy to see that
any element of $\sym_{m,n}$ satisfies the condition
\eqref{cond:tauw}. Hence we obtain the following corollary
(see also \cite[Lemma 3.17]{KMR12}).
\Cor
 Let $|\beta|=m$ and $|\gamma|=n$.
 For $w\in\sym_{m,n}$,
the product $\tau_w\seteq\tau_{i_1}\cdots \tau_{i_\ell}\in R( \beta+\gamma)$ does
not depend on the choice of reduced expression
$w=s_{i_1}\cdots s_{i_\ell}$.
\encor

Hence $\tau_w\in R( \beta+\gamma)$ is well-defined for $w\in\sym_{m,n}$.
Since $w[{m,n}]$ belongs to $\sym_{m,n}$, we set
\eq
&&\tau_{m,n}=\tau_{w[{m,n}]}.
\eneq
Note that
$$\deg \tau_{m,n}  e(\beta, \gamma)  =-(\beta,\gamma)
\quad\text{if $|\beta|=m$ and $|\gamma|=n$.}$$

\Prop \label{prop:R}
Let $\beta,\gamma \in \rootl^+$ with $|\beta|=m$ and $|\gamma|=n$,
and
 let $M, N$ be an $R(\beta)$-module and an $R(\gamma)$-module, respectively.
Suppose we have  algebraically independent  indeterminates  $z$ and $z'$.
Then \bnum
\item
$N_{z'}\circ M_z=\soplus_{w\in\sym_{n,m}}\cor[z,z']\tau_w(\one_{z'}\tens\one_{z}\tens N\tens M)$.
\label{item:w}
\item
We have
$$R_{M_z,N_{z'}}(M\tens N) \ \subset \ \soplus_{w\in\sym_{n,m}}\cor[z-z']
\tau_w(\one_{z'}\tens\one_{z}\tens N\tens M).$$
\item  Set $\ell=\inp{\beta,\gamma}$.
Then \eqn
&&R_{M_z,N_{z'}}(u_z\tens v_{z'})- (z'-z)^\ell\tau_{w[n,m]}(v_{z'}\tens u_z)\\
&&\hs{10ex}\in \sum_{s<\ell}(z'-z)^s\tau_{w[n,m]}
(\one_{z'}\tens\one_{z}\tens N\tens M)\\
&&\hs{20ex}
+\sum_{ \substack{w\in\sym_{n,m}, \\ w\not=w[n,m]} }\hs{-2ex}
\cor[z'-z]\tau_w(\one_{z'}\tens\one_{z}\tens N\tens M)
\eneqn
for $u\in M$ and $v\in N$.\label{item:Rw}
\item
Set $$A=\sum_{\mu\in I^\beta, \nu\in I^\gamma}\Bigl(
\prod\limits_{\substack{1\le a\le m, 1\le b\le n\\\mu_a\not=\nu_b}}
 Q_{\mu_a,\nu_b}\bl x_a\etens  e(\gamma), e(\beta)  \etens x_b\br\Bigr)e(\mu)\etens e(\nu)
\in R(\beta)\etens R(\gamma).$$
Then $A$ belongs to the center of $R(\beta)\etens R(\gamma)$, and
$$R_{N,M}R_{M,N}(u\tens v)=A(u\tens v)\qtext{for $u\in M$ and $v\in N$.}$$
\ee
\enprop
\Proof
 By induction on the length of $w$, one can prove
\eq
\vphi_we(\nu)-\tau_w\prod_{(i,j)\in  B }(x_i-x_j)e(\nu)
\in\sum_{w'<w}\tau_{w'}\cor[x_1,\ldots, x_n]\label{eq:phitau}
\eneq
for $w\in\sym_n$ and $\nu\in I^n$, where
$$ B  =\set{(i,j)}{1\le i<j\le n, \, w(i)>w(j), \,  \nu_i = \nu_j }.$$
Now it is easy to see that the statement (iii) follows from
\eqref{eq:phitau}.

The other statements easily follow from Lemma~\ref{lem:ga}. \QED

Hereafter we assume that
\eq
&&\text{the base ring $\cor$ is a field.}
\eneq

For a non-zero $R(\beta)$-module $M$ and a non-zero $R(\gamma)$-module $N$,
\eq&&\parbox{70ex}{%
let $s$ be the order of zeroes of $R_{M_z,N_{z'}}\col  M_z\circ
N_{z'}\To q^{(\beta,\gamma)-2\inp{\beta,\gamma}}N_{z'}\circ M_z$;
i.e., the largest non-negative integer such that the image of
$R_{M_z,N_{z'}}$ is contained in $(z'-z)^s
q^{(\beta,\gamma)-2\inp{\beta,\gamma}}N_{z'}\circ M_z$.}
\label{def:s} \eneq Note that Proposition \ref{prop:R} (iii) shows
that
 such an $s$ exists   and
 $s\le \inp{\beta,\gamma}$.

\Def For a non-zero $R(\beta)$-module $M$ and a non-zero $R(\gamma)$-module
$N$,
we set $$\ds{M,N}\seteq(\beta,\gamma)-2\inp{\beta,\gamma}+2s,$$ and define
$$\rmat{M,N}\col M\circ N\to q^{\ds{M,N}}N\circ M$$
by  $$\rmat{M,N} = \bl (z'-z)^{-s}R_{M_z,N_{z'}}\br\vert_{z=z'=0}.$$
If $M$ or $N$ vanishes, then we set
 \eqn
s=0, \ \ds{M,N}=(\beta,\gamma)-2\inp{\beta,\gamma}, \ \text{and} \ \rmat{M,N}=0.
\eneqn
\edf
 By Proposition \ref{prop:R} (ii), the morphism $\rmat{M,N}$ does not vanish
if $M$ and $N$ are non-zero.

Note that if $R_{M,N}$ does not vanish, then $s=0$ and $\rmat{M,N}=R_{M,N}$.

\Cor\label{cor:int}
If $M$ and $N$ are non-zero, then there exists a non-zero homomorphism
$M\circ N\to N\circ M$. Here we omit the grading. 
\encor

\Rem
We don't know if Corollary \ref{cor:int} holds when
the \KLR\ is not symmetric.
\enrem

Note that the homomorphisms $\rmat{M,N}$ satisfy the Yang-Baxter
equation. Namely, if $L$ is another $R(\gamma)$-module, then the
following diagram is commutative:
$$\xymatrix@R=1.7ex@C=7ex{
&L\circ M\circ N\ar[dl]_{\rmat{L,M}}\ar[dr]^{\rmat{M,N}}\\
M\circ L\circ N\ar[dd]_{\rmat{L,N}}&&L\circ  N\circ M\ar[dd]^{\rmat{L,N}}\\
\\
M\circ N\circ L\ar[dr]_{\rmat{M,N}}&&N\circ L\circ M\ar[dl]^{\rmat{L,M}}\\
&N\circ M\circ L \,. }$$ Here, we omit the grading shift operators.
This immediately follows from the corresponding result for
$R_{M,N}$.

For $\beta_1,\ldots,\beta_t\in\rtl^+$,
a sequence of $R(\beta_k)$-modules $M_k$ ($k=1,\ldots,t$) and $w\in \sym_t$,
we set $d=\sum\limits\ds{M_i,M_{j}}$,
where the summation ranges over the set
$$\set{(i,j)}{1\le i<j\le t, w(i)>w(j)}.$$
We define \eq &&\rmat{M_1,\ldots,M_t}^{\quad w} =\rmat{\{M_s\}_{1\le
s\le t}}^{\quad w}\col M_1\circ \cdots\circ M_t \to q^dM_{w(1)}\circ
\cdots\circ M_{w(t)} \eneq by induction on the length of $w$ as
follows: \eqn &&\hs{-3.5ex}\rmat{\{M_a\}_{1\le a\le t}}^{\ w}
=\begin{cases}
\id_{M_1\circ \cdots\circ M_t}&\text{if $w=e$,}\\[2ex]
\ba{l}\hs{-1ex}\rmat{\{M_{s_k(a)}\}_{1\le a\le t}}^{\ ws_k}\\
\hs{2ex}\cdot\bl M_{1}\circ \cdots \circ M_{k-1}\circ
\rmat{M_{k}, M_{k+1}}\circ  M_{k+2}\circ\cdots\circ M_{t}\br\ea
&\text{if $w(k)>w(k+1)$.}
\end{cases}
\eneqn
Then it does not depend on the choice of $k$ and
$\rmat{M_1,\ldots,M_t}^{\ w}$ is well-defined.

Similarly, we define
\eq
R_{M_1,\ldots, M_t}^w\col
 M_1\circ \cdots \circ M_t
\to q^bM_{w(1)}\circ \cdots \circ M_{w(t)}, \eneq where
$b=\sum\limits_{ \substack{1\le k<k'\le t,\\ w(k)>w(k')} }
(\beta_k,\beta_{k'})-2\inp{\beta_k,\beta_{k'}}$.

We set \eq &&\rmat{M_1,\ldots,
M_t}\seteq\rmat{M_1,\ldots,M_t}^{\hs{2ex} w_t} \quad \text{and}
\quad R_{M_1,\ldots, M_t}\seteq R_{M_1,\ldots, M_t}^{\hs{2ex} w_t},
\eneq where $w_t$ is the longest element of $\sym_t$.

The following lemma is obvious.
\Lemma\label{lem:Rr}
Let $M_k$ $(1\le k\le t)$
be $R(\beta_k)$-modules and   $N_{ k'}$ $(1\le  k'  \le t')$
be $R(\gamma_{k'})$-modules for some $\beta_k, \gamma_{k'} \in \rootl^+$.

Let  $\widetilde{M}\seteq M_1\circ\cdots\circ M_t$
and $\widetilde{N}\seteq N_1\circ\cdots\circ N_{t'}$.
 Suppose we have epimorphisms
 \eqn \widetilde{M} \epito M \ \text{and} & \widetilde{N} \epito N. \eneqn

Then we have
\bnum
\item
$R_{\widetilde{M},\widetilde{N}}=R^{w[{t,t'}]}_{M_1,\ldots,M_t,N_1,\ldots, N_{t'}}$,
\item
there exists a unique morphism
$\vphi\col M\circ N\to N\circ M$ such that, the diagram
$$\xymatrix@C=15ex{
{M_1\circ\cdots\circ M_t\circ N_1\circ\cdots\circ N_{t'}}
\ar[r]^{r^{w[{t,t'}]}_{M_1,\ldots,M_t,N_1,\ldots, N_{t'}}}\ar@{->>}[d]
&
{N_1\circ\cdots\circ N_{t'}\circ M_1\circ\cdots\circ M_t}\ar@{->>}[d]\\
M\circ N\ar[r]^{\vphi}&N\circ M}
$$
is commutative.
\item
Moreover, if $\vphi$ does not vanish, then we have
$$\text{$\vphi=\rmat{M,N}$ \ and \ \
$\ds{M,N}=\sum_{1\le k\le t,\,1\le k'\le t'}\ds{M_k,N_{k'}}$.}$$
\ee
\enlemma

The following proposition will be used later.
Note that the grading plays a crucial role here.

\Prop\label{prop:nonvan}
Let $\beta_k\in \rtl^+$ and
let $M_k$ a {\em non-zero} module in $R(\beta_k)\gmod$
 $(1\le k\le t)$.
For $w\in\sym_t$,
set
$d=\sum\limits_{\substack{1\le k<k'\le t,\\ w(k)>w(k')}} \ds{M_k,M_{k'}}$.
Assume that
\eq&&
\hs{5ex}\Hom_{R(\beta)\gmod}(M_1\circ \cdots  \circ  M_t,\,
q^cM_{ w(1)}\circ \cdots  \circ   M_{ w(t)})=0
\ \text{for  all  $c>d$,}
\label{cond:hom}
\eneq
 where $\beta=\sum\limits_{k=1}^t \beta_k$.
Then the homomorphism
$$\rmat{M_1,\ldots, M_t}^{\ w}\col M_1\circ\cdots \circ M_t
\To q^d M_{w(1)}\circ \cdots\circ M_{w(t)}$$
does not vanish.
\enprop
\Proof
Set $d_{i,j}=\ds{M_i,M_j}$,
$D_{i,j}=(\beta_i,\beta_j)-2\inp{\beta_i,\beta_j}$,
and let $s_{i,j}$ be the degree of  zeroes  of $R_{(M_i)_z,\,(M_j)_{z'}}$.
Then we have
$d_{i,j}=D_{i,j}+2s_{i,j}$ and
$$\rmat{M_i,M_j}=(z'-z)^{-s_{i,j}}R_{(M_i)_z,\,(M_j)_{z'}}\vert_{z=z'=0}.$$
We set $D=\sum_{i,j}D_{i,j}$ and $s=\sum_{i,j}s_{i,j}$.
Here,  the sum is taken over the set
$$ \set{(i,j)}{1\le i<j\le t, \,w(i)>w(j)}.$$

Let $z$ be an indeterminate and
let $a_1,\ldots,a_t$ be generic constants.
 Set $z_k=a_kz$ for $1\le k\le t$.
Consider the morphism
\eq&&\ba{l}
 R_{(M_1)_{z_1},\ldots, (M_t)_{z_t}}^{\ w}\col
(M_1)_{z_1}\circ\cdots\circ (M_t)_{z_t}\\[1ex]
\hs{20ex}\To
q^D(M_{w(1)})_{z_{w(1)}}\circ\cdots\circ
(M_{w(t)})_{z_{w(t)}}.\ea
\eneq
Let $m$ be the degree of  zeroes  of  $R_{(M_1)_{z_1},\ldots, (M_t)_{z_t}}^{\ w}$,
that is, the largest integer $m$ such that
the image of $R_{(M_1)_{z_1},\ldots, (M_t)_{z_t}}^{\ w}$ is contained in
$z^mq^D(M_{w(1)})_{z_{w(1)}}\circ\cdots\circ (M_{w(t)})_{z_{w(t)}}$.
Since $R_{(M_1)_{z_1},\ldots, (M_t)_{z_t}}^{\ w}$ is not identically zero,
such an $m$ exists.

Set $r=\bl (q^{-2}z)^{-m} R_{(M_1)_{z_1},\ldots, (M_t)_{z_t}}^{\
w}\br\vert_{z=0}$. Then $r$ is a non-zero homomorphism from
$M_1\circ \cdots\circ M_t$ to $q^{2m+D}M_{w(1)}\circ \cdots\circ
M_{w(t)}$. By the assumption \eqref{cond:hom}, we obtain $2m+D\le
d=D+2s$. Hence we conclude
$$m\le s.$$

On the other hand, for $i, j$ such that
$1\le i<j\le t$ and $w(i)>w(j)$,
the homomorphism $z^{-s_{i.j}}R_{(M_i)_{z_i}, (M_j)_{z_j}}$
is well-defined and
$r_{M_i,M_j}=\bl z^{-s_{i,j}}R_{(M_i)_{z_i},\, (M_j)_{z_j}}\br\vert_{z=0}$
up to a constant multiple.
Since $z^{-s}R_{(M_1)_{z_1},\ldots, (M_t)_{z_t}}^{\ w}$ is a product of the
$\bl z^{-s_{i,j}}R_{(M_i)_{z_i}, (M_j)_{z_j}}\br$'s, it is  well-defined,
and $\rmat{M_1,\cdots, M_t}^{\ w}$ coincides with
$\bl z^{-s}R_{(M_1)_{z_1},\ldots, (M_t)_{z_t}}^{\ w}\br\vert_{z=0}$
up to a constant multiple.
Hence we have $m\ge s$.
Therefore $m=s$ and
$\rmat{M_1,\cdots, M_t}^{\ w}$ is equal to $r$ up to a constant multiple.
\QED

\subsection{R-matrices for one-dimensional modules}\label{subsec:R1}
Let $\mu=(\mu_1,\ldots, \mu_n)$ be a sequence of elements in $I$.
Let $L(\mu)$ be a free $\cor$-module $\cor u(\mu)$ with the
generator $u(\mu)$ of degree $0$. The following lemma can be easily
verified. \Lemma By defining \eqn &&x_k u(\mu)=0,\quad
\tau_ku(\mu)=0, \quad e(\nu) u(\mu) = \delta_{\nu,\mu} u(\mu),
\eneqn $L(\mu)$ becomes an  $R(\al_{\mu_1}+\cdots+\al_{\mu_n})$
-module if and only if \eq &&\parbox{70ex}{ \bna\item
$(\al_{\mu_k},\al_{\mu_{k+1}})<0$ for $1\le k<n$,
\item if  $\mu_{k}=\mu_{k+2}$ $(1\le k\le n-2)$,
then
$\lan h_{\mu_k}, \al_{\mu_{k+1}}\ran\not=-1$.
\ee
\label{cond:oned}}
\eneq
\enlemma

Now we assume \eqref{cond:sym}; i.e., $R$ is a symmetric \KLR.
The following lemma immediately follows from the definition.
\Lemma \label{lem:expliciteR_easiest cases}
Let $\mu=(\mu_1,\ldots,\mu_m)$
and $\nu=(\nu_1,\ldots,\nu_n)$
be a pair of sequences satisfying \eqref{cond:oned}. If $\mu_i\not=\nu_j$
for any $1\le i\le m$ and $1\le j\le n$,
then
$$R_{L(\mu)_z,L(\nu)_{z'}}\bl u(\mu)_z\tens u(\nu)_{z'}\br
=\tau_{n,m}\bl u(\nu)_{z'}\tens u(\mu)_z\br.$$
\enlemma

We shall use the following lemma later in the type $A$ case.
\Lemma \label{lem:expliciteR}
 Let $i \in I$  and  let  $\mu=(\mu_1,\ldots,\mu_n)$ be a sequence satisfying \eqref{cond:oned}.
Set $\beta=\sum_{k=1}^n\al_{\mu_k}$ and $p=\inp{\beta,\al_i}$.
Then we have
\eqn
&&
\ba{l}
R_{L(\mu)_z, L(i)_{z'}}(u(\mu)_z\tens u(i)_{z'})\\[1ex]
\hs{5ex}=\bl (z'-z)^p\tau_n\cdots\tau_1+\delta_{\mu_n,i}(z'-z)^{p-1}
\tau_{n-1}\cdots \tau_1\br
\bl u(i)_{z'}\tens u(\mu)_z\br,\\[2ex]
R_{L(i)_{z}, L(\mu)_{z'}}(u(i)_{z}\tens u(\mu)_{z'})\\[1ex]
\hs{5ex}=
\bl (z'-z)^p\tau_1\cdots\tau_n+\delta_{\mu_1,i}(z'-z)^{p-1}
\tau_{2}\cdots \tau_n\br
\bl u(\mu)_{z'}\tens u(i)_{z}\br.
\ea
\eneqn
\enlemma
\Proof
We shall prove only the second formula.
We prove it by induction on $n$.
 Since $n=1$ case is obvious, assume that $n>1$.
Set $\mu'=(\mu_2,\ldots,\mu_n)$ and $j=\mu_1$.

\smallskip
First assume that $i\not=j$.
Then we have
\eqn
&&R_{L(i)_{z}, L(j)_{z'}\circ L(\mu')_{z'}}(u(i)_{z}\tens u(j)_{z'}
\tens u(\mu')_{z'})\\
&&\hs{0.3ex}=
\bl L(j)_{z'}\circ R_{L(i)_{z}, L(\mu')_{z'}}\br\cdot
\Bigl(\bl R_{L(i)_{z}, L(j)_{z'}}(u(i)_{z}\tens u(j)_{z'})\br\tens u(\mu')_{z'}
\Bigr)\\
&&\hs{0.3ex}=
\bl L(j)_{z'}\circ R_{L(i)_{z},   L(\mu')_{z'} }\br
\Bigl(\tau_1(u(j)_{z'}\tens u(i)_{z})\tens u(\mu')_{z'}\Bigr)\\
&&\hs{0.3ex}=
\tau_1\bl L(j)_{z'}\circ R_{L(i)_{z},L(\mu')_{z'} }\br
\Bigl(u(j)_{z'}\tens u(i)_{z}\tens u(\mu')_{z'}\Bigr)  \\
&&\hs{0.3ex}=
\tau_1\Bigl( u(j)_{z'}\tens
 R_{L(i)_{z}, L(\mu')_{z'}}(u(i)_{z}\tens u(\mu')_{z'})\Bigr)\\
&&\hs{0.3ex}=
\tau_1\biggl( u(j)_{z'}\tens\Bigl(\bl
(z'-z)^p\tau_1\cdots \tau_{n-1}
+\delta_{i,\mu_2}(z'-z)^{p-1}\tau_2\cdots\tau_{n-1}\br
(u(\mu')_{z'}\tens u(i)_{z})\Bigr)\biggr).
\eneqn
Hence we obtain
\eqn
&&R_{L(i)_{z}, L(\mu)_{z'}}(u(i)_{z}\tens u(\mu)_{z'})\\
&&\hs{4ex}=
\bl
(z'-z)^p\tau_1(\tau_2\cdots \tau_n)
+\delta_{i,\mu_2}(z'-z)^{p-1}\tau_1(\tau_3\cdots\tau_{n})\br
(u(\mu)_{z'}\tens u(i)_z).
\eneqn
Then the last term vanishes since
$\tau_1u(\mu)_{z'}=0$.

Now assume that $i=\mu_1$. Then $\mu_2\not=i$.
Since we have
\eqn
R_{L(i)_z, L(i)_{z'}}(u(i)_z\tens u(i)_{z'})
&=&(\tau_1(x_1-x_2)+1)(u(i)_{z'}\tens u(i)_{z})\\
&=&((z'-z)\tau_1+1)(u(i)_z\tens u(i)_{z'}), \eneqn which gives \eqn
&&R_{L(i)_{z}, L(j)_{z'}\circ L(\mu')_{z'}}(u(i)_{z}\tens u(j)_{z'}
\tens u(\mu')_{z'})\\
&&\hs{3ex}=
\bl L(j)_{z'}\circ R_{L(i)_{z},\circ L(\mu')_{z'}}\br
\Bigl(\bl(z'-z)\tau_1+1\br u(i)_{z'}\tens u(i)_{z}\tens u(\mu')_{z'}\Bigr)\\
&&\hs{5ex}=
((z'-z)\tau_1+1)
(z'-z)^{p-1}\tau_2\cdots \tau_n
(u(i)_{z'}\tens  u(\mu')_{z'}\tens u(i)_{z}).
\eneqn
Hence we obtain the desired result.
\QED

\subsection{Uniqueness of R-matrices}
The following lemma says that the convolution of
two  simple modules
(after adding spectral parameters)
remains simple.

\Lemma Assume that $\cor$ is a field. Let $z$, $z'$ be algebraically
independent indeterminates and set $K=\cor(z,z')$. Let $M$ and $N$
be a simple $R( \beta )$-module and a simple $R( \gamma )$-module,
respectively. Then $K\tens_{\cor[z, z']} (M_z\circ N_{z'})$ is a
simple $K\tens R( \beta+ \gamma )$-module. \enlemma

\Proof
In  this  proof,  we  write
 $M_z$ for the  $K\tens R( \beta )$-module $K\tens_{\cor[z]} M_z$ 
for the sake of simplicity. Recall that $\sym_{m,n}\seteq\set{w\in
\sym_{m+n}}{\text{$w(k)<w(k+1)$ for any $k\not=m$}}$. Then we can
easily see that
$$M_z\circ N_{z'}=\soplus_{w\in \sym_{m,n}}\vphi_w\cdot M_z\tens N_{z'}.$$
 Indeed, it follows from \eqref{eq:phitau} and
the fact that $x_i-x_j\col M_z\tens N_{z'}\to M_z\tens N_{z'}$ is bijective
for $1\le i\le m<j\le m+n$.

Let $L$ be a non-zero $K\tens R( \beta+\gamma )$-submodule of $M_z\circ N_{z'}$.
For $u\in M_z\circ N_{z'}$, we write
$$u=\sum_{w\in \sym_{m,n}}\vphi_w u_w\qtext{with $u_w\in M_z\tens N_{z'}$,}$$
and set $S(u)=\set{w\in \sym_{m,n}}{u_w\not=0}$.

Let us take  a \ non-zero $u\in L$ such that the cardinality of $S(u)$ is
minimal and let $w\in S(u)$. Assume that there exists $w'\not=w$ in
$S(u)$. Since we have $$\set{1\le k\le m+n}{w^{-1}(k)\le m}
\not\subset\set{1\le k\le m+n}{{w'}^{-1}(k)\le m},$$ there exists
$a$ such that $1\le w^{-1}(a)\le m$ and $m<{w'}^{-1} (a)$.

Take a monic polynomial $f(x)$ such that $f(x_k)\vert_N=0$ for $1\le
k\le n$. Then $f(x_k-z')$ acts by zero on $M_z\tens N_{z'}$ for $k>
m$, and the action of $f(x_k-z')$ is invertible on $M_z\tens N_{z'}$
for $k\le m$. Hence $f(x_{{w'}^{-1}(a)}-z')u_{w'}=0$. Since
$$f(x_{a}-z')u=\sum_{y\in S(u)\setminus\{w'\}}\vphi_yf(x_{y^{-1}(a)}-z')u_y,$$
the minimality of $S(u)$ implies that $f(x_{a}-z')u=0$.
In particular, we have
$f(x_{w^{-1}(a)}-z')u_w=0$, which implies that $u_w=0$.
It is a contradiction and we conclude that $S(u)=\{w\}$
and $u=\vphi_w u_w$.

Since $\vphi_{w^{-1}}\vphi_w\col M_z\tens N_{z'}\to M_z\tens N_{z'}$
is a $K[x_1,\ldots, x_{m+n}]$-linear isomorphism, we have $u_w\in
L$. Since $M_z\tens N_{z'}$ is a simple $K\tens R( \beta )\tens R(
\gamma )$-module, we have $M_z\tens N_{z'}\subset L$, and we
conclude that $L=M_z\circ N_{z'}$. \QED

This lemma immediately implies
the following proposition which  says  that an R-matrix for
simple modules is unique up to constant multiple.

\Prop Assume that $\cor$ is a field and let $z$, $z'$ be independent
indeterminates. Let $M$ and $N$ be a  simple  $R( \beta )$-module
and $R( \gamma )$-module, respectively. Then we have \eqn &&\Hom_{R(
\beta +\gamma)}\bl M_z\circ N_{z'}, M_z\circ N_{z'}\br
\simeq\cor[z,z'],\\[1ex]
&&\Hom_{ R( \beta +\gamma)}\bl M_z\circ N_{z'},
N_{z'}\circ M_z\br
\simeq \cor[z,z'](z-z')^{-s} R_{M_z, N_{z'}}.
\eneqn
Here $s$ is the order of zeroes of
$R_{M_z,N_{z'}}$ \ro see \eqref{def:s}\rf.
\enprop
\Proof
The first assertion immediately follows from the preceding lemma.
The second follows from the first since
$R_{M_z, N_{z'}}$ gives an isomorphism
from
$ K\tens_{ \cor[z,z']}(M_z\circ N_{z'})$ to $K\tens_{ \cor[z,z']}(N_{z'}\circ M_z)$,
where $K=\cor[z,z',(z-z')^{-1}]$.
\QED

\section{Quantum affine algebras and their representations}

\subsection{Quantum affine algebras}
 \hfill

In this section, we briefly review the representation theory of quantum affine algebras following \cite{AK, Kas02}.
Hereafter, we take the algebraic closure of $\C(q)$
in $\cup_{m >0}\C((q^{1/m}))$ as a base field $\cor$.

Let $ I$ be an index set and $A=(a_{ij})_{i,j \in I}$ be a generalized
Cartan matrix of affine type; i.e., $A$ is positive semi-definite of corank 1.
 We choose $0\in I$ as the leftmost vertices in the tables
in \cite[{pages 54, 55}]{Kac} except $A^{(2)}_{2n}$-case, in which
case we take the longest simple root as $\al_0$. Set $I_0
=I\setminus\{0\}$. We take a Cartan datum $(A,P,
\Pi,P^{\vee},\Pi^{\vee})$ as follows.

The weight lattice $P$ is given by
\begin{align*}
  P = \Bigl(\soplus_{i\in I}\Z \La_i\Bigr) \oplus \Z \delta
\end{align*}
and the simple roots are given by
$$\al_i=\sum_{j\in I}a_{ji}\La_j+\delta(i=0)\delta.$$
Also, the simple coroots $h_i\in P^\vee=\Hom_\Z (P,\Z)$ are given by
\eqn
&& \lan h_i,\La_j\ran=\delta_{ij},\quad \lan h_i,\delta\ran=0.
\eneqn
\noindent
Let us denote by $\g$ the affine Kac-Moody Lie algebra associated with
the affine Cartan datum $(A,P, \Pi,P^{\vee},\Pi^{\vee})$.
We denote by $\g_0$ the subalgebra of $\g$ generated by
$e_i, f_i, h_i$ for $i \in I_0$.
Then $\g_0$ is a finite-dimensional simple Lie algebra.
Consider the positive integers $c_i$'s and $d_i$'s determined by the conditions
$$\sum_{i\in I}c_i a_{ij}= \sum_{i\in I} a_{ji} d_i =0
\quad\text{for all $j \in I$,}$$
and $\{c_i \}_{i\in I}$, $\{d_i \}_{i\in I}$
be families of relatively prime positive integers
(see \cite[Chapter 4]{Kac}). Then the center of $\g$ is 1-dimensional and is generated by the {\em canonical central element}
$$c= \sum_{i\in I}c_ih_i$$
(\cite[Proposition 1.6]{Kac}).
Also it is known that the imaginary roots of $\g$ are non-zero integral multiples of the {\em null root}
$$\delta= \sum_{i\in I}d_i \alpha_i$$
(\cite[Theorem 5.6]{Kac}).
Note that $d_0 =1$ in all cases and
$c_0=1$ if $\g \neq A^{(2)}_{2n}$ and $c_0=2$ if $\g=A^{(2)}_{2n}$.

Let us denote by $U_q(\g)$ the quantum group associated with the affine Cartan datum $(A,P, \Pi,P^{\vee},\Pi^{\vee})$.
 We denote by $\uqpg$ the subalgebra of $U_q(\mathfrak{g})$ generated by $e_i,f_i,K_i^{\pm1}(i=0,1,,\ldots,n)$.

Set
$$P_\cl=P/\Z\delta$$
and call it the {\em classical weight lattice}. Let $\cl\col P\to
P_\cl$ be the projection. Then $P_\cl=\soplus_{i\in I}\Z\cl(\La_i)$
and we have
$$P_\cl^\vee\seteq\Hom_{\Z} (P_\cl,\Z)=\set{h\in P^\vee}{\lan h,\delta\ran=0}
=\soplus_{i\in I}\Z h_i.$$ Set $\Pi_\cl = \cl(\Pi)$ and
$\Pi^{\vee}_\cl= \{h_0, \ldots, h_n\}$. Then $U_q'(\mathfrak{g})$
can be regarded as the quantum group associated with the quintuple
$(A,P_\cl, \Pi_\cl,P^{\vee}_\cl,\Pi^{\vee}_\cl)$.

Set $P^0_\cl = \set{\la\in P_\cl}{\lan c,\la\ran=0}\subset P_\cl$.
We call the elements of $P^0_\cl$ by the {\em classical integral
weight of level $0$.} Note that we can take $P_\cl^0 $ as the weight
lattice of $\g_0$.

Let $W$ be the {\em Weyl group} of $\g$.
The image of the canonical group homomorphism $W \rightarrow \Aut(P^0_\cl)$
is denoted by $W_\cl$.
Then $W_\cl$ coincides with the Weyl group of $\g_0$.

A $\uqpg$-module $M$ is called an {\em integrable module} if
 \bni
\item $M$ has a weight space decomposition
$$M = \bigoplus_{\lambda \in P_\cl} M_\lambda,$$
where  $M_{\lambda}= \set{ u \in M }{
\text{$K_i u =q_i^{\lan h_i , \lambda \ran} u$ for all $i\in I$}}$,
\item  the actions of
 $e_i$ and $f_i$ on $M$ are locally nilpotent for any $i\in I$.
\ee
In this paper, we mainly consider the category of finite-dimensional integrable $\uqpg$-modules,
denoted by $\CC_\g$. The modules in this category are also called {\em of type $1$} (for example, see \cite{CP94}).

Let $M$ be an integrable $\uqpg$-module.
Then the {\em affinization} $M_\aff$ of $M$
is a $P$-graded $\uqpg$-module
$$M_\aff=\soplus_{\la\in P}(M_\aff)_{\la}\qtext{with $(M_\aff)_{\la}=M_{\cl(\la)}$.}
$$
Let us denote by  $\cl\col M_\aff \to M$ the canonical $\cor$-linear homomorphism.
The actions  $$e_i\col (M_\aff)_\la  \to (M_\aff)_{\la+\al_i}
 \quad \text{and} \quad f_i\col(M_\aff)_\la\to (M_\aff)_{\la-\al_i}$$ are defined
  in such a way  that
they commute with $\cl\col M_\aff\to M$.

  We  denote by $z_M\col M_\aff\to M_\aff$ the $\uqpg$-module
automorphism of weight $\delta$ defined by $(M_\aff)_\la\simeq
M_{\cl(\la)}\simeq (M_\aff)_{\la+\delta}$. Then we have
 $$M\simeq M_\aff/(z_M-1)M_\aff.$$

Let $A$ be a commutative $\cor$-algebra and
let $x$ be an invertible element of $A$.
For an $A \otimes_{\cor} \uqpg$-module $M$, let us denote by $M_x$
the $A \otimes_{\cor} \uqpg$-module $M_\aff/(z_M-x)M_\aff$.
For invertible elements $x, y $ of $A$ and
$A \otimes_{\cor} \uqpg$-modules $M, N$, we have
$$(M_x)_y \simeq M_{xy}\qtext{and}
\quad(M \otimes_A N)_x \simeq M_x \otimes_A N_x.$$
We sometimes write $M_z$ for $M_\aff$
with $z_M=z$.

We embed $P_\cl$ into $P$ by $\iota\col P_\cl\to P$
which is given by $\iota(\cl(\La_i))=\La_i$.
For $u\in M_\la$ ($\la\in P_\cl$),
let us denote by $u_z\in (M_\aff)_{\iota(\la)}$ the element such that
$\cl(u_z)=u$. With this notation,  we have
$$e_i(u_z)=z^{\delta_{i,0}}(e_iu)_z, \quad
f_i(u_z)=z^{-\delta_{i,0}}(f_iu)_z, \quad
K_i(u_z)=(K_iu)_z.
$$
Then we have
$M_\aff\simeq\cor[z,z^{-1}]\tens M$.

\begin{definition}
 Let $M$ be an integrable $U_q(\g)$-module. A weight vector $u \in M_\lambda \ (\lambda \in P)$ is called
an {\em extremal vector} if there exists  a family of vectors $\{u_w\}_{w\in W}$
satisfying the following properties:
\eq
&&\text{$u_w=u$ for $w=e$,} \nonumber \\
&&
\hbox{if $\langle h_i,w\lambda\rangle\ge 0$, then
$e_iu_w=0$ and $f_i^{(\langle h_i,w\lambda\rangle)}u_w=u_{s_iw}$,} \nonumber \\
&&\hbox{if $\langle h_i,w\lambda\rangle\le 0$, then
$f_iu_w=0$ and $e_i^{( -  \langle h_i,w\lambda\rangle)} u_w=u_{s_iw}$.} \nonumber
\eneq
 This definition extends to the $\uqpg$-module case
by replacing $P$ with $P_\cl$
and keeping $W$.
\end{definition}

Hence if such $\{u_w\}_{w\in W}$ exists, then it is unique and
$u_w$ has weight $w\lambda$.
We denote $u_w$ by $S_wu$.

For $\lambda \in P$,
let us denote by $W(\lambda)$
the $U_q(\g)$-module
generated by $u_\lambda$
with the defining relation that
$u_\lambda$ is an extremal vector of weight $\lambda$ (see \cite{Kas94}).
This is in fact a set of infinitely many linear relations on $u_\lambda$.

 Set $\varpi_i=\gcd(c_0,c_i)^{-1}(c_0\Lambda_i-c_i\Lambda_0) \in P^0$
for $i=1,2,\ldots,n$.
Then $\{\cl(\varpi_i)\}_{i=1,2,\ldots,n}$ forms a basis of $P^0_\cl$.
We call $\varpi_i$ a {\em level $0$ fundamental weight}.
 As shown in \cite{Kas02}, for each $i=1,\ldots,n$, there exists a $\uqpg$-module automorphism
$z_i \col W(\varpi_i) \rightarrow W(\varpi_i)$
which sends $u_{\varpi_i}$ to $u_{\varpi_i + \mathsf{d_i} \delta}$,
where $\mathsf{d_i} \in \Z_{>0}$ denotes the generator of the free abelian group $\set{ m \in \Z}{\varpi_i + m \delta \in W \varpi_i }$.

We define the $\uqpg$-module $V(\varpi_i)$ by
$$V(\varpi_i) = W(\varpi_i) / (z_i-1) W(\varpi_i).$$
 It can be characterized as follows (\cite[\S\;1.3]{AK}):
\begin{enumerate}
\item  the weights of $V(\varpi_i)$ are contained in the convex hull of $W_\cl \cl(\varpi_i)$,
\item $\dim V(\varpi_i)_{\cl(\varpi_i)} = 1$,
\item for any $\mu \in W_\cl \cl(\varpi_i) \subset P^0_\cl$, we can associate a non-zero vector $u_\mu$ of weight $\mu$ such that
$$u_{s_i \mu} = \begin{cases}
f_i^{(\langle h_i, \mu \rangle )} u_\mu & \text{if} \ \langle h_i, \mu \rangle \ge 0, \\
e_i^{(-\langle h_i, \mu \rangle) } u_\mu & \text{if} \ \langle h_i, \mu \rangle \leq 0,
\end{cases}$$
\item $V(\varpi_i)$ is generated by $V(\varpi_i)_{\cl(\varpi_i)}$ as a  $\uqpg$-module.
\end{enumerate}
We call $V(\varpi_i)$ the {\em fundamental representation of $\uqpg$ of weight $\varpi_i$}.

We have $V(\varpi_i)_\aff \simeq
\cor[z_i^{{1 / \mathsf{d_i}}}] \otimes_{\cor[z_i]} W(\varpi_i)$,
and hence if $\mathsf{d_i} =1$, then $W(\varpi_i) \simeq V(\varpi_i)_\aff$ \cite[Theorem 5.15]{Kas02}.

\bigskip
An involution of a $\uqpg$-module $M$ is called a \emph{bar
involution} if $\ol{au}=\bar a\bar u$ holds for any $a\in \uqpg$ and
$u\in M$. We say that a finite crystal $B$ with weight in $P_\cl^0$
is a {\em simple crystal} if there exists $\lambda \in P_\cl^0$ such
that $\sharp(B_{\lambda})=1$ and the weight of any extremal vector of
$B$ is contained $W_\cl \lambda$. If a $\uqpg$-module $M$ has a bar
involution, a crystal basis with simple crystal graph, and a global
basis, then we say that $M$ is a {\em good module}
(\cite[\S\;8]{Kas02}). For example, the fundamental representation
$V(\varpi_i)$ is a good $\uqpg$-module. Any good module is a simple
$\uqpg$-module.

\subsection{R-matrices}
 \hfill

We recall the notion of R-matrices of good modules following \cite[\S\;8]{Kas02}.
For a vector $v$ in a $\uqpg$-module $M$,
assume that $\wt(v)$ is of level $0$ and dominant with respect to $I_0$.
Then
$v$ is an extremal weight vector if and only if
$\wt(\uqpg v) \subset \wt(v) + \sum_{i \in I_0} \Z_{\le0} \cl(\alpha_i)$.
In this case, we call $v$ a
{\em dominant extremal weight vector of $M$}.

Let $M_1$ and $M_2$ be good $\uqpg$-modules, and let $u_1$ and $u_2$
be dominant extremal weight vectors in $M_1$ and $M_2$,
respectively. Then there exists a unique $\uqpg$-module homomorphism
\begin{equation}
\Rnorm_{M_1, M_2} \col (M_1)_\aff \otimes (M_2)_\aff \rightarrow
\cor(z_1,z_2)\otimes_{\cor[z_1^{\pm1},z_2^{\pm1}]} \big((M_2)_\aff \otimes (M_1)_\aff \big)\end{equation}
satisfying
\eq \label{eq:r-matrix commute with z}
&&\Rnorm_{M_1, M_2} \circ z_1 = z_1 \circ\Rnorm_{M_1, M_2},\quad
\Rnorm_{M_1, M_2} \circ z_2 =z_2\circ \Rnorm_{M_1, M_2},\\
&&\text{that is, $\Rnorm_{M_1, M_2}$ is $\cor[z_1,z_2]$-linear,}\nn
\eneq
and
\begin{equation}\Rnorm_{M_1, M_2}(u_1 \otimes u_2) = u_2 \otimes u_1
\end{equation}
 (\cite[\S\;8]{Kas02}).
Here we write $z_k$ for the automorphism $z_{M_k}$ of $(M_k)_\aff$
($k=1,2$). We sometimes write $\Rnorm_{M_1, M_2}(z_1,z_2)$ if we
want to emphasize $z_1$ and $z_2$ as variables and consider the
R-matrix as an element of $\cor(z_1,z_2)\tens_\cor\Hom_\cor(M_1\tens
M_2, M_2\tens M_1)$.

Note that $\Rnorm_{M_1, M_2} \bl (M_1)_\aff \otimes (M_2)_\aff\br
\subset\cor(z_2/  z_1 ) \otimes_{\cor[(z_2/z_1)^{\pm1}]}
\big((M_2)_\aff \otimes (M_1)_\aff \big)$. Let $d_{M_1,M_2}(u) \in
\cor[u]$ be a monic polynomial of the smallest degree such that the
image of $d_{M_1,M_2}(z_2/z_1) \Rnorm_{M_1, M_2}$ is contained in
$(M_2)_\aff \otimes (M_1)_\aff$. We call $\Rnorm_{M_1, M_2}$ the
{\em normalized R-matrix} and $d_{M_1,M_2}$ the {\em denominator of
$\Rnorm_{M_1, M_2}$}.

Since $(M_1)_{x} \otimes (M_2)_{y}$ is irreducible
for generic $x,y\in\cor^\times$, we have
\begin{equation}\label{eq:r^2=1}
\Rnorm_{M_2, M_1} \circ \Rnorm_{M_1, M_2} = \id_{(M_1)_\aff \otimes (M_2)_\aff}.
\end{equation}
The normalized R-matrices satisfy the {\em Yang-Baxter equation}
(cf.\ \eqref{dia:RKLR}).

\eq \label{eq:r_YB}
&&\ba{l}
(\Rnorm_{M_2, M_3} \otimes 1 )\circ (1 \otimes \Rnorm_{M_1, M_3})
 \circ (\Rnorm_{M_1, M_2} \otimes 1)\\[2ex]
\hs{20ex}=(1 \otimes \Rnorm_{M_1, M_2}) \circ (\Rnorm_{M_1, M_3} \otimes 1) \circ (1 \otimes \Rnorm_{M_2, M_3}).\ea
\eneq

 Let $M_1,\ldots, M_t$ be good modules
and $w\in \sym_t$. Let $z_k$ be $z_{M_k}$ in $\End((M_k)_\aff)$.
Then we can define
\eq&&\hs{5ex}
\ba{l}
{\Rnorm}_{M_1,\ldots, M_t}^{\hs{2ex}w }
\col (M_1)_\aff\tens\cdots (M_t)_\aff\\[1ex]
\hs{20ex}\To\cor(z_1,\ldots, z_t)\tens_{\cor[z_1^{\pm1},\ldots,
z_t^{\pm1}]} (M_{w(1)})_\aff\tens\cdots (M_{w(t)})_\aff \ea \eneq as
$${\Rnorm}_{M_1,\ldots, M_t}^{ \hs{2ex}w
}=\Rnorm_{M_{w_\ell(i_\ell)},M_{w_\ell(1+i_{\ell})}}\circ
\cdots\circ \Rnorm_{M_{w_1(i_1)},M_{w_1(1+i_{1})}}$$ for a reduced
expression $w=s_{i_1}\cdots s_{i_\ell}$ of $w$, and
$w_k=s_{i_{1}}\cdots s_{i_{k-1}}$. By the Yang-Baxter equation for
normalized R-matrices, this definition does not depend on the choice
of reduced expressions of $w$. We write ${\Rnorm}_{M_1,\ldots, M_t}$
for ${\Rnorm}_{M_1,\ldots, M_t}^{ \hs{2ex}w_t }$, when $w_t$ is the
longest element of $\sym_t$.

The following facts are proved in \cite{AK,Kas02}. .
\Th\label{th:zero} \hfill
\bnum
\item For good modules $M_1,M_2$, the zeroes of $d_{M_1,M_2}(z)$ belong to
$\C[[q^{1/m}]]\;q^{1/m}$ for some $m\in\Z_{>0}$.
\item Let $M_k$ be a good module
with a  dominant extremal vector $u_k$ of weight $\la_k$, and
$a_k\in\cor^\times$ for $k=1,\ldots, t$.

Assume that $a_j/a_i$ is not a zero of $d_{M_i, M_j}(z) $ for any
$1\le i<j\le t$. Then the following statements hold. \bna
\item
 $(M_1)_{a_1}\tens\cdots\tens (M_t)_{a_t}$ is generated by
$u_1\tens\cdots \tens u_t$.
\item The head of
$(M_1)_{a_1}\tens\cdots\tens (M_t)_{a_t}$ is simple.
\item The vector $u_t\tens\cdots\tens u_1$ cogenerates
$(M_t)_{a_t}\tens\cdots\tens (M_1)_{a_1}$; i.e., any non-zero
submodule contains this vector.
\item The socle of $(M_t)_{a_t}\tens\cdots\tens (M_1)_{a_1}$
is simple.
\item
 Let
$$r\col (M_1)_{a_1}\tens\cdots\tens (M_t)_{a_t}
\to (M_t)_{a_t}\tens\cdots\tens (M_1)_{a_1}$$
 be  the specialization of ${\Rnorm}_{M_1,\ldots, M_t}$
at $z_k=a_k$.  Then the image of $r$ is simple and
it coincides with the head of
$(M_1)_{a_1}\tens\cdots\tens (M_t)_{a_t}$
and also with the socle of $(M_t)_{a_t}\tens\cdots\tens (M_1)_{a_1}$.
\ee
\item
Let $M$ be a simple integrable $\uqpg$-module $M$. Then, there exists
a finite sequence $\bl(i_1,a_1),\ldots, (i_t,a_t)\br$
in $I_0\times \cor^\times$
such that
$d_{V(\varpi_{i_k}),V(\varpi_{ i_{k'} })}(a_{k'}/a_k)\not=0$ for $1\le k<k'\le t$ and
$M$ is isomorphic to the head of
$V(\varpi_{i_1})_{a_1}\tens\cdots\tens V(\varpi_{i_t})_{a_t}$.
Moreover, such a sequence $\bl(i_1,a_1),\ldots, (i_t,a_t)\br$
is unique up to permutation.
\ee
\enth

\begin{example} \label{ex:R-matrix}
When $\g=\slnh$, the normalized R-matrices for the fundamental representations are given as follows
for $1\le k,\ell\le N-1$ (see, for example, \cite{DO94}):
 \begin{align}
   \Rnorm_{V(\varpi_k), V(\varpi_{\ell})}=
    \sum_{0 \leq i \leq \min(k,\ell,  N-k, N-\ell)} \prod_{s=1}^{i} \frac{1-(-q)^{|k-\ell|+2s} z }{z- (-q)^{|k-\ell|+2s}} \
    P_{\varpi_{\max(k,\ell)+i}+\varpi_{\min (k,\ell)-i}},
 \end{align}
where $z=z_{V(\varpi_{\ell})}/z_{V(\varpi_k)}$
and $P_\lambda$ denotes the projection from $V(\varpi_k) \otimes V(\varpi_{\ell})$ to the direct summand $V(\lambda)$ as a $U_q(\mathfrak{sl}_N)$-module.
 We understand $\varpi_0=\varpi_N=0$.

Note that
\eq\hs{10ex}
d_{V(\varpi_k), V(\varpi_{\ell})}(z)=\hs{-4ex}
\prod\limits_{s=1}^{\min(k, \ell, N-k, N-\ell)}\hs{-2ex}
\bl z-(-q)^{|k-\ell|+2s}\br
=\hs{-2ex}\prod_%
{\substack{|k-\ell|+2\le p\le k+\ell,\, 2N-k-\ell,\\
p\equiv k-\ell\hs{-1ex}\mod 2}}\hs{-4ex}
(z-(-q)^p).\label{eq:zerR}
\eneq
\end{example}

\begin{remark} \label{rmk:simple poles}
The above example shows that if $\g$ is of type $A_{N-1}^{(1)}$
($N\ge 2$), then every normalized R-matrix between two fundamental
representations of $U_q'(\g)$ has only simple poles. In the
forthcoming paper, we will show that it is not true when $\g$ is of
type $D_N^{(1)}$ ($N \ge 4$).
\end{remark}

%%%%%%%%%%%%%%%%%%%%%%%%%%%%%%%%%%%%%
\section{Affine Schur-Weyl duality via \KLRs}\label{subsec:action}

\subsection{Action of Khovanov-Lauda-Rouquier algebras  on $\Vhat^{\tensor \beta}$}

Let $\{V_s\}_{s\in \mathcal{S}}$ be a family of good $\uqpg$-modules
and  let $\lambda_s$ be a dominant extremal weight of $V_s$ and $v_s$
a dominant extremal weight vector in $V_s$ of weight $\lambda_s$.

Let $J$ be an index set and
let $X \colon J \rightarrow \mathbb{T}$ and
$S \colon J \rightarrow \Ss$ be maps,
where $\mathbb{T}=\cor^\times$ is the algebraic torus.
For each $i,j \in J $,
we choose $c_{i,j}(u,v)\in\cor[[u,v]]$ such that
\eq&&\ba{l} \label{cond:cij}
c_{i,j}(u,v)c_{j,i}(v,u)=1,\\
c_{i,i}(u,v)=1.
\ea
\eneq
Set
\begin{equation}
P_{ij}(u,v) =(u-v)^{d_{ij}}c_{i,j}(u,v),\label{def:Pij}
\end{equation}
where $d_{ij}$ denotes the order of the zero of
$d_{V_{S(i)},V_{S(j)}}(z_2 / z_1)$ at $z_2 / z_1 = {X(j) / X(i)}$.

Let $R^{J}$ be the symmetric \KLR\ associated with
\begin{equation}\label{eq:Q_omega}
Q_{ij}(u,v) = \delta(i\not=j)P_{ij}(u,v)P_{ji}(v,u)=\delta(i\not=j) (u-v)^{d_{ij}}(v-u)^{d_{ji}}\end{equation}
for $i,j \in J$.

\Rem \label{rmk:cij} If we consider the non-graded $R^{J}$-module category $R^J\smod$,
the choice of $c_{i,j}$ is not important, and we can choose, for example,
$c_{i,j}(u,v)=1$. Indeed, adding $c_{i,j}(u,v)$
is equivalent to  the change of  generators
$\tau_ke(\nu)\mapsto c_{\nu_k,\nu_{k+1}}(x_x,x_{k+1})\tau_ke(\nu)$.
However, as we shall see later in Theorem~\ref{thm:cij},
we need to choose $c_{i,j}(u,v)$ carefully when we  are concerned with
localizations of the graded $R^{J}$-module category $R^J\gmod$.
\enrem

\begin{remark}
The underlying Cartan matrix $A_J=(a^J_{ij})_{i,j\in J}$ is given by
\begin{equation}\label{eq:quiver}
a^J_{ij}=\begin{cases}
2&\text{if $i=j$,}\\
-d_{ij}-d_{ji}&\text{if $i\not=j$.}\end{cases}
\end{equation}
Consider the quiver $\Gamma_J \seteq (J, \Omega)$ with the set of
vertices $J$ and the set of oriented edges $\Omega$ such that
$$\sharp\set{h \in \Omega}{s(h) = i, t(h)=j } = d_{ij}
\qtext{ for all $i,j\in J$,}$$
where $s(h)$ and $t(h)$ denote the source and the target of an oriented edge $h \in \Omega$.
Theorem~\ref{th:zero} implies that $X(j)/X(i)\in\C[[q^{1/m}]]q^{1/m}$ for some $m>0$
if $d_{ij}>0$.
Hence we obtain
$$\text{if $d_{ij} > 0$, then $d_{ji} =0$.}$$
Thus the quiver $\Gamma_J$ has neither loops nor cycles.
The underlying unoriented graph of $\Gamma_J$ gives a symmetric Cartan datum and the polynomials in \eqref{eq:Q_omega}
are essentially same as
the ones used in \cite{VV09} associated with
 the symmetric Cartan datum of $\Gamma_J$.
\end{remark}

\bigskip
Let
\begin{equation*}
\Oh_{\mathbb{T}^n, X(\nu)} = \cor [[X_1 - X(\nu_1), \ldots, X_n-X(\nu_n)]]
\end{equation*}
 be the completion of
the local ring $\sho_{\mathbb{T}^n, X(\nu)}$
 of $\mathbb{T}^n$ at  $X(\nu)\seteq(X(\nu_1),\ldots,X(\nu_n))$
 and $\bK[\nu]$  the fraction field of $\Oh_{\mathbb{T}^n, X(\nu)}$.
 For $\beta \in \rootl^+$ with $|\beta|=n$,
set
\begin{align*}
&\bP_{ \beta } \seteq \soplus_{\nu \in J^{ \beta }} \cor [x_1, \ldots, x_n] e(\nu)\subset R^J( \beta ), \\
&\bPh_{ \beta }\seteq\soplus_{\nu\in J^{ \beta }}\Oh_{\mathbb{T}^n, X(\nu)}e(\nu), \\
&\bK[ \beta ] \seteq \soplus_{\nu \in J^{ \beta }} \bK[\nu]e(\nu).\end{align*}

Then we have
\begin{equation*}
\bP_{ \beta }\hookrightarrow\bPh_{ \beta }\hookrightarrow\bK[ \beta ]
\end{equation*}
as $\cor$-algebras, where the first arrow is given by
\begin{equation*}
x_k e(\nu) \mapsto X(\nu_k)^{-1}\bl X_k -X(\nu_k)\br e(\nu).
\end{equation*}
Note that
\begin{equation*}
\cor [X_1^{\pm1}, \ldots, X_n^{\pm1}] \subset \Oh_{\mathbb{T}^n, X(\nu)}
\qtext{and}\quad \Oh_{\mathbb{T}^n, X(\nu)}\simeq\cor[[x_1,\ldots,x_n]]
\qtext{for all} \ \nu \in J^{ \beta }.
\end{equation*}
Let
\begin{align*}
\cor [\sym_n] \seteq \bigoplus_{w \in \sym_n} \cor r_w
\end{align*}
be the group algebra of $\sym_n$;
i.e., the $\cor$-algebra with the multiplication
$r_{w}r_{w'}=r_{ww'}$ for $w,w'\in \sym_n$.
We denote $r_{s_i}$ by $r_i$ for $i=1,\ldots, n-1$.

The symmetric group $\sym_n$ acts on $\bP_{ \beta }$, $\bPh_{ \beta }$, $\bK[ \beta ]$
from the left and we have
\begin{equation*}
\bP_{ \beta }\otimes {\cor[\sym_n]} \hookrightarrow \bPh_{ \beta }\otimes {\cor[\sym_n]} \hookrightarrow \bK[ \beta ]\otimes {\cor[\sym_n]}
\end{equation*}
as algebras. Here the algebra structure on $\bK[ \beta ]\otimes {\cor[\sym_n]}$ is given by
\begin{align} \label{eq:rel of Khat S_n}
r_w f =w(f) r_w & \quad \text{for $f \in \bK[ \beta ]$, $w \in \sym_n$.}
\end{align}
Then $\bK[ \beta ]$
may be regarded as a right $\bK[ \beta ] \otimes {\cor[\sym_n]}$-module
by
$$a (f\otimes r_w)=w^{-1}(af) \quad (a,f\in \bK[ \beta ], \ w\in
\sym_n).$$

 Define $\tau_a\in\bK[ \beta ]\otimes {\cor[\sym_n]}$ by
\begin{equation*}
e(\nu) \tau_a = \begin{cases}
e(\nu)  P_{\nu_a, \nu_{a+1}}(x_{a}, x_{a+1})r_a & \text{if $\nu_a \neq \nu_{a+1}$,} \\
e(\nu) (r_a -1)(x_a -x_{a+1})^{-1} & \text{if $\nu_a = \nu_{a+1}$.}
\end{cases}
\end{equation*}
Then the subalgebra of $\bK[ \beta ]\otimes {\cor[\sym_n]}$
generated by \eqn &&e(\nu) \ (\nu \in J^{ \beta }),\quad x_a \ (1
\leq a \leq n), \quad \tau_a  \ (1 \leq a \leq n-1) \eneqn is
isomorphic to the \KLR\ $R^J({ \beta })$ at ${ \beta }$ associated
with $Q_{ij}(u,v) = \delta(i\not=j)P_{ij}(u,v) P_{ji}(v,u)$
(\cite[Proposition 3.12]{R08}, \cite[Theorem 2.5]{KL09}).

\bigskip
For each $\nu =(\nu_1,\ldots, \nu_n) \in J^{ \beta }$, we set
$$
V_\nu =( V_{S(\nu_1)})_\aff \otimes \cdots
\otimes (V_{S(\nu_n)})_\aff$$
which is a $\bl\cor[X_1^{\pm1},\ldots, X_n^{\pm1}]\otimes\uqpg \br$-module,
where $X_k=z_{V_{S(\nu_k)}}$.
 We   define
\begin{align}
\ba{ll}
\hV_\nu&\seteq \Oh_{\mathbb{T}^n, X(\nu)}\otimes _{\cor[X_1^{\pm1},\ldots,X_n^{\pm1}]}
V_\nu,\\[1ex]
\bV[ \beta ]&\seteq \soplus\nolimits_{\nu \in J^{ \beta }}
\hV_\nu e(\nu),\\[1ex]
\bVK[ \beta ]&\seteq \bK[ \beta ]\tens_{\bPh_{ \beta }}\bV[{ \beta
}] \simeq\soplus\nolimits_{\nu\in J^\beta}\bK[\nu]\tens
\nolimits_{\Oh_{\mathbb{T}^n, X(\nu)}} \hV_\nu.\ea
\label{eq:Vhat}
\end{align}
For each $\nu \in J^{ \beta }$ and $a=1,\ldots, n-1$, there exists a $\uqpg$-module homomorphism
\begin{equation*}
R^{\nu}_{a, a+1} \colon \cor(X_1,\ldots,X_n) \otimes_{\cor[X_1^{\pm1}, \ldots X_n^{\pm1}]}
V_\nu \rightarrow
\cor(X_1,\ldots,X_n) \otimes_{\cor[X_1^{\pm1}, \ldots X_n^{\pm1}]} V_{s_a(\nu)}
\end{equation*}
which is given by
\begin{equation*}
 v_1 \otimes \cdots \otimes v_a \otimes v_{a+1} \otimes \cdots \otimes v_n
\mapsto v_1 \otimes \cdots \otimes \Rnorm_{V_{S(\nu_a)},V_{S(\nu_{a+1})}}(v_a \otimes v_{a+1}) \otimes \cdots \otimes v_n
\end{equation*}
for $v_k \in (V_{S(\nu_k)})_\aff$ $(1 \leq k \leq n)$.
It follows that
\begin{align}
&R^{\nu}_{a, a+1} \circ X_k = X_{s_a(k)} \circ R^{\nu}_{a, a+1} && \text{from \eqref{eq:r-matrix commute with z},} \nonumber\\
&R^{s_a(\nu)}_{a, a+1} \circ R^{\nu}_{a, a+1} = 1_{V_{\nu}}&& \text{from \eqref{eq:r^2=1},} \nonumber \\
&R^{s_{a+1}s_a(\nu)}_{a, a+1} \circ R^{s_a (\nu)}_{a+1, a+2} \circ R^{\nu}_{a, a+1} =
R^{s_a s_{a+1}(\nu)}_{a+1, a+2} \circ R^{s_{a+1}(\nu)}_{a, a+1} \circ R^{\nu}_{a+1, a+2}
&& \text{from \eqref{eq:r_YB}.} \ \nonumber
\end{align}
Set $d_{\nu_a,\nu_{a+1}}(u)=d_{V_{S(\nu_a)}, V_{S(\nu_{a+1})}} (u)$. Then,
we have
\begin{equation}
 R^{\nu}_{a, a+1}\circ d_{\nu_a,\nu_{a+1}}(X_{a+1}/X_a)
\colon V_{\nu} \rightarrow V_{s_a(\nu)}.\label{eq:regR}
\end{equation}

The algebra $\bK[ \beta ] \otimes {\cor[\sym_n]}$ acts on $\bVK[{ \beta }]$ from the right, where
\begin{align*}
e(\nu) r_a \col \; &\widehat{\mathbb K}_\nu \otimes_{\cor[X_1^{\pm1}, \ldots X_n^{\pm1}]} V_\nu \\
&\rightarrow \widehat{\mathbb K}_{s_a(\nu)} \otimes_{\cor(X_1,\ldots,X_n)} \big( \cor(X_1,\ldots,X_n) \otimes_{\cor[X_1^{\pm1}, \ldots X_n^{\pm1}]} V_{s_a(\nu)}\big)
\end{align*}
is given by
\begin{equation*}
(f \otimes v) e(\nu) r_a = s_a(f) e(s_a(\nu)) \otimes R^{\nu}_{a, a+1}(v)
\end{equation*}
for $f \in \widehat{\mathbb K}_{\nu}$, $v \in \cor(X_1,\ldots,X_n) \otimes_{\cor[X_1^{\pm1}, \ldots, X_n^{\pm1}]} V_\nu$.
The subalgebra $\widehat{\mathbb{K}}_{ \beta }$ acts by the multiplication.
The relation
\eqref{eq:rel of Khat S_n} follows from the properties of normalized R-matrices and hence we have a well-defined right action of the algebra $\widehat{\mathbb{K}}_{ \beta } \otimes {\cor[\sym_n]}$ on
$\bVK[{ \beta }]$.
Since the normalized R-matrices are $\uqpg$-module homomorphisms,
$\bVK[{ \beta }]$ has a structure of
 $(\uqpg,\widehat{\mathbb{K}}_{ \beta } \otimes {\cor[\sym_n]})$-bimodule.

\begin{theorem}
The subspace $\bV[{ \beta }]$ of \/  $\bVK[{ \beta }]$ is stable under
the  right action of the subalgebra $R^J({ \beta })$ of\/ $\widehat{\mathbb{K}}_{ \beta } \otimes {\cor[\sym_n]}$.
In particular, $\bV[{ \beta }]$ has a structure of
$(\uqpg, R^J({ \beta }))$-bimodule.
\begin{proof}
It is obvious that $\bV[{ \beta }]$
is stable under the actions of $e(\nu)$ $(\nu \in J^{ \beta })$ and
$x_a$ $(1 \leq a \leq n)$.
Thus it is enough to show that $\bV[{ \beta }]$ is stable under $e(\nu) \tau_a$ $(\nu \in J^{ \beta }, \ 1 \leq a < n)$.

Assume $\nu_a \neq \nu_{a+1}$. Then we have
\eqn
&&e(\nu) P_{\nu_a,\nu_{a+1}}(x_{a},x_{a+1})\\
&&\hs{1ex}=e(\nu) d_{\nu_a,\nu_{a+1}}(X_{a+1}/X_a)
\dfrac{ (X(\nu_{a})^{-1}X_a-X(\nu_{a+1})^{-1}X_{a+1})^{d_{\nu_a, \nu_{a+1}}}
c_{\nu_a,\nu_{a+1}}(x_{a},x_{a+1})} {d_{\nu_a,\nu_{a+1}}(X_{a+1}/X_a)} \nonumber.
\eneqn
Since $d_{\nu_a, \nu_{a+1}}$ is the multiplicity of the zero of the polynomial
$d_{\nu_a,\nu_{a+1}}(X_{a+1}/X_a)$ at $X_{a+1}/X_a=X(\nu_{a+1}) / X(\nu_{a})$,
we have
$$\dfrac{ (X(\nu_{a})^{-1}X_a-X(\nu_{a+1})^{-1}X_{a+1})^{d_{\nu_a, \nu_{a+1}}}
c_{\nu_a,\nu_{a+1}}(x_{a},x_{a+1})} {d_{\nu_a,\nu_{a+1}}(X_{a+1}/X_a)} \in
 \Oh_{\mathbb{T}^n, X(\nu)}.$$
It follows that
$$\hV_\nu e(\nu) P_{\nu_a,\nu_{a+1}}(x_a,x_{a+1})
\subset\hV_\nu e(\nu)d_{\nu_a,\nu_{a+1}}(X_{a+1}/ X_{a}).$$
Hence
\eqn
&&\hV_\nu e(\nu)\tau_a
=\hV_\nu e(\nu)P_{\nu_a,\nu_{a+1} }(x_{a}, x_{a+1})r_a
\subset \hV_\nu e(\nu) d_{\nu_a,\nu_{a+1}}(X_{a+1} / X_{a})r_a,
\eneqn
and it is contained in $\hV_{s_a(\nu)} e(s_a(\nu))$ by \eqref{eq:regR}.

\smallskip

Assume $\nu_a=\nu_{a+1}$.
Then $\Rnorm_{V_{S(\nu_a)},V_{S(\nu_a)}}$ does not have a pole at
$X_a=X_{a+1}$ by Theorem~\ref{th:zero}.
Since $(V_{S(\nu_a)})_x\otimes (V_{S(\nu_{a})})_x$ is irreducible for any $x\in\cor^\times$, we obtain
$\Rnorm_{V_{S(\nu_a)},V_{S(\nu_a)}}\vert_{X_a=X_{a+1}}=\id$.
Therefore, we have
\begin{align*}
\hV_\nu e(\nu) \tau_a
&=\hV_\nu e(\nu) (r_a -1)(x_a -x_{a+1})^{-1} \\
&=\hV_\nu e(\nu) X(\nu_a) (r_a -1) (X_a -X_{a+1})^{-1}
\subset \hV_\nu e(\nu),
\end{align*}
as desired.
\end{proof}
\end{theorem}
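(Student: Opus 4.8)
Since $R^J(\beta)$ is generated as a $\cor$-algebra by the idempotents $e(\nu)$ $(\nu\in J^\beta)$, the elements $x_a$ $(1\le a\le n)$, and the elements $\tau_a$ $(1\le a<n)$, the plan is simply to check that each generator preserves the subspace $\bV[\beta]=\soplus_{\nu\in J^\beta}\hV_\nu e(\nu)$ of $\bVK[\beta]$. For $e(\nu)$ this is clear, as $e(\nu)$ acts as the projector onto the summand $\hV_\nu e(\nu)$; and for $x_a$ it is clear too, since under the inclusion $\bP_\beta\hookrightarrow\bPh_\beta$ the element $x_a$ acts on the $e(\nu)$-component through multiplication by $X(\nu_a)^{-1}\bl X_a-X(\nu_a)\br\in\Oh_{\mathbb T^n,X(\nu)}$. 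So everything reduces to stability under the $\tau_a$, and I would split into the two cases $\nu_a\ne\nu_{a+1}$ and $\nu_a=\nu_{a+1}$ dictated by the formula defining $\tau_a$ in $\bK[\beta]\tens\cor[\sym_n]$.

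In the case $\nu_a\ne\nu_{a+1}$ one has $e(\nu)\tau_a=e(\nu)P_{\nu_a,\nu_{a+1}}(x_a,x_{a+1})\,r_a$. First I would rewrite the polynomial factor: by the definition \eqref{def:Pij} of $P_{ij}$ together with the substitution $x_ke(\nu)=X(\nu_k)^{-1}\bl X_k-X(\nu_k)\br e(\nu)$, it equals $\bl X(\nu_a)^{-1}X_a-X(\nu_{a+1})^{-1}X_{a+1}\br^{d_{\nu_a,\nu_{a+1}}}c_{\nu_a,\nu_{a+1}}(x_a,x_{a+1})\,e(\nu)$. Since $d_{ij}$ is by definition the order of the zero of $d_{V_{S(i)},V_{S(j)}}(z_2/z_1)$ at $z_2/z_1=X(j)/X(i)$, dividing the above by $d_{\nu_a,\nu_{a+1}}(X_{a+1}/X_a)$ yields an element of $\Oh_{\mathbb T^n,X(\nu)}$; equivalently
\[
\hV_\nu e(\nu)\,P_{\nu_a,\nu_{a+1}}(x_a,x_{a+1})\ \subset\ \hV_\nu e(\nu)\,d_{\nu_a,\nu_{a+1}}(X_{a+1}/X_a).
\]
Applying $r_a$ and using \eqref{eq:regR}, which states exactly that $R^\nu_{a,a+1}\circ d_{\nu_a,\nu_{a+1}}(X_{a+1}/X_a)$ carries $V_\nu$ into $V_{s_a(\nu)}$ without introducing denominators, I conclude $\hV_\nu e(\nu)\tau_a\subset\hV_{s_a(\nu)}e(s_a(\nu))\subset\bV[\beta]$.

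In the case $\nu_a=\nu_{a+1}$ one has $e(\nu)\tau_a=e(\nu)(r_a-1)(x_a-x_{a+1})^{-1}$, and $(x_a-x_{a+1})e(\nu)=X(\nu_a)^{-1}(X_a-X_{a+1})e(\nu)$, so the only danger is the apparent pole along $X_a=X_{a+1}$. I would kill it using Theorem~\ref{th:zero}: by part~(i) the zeros of $d_{V_{S(\nu_a)},V_{S(\nu_a)}}(z)$ lie in $\C[[q^{1/m}]]q^{1/m}$, hence $z=1$ is not one of them and $\Rnorm_{V_{S(\nu_a)},V_{S(\nu_a)}}$ is regular at $X_a=X_{a+1}$; moreover $(V_{S(\nu_a)})_x\tens(V_{S(\nu_a)})_x$ is irreducible for every $x\in\cor^\times$, so the specialization $\Rnorm_{V_{S(\nu_a)},V_{S(\nu_a)}}\vert_{X_a=X_{a+1}}$ is a nonzero $\uqpg$-endomorphism of an irreducible module fixing $v_{S(\nu_a)}\tens v_{S(\nu_a)}$, hence is the identity. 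Consequently the operator $r_a-1$ vanishes on the locus $X_a=X_{a+1}$, i.e.\ is divisible by $X_a-X_{a+1}$, and therefore $e(\nu)(r_a-1)(x_a-x_{a+1})^{-1}=X(\nu_a)\,e(\nu)(r_a-1)(X_a-X_{a+1})^{-1}$ acts on $\hV_\nu e(\nu)$ with values again in $\hV_\nu e(\nu)$, completing the check. The hard part is precisely this last case: one must recognize that the pole of $\tau_a$ along the diagonal $X_a=X_{a+1}$ is fictitious, which needs \emph{both} the regularity of $\Rnorm$ there (part (i) of Theorem~\ref{th:zero}) \emph{and} its specializing to the identity (irreducibility of the tensor square); the first case is of the same flavour --- a cancellation of a pole of the normalized R-matrix --- but is handed to us cleanly by \eqref{eq:regR}, the zero of order $d_{\nu_a,\nu_{a+1}}$ deliberately built into $P_{\nu_a,\nu_{a+1}}$ being exactly what clears it.
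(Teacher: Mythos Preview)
Your proof is correct and follows essentially the same approach as the paper's: reduce to stability under the generators, dispose of $e(\nu)$ and $x_a$ trivially, and in the two cases for $\tau_a$ clear the pole of the normalized R-matrix either via the built-in zero of order $d_{\nu_a,\nu_{a+1}}$ in $P_{\nu_a,\nu_{a+1}}$ together with \eqref{eq:regR}, or via Theorem~\ref{th:zero} plus irreducibility to force $\Rnorm\vert_{X_a=X_{a+1}}=\id$. Your write-up is slightly more explicit in justifying why the specialization equals the identity (it fixes the dominant extremal vector), but the argument is the same.
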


\subsection{The Functor $\F$}

Since $\bV[{ \beta }]$ is a $(\uqpg, R^J({ \beta }))$-bimodule, we can construct the following functor:
\begin{align}
\F_{ \beta } \colon \Modg(R^J(\beta)) &\rightarrow \Mod(\uqpg)\label{eq:the functor}
\end{align}
  sending an $R^J(\beta)$-module $M$  to the $\uqpg$-module
\eqn \F_{ \beta } (M) \seteq \bV[{ \beta }] \otimes_{R^J({ \beta })}
M. \eneqn
Set \eqn && \F_n \seteq \bigoplus_{\beta \in \rootl^+, \,
|\beta|=n } \F_\beta : \Mod(R^J(n))
\to \Mod(\uqpg),  \\
&& \F\seteq \soplus_{n\in\Z_{\ge0}}\F_n
\col\soplus_{n\in\Z_{\ge0}} \Mod(R^J(n)) \rightarrow \Mod(\uqpg).
\eneqn

 Recall that $\CC_\g$ denotes the category of
finite-dimensional integrable $\uqpg$-modules.
\begin{theorem} \label{thm:conv to tensor}
The functor $\F$ induces a tensor functor
$$\F\col\soplus\nolimits_{\beta \in \rootl^+} R^J( \beta) \gmod
\to\CC_\g.$$ Namely, $\F$ sends finite-dimensional graded
$R^J(\beta)$-modules to $\uqpg$  -modules in $\CC_\g$, and  there
exist canonical $\uqpg$-module isomorphisms \eqn \F(R^J(0))\simeq
\cor,  &  \F(M_1 \circ M_2) \simeq \F(M_1) \otimes \F(M_2) \eneqn
for $M_1 \in R^J( \beta_1 ) \gmod$ and $M_2 \in R^J( \beta_2 )\gmod$
 such that the diagrams in \eqref{dia:tensf} are  commutative .
\end{theorem}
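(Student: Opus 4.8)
The plan is to establish the three assertions in turn, all of them by reducing to the structure of $\bV[\beta]$ as a $(\uqpg,R^J(\beta))$-bimodule.

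\medskip
\emph{Step 1: $\F$ lands in $\CC_\g$.} First I would check that for a finite-dimensional graded $R^J(\beta)$-module $M$, the $\uqpg$-module $\F_\beta(M)=\bV[\beta]\otimes_{R^J(\beta)}M$ is finite-dimensional and integrable. Since $M$ is annihilated by some power of the maximal graded ideal of $R^J(\beta)$, only finitely many monomials in the $x_k$ act nontrivially, so $\F_\beta(M)$ is a subquotient of a finite direct sum of spaces of the form $\bl\cor[x]/(x^N)\br^{\oplus r}\otimes_{\cor[x]}\hV_\nu e(\nu)$; using $\Oh_{\mathbb T^n,X(\nu)}\simeq\cor[[x_1,\dots,x_n]]$ and $\hV_\nu\simeq\Oh_{\mathbb T^n,X(\nu)}\otimes V_\nu$ with $V_\nu$ finite-dimensional over $\cor$, one sees this is finite-dimensional. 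Integrability (weight space decomposition, local nilpotence of $e_i,f_i$) is inherited from the $V_{S(\nu_k)}$, which are good hence integrable; one checks the $\uqpg$-action on the tensor factors passes to the quotient by $R^J(\beta)$, using that $R^J(\beta)$ acts by $\uqpg$-module homomorphisms.

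\medskip
\emph{Step 2: the monoidal isomorphism.} For $M_i\in R^J(\beta_i)\gmod$ set $\beta=\beta_1+\beta_2$ and $e=e(\beta_1,\beta_2)$. By definition $M_1\circ M_2=R^J(\beta)e\otimes_{R^J(\beta_1)\otimes R^J(\beta_2)}(M_1\otimes M_2)$, so
\[
\F_\beta(M_1\circ M_2)=\bV[\beta]\otimes_{R^J(\beta)}R^J(\beta)e\otimes_{R^J(\beta_1)\otimes R^J(\beta_2)}(M_1\otimes M_2)
=\bV[\beta]e\otimes_{R^J(\beta_1)\otimes R^J(\beta_2)}(M_1\otimes M_2).
\]
Now the key computation is to identify $\bV[\beta]e$ as a right $R^J(\beta_1)\otimes R^J(\beta_2)$-module with $\bV[\beta_1]\otimes\bV[\beta_2]$, compatibly with the left $\uqpg$-action given by the coproduct. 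Concretely, $\bV[\beta]e=\soplus_{\nu}\hV_\nu e(\nu)$ where $\nu$ runs over sequences whose first $|\beta_1|$ entries lie in $J^{\beta_1}$; each such $\nu$ is uniquely $\mu*\lambda$ with $\mu\in J^{\beta_1}$, $\lambda\in J^{\beta_2}$, and $\hV_{\mu*\lambda}=\hV_\mu\widehat\otimes\hV_\lambda$ (completed tensor over the two local rings, which is again the local ring at $X(\mu*\lambda)$), while the $\uqpg$-structure on $\hV_{\mu*\lambda}$ is exactly $\Delta$ applied to that of $\hV_\mu$ and $\hV_\lambda$. Under the algebra embedding \eqref{eq:embedding}, the right actions of $x_k,\tau_k,e(\nu)$ on $\bV[\beta]e$ restrict to the corresponding actions on the two factors (the ``overlap'' generator $\tau_{|\beta_1|}$ is not in $R^J(\beta_1)\otimes R^J(\beta_2)$, so no compatibility is required there). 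Hence $\bV[\beta]e\simeq\bV[\beta_1]\otimes\bV[\beta_2]$ as $(\uqpg,\,R^J(\beta_1)\otimes R^J(\beta_2))$-bimodules, and therefore
\[
\F_\beta(M_1\circ M_2)\simeq(\bV[\beta_1]\otimes\bV[\beta_2])\otimes_{R^J(\beta_1)\otimes R^J(\beta_2)}(M_1\otimes M_2)\simeq\F_{\beta_1}(M_1)\otimes\F_{\beta_2}(M_2),
\]
the last isomorphism being the standard factorization of a tensor product of relative tensor products. The case $\F(R^J(0))\simeq\cor$ is the degenerate instance ($\beta=0$, $\bV[0]=\cor$).

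\medskip
\emph{Step 3: coherence.} Finally I would verify that these isomorphisms are natural in $M_1,M_2$ and make the associativity (and unit) diagrams \eqref{dia:tensf} commute. Naturality is immediate from functoriality of $\otimes_{R^J(\beta_i)}$. For associativity one unwinds $\F((M_1\circ M_2)\circ M_3)$ and $\F(M_1\circ(M_2\circ M_3))$ both down to $(\bV[\beta_1]\otimes\bV[\beta_2]\otimes\bV[\beta_3])\otimes(M_1\otimes M_2\otimes M_3)$ over the triple tensor algebra, and checks that the two composites of structure isomorphisms agree; this reduces to the coassociativity of $\Delta$ on the $\uqpg$-side together with the evident associativity of the iterated completed tensor product of the spaces $\hV_\nu$.

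\medskip
\textbf{Main obstacle.} The routine-looking but genuinely delicate point is Step 2: that $\hV_{\mu*\lambda}$ really is the \emph{completed} tensor product $\hV_\mu\,\widehat\otimes\,\hV_\lambda$ over the completed local rings and that this identification intertwines the right $R^J$-action coming from \eqref{eq:embedding} with the separate actions on the two factors. One must be careful that completion at $X(\mu*\lambda)$ of $\mathbb T^{m+n}$ agrees with the completed tensor of the completions at $X(\mu)$ and $X(\lambda)$ — true because $\Oh$ is just a power series ring — and that the R-matrix operators $R^\nu_{a,a+1}$ appearing in $\tau_a$ for $a<m$ or $a>m$ involve only tensor factors inside one block, so they descend correctly. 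Everything else is bookkeeping.
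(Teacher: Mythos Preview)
Your overall strategy matches the paper's, but Step~2 contains a real gap that you yourself flag as the ``main obstacle'' and then do not resolve. The claimed bimodule isomorphism
\[
\bV[\beta]e\;\simeq\;\bV[\beta_1]\otimes_\cor\bV[\beta_2]
\]
is \emph{false}: as you note, $\hV_{\mu*\lambda}\simeq\hV_\mu\,\widehat\otimes\,\hV_\lambda$ is the \emph{completed} tensor product, and $\cor[[x_1,\dots,x_m]]\otimes_\cor\cor[[x_{m+1},\dots,x_n]]\subsetneq\cor[[x_1,\dots,x_n]]$. So $\bV[\beta]e$ is strictly larger than $\bV[\beta_1]\otimes\bV[\beta_2]$, and you cannot invoke the ``standard factorization'' $(A\otimes B)\otimes_{R\otimes S}(M\otimes N)\simeq(A\otimes_R M)\otimes(B\otimes_S N)$ directly.

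The paper's fix is precisely to avoid any such bimodule identification. Instead, it observes that for \emph{finite-dimensional} modules $L_1,L_2$ over the polynomial subalgebras $\mathbb P_{\beta_1},\mathbb P_{\beta_2}$ (on which the $x_k$ act nilpotently), the natural map
\[
\bl\bV[\beta_1]\otimes\bV[\beta_2]\br\otimes_{\mathbb P_{\beta_1}\otimes\mathbb P_{\beta_2}}(L_1\otimes L_2)
\;\longrightarrow\;
\bV[\beta]\otimes_{\mathbb P_{\beta_1}\otimes\mathbb P_{\beta_2}}(L_1\otimes L_2)
\]
is an isomorphism, because after tensoring with an artinian module the completion is irrelevant. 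Both sides are then quotients by the same $R^J(\beta_1)\otimes R^J(\beta_2)$-relations, giving the desired isomorphism. Your argument would be repaired by inserting exactly this step: first tensor over the commutative polynomial part, use finite-dimensionality of $M_1,M_2$ to kill the completion discrepancy, then pass to $R^J$. Step~1 and Step~3 are fine (and in Step~1 the paper gives the cleaner version: $\F(M)$ is a quotient of the finite-dimensional space $\bV[\beta]\otimes_{\mathbb P_\beta}M$).
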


\begin{proof}
(i) \ First let us show that
$\F(M)$ belongs to $\CC_\g$ for any $M\in  R^J( \beta )\gmod$.
Since $\bV[\beta]$ has a weight decomposition, $\F(M)=
\bV[\beta]\tens_{\R^J(\beta)}M$ has also a weight decomposition.
Since $\F(M)$ is a quotient of
$\bV[\beta]\tens_{\mathbb{P}_\beta}M$ which is finite-dimensional,
$\F(M)$ is also finite-dimensional.

\smallskip
\noi
(ii)\quad We shall construct a canonical isomorphism
$\F(M_1 \circ M_2) \simeq \F(M_1) \otimes \F(M_2)$.
 Set $\beta=\beta_1 +\beta_2$.  For each $\nu=(\nu_1,\ldots,\nu_n) \in J^{\beta}$ such that
$\nu'=(\nu_1, \ldots, \nu_{n_1}) \in J^{\beta_1}$, $\nu''=(\nu_{1+n_1}, \ldots, \nu_{n}) \in J^{\beta_2}$,
we have an algebra homomorphism
$\Oh_{\mathbb{T}^{n_1}, X(\nu')} \otimes \Oh_{\mathbb T^{n_2}, X(\nu'')}
\to  \Oh_{\mathbb T^n, X(\nu)}$.
Moreover, for any finite-dimensional $\Oh_{\mathbb{T}^{n_1}, X(\nu')}$-module
$L_1$ and any finite-dimensional $\Oh_{\mathbb{T}^{n_2}, X(\nu'')}$-module $L_2$,
the induced morphism
$$L_1\otimes L_2\to \Oh_{\mathbb T^n, X(\nu)}
\tens_{\Oh_{\mathbb{T}^{n_1}, X(\nu')} \otimes \Oh_{\mathbb T^{n_2}, X(\nu'')}}(L_1\otimes L_2)$$
is an isomorphism.
Hence for any finite-dimensional $\mathbb{P}_{ \beta_1 }$-module
$L_1$ and any finite-dimensional $\mathbb{P}_{ \beta_2 }$-module $L_2$,
the induced morphism
$$
(\bV[ \beta_1 ] \otimes \bV[ \beta_2 ])
\tens_{\mathbb{P}_{ \beta_1 }\otimes\,\mathbb{P}_{ \beta_2 }}(L_1\otimes L_2)
\to
\bV[{ \beta }]\tens_{\mathbb{P}_{{ \beta_1 }}\otimes\,\mathbb{P}_{{ \beta_2 }}}(L_1\otimes L_2)
$$
is an isomorphism.

The module
$\bV[{ \beta }] \otimes_{R^J({ \beta })}(M_1 \circ M_2)\simeq \bV[{ \beta }]
\otimes_{R^J({ \beta_1 }) \otimes R^J({ \beta_2 })} (M_1 \otimes M_2)$
is the quotient of
$\bV[{ \beta }] \otimes_{\mathbb{P}_{{ \beta_1 }}\otimes\mathbb{P}_{{ \beta_2 }}} (M_1 \otimes M_2)$
by the submodule generated by
$va\otimes u-v\otimes au$,  where $a\in R^J({ \beta_1 }) \otimes R^J({ \beta_2 })$,
$v\in \bV[{ \beta }]$, $u\in M_1 \otimes M_2$.
A similar result holds  for $\bl\bV[{ \beta_1 }]\otimes
\bV[{ \beta_2 }] \br\otimes_{R^J({ \beta_1 }) \otimes R^J({ \beta_2 })} (M_1 \otimes M_2)$.
Thus we obtain the desired result
$$\bl\bV[ \beta_1 ]\otimes \bV[ \beta_2 ]\br\otimes_{R^J( \beta_1 ) \otimes R^J( \beta_2 )} (M_1 \otimes M_2)
\simeq \bV[ \beta ]\otimes_{R^J( \beta_1 ) \otimes R^J( \beta_2 )} (M_1 \otimes M_2).$$

The commutativity of \eqref{dia:tensf} is immediate.
\end{proof}

The following proposition is obvious by the construction.

\begin{prop} \label{prop:image of tau} \hfill

\bnum
\item For any $i \in J$,
we have
\begin{align}
  \mathcal F(L(i)_z) \simeq \cor[[z]]\tens_{\cor[z_{V_{S(i)}}^{\pm}]}(V_{S(i)})_\aff,
\end{align}
where $\cor[z_{V_{S(i)}}^{\pm 1}]\to\cor[[z]]$ is  given  by
$z_{V_{S(i)}}\mapsto X(i)(1+z)$.
\item For $i,j\in J$,
let  $\phi=R_{L(i)_z,L(j)_{z'}}\col L(i)_z \circ L(j)_{z'}
\rightarrow L(j)_{z'} \circ L(i)_z$; i.e.,
the $R^J(\alpha_i+\alpha_j)$-module homomorphism given by
\begin{align}
  \phi\bl u(i)_z\otimes u(j)_{z'}\br = \vphi_1\bl u(j)_{z'} \otimes u(i)_z\br,
\end{align}
where $\vphi_1$ is the intertwiner defined in {\rm \S\,\ref{def:int}.}
Then we have
\eqn
&&\mathcal F(\phi) =(X_i/X(i)-X_j/X(j))^{d_{i,j}}c_{i,j}(X_i/X(i)-1,X_j/X(j)-1)
\Rnorm_{V_{S(i)}, V_{S(j)}}\eneqn
as a morphism
\eqn
&&\Oh_{\mathbb T^2,( X(i),X(j)) }\tens_{\cor[X_i^{\pm 1},X_j^{\pm 1}]}
\bl(V_{S(i)})_\aff \otimes (V_{S(j)})_\aff\br \\
&&\hs{10ex}\longrightarrow\Oh_{\mathbb T^2,( X(j),X(i)) }
\tens_{\cor[X_j^{\pm 1},X_i^{\pm 1}]} \bl(V_{S(j)})_\aff \otimes
(V_{S(i)})_\aff\br,
\eneqn where ${X_i}=z_{V_{S(i)}}$ and
${X_j}=z_{V_{S(j)}}$. \ee
\end{prop}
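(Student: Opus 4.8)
The plan is simply to unwind the definitions; both assertions follow at once from the explicit description of the $(\uqpg,R^J(\beta))$-bimodule $\bV[\beta]$ and of the $R^J(\beta)$-action on $\bVK[\beta]$ set up in Section~\ref{subsec:action}. For (i): since the simple roots $\alpha_j$ $(j\in J)$ are linearly independent, $J^{\alpha_i}=\{(i)\}$, hence $R^J(\alpha_i)\cong\cor[x_1]$ (there are no $\tau$-generators and the polynomials $Q$ do not enter when $|\beta|=1$), $\bV[\alpha_i]=\hV_{(i)}=\Oh_{\mathbb T^1,X(i)}\otimes_{\cor[X_1^{\pm1}]}(V_{S(i)})_\aff$ with $X_1=z_{V_{S(i)}}$, and $x_1$ acts on $\hV_{(i)}$ by multiplication by $X_1/X(i)-1$ (from the embedding $\bP_\beta\hookrightarrow\bPh_\beta$), while $x_1$ acts on $L(i)_z$ by the scalar $z$. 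Therefore $\F(L(i)_z)=\hV_{(i)}\otimes_{\cor[x_1]}\cor[z]$; since $\cor[z]$ is free of rank one over $\cor[x_1]$ via $x_1\mapsto z$ and $\Oh_{\mathbb T^1,X(i)}\simeq\cor[[x_1]]$, this base change merely identifies $z$ with $X_1/X(i)-1$, i.e. $z_{V_{S(i)}}=X(i)(1+z)$, which is precisely the asserted isomorphism.

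For (ii), I would first pin down the source and target. Writing $L(i)_z\circ L(j)_{z'}=R^J(\alpha_i+\alpha_j)e((i,j))\otimes_{R^J(\alpha_i)\otimes R^J(\alpha_j)}(L(i)_z\otimes L(j)_{z'})$, the same base change as in (i) but in two variables gives $\F(L(i)_z\circ L(j)_{z'})\simeq\Oh_{\mathbb T^2,(X(i),X(j))}\otimes_{\cor[X_i^{\pm1},X_j^{\pm1}]}\bl(V_{S(i)})_\aff\otimes(V_{S(j)})_\aff\br$ with $X_i=X(i)(1+z)$, $X_j=X(j)(1+z')$, and similarly for $\F(L(j)_{z'}\circ L(i)_z)$; these coincide with the domain and codomain displayed in the proposition. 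Since $L(i)_z\circ L(j)_{z'}$ is $R^J(\alpha_i+\alpha_j)$-generated by $u(i)_z\etens u(j)_{z'}$, it suffices to evaluate $\F(\phi)$ on $v\otimes\bl u(i)_z\otimes u(j)_{z'}\br$; by the definition of $\phi$ and of the balanced tensor product this equals $(v\cdot\vphi_1)\otimes\bl u(j)_{z'}\otimes u(i)_z\br$, where $v\cdot\vphi_1$ denotes the right action of the intertwiner $\vphi_1\in R^J(\alpha_i+\alpha_j)$ on $\bV[\alpha_i+\alpha_j]$.

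Now one invokes the explicit action of $R^J(\beta)$ on $\bVK[\beta]$: on the idempotent piece indexed by $\nu=(i,j)$, the generator $\tau_1$ acts as $P_{i,j}(x_1,x_2)\,r_1$ when $i\neq j$ and as $(r_1-1)(x_1-x_2)^{-1}$ when $i=j$, with $r_1$ acting through the normalized R-matrix $\Rnorm_{V_{S(i)},V_{S(j)}}$, which is $\cor[X_i,X_j]$-linear. In the case $\nu_1=\nu_2$ the definition of the intertwiner contributes an extra factor $(x_1-x_2)$ that cancels the $(x_1-x_2)^{-1}$ occurring in $\tau_1$, so in every case $\vphi_1$ acts as $P_{i,j}(x_1,x_2)\,r_1$, with the convention $P_{i,i}=1$; this is consistent because $d_{ii}=0$ (the quiver $\Gamma_J$ has no loops, equivalently $z=1$ is not a zero of $d_{V_{S(i)},V_{S(i)}}(z)$ by Theorem~\ref{th:zero}~(i)) and $c_{i,i}=1$ by \eqref{cond:cij}. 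Substituting $x_1\mapsto X_i/X(i)-1$ and $x_2\mapsto X_j/X(j)-1$ into $P_{i,j}(u,v)=(u-v)^{d_{ij}}c_{i,j}(u,v)$ and using the $\cor[X_i,X_j]$-linearity of $\Rnorm_{V_{S(i)},V_{S(j)}}$ to pull the resulting scalar past $r_1$ yields
$$\F(\phi)=(X_i/X(i)-X_j/X(j))^{d_{i,j}}\,c_{i,j}\bl X_i/X(i)-1,\,X_j/X(j)-1\br\,\Rnorm_{V_{S(i)},V_{S(j)}},$$
as claimed.

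The argument is conceptually routine, and the only genuine difficulty is bookkeeping: one must keep straight the several ring identifications ($\Oh_{\mathbb T^n,X(\nu)}\simeq\cor[[x_\bullet]]$ and the base changes $-\otimes_{\cor[x_\bullet]}\cor[z_\bullet]$), which spectral parameter ($z$ or $z'$) is matched with which torus coordinate on each side, and the left-versus-right module conventions together with the transposition $s_1$ built into the action of $r_1$. Once these are fixed, each step reduces to a one-line substitution.
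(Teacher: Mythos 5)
Your argument is correct and is exactly the unwinding of the construction that the paper itself declares ``obvious by the construction'' (the paper supplies no proof for this proposition). In particular, you correctly track the identification $x_k\mapsto X(\nu_k)^{-1}(X_k-X(\nu_k))$, the right action $e(\nu)\tau_1=e(\nu)P_{\nu_1,\nu_2}(x_1,x_2)r_1$, and the cancellation of $(x_1-x_2)^{-1}$ against the extra factor in $\vphi_1$ when $i=j$, which is the only point where something could have gone wrong.
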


\subsection{Exactness of  the functor $\F$} The following
propositions are  key ingredients in proving our main theorem.
\begin{prop}[{\cite[Corollary 2.9]{Kato12}, \cite[Theorem 4.6]{McNa12}}]%
\label{pro:finite global dimension}
If the quiver associated with  $R^J$   is of   finite  type $A,D,E$, then $R^J( \beta )$ has
a finite global dimension  for every $\beta \in \rootl^+$.
\end{prop}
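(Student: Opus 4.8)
The proposition is quoted verbatim from \cite{Kato12} and \cite{McNa12}, so I will only indicate the shape of the argument I would give. The plan is to use that a symmetric \KLR\ of finite type $A,D,E$ is \emph{affinely quasi-hereditary} (equivalently, carries a properly stratified structure) relative to its cuspidal modules, and then to run the standard homological d\'evissage for such an algebra. The $ADE$ hypothesis enters only through the combinatorial finiteness needed to set up this structure.

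First I would fix a convex total order $\preceq$ on the set $\Delta^{+}$ of positive roots of the finite root system attached to the underlying graph of $\Gamma_J$, and recall the cuspidal modules: for each $\gamma\in\Delta^{+}$ there is a distinguished simple graded $R^{J}(\gamma)$-module $C_\gamma$ with $\End(C_\gamma)\simeq\cor[x]$, a polynomial ring in one variable; this uses only that $R^{J}$ is symmetric of finite $ADE$ type. Next, for a Kostant partition $\pi=(\gamma_1^{m_1}\succ\cdots\succ\gamma_r^{m_r})$ of $\beta$, I would form the standard module $\Delta(\pi)$ from suitably normalized convolution powers $C_{\gamma_1}^{\circ m_1}\circ\cdots\circ C_{\gamma_r}^{\circ m_r}$ and the proper standard module $\overline{\Delta}(\pi)$ as its cuspidal quotient. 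The essential input, which is \cite[Theorem 4.6]{McNa12} refining \cite{KL09}, is that the $\overline{\Delta}(\pi)$ exhaust the simple $R^{J}(\beta)$-modules up to grading shift, that $R^{J}(\beta)$ carries a finite filtration by the $\Delta(\pi)$, and that $\Hom$ and $\Ext^{1}$ among the $\Delta(\pi)$ are governed by $\preceq$; i.e., $R^{J}(\beta)$ is affinely quasi-hereditary.

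Then I would conclude by d\'evissage along this stratification. The ``stratum algebra'' attached to a single root $\gamma$ with multiplicity $m$ is, up to Morita equivalence and base change, a nil-Hecke-type algebra over $\End(C_\gamma)\simeq\cor[x]$, hence a matrix algebra over a polynomial ring, so it has finite global dimension. Feeding this into the triangular filtration: each $\Delta(\pi)$ is built from the $\Delta(\pi')$ with $\pi'\succ\pi$ in finitely many steps, the top contribution having finite projective dimension by the polynomial-ring computation, so $\mathrm{pd}_{R^{J}(\beta)}\Delta(\pi)<\infty$; and since $R^{J}(\beta)$ itself is filtered by finitely many such $\Delta(\pi)$, the regular module has finite projective dimension, whence $\on{gl.dim}R^{J}(\beta)<\infty$.

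The genuine obstacle — and the only place where finite $ADE$ type is really used — is the affine quasi-hereditary structure of the middle step: the good behaviour of cuspidal modules and the triangularity of standard modules. Outside finite $ADE$ type $\Delta^{+}$ is infinite, cuspidal modules need not be simple or finite over a polynomial subalgebra, and the global dimension can genuinely be infinite, so no purely formal argument suffices. The alternative route of \cite{Kato12} bypasses the cuspidal combinatorics altogether: it realizes $R^{J}(\beta)$ geometrically as the $\Ext$-algebra of a semisimple complex on the representation variety of the quiver (Varagnolo--Vasserot), and reads finiteness of the global dimension off the finiteness of the relevant equivariant derived category, where the $ADE$ hypothesis enters through finiteness of the orbit set.
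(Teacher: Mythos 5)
The paper does not prove this proposition at all: it is quoted verbatim from \cite{Kato12} and \cite{McNa12}, exactly as you do, and your outline of the two arguments (McNamara's stratification by cuspidal and standard modules along a convex order, and Kato's geometric realization of $R^J(\beta)$ as an Ext-algebra of a semisimple complex) is a faithful summary of those references. One small imprecision worth noting: the cuspidal object with endomorphism ring $\cor[x]$ is the infinite-dimensional root (standard) module $\Delta(\gamma)$, which is indecomposable but not simple — the simple cuspidal module is finite-dimensional with scalar endomorphisms — but this does not affect the d\'evissage you describe.
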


\begin{prop} \label{pro:projectivity}
Let $A\to B$ be a homomorphism of algebras.
We assume the following conditions:
\bna
\item $B$ is a finitely generated projective $A$-module,
\item $\Hom_A(B,A)$ is a projective $B$-module,
\item the global dimension of $B$ is finite.
\ee
Then we have:
\bni
\item any $B$-module projective over $A$ is projective over $B$,
\item any $B$-module flat over $A$ is flat over $B$.
\ee
\end{prop}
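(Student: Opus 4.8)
The plan is to reduce everything to a syzygy/induction argument over the finite global dimension of $B$. First I would recall that hypothesis (a) makes the functor $B\tens_A(-)$ exact and sends finitely generated projectives to finitely generated projectives, while hypothesis (b) says that $B^\vee\seteq\Hom_A(B,A)$, which carries a $(B,A)$-bimodule structure, is projective as a left $B$-module; combined with (a) this gives that $B^\vee\tens_A(-)$ is exact and carries $A$-projectives to $B$-projectives. The key observation, which I would verify first, is that for any $A$-module $L$ there is a natural isomorphism of $B$-modules $\Hom_A(B,L)\simeq B^\vee\tens_A L$ when $L$ is finitely generated (and more generally a natural map which is an isomorphism because $B$ is finitely generated projective over $A$); hence $\Hom_A(B,-)$ preserves $A$-projectivity, landing in $B$-projectives. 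Dually, for a $B$-module $N$ the restriction $N_A$ is $A$-projective iff $N$ is a direct summand (as $B$-module) of $B\tens_A N_A$ via the counit $B\tens_A N\to N$; this is the standard ``separability-type'' splitting criterion that I would state as the crux of the argument.

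For part (i): let $N$ be a $B$-module which is projective over $A$. Because $\mathrm{gl.dim}(B)=d<\infty$ by (c), take a projective resolution $0\to P_d\to\cdots\to P_0\to N\to 0$ of $N$ over $B$. Restricting to $A$, every $P_i$ is $A$-projective by (a), and $N$ is $A$-projective by assumption, so the resolution splits as a complex of $A$-modules; therefore all the syzygies $\Omega^i N$ are $A$-projective as well. It then suffices to show $N$ is a direct summand of a $B$-projective. Apply $\Hom_A(B,-)$ to the split exact sequence of $A$-modules $0\to\Omega^1 N\to P_0\to N\to 0$: since it is $A$-split, the resulting sequence of $B$-modules $0\to\Hom_A(B,\Omega^1N)\to\Hom_A(B,P_0)\to\Hom_A(B,N)\to 0$ is (split) exact; but $\Hom_A(B,P_0)$ is $B$-projective by the observation above, and there is a natural $B$-linear unit map $N\to\Hom_A(B,N)$ which is a split mono precisely when $N_A$ is $A$-projective (this is where hypothesis on $N$ re-enters). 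Thus $N$ is a direct summand of the $B$-projective module $\Hom_A(B,P_0)$, hence projective. (Finiteness of $d$ is used only to guarantee that finitely generated $N$ admit finite projective resolutions, which keeps all the $\Hom$/$\tens$ manipulations within finitely generated modules; if one wants the statement for arbitrary $N$ one can instead run the dimension-shift directly on $\Omega^d N$, which is $B$-projective by definition of global dimension, and propagate back down using that each short exact sequence $0\to\Omega^{i+1}N\to P_i\to\Omega^i N\to 0$ is $A$-split.)

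For part (ii): this follows the same pattern with ``flat'' in place of ``projective''. If $N$ is a $B$-module flat over $A$, take again a $B$-projective resolution; restricting to $A$ shows the syzygies are $A$-flat, and since $\mathrm{gl.dim}(B)<\infty$ the $d$-th syzygy $\Omega^d N$ is $B$-projective, in particular $B$-flat. Now one shifts back: in each $0\to\Omega^{i+1}N\to P_i\to\Omega^i N\to 0$, the module $P_i$ is $B$-flat and the sequence is $A$-split (because $\Omega^i N$ is $A$-flat, so $\Tor_1^A(\Omega^iN,-)=0$), and one checks $\Omega^{i}N$ is $B$-flat using that a module is flat iff all its finitely generated submodules embed into flat modules compatibly — alternatively, use that for finitely presented modules over a noetherian-enough ring flat $=$ projective, but cleanly one argues via $\Tor_1^B(\Omega^iN, S)=0$ for all $B$-modules $S$ by dimension shifting $\Tor^B_1(\Omega^iN,S)\simeq\Tor^B_{d+1-i}(\Omega^dN,S)=0$. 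The main obstacle, and the step I would spend the most care on, is the identification $\Hom_A(B,-)\simeq B^\vee\tens_A(-)$ together with checking it is an isomorphism of left $B$-modules compatibly with the unit/counit adjunction maps — i.e.\ pinning down exactly how hypotheses (a) and (b) combine to make $\Hom_A(B,-)$ preserve $B$-projectivity; once that naturality is in hand, the dimension-shifting in (i) and (ii) is routine homological algebra.
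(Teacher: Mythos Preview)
Your argument contains a genuine gap in both parts. In (i), the assertion that the unit $\eta_N\colon N\to\Hom_A(B,N)$, $n\mapsto(b\mapsto bn)$, is a $B$-split monomorphism whenever $N$ is $A$-projective is false: its obvious retraction ``evaluate at $1$'' is only $A$-linear. Take $A=\cor$ a field, $B=\cor[x]/(x^2)$, and $N=\cor$ the simple $B$-module; then (a) and (b) hold ($B^\vee\simeq B$ since $B$ is Frobenius), $N$ is $A$-free, and $\Hom_A(B,N)\simeq B$, but $\eta_N$ is the inclusion of the socle $\cor x\subset B$, which does not split over $B$. In particular $N$ is not $B$-projective, so hypothesis (c) is indispensable---yet your main argument for (i) never invokes it. Your fallback ``propagate back down from $\Omega^dN$'' does not work either: knowing that the \emph{kernel} $\Omega^{i+1}N$ in $0\to\Omega^{i+1}N\to P_i\to\Omega^iN\to0$ is $B$-projective does not split the sequence over $B$; you would need $\Omega^iN$ projective or $\Omega^{i+1}N$ injective. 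For (ii) the problems compound: the claim that these syzygy sequences are $A$-split because $\Omega^iN$ is $A$-flat confuses $\Tor$ with $\Ext$, and the dimension-shift you write goes the wrong way---one has $\Tor_1^B(\Omega^iN,S)\simeq\Tor_{i+1}^B(N,S)$, so the vanishing you extract from $\Omega^dN$ being $B$-flat is only $\Tor_j^B(N,S)=0$ for $j>d$, already a consequence of (c).

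The missing idea is a \emph{descending} induction on the bound: one proves that $\fd(M)\le d$ implies $\fd[B](M)\le d$ for every $d\ge0$, the case $d\gg0$ being exactly (c). For the inductive step one uses the very embedding you identified, but exploits its \emph{cokernel}: from $0\to M\to\Hom_A(B,M)\to N\to0$ and your observation $\Hom_A(B,L)\simeq B^\vee\tens_A L$ together with (b), one gets $\fd[B]\bl\Hom_A(B,M)\br\le\fd(M)\le d$, hence $\fd(N)\le d+1$ (using $\fd\le\fd[B]$, which follows from (a)); the inductive hypothesis at level $d+1$ gives $\fd[B](N)\le d+1$, and the long exact $\Tor^B$-sequence then forces $\fd[B](M)\le d$. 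The projective case (i) is parallel with $\Ext$ in place of $\Tor$.
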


\begin{proof}
Since the proof is similar, we give only the proof of (ii).

Let us denote by $\fd M$ the flat dimension of an $A$-module $M$.
Then we have $$\fd(M)\le\fd[B](M)$$ for any $B$-module $M$,
because $\Tor_k^A(N,M)\simeq \Tor_k^B(N\tens_A B,M)$ for any
$A^\opp$-module $N$ and $k\in\Z$ by (a).

By (b),  $\Hom_A(B,A)\tens_AL$ is a flat $B$-module
if $L$ is a flat $A$-module. Indeed,
the functor $X\tens_B\Hom_A(B,A)$ is exact in $X\in\Mod(B^\opp)$
and hence the functor $X\tens_B\Hom_A(B,A)\tens_AL$
is also exact in $X$.

On the other hand, for any $A$-module $L$, the canonical $B$-module homomorphism
$$\Hom_A(B,A)\tens_AL\to \Hom_A(B,L),
\qquad f\tens s\longmapsto (B\ni b\mapsto f(b)s)
$$
is an isomorphism by (a).
Hence we conclude that
$\Hom_A(B,L)$ is a flat $B$-module
for any flat $A$-module $L$.
It immediately implies that
$$\fd[B]\bl\Hom_A(B,L)\br\le\fd (L)\quad\text{for any $A$-module $L$.}$$

Now, let $M$ be a $B$-module.
Then there exists a canonical $B$-module homomorphism
$$\vphi_M\colon M\to \Hom_A(B,M)$$
given by $\vphi_M(x)(b)=bx$.
It is evidently injective.

In order to prove the proposition,
it is enough to show the following statement for any $d\ge0$:
\eq \text{for any $B$-module $M$,
$\fd (M)\le d$ implies $\fd[B](M)\le d$.} \nonumber
\eneq
We shall show it by the descending induction on $d$.
If $d\gg0$, it is a consequence of (c).
Let $M$ be a $B$-module with $\fd(M)\le d$.
We have an exact sequence
$$0\to M\To[\vphi_M] \Hom_A(B,M)\to N\to 0.$$
Then
$$\fd\bl\Hom_A(B,M)\br\le\fd[B]\bl\Hom_A(B,M)\br\le \fd (M)\le d.$$
Hence we have $\fd[A]N\le d+1$,
which implies that $\fd[B](N)\le d+1$
by the induction hypothesis.
Finally, we conclude that
$\fd[B](M)\le d$.
Thus the induction proceeds.
\end{proof}

\begin{theorem} \label{thm:exact}
If the quiver associated with  $R^J$  is of finite type $A, D, E$,
then the functor  $\F_\beta$ is exact  for every $\beta \in
\rootl^+$.
\end{theorem}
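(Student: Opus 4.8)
The plan is to establish exactness of $\F_\beta=\bV[\beta]\otimes_{R^J(\beta)}(-)$ by showing that $\bV[\beta]$ is flat as a right $R^J(\beta)$-module (note that $\bV[\beta]$ carries no grading, so ordinary, non-graded flatness is what is needed here). I would obtain this flatness from Proposition~\ref{pro:projectivity}(ii) applied to the inclusion of algebras $A=\bP_\beta\hookrightarrow B=R^J(\beta)$, where $\bP_\beta=\soplus_{\nu\in J^\beta}\cor[x_1,\ldots,x_n]e(\nu)$ is the polynomial subalgebra of the \KLR. So the proof reduces to verifying hypotheses (a), (b), (c) of that proposition for this pair, together with the flatness of $\bV[\beta]$ over $\bP_\beta$.

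Hypothesis (a): the PBW-type decomposition $R^J(\beta)=\soplus_{\nu\in J^\beta,\,w\in\sym_n}\cor[x_1,\ldots,x_n]e(\nu)\tau_w$ exhibits $R^J(\beta)$ (after grouping the $e(\nu)$-components into $\bP_\beta$) as a free left $\bP_\beta$-module of rank $n!$, in particular a finitely generated projective $\bP_\beta$-module. Hypothesis (c) is precisely Proposition~\ref{pro:finite global dimension}, and this is the only place the finite-type $A,D,E$ assumption enters. For hypothesis (b) --- that $\Hom_{\bP_\beta}(R^J(\beta),\bP_\beta)$ is a projective $R^J(\beta)$-module --- I would use that $R^J(\beta)$ is a graded (twisted) Frobenius extension of $\bP_\beta$: there is a $\bP_\beta$-bilinear nondegenerate trace form on $R^J(\beta)$, built from the basis $\{e(\nu)\tau_w\}$, the anti-involution $\psi$, and the automorphism of $R^J(\beta)$ attached to the longest element $w_0$ of $\sym_n$, and it yields an $(R^J(\beta),\bP_\beta)$-bimodule isomorphism $\Hom_{\bP_\beta}(R^J(\beta),\bP_\beta)\simeq{}_{\sigma}R^J(\beta)$ up to a grading shift, where $\sigma$ denotes that automorphism. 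The right-hand side is free of rank one, hence projective, as a left $R^J(\beta)$-module.

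It then remains to check that $\bV[\beta]$ is flat over $\bP_\beta$. Decomposing along the orthogonal idempotents $e(\nu)$, the block $\bV[\beta]e(\nu)$ is $\hV_\nu=\Oh_{\mathbb{T}^n,X(\nu)}\otimes_{\cor[X_1^{\pm1},\ldots,X_n^{\pm1}]}V_\nu$, which is free over $\Oh_{\mathbb{T}^n,X(\nu)}$ since $V_\nu$ is free over $\cor[X_1^{\pm1},\ldots,X_n^{\pm1}]$; and under $\Oh_{\mathbb{T}^n,X(\nu)}\simeq\cor[[x_1,\ldots,x_n]]$ the action of $\cor[x_1,\ldots,x_n]e(\nu)$ is restricted from the flat ring map $\cor[x_1,\ldots,x_n]\hookrightarrow\cor[[x_1,\ldots,x_n]]$. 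Hence each $\bV[\beta]e(\nu)$ is flat over $\cor[x_1,\ldots,x_n]e(\nu)$, so $\bV[\beta]$ is flat over $\bP_\beta$ --- equivalently, $\bV[\beta]$ is free over $\bPh_\beta$ while $\bPh_\beta$ is flat over $\bP_\beta$. Proposition~\ref{pro:projectivity}(ii) then yields that $\bV[\beta]$ is flat over $R^J(\beta)$, whence $\F_\beta$ is exact.

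The main obstacle I anticipate is hypothesis (b): making precise the Frobenius-extension structure of $R^J(\beta)$ over $\bP_\beta$ --- in particular the explicit nondegenerate trace form and its twist $\sigma$ --- and verifying the required bimodule compatibilities. Everything else is either a verbatim application of Proposition~\ref{pro:finite global dimension} or a routine base-change and flatness check. (As an alternative, one may instead apply Proposition~\ref{pro:projectivity} to the completed pair $\bPh_\beta\hookrightarrow\bPh_\beta\otimes_{\bP_\beta}R^J(\beta)$, over which $\bV[\beta]$ is literally free, and then descend flatness along the flat algebra map $R^J(\beta)\to\bPh_\beta\otimes_{\bP_\beta}R^J(\beta)$; the Frobenius input is still needed, now obtained by base change from the statement over $\bP_\beta$.)
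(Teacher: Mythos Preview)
Your proposal is correct and follows essentially the same route as the paper: apply Proposition~\ref{pro:projectivity} to the polynomial subalgebra inside $R^J(\beta)$, invoke Proposition~\ref{pro:finite global dimension} for hypothesis (c), and use flatness of $\bV[\beta]$ over $\bP_\beta$ to conclude. The paper simply declares (a) and (b) to be ``well-known'' without elaboration, so your sketch of the Frobenius-extension argument for (b) is actually more detailed than what appears there. One minor point of precision: since $\bV[\beta]$ is a \emph{right} $R^J(\beta)$-module, the paper applies Proposition~\ref{pro:projectivity} to $A=\bP_\beta^{\opp}$ and $B=R^J(\beta)^{\opp}$; in your write-up you should either do the same or note explicitly that the anti-involution $\psi$ identifies $R^J(\beta)$ with its opposite, so that hypotheses (a)--(c) transfer.
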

\begin{proof}
Let us apply Proposition \ref{pro:projectivity} with $A=\mathbb{P}_{ \beta }^\opp$
and $B= R^J({ \beta })^\opp$.
The conditions (a) and (b) are well-known, and (c) is nothing but Proposition \ref{pro:finite global dimension}. Therefore,
$\bV[{ \beta }]$ is a flat $R^J({ \beta })^\opp$-module
since it is a flat $\mathbb{P}_{ \beta }^\opp$-module.
\end{proof}

\section{Quantum affine algebra $\UA$
and the category $\mathcal C_{J}$}

In this section, we investigate the tensor category structure
of  $\CC_{\slnh}$  ($N\ge2$)  via its
vector representation using the method introduced in the previous section.
 In the case of $\g=\slnh$, we have
$I=\{0,1,\ldots,N-1\}$ and
$$(\al_i,\al_j)=
2\delta_{i,j}-\delta(i\equiv j+1\mod N)-\delta(i\equiv j-1\mod N).$$
The base field $\cor$ is $\C(q)$.

\subsection{R-matrix for $V(\varpi_1)$ of $\UA$}

Let $V\seteq V(\varpi_1)$ be the fundamental representation of $\UA$
of fundamental weight $\varpi_1$.
Then
$V=\soplus_{k=1}^{N}\cor u_k$
with the action
\eqn
e_i(u_k)&=&\delta({k\equiv i+1\hs{-1ex}\mod N}) u_{i}, \\
f_i(u_k)&=&\delta({k\equiv i\hs{-1ex}\mod N}) u_{i+1},\\
K_iu_k&=&q^{\delta(k\equiv i\hs{-1ex}\mod N)-\delta(k\equiv i+1\hs{-1ex}\mod N)}u_k.
\eneqn
Here $u_0=u_N$.

The normalized R-matrix
$$R=\Rnorm_{V(\varpi_1), V(\varpi_1)} \col
V_{z} \otimes V_{z'}
\rightarrow V_{z'} \otimes V_{z}$$
is explicitly given by
\eq&&
\ba{l}
 R((u_i)_z\tens (u_j)_{z'})\\
\hs{5ex}=\bc \dfrac{(1-q^2)z'{}^{\delta(i>j)}z^{\delta(i<j)}}{z'-q^2z}
(u_i)_{z'}\tens (u_j)_z+
 \dfrac{q(z'-z)}{z'-q^2z}(u_j)_{z'}\tens (u_i)_z&\text{if $i\not=j$,}\\
(u_i)_{z'}\tens (u_i)_z&\text{if $i=j$.}
\ec\ea\label{eq: RV}
\eneq
 It shows that $d_{V,V}(z'/z)=z'/z-q^2$.

\smallskip

Let $\mathcal{S}=\{V\}$,
$J=\Z$ and let $X\col J\to \cor^\times$ be the
map given by $X(j)=q^{2j}$.
Then we have
\eq
d_{ij}=\delta(j=i+1)\qtext{for $i,j\in J$.}
\eneq
Then, for $i,j\in J$, we have
$$(\al_i,\al_j)=\begin{cases}-1&\text{if $i-j=\pm1$,}\\
2&\text{if $i=j$,}\\
0&\text{otherwise,}
\end{cases}$$
and
$$Q_{ij}(u,v)=\begin{cases}
\pm(u-v)&\text{if $j=i\pm1$,}\\
0&\text{if $i=j$,}\\
1&\text{otherwise.}
\end{cases}$$
Therefore the corresponding \KLR\ $R=R^J$ is of type $\mathrm{A}_{\infty}$.

\medskip

We take
$$P_J=\soplus_{a\in \Z}\Z\oep_a$$
as the weight lattice with $(\oep_a,\oep_b)=\delta_{a,b}$.
The root lattice $\rtl_J=\soplus_{i\in J}\Z\al_i$
is embedded into $P_J$ by
$\al_i=\oep_i-\oep_{i+1}$. We write as usual $\rtl_J^+$
for $\soplus_{i\in J}\Z_{\ge0}\al_i$.

Note that,  for $\beta\in \rtl^+_J$ and $\nu\in J^\beta$,  we have
$$(\tau_{a+1}\tau_a\tau_{a+1}-\tau_a\tau_{a+1}\tau_{a})e(\nu)
=\begin{cases}\pm e(\nu)&\text{if $\nu_a=\nu_{a+2}=\nu_{a+1}\mp1$,}\\
0&\text{otherwise.}
\end{cases}$$
 Also,  we have
$$P_{ij}(u,v)=(u-v)^{\delta(j=i+1)}c_{i,j}(u,v).$$
 We will choose
$c_{i,j}(u,v)$  satisfying \eqref{cond:cij} later in  Theorem \ref{thm:cij}.
Recall  that
 the functor $$\F \col \soplus_{\beta\in  \rtl^+_J} \Modg(R(\beta))\to
\Mod( \UA) $$
 defined in \eqref{eq:the functor} is exact (Theorem \ref{thm:exact}).

\bigskip

\subsection{KLR-modules in $A$ case}
Let us recall the representation theory of \KLRs\ in $A$-case.

 A pair of integers  $(a,b)$ such that $ a \leq b$
is called a {\em segment}.  The \emph{length} of $(a,b)$  is $b-a+1$.
A \emph{multisegment} is a  finite sequence of  segments.

 For a segment $(a,b)$ of length $\ell$, we define a graded $1$-dimensional
 $R( \oep_a-\oep_{b+1} )$-module
 $L(a,b)=\cor u{(a,b)}$ in $R( \oep_a-\oep_{b+1} )\gmod$ which is generated by a
vector $u{(a,b)}$ of degree $0$ with the action of $R( \oep_a-\oep_{b+1} )$ given by
 \begin{align}
x_m u{(a,b)} =0 , && \tau_k u{(a,b)} =0 , &&
e(\nu) u{(a,b)}= \begin{cases}
u{(a,b)} & \text{if $\nu=(a,a+1, \ldots, b)$,} \\
0 & \text{otherwise.}
\end{cases}
\end{align}
Note that it was denoted by $L(a,a+1,\ldots,b)$ in
\S\,\ref{subsec:R1}.
We understand that $L(a,a-1)$ is the 1-dimensional module over $R(0) = \cor$  and
the length of $(a,a-1)$ is $0$.
When $a=b$, we use the notation $L(a)$ instead of $L(a,a)$.

Recall that $w[\ell,\ell']$ denotes the element in
the symmetric group $\sym_{\ell+\ell'}$ given by
$$w[\ell,\ell'](k)=\begin{cases}\ell'+k&\text{for $1\le k\le \ell$,}\\
k-\ell&\text{for $\ell<k\le\ell+\ell'$,}
\end{cases}$$
and we write  $\tau_{\ell,\ell'}$ for
$\tau_{w[\ell,\ell']}\in R( \beta )$  with $|\beta|=\ell+\ell'$.
For example, we have $\tau_{1,\ell}=\tau_\ell\cdots\tau_1$.

 In the sequel, for $n\in\Z_{\ge0}$ and $\beta\in\rtl^+_J$
with $|\beta|=n$, we sometimes write $e(n)$ for $e(\beta)$.
 Since the proofs of the following lemmas are straightforward, we omit them.
\Lemma
Let $z$ and $z'$ be  algebraically  independent indeterminates.
 For $a<b$, set  $\ell = b-a+1$.  Then  we have
\eq
&&  \tau_{1,\ell}  (e(1)\etens\tau_{\ell-1,1})(u(a,b)_{z'}\tens u(a)_z)=
u(a,b)_{z'}\tens u(a)_z
\eneq
in $L(a,b)_{z'}\circ L(a)_z$.
\enlemma
\begin{comment}
\Proof
We shall prove it by induction on $\ell$.
If $\ell=2$ (i.e., $b=a+1$), then we have
\eqn
&&\tau_2\tau_1\tau_2(u(a,b)_{z'}\tens u(a)_z)
=(\tau_1\tau_2\tau_1+1)(u(a,b)_{z'}\tens u(a)_z)
=u(a,b)_{z'}\tens u(a)_z.
\eneqn
Now assume $\ell>2$.
Set $A_\ell= \tau_{1,\ell}   (e(1)\etens\tau_{\ell-1,1})$.
Then we have $A_\ell=\tau_\ell A_{\ell-1}\tau_\ell$.
By the induction hypothesis, we have
\eq
&&A_{\ell-1}\bl u(a,b-1)_{z'}\tens u(a)_z\tens u(b)_{z'})=
u(a,b-1)_{z'}\tens u(a)_z\tens u(b)_{z'}\label{eq:Al-1}
\eneq
in $L(a, b-1)_{z'}\circ L(a)_z\circ L(b)_{z'}$.
By applying the   homomorphism
$L(a,b-1)_{z'}\circ R_{L(a)_z,L(b)_{z'}}$
to \eqref{eq:Al-1}, we obtain
\eqn
&&A_{\ell-1}(e(\ell-1)\etens \tau_1)
\bl u(a,b-1)_{z'}\tens  u(b)_{z'}\tens u(a)_z\br\\
&&\hs{10ex}=
(e(\ell-1)\etens \tau_1)\bl u(a,b-1)_{z'}\tens  u(b)_{z'}\tens u(a)_z\br
\eneqn
because $R_{L(a)_z,L(b)_{z'}}(u(a)_z\tens u(b)_{z'})=\tau_1(u(b)_{z'}\tens u(a)_{z})$.

Hence we have
\eqn
&&A_{\ell-1}\tau_{\ell}
\bl u(a,b)_{z'}\tens u(a)_z\br=
\tau_\ell\bl u(a,b)_{z'}\tens u(a)_z\br
\eneqn
Finally, we obtain
\eqn
A_{\ell}\bl u(a,b)_{z'}\tens u(a)_z\br
&=&\tau_{\ell}A_{\ell-1}\tau_{\ell}
\bl u(a,b)_{z'}\tens u(a)_z\br\\
&=&
\tau_\ell^2\bl u(a,b)_{z'}\tens u(a)_z\br
=u(a,b)_{z'}\tens u(a)_z.
\eneqn
\QED
\end{comment}

\Lemma\label{lem:rmat.segments}
 For $a \leq b$,  we have
\eqn&&R_{L(a,b)_z,L(a,b)_{z'}}(u(a,b)_{z}\tens u(a,b)_{z'})\\
&&\hs{15ex}=
\bl(z'-z)^{b-a+1} \tau_{b-a+1,b-a+1}
+(z'-z)^{b-a}\br(u(a,b)_{z'}\tens u(a,b)_z).\eneqn
\enlemma

Some parts of the following proposition appeared in \cite{Vaz02}
in terms of modules over affine Hecke algebras.

We omit the details of its proof.
\begin{prop} \label{prop:exact sequences}
For $a \leq b$ and $a' \leq b'$,
 set $\ell=b-a+1$, $\ell'=b'-a'+1$ and $p=\sharp([a,b]\cap[a',b'])
=\max(0,\min(b',b)-\max(a,a')+1)$, $\beta=\oep_{a}-\oep_{b+1}$
and $\beta'=\oep_{a'}-\oep_{b'+1}$.
Let $s$ be the order of zeros of
$R_{L(a,b)_z,L(a',b')_{z'}}$ \ro see \eqref{def:s}\rf.
  \bnum
\item If $a'=a$ and $b'=b$, then $s=b-a$ and we have
$\rmat{L(a,b),L(a,b)}=\id_{L(a,b)\circ  L(a,b)}$.
  \item
\bna
\item
If $a \leq a' \leq b \leq b'$, then $s=b-a'$ and
 there exists a nonzero homomorphism
  \begin{align*}
   f\seteq\rmat{L(a,b),L(a',b')} : L(a,b) \circ L(a',b') \to q^{\delta_{a,a'}+\delta_{b,b'}-2}L(a',b') \circ L(a,b).
  \end{align*}
\item  Unless $a \leq a' \leq b \leq b'$,  we have  $s=p$,
$$R_{L(a,b)_z,L(a',b')_{z'}}(u(a,b)_{z}\tens u(a',b')_{z'})
=(z'-z)^{p}\tau_{\ell',\ell}(u(a',b')_{z'}\tens u(a,b)_z)$$ and
there exists a nonzero homomorphism
 \begin{align*}
   g\seteq \rmat{L(a,b),L(a',b')}: L(a,b) \circ L(a',b')\to q^{(\beta,\beta')}
 L(a',b') \circ L(a,b)
  \end{align*}
 given  by
$g\bl u(a,b)\tens u(a',b')\br=\tau_{\ell',\ell}\bl u(a',b')\tens u(a,b)\br$.
\ee

\item If $a\le a'\le b'\le b$, then
 $L(a,b) \circ L(a',b')$ is irreducible and
$$L(a,b) \circ L(a',b') \simeq q^{\delta_{a,a'}-\delta_{b,b'}}L(a',b') \circ L(a,b).$$
\item If $b'<a-1$,
then  $L(a,b) \circ L(a',b')$ is irreducible and
$$g\col L(a,b) \circ L(a',b')\isoto L(a',b') \circ L(a,b).$$
  \item If $a' < a\le b' < b$, then
we have the following exact sequence
\begin{align*}
 & 0 \To qL(a',b) \circ L(a, b') \To[\vphi]
  L(a, b) \circ L(a',b')\\
&\hs{20ex}\To[{\hs{1.5ex} g\hs{1.5ex}}] L(a',b') \circ L(a, b)\To
 q^{-1}L(a', b) \circ L(a,b')\To 0.\end{align*}
Moreover, the image of $g$ coincides with the
head of $ L(a, b)\circ L(a',b')$ and the socle of $ L(a',b') \circ L(a, b)$.
\item If $a=b'+1$, then we have an exact sequence
\begin{align*}
  0 \to q L(a',b) \To[\psi]
 L(a, b) \circ L(a',b')
\To[{g}] q^{-1}L(a',b') \circ L(a, b)\to
q^{-1} L(a', b)\rightarrow 0.
\end{align*}
Moreover, the image of ${g}$ coincides with the
head of $ L(a, b)\circ L(a',b')$ and the socle of
$ q^{-1}L(a',b') \circ L(a, b)$.
\item
$\ds{L(a,b),L(a',b')}
=(\beta,\beta')-2\delta(a\le a'\le b\le b')$.
\ee
\end{prop}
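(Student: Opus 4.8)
The plan is to derive (i), (ii) and (vii) from one explicit computation --- the value of the spectral R-matrix $R_{L(a,b)_z,L(a',b')_{z'}}$ on the cyclic vector $u(a,b)_z\tens u(a',b')_{z'}$ --- and then to obtain (iii)--(vi) by feeding that computation into the (classical) length $\le2$ structure of products of segment modules. Two configurations are already in hand: (i) is Lemma~\ref{lem:rmat.segments} (divide its formula by $(z'-z)^{b-a}$ and set $z=z'=0$ to read off $s=b-a$ and $\rmat{L(a,b),L(a,b)}=\id$), and when $[a,b]\cap[a',b']=\emptyset$ the formula of (ii)(b) with $p=0$ is Lemma~\ref{lem:expliciteR_easiest cases}. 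So the computational work lies in the overlapping and adjacent configurations.

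For these I would induct on $\ell'=b'-a'+1$. Using the surjection $L(a')_{z'}\circ\cdots\circ L(b')_{z'}\epito L(a',b')_{z'}$, the functoriality of $R$ in each argument, and Lemma~\ref{lem:Rr}(i), one writes $R_{L(a,b)_z,L(a',b')_{z'}}$ as the image of $R^{w[\ell,\ell']}$ between single boxes; the box-to-box R-matrices come straight from Lemma~\ref{lem:expliciteR}: $R_{L(i)_z,L(j)_{z'}}\bl u(i)_z\tens u(j)_{z'}\br$ equals $\tau_1\bl u(j)_{z'}\tens u(i)_z\br$ when $i\neq j$ and $\bl(z'-z)\tau_1+1\br\bl u(i)_{z'}\tens u(i)_z\br$ when $i=j$. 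Peeling off the leftmost box $L(a')$ (again Lemma~\ref{lem:expliciteR}), applying the inductive hypothesis to $R_{L(a,b)_z,L(a'+1,b')_{z'}}$, and commuting the resulting $\tau_j$'s past one another yields the closed formula. The cases are split by the relative position of $[a,b]$ and $[a',b']$; the one delicate point is the pattern $a\le a'\le b\le b'$, where the expected leading term $(z'-z)^p\tau_{\ell',\ell}\bl u(a',b')_{z'}\tens u(a,b)_z\br$ of Proposition~\ref{prop:R}(iii) drops out --- $\tau_{w[\ell',\ell]}$ annihilates the cyclic vector of $L(a',b')_{z'}\circ L(a,b)_z$ in that configuration --- leaving a term exactly one order lower, so $s=p-1=b-a'$, which is (ii)(a). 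In every other configuration the leading term survives, $s=p=\sharp([a,b]\cap[a',b'])$, and $R$ reduces to the single term displayed in (ii)(b). Since $\inp{\beta,\beta'}=p$, substituting into $\ds{L(a,b),L(a',b')}=(\beta,\beta')-2\inp{\beta,\beta'}+2s$ gives $\ds{L(a,b),L(a',b')}=(\beta,\beta')-2\,\delta(a\le a'\le b\le b')$, which is (vii).

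For the module-theoretic assertions (iii)--(vi) I would invoke the structure theory of products of segment modules over KLR algebras of type $A$ --- the graded counterpart of Vazirani's analysis for affine Hecke algebras, \cite{Vaz02} --- namely that the product is irreducible when the two segments are nested or disjoint-and-non-adjacent, and has length two, with constituents $L(a',b)\circ L(a,b')$, $L(a',b)$, etc.\ displayed above, when they are linked. Granting this, (iii) and (iv) follow because a nonzero homomorphism between simple modules is an isomorphism, so the maps $f$ of (ii)(a) and $g$ of (ii)(b) are isomorphisms, and the grading shifts $q^{\delta_{a,a'}-\delta_{b,b'}}$ in (iii), and the triviality of the shift in (iv), are read off from $(\beta,\beta')$ and $\ds{}$. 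For (v) and (vi) one exhibits, on cyclic vectors, the homomorphisms $\varphi,\psi$ onto $\Ker g$ --- $qL(a',b)\circ L(a,b')$ in (v), $qL(a',b)$ in (vi) --- and the matching descriptions of $\Coker g$, checks that $\varphi,\psi$ are injective and that the alternating sum of dimensions vanishes, so the displayed four-term sequences are exact; the dimension of $L(a,b)\circ L(a',b')$ needed here is itself computable inductively from Lemma~\ref{lem:expliciteR_easiest cases} and the lemma preceding Lemma~\ref{lem:rmat.segments}. That $\Img g$ is the head of $L(a,b)\circ L(a',b')$ and the socle of $L(a',b')\circ L(a,b)$ then follows from the length-two structure and the uniqueness up to scalar of $g$, and the remaining grading shifts come from \eqref{eq:dualconv}.

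The main obstacle is the explicit R-matrix computation in the overlapping and adjacent cases: setting up the induction so that the cancellation in the pattern $a\le a'\le b\le b'$ is transparent, and keeping accurate track of the various $\tau_w$'s and the powers of $z'-z$ across all relative positions of the two segments. A secondary difficulty is the consistent bookkeeping of grading shifts in (iii)--(vi), which the ungraded category $R^J\smod$ does not see (Remark~\ref{rmk:cij}) and which must therefore be pinned down directly. If one prefers not to re-derive the module structure, an alternative is to cite \cite{Vaz02} for (iii)--(vi) and to supply only the combinatorial translation to the KLR setting together with the grading-shift computation.
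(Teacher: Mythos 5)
Your overall architecture agrees with the paper for (i) and (iii)--(vii) (Lemma~\ref{lem:rmat.segments} for (i), \cite{Vaz02} for the length-two structure underlying (iii)--(vi), and (vii) read off from (ii) via $\ds{M,N}=(\beta,\beta')-2\inp{\beta,\beta'}+2s$ with $\inp{\beta,\beta'}=p$), but the one step you yourself flag as delicate rests on a false claim. In the configuration $a\le a'\le b\le b'$ the leading term $(z'-z)^{p}\tau_{\ell',\ell}\bl u(a',b')_{z'}\tens u(a,b)_z\br$ does \emph{not} drop out: $\tau_{w[\ell',\ell]}$ does not annihilate the cyclic vector of $L(a',b')_{z'}\circ L(a,b)_z$. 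Already in the smallest instance $a=a'=b=b'$ one has $R_{L(a)_z,L(a)_{z'}}\bl u(a)_z\tens u(a)_{z'}\br=\bl(z'-z)\tau_1+1\br\bl u(a)_{z'}\tens u(a)_z\br$, and $\tau_1\bl u(a)_{z'}\tens u(a)_z\br\ne0$ because $L(a)_{z'}\circ L(a)_z$ is free of rank $2$ over $\cor[z,z']$ with basis $\{u\tens u,\ \tau_1(u\tens u)\}$. Lemma~\ref{lem:expliciteR} and Lemma~\ref{lem:rmat.segments} exhibit the same pattern in general: the correct mechanism producing $s=p-1=b-a'$ is the appearance of an \emph{additional} term of order exactly $p-1$ in $(z'-z)$, supported on a strictly shorter element $\tau_w$ with $w\ne w[\ell',\ell]$, while the leading term survives. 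So your induction must instead establish that this extra order-$(p-1)$ term is nonzero and that no term of order $<p-1$ occurs; hunting for a cancellation of the leading term would derail the computation precisely in the case that matters.

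For what it is worth, the paper avoids the explicit R-matrix computation in case (ii)(a) altogether: it constructs a nonzero morphism $f$ of the predicted degree as a composition of the splitting maps $\xi_{a,c,b}\col L(a,b)\to q^{\delta_{a,c}+\delta_{c,b+1}-1}L(c,b)\circ L(a,c-1)$ (induced by $R_{L(a,c-1),L(c,b)}$) with an R-matrix between disjoint segments, and then identifies $f$ with $\rmat{L(a,b),L(a',b')}$ by the uniqueness statement of Lemma~\ref{lem:Rr} together with (i); the value of $s$ is then forced by the degree of $f$. For (ii)(b) it argues from Proposition~\ref{prop:R}~(iii): homogeneity of $R_{L(a,b)_z,L(a',b')_{z'}}$ kills all lower-order multiples of $\tau_{\ell',\ell}$, and a combinatorial check shows that no other $w\in\sym_{\ell',\ell}$ with $w\nu=w[\ell',\ell]\nu$ exists outside the pattern $a\le a'\le b\le b'$. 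Either route can work, but if you keep your direct induction you must replace the claimed cancellation by the correct two-term structure and verify the nonvanishing of the subleading term.
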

\Proof
The assertion (i) is noting but Lemma \ref{lem:rmat.segments}.

\smallskip
\noi
 Let us show (ii) (a).
Let $a\le b$ and $a\le c\le b+1$. Then it is easy to see that
there exists a unique morphism
$$\xi_{a,c,b}\col L(a,b)\To q^{\delta_{a,c}+\delta_{c,b+1}-1}L(c,b)\circ L(a,c-1)$$
such that the diagram
\eq
\xymatrix{
L(a,c-1)\circ L(c,b)\ar@{->>}[r]\ar[dr]_(.4){R_{L(a,c-1),L(c,b)}}
&L(a,b)\ar[d]^{\xi_{a,c,b}}\\
&q^{\delta_{a,c}+\delta_{c-1,b}-1}L(c,b)\circ L(a,c-1)}
\label{mor:xi}
\eneq
commutes.
We have explicitly
$$\xi_{a,c,b}(u(a,b))=\tau_{b-c+1,c-a}\bl u(c,b)\tens u(a,c-1)\br.$$

 Let $f$ be the composition
\eqn
&&\hs{-2ex}
L(a,b)\circ L(a',b')\\
&&\To[{\;\xi_{a,a',b}\circ \xi_{a',b+1,b'}\;}]
q^{\delta_{a,a'}-1}L(a',b)\circ L(a,a'-1)\circ
q^{\delta_{b,b'}-1} L(b+1,b')\circ L(a',b)\\
&&\To[{R_{L(a,a'-1), L(b+1,b')}}]
 q^{\delta_{a,a'}+\delta_{b,b'}-2}L(a',b)\circ L(b+1,b')
\circ L(a,a'-1)\circ L(a',b)\\
&&\To q^{\delta_{a,a'}+\delta_{b,b'}-2} L(a',b')\circ L(a,b).
\eneqn
 We can check easily that this composition
does not vanish.
Hence it coincides with
$\rmat{L(a,b),\;L(a',b')}$ by using (i) and Lemma~\ref{lem:Rr}.

\vs{2ex}

\noi
  Let us show (ii) (b).

\noi
By Proposition~\ref{prop:R}~\eqref{item:Rw},
we can write
\eqn
&&R_{L(a,b)_z, L(a',b')_{z'}}(u(a,b)_z \tensor u(a',b')_{z'})\\
&&\hs{8ex}=(z'-z)^{p}\tau_{\ell',\ell}(u(a',b')_{z'}\tens u(a,b)_z)
+a(z'-z)\tau_{\ell',\ell}(u(a',b')_{z'}\tens u(a,b)_z)\\
&&\hs{16ex}+\sum_{w\in A}g_w\tau_w (u(a',b')_{z'}\tens u(a,b)_z),
\eneqn
where $A=\set{w\in\sym_{\ell',\ell}}{w\not=w[\ell',\ell]}$,
$a(z'-z)$ is a polynomial of degree $<p$,
and $g_w\in \cor[z',z]$.

The degree of $(z'-z)^{p}\tau_{\ell',\ell}(u(a',b')_{z'}\tens u(a,b)_z)$
is equal to the degree of
$a(z'-z)\tau_{\ell',\ell}(u(a',b')_{z'}\tens u(a,b)_z)$. Hence $a(z'-z)$
should vanish.
Moreover, we may assume that $w\in A$ satisfies
$w\nu=w[\ell',\ell]\nu$ where $\nu=(a',\ldots, b',a,\ldots,b)$.
 We can easily see that
there is no such  $w$  except the case $a \le a' \le b \le b'$.
Hence we obtain (ii) (b).

\vskip 1em

The assertions (iii)-(vi) appeared in \cite[Lemma 4]{Vaz02} except the descriptions of homomorphisms.
We will describe the homomorphisms explicitly.

The  left arrow $\psi$  in (vi) is given by \eqref{mor:xi}.

The left arrow $\vphi$ in (v) is given by
$$
qL(a',b)\circ L(a,b')
\To[{\xi}]
 L(a,b)\circ L(a',a-1)\circ L(a,b')
\To L(a,b)\circ L(a',b').
$$
The right arrow in  (v) is given by
\eqn
L(a',b')\circ L(a,b)
\To[\xi]L(a',b')\circ q^{-1}L(b'+1,b)\circ L(a,b')\to
q^{-1}L(a',b)\circ L(a,b').
\eneqn
\vskip 1em

The assertion (vii) is a consequence of (ii).
\QED

\Rem
If we had chosen $Q_{i,i+1}(u,v)=v-u$, then
$r_{L(a,b),L(a,b)}$ would be $(-1)^{b-a}\id$.
\enrem

We give a total order on the set of segments as follows:
\begin{align} \label{eq:right order}
(a_1,b_1) > (a_2,b_2) && \text{if} \ a_1 > a_2 \ \text{or} \ a_1=a_2 \ \text{and} \ b_1 > b_2.
\end{align}

The following proposition was proved in \cite{KP11}.
The corresponding statement for affine Hecke algebras was proved in \cite[Theorem 2.2]{Vaz02} (see also \cite{BZ77, Z80}).
\begin{prop} \cite[Theorem 4.8, Theorem 5.1]{KP11} \label{prop:multisegments}
\bnum
\item
Let $M$ be a finite-dimensional simple  graded  $R(\ell)$-module.
Then there exist a unique pair of a multisegment
$\big ((a_1,b_1), \ldots , (a_t,b_t) \big)$ and an integer $c$ such that
\bna
\item $(a_k, b_k) \ge (a_{k+1}, b_{k+1})$ for $1 \leq k \leq t-1$,
\item $\sum_{k=1}^t \ell_k=\ell$, where $\ell_k \seteq b_k -a_k +1$,
\item
$ M \simeq q^c\hd \big(L(a_1,b_1)\circ \cdots \circ L(a_t,b_t) \big)$,
 where $\hd$ denotes the head.
\ee
\item Conversely, if  a multisegment   $\big ((a_1,b_1), \ldots , (a_t,b_t) \big)$  satisfies
$\rm (a)$ and $\rm (b)$, then
$\hd \big(L(a_1,b_1)\circ \cdots \circ L(a_t,b_t) \big)$
is a simple   graded  $R(\ell)$-module.
\ee
\end{prop}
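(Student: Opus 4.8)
The plan is to derive both assertions from the two-segment analysis in Proposition~\ref{prop:exact sequences}, using the categorification theorem (Theorem~\ref{Thm:categorification}) only as a counting device. For a multisegment $\mathbf m=((a_1,b_1),\ldots,(a_t,b_t))$ satisfying (a) write $P(\mathbf m)=L(a_1,b_1)\circ\cdots\circ L(a_t,b_t)$ and $\beta(\mathbf m)=\sum_k(\oep_{a_k}-\oep_{b_k+1})$; note that for each fixed weight there are only finitely many such $\mathbf m$, and recall the Zelevinsky partial order on them, generated by the elementary merging moves $\{(a',b'),(a,b)\}\mapsto\{(a',b),(a,b')\}$ and $\{(a',b'),(a,b)\}\mapsto\{(a',b)\}$ — exactly the moves appearing on the left-hand side of the exact sequences of Proposition~\ref{prop:exact sequences}(v),(vi).

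The heart of the matter is the following claim, to be proved by a double induction on $t$ and on $\mathbf m$ along the Zelevinsky order: if $\mathbf m$ satisfies (a) and (b) then $\hd P(\mathbf m)$ is simple and occurs with multiplicity one in $P(\mathbf m)$, and every other composition factor of $P(\mathbf m)$ is a grading shift of $L(\mathbf n):=\hd P(\mathbf n)$ for some $\mathbf n$ with $\beta(\mathbf n)=\beta(\mathbf m)$ lying strictly above $\mathbf m$ in the Zelevinsky order. For the inductive step I would strip off $L(a_1,b_1)$ and push it to the right past $L(a_2,b_2),\ldots,L(a_t,b_t)$ using Proposition~\ref{prop:exact sequences}: when the relevant pair of segments is unlinked or nested the two factors commute up to a grading shift (cases (iii), (iv), and the nested case), and when they are linked (cases (v),(vi)) the displayed four-term exact sequences, combined with the induction hypothesis applied to the strictly smaller convolutions occurring in them, show that every new composition factor is a grading shift of some $L(\mathbf n)$ with $\mathbf n$ above $\mathbf m$. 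Since the leading term is $L(\mathbf m)$ with coefficient $1$, the resulting expansion $[P(\mathbf m)]=[L(\mathbf m)]+\sum_{\mathbf n>\mathbf m}c_{\mathbf n}(q)[L(\mathbf n)]$ is unitriangular over $\A=\Z[q^{\pm1}]$; hence the classes $\{[L(\mathbf m)]:\beta(\mathbf m)=\beta\}$ are $\A$-linearly independent in $K(R(\beta)\gmod)$, so the modules $L(\mathbf m)$ of weight $\beta$ are pairwise non-isomorphic even up to grading shift. This is precisely statement (ii), and it also gives the uniqueness in (i) — the normalizing constant $c$ being pinned down because a nonzero finite-dimensional graded module is never isomorphic to a nontrivial grading shift of itself.

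It remains to prove the existence part of (i): that every finite-dimensional simple graded $R(\beta)$-module is a grading shift of some $L(\mathbf m)$ with $\beta(\mathbf m)=\beta$. The graded simples of $R(\beta)\gmod$, taken up to grading shift, form an $\A$-basis of $K(R(\beta)\gmod)$, which by Theorem~\ref{Thm:categorification} is free of rank $\dim_{\Q(q)}U^-_q(\g)_{-\beta}$. For the infinite linear quiver the positive roots of $\g$ are exactly the $\oep_a-\oep_{b+1}$ with $a\le b$, so a PBW basis of $U^-_q(\g)_{-\beta}$ is indexed by Kostant partitions of $\beta$, i.e.\ by multisets of such roots summing to $\beta$, i.e.\ by multisegments of weight $\beta$ listed in decreasing order. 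Thus the number of decreasing multisegments of weight $\beta$ equals the rank of $K(R(\beta)\gmod)$, so the linearly independent family $\{L(\mathbf m):\beta(\mathbf m)=\beta\}$ is automatically a complete set of simples.

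The main obstacle is the double induction in the second paragraph: one must choose how the two inductions interact (a merging move can decrease $t$ or leave it unchanged, while always moving up in the Zelevinsky order), check that after rebracketing every convolution on the right of the exact sequences of Proposition~\ref{prop:exact sequences}(v),(vi) is genuinely ``smaller'', and track the grading-shift exponents (the $\delta_{a,a'}+\delta_{b,b'}-2$ and the stray $q^{\pm1}$'s) carefully enough that the triangularity is exact. Everything else is either the two-segment bookkeeping already contained in Proposition~\ref{prop:exact sequences} or the purely numerical input from Theorem~\ref{Thm:categorification}; a complete treatment is carried out in \cite[Theorems 4.8 and 5.1]{KP11}, paralleling the affine Hecke algebra case \cite[Theorem 2.2]{Vaz02}.
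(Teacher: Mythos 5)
The paper does not actually prove this proposition: it is quoted verbatim from \cite[Theorems 4.8 and 5.1]{KP11}, with a pointer to \cite[Theorem 2.2]{Vaz02} for the ungraded affine Hecke algebra analogue, so there is no internal argument to compare yours with. Your outline is the standard Zelevinsky-style strategy --- unitriangularity of the standard modules $P(\mathbf{m})$ with respect to the merging order, combined with a count of simples coming from Theorem~\ref{Thm:categorification} and the identification of Kostant partitions of $\beta$ in type $A_\infty$ with decreasing multisegments --- and that skeleton is sound. The counting step is correct, and the exact sequences of Proposition~\ref{prop:exact sequences}(v),(vi) do show that, in $K(R(\beta)\gmod)$, the class $[P(\mathbf{m})]$ differs from contributions indexed by strictly larger multisegments by a single ``new'' class.

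There is, however, a genuine gap at the heart of your induction: you build into the inductive claim that $\hd P(\mathbf{m})$ is simple and occurs with multiplicity one, but the mechanism you describe --- stripping off $L(a_1,b_1)$, pushing it to the right, and reading off the four-term exact sequences --- only controls composition factors, i.e.\ identities in the Grothendieck group. Composition factors cannot detect the head: even granting an expansion $[P(\mathbf{m})]=[S]+\sum_{\mathbf{n}>\mathbf{m}}c_{\mathbf{n}}(q)[L(\mathbf{n})]$ with $S$ simple, nothing in your argument excludes that $\hd P(\mathbf{m})$ is a direct sum of $S$ with several of the $L(\mathbf{n})$'s, which would wreck both (ii) and the multiplicity-one statement. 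The missing ingredient is module-theoretic: one sorts the content of $\mathbf{m}$ into a distinguished weight $\nu$, uses the shuffle lemma to see that $e(\nu)P(\mathbf{m})$ is a simple module over the relevant parabolic subalgebra (a tensor product of Kato modules) and that it generates $P(\mathbf{m})$; hence any proper submodule $K$ satisfies $e(\nu)K=0$, so $P(\mathbf{m})$ has a unique maximal submodule and its head is simple. This is precisely the argument the paper itself carries out later, in the proof of Lemma~\ref{lem:rmat.mult}, and it is how \cite{KP11} and \cite{Vaz02} proceed. With that lemma supplied (and with the reordering/grading bookkeeping you already flag treated only up to grading shift, which suffices for linear independence over $\A$), your triangularity-plus-counting argument does close up.
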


If a multisegment $\big ((a_1,b_1), \ldots , (a_t,b_t) \big)$
satisfies the condition (a) above, then
we say that it is an {\em ordered multisegment}.
We call the  ordered  multisegment $\big ((a_k,b_k)\big)_{1\le k \le t}$
in Proposition \ref{prop:multisegments}~(i)
the {\em multisegment associated with $M$}.

The following lemma is more or less proved in \cite{Vaz02}
if we ignore the grading.
\Lemma\label{lem:rmat.mult}
Let $\big ((a_1,b_1), \ldots , (a_t,b_t) \big)$ be a multisegment
satisfying the conditions {\rm (a), (b)} in
{\rm Proposition~\ref{prop:multisegments}}.
Set $\beta_k=\oep_{a_k}-\oep_{b_k+1}$ and
$$d\seteq\sum_{1\le i<j\le t}\ds{L(a_i,b_i), L(a_j,b_j)}
=\sum_{1\le i<j\le t,\,\beta_i\not=\beta_j}(\beta_i,\beta_j).$$
 Also  set
$L=L(a_1,b_1)\circ \cdots \circ L(a_t,b_t)$ and $L'=L(a_t,b_t)\circ
\cdots \circ L(a_1,b_1)$. Then the following statements hold.
\bnum
\item $\hd(L)$
is isomorphic to $q^d\soc(L')$, where $\soc$ denotes the socle.
\item
$\bl\hd(L)\br^*\simeq q^{-2\sum_{1\le i<j\le t}\delta_{\beta_i,\beta_j}}\hd(L)$.
\item
For any $s\in\Z$ and any non-zero homomorphism $\phi\col L\to q^sL'$,
we have
$$\hd(L)\simeq \phi(L)\simeq\soc(q^sL').$$

\item We have
$$\Hom_{R(\beta)\gmod}(L,q^sL')\simeq
\begin{cases}\cor&\text{if $s=d$,} \\
0&\text{otherwise.}
\end{cases}$$
\ee
\enlemma
\begin{proof}
Let us rename the multisegment $\big ((a_1,b_1), \ldots , (a_t,b_t) \big)$ by
\begin{align*}
 &\big( (c_1,d_{1,1}),(c_1,d_{1,2}), \ldots , (c_1,d_{1,s_1}), (c_2,d_{2,1}),
(c_2,d_{2,2}), \ldots, (c_2,d_{2,s_2}), \\
 &\ldots,
(c_p,d_{p,1}), (c_p,d_{p,2}), \ldots, (c_p,d_{p, s_t}) \big)
\end{align*}
satisfying
\begin{align*}
  c_k > c_{k+1} \quad (1 \leq k < p) \quad \text{and} \quad d_{k,j}\ge d_{k,j+1}
\quad (1\le k\le p,\;1\leq j < s_k)
\end{align*}
and set
$$L^k \seteq L(c_k,d_{k,1}) \circ L(c_k,d_{k,2}) \circ \cdots \circ L(c_k,d_{k,s_k}) $$
for each $1 \leq k \leq p$.
Then $L \simeq L^1 \circ \cdots \circ L^p$.

The module $L^k$
is an irreducible $R(\gamma_{k})$-module
 by \cite[Lemma 5]{Vaz02}, where
$\gamma_{k,j} = \oep_{c_k}-\oep_{d_{k,j}+1}$ for $1 \leq j \leq s_k$
and $\gamma_k=\sum_{j=1}^{s_k}\gamma_{k,j}$. Define
$$\nu^k \seteq (\underbrace{c_k,\ldots,c_k}_{m_{k,c_k}}, \underbrace{c_k+1,\ldots,c_k+1}_{m_{k,1+c_k}}, \ldots, \underbrace{d_{k,1},\ldots,d_{k,1}}_{m_{k,d_{k,1}}})
\in I^{\gamma_k},$$
where $m_{k,j}$ denotes the number of occurrences of $j$ in
$$(c_k, c_k +1 \ldots, d_{k,1}; c_k, c_k +1, \ldots ,d_{k,2}; \ldots; c_k, c_k +1, \ldots d_{k, s_k}).$$
Then the shuffle lemma says that
$$\dim e(\nu^k) L^k = m_{k,c_k} !\; m_{k,1+c_k} ! \cdots m_{k,d_{k,1}} !. $$
Hence $e(\nu^k) L^k$ is a simple
$R(\beta_{k,c_k})\etens\cdots \etens R(\beta_{k,d_{k,1}})$-module,
and  $L^k$ is generated by $e(\nu^k) L^k$ as an $R(\gamma_{k})$-module.
Here $\beta_{k,j}=m_{k,j}\al_{j}$. Note that $\gamma_k=\sum_j\beta_{k,j}$.

On the other hand, by the shuffle lemma, we have
$$\dim e(\nu) L = \prod_{k=1}^p (m_{k,c_k} ! \cdots
m_{k,d_{k,1}} !),$$ where $\nu\seteq \nu^1 * \nu^2 * \cdots * \nu^p$
is the concatenation of the $\nu_k$'s. It follows that
$$e(\nu) L = e(\nu^1) L^1 \boxtimes \cdots \boxtimes e(\nu^p)L^p$$
is an irreducible $\bl R(\beta_{1,c_1}) \etens \cdots \etens
R(\beta_{1,d_{1,1}})\br \etens \cdots \etens \bl R(\beta_{p,c_p})
\etens \cdots \etens R(\beta_{p,d_{p,1}})\br$-module. Hence $e(\nu)
L$ is isomorphic to the tensor product of Kato modules up to a
grading shift and
\eq&& \ba{l} \dim_q e(\nu) L = q^A
\prod_{k=1}^p ([m_{k,c_k}] ! \cdots [m_{k,d_{{k,1}}}] !),
\\[2ex]
\Bigl(q^{-A}e(\nu) L\Bigr)^*
\simeq q^{-A}e(\nu) L
\ea
\label{eq:qdim}
\eneq
for some integer $A$.

Let $K$ be a submodule of $L$ such that $e(\nu) K \neq 0$. Then we have
$$e(\nu) K = e(\nu) L,$$
because $e(\nu) K$
is an $(R(\beta_{1,c_1}) \otimes \cdots \otimes R(\beta_{1,d_{1,1}})) \otimes \cdots \otimes (R(\beta_{p,c_p}) \otimes \cdots \otimes R(\beta_{p,d_{p,1}}))$-module.
Since $e(\nu^1) L^1 \boxtimes \cdots \boxtimes e(\nu^p)L^p$ generates
$L^1 \boxtimes \cdots \boxtimes L^p$ and $L^1 \boxtimes \cdots \boxtimes L^p$ generates $L$,
 $e(\nu) L$ generates $L$.
Thus $e(\nu) K$ generates $L$. It follows that for any proper
submodule $K$ of $L$, we have $e(\nu) K=0$. Hence $L$ has a unique
maximal submodule and therefore the head of $L$ is irreducible.
Moreover, we have
$$e(\nu) L
\isoto e(\nu) \hd(L).$$
By \eqref{eq:qdim}, we have
$$(\hd(L))^*\simeq q^{-2A}\hd(L).$$

\smallskip
Note that $L'\simeq q^{-B}L^*$  by \eqref{eq:dualconv},  where
$B=\sum_{1\le k<k'\le t}(\beta_k,\beta_{k'})$.
Hence $L'$
has a simple socle
and $e(\nu)\soc(L')\isoto
e(\nu) L'$.
Moreover, we have
$$\soc(L')\simeq q^{-B}\soc(L^*)  \simeq  q^{-B}(\hd(L))^*\simeq q^{-B-2A}\hd(L).$$

If $\phi\col L \rightarrow q^sL'$ is a non-zero homomorphism, then we have
$$e(\nu) \phi(L) \neq 0$$
because $e(\nu) L$ generates $L$ and hence
$e(\nu) \phi(L)$ generates $\phi(L)$.
Note that
 $e(\nu) q^sL'$ generates
the socle of $q^sL'$. Hence
we conclude that $\phi(L) = \soc (q^sL')\simeq q^{s-2A-B}\hd (L)$.
It follows that $s=B+2A$ and $\phi$ is equal to the composition
$$L\epito\hd(L)\isoto q^{B+2A}\soc(L')\monoto q^{B+2A}L'$$
up to a constant multiple.
Because
$\dim \Hom_{R(\beta) \gmod}(M,N) = \delta(M \simeq N)$ for any simple modules $M$ and $N$ in
$R(\beta) \gmod$ (see \cite[Corollary 3.19]{KL09}),
we have
\eqn
\Hom_{R(\beta) \gmod}(L, q^s L') \simeq \begin{cases}
 \cor & \text{if} \ s=B+2A, \\
 0 & \text{otherwise}.
\end{cases}
\eneqn

Now it remains to show that
$$A=-\sharp\set{(i,j)}{1\le i<j\le t, \beta_i=\beta_j}.$$

\bigskip
Set
$$  \dim_{q} e(\nu^k) L^k = q^{A_k}[m_{k,c_k}] ! [m_{k,1+c_k}] ! \cdots [m_{k,d_{k,1}}] !. $$
We have
$\prod_{s=1}^{m}\frac{(1-q^{-2s})}{(1-q^{-2})}=q^{-m(m-1)/2}[m]!$.
We can see easily that
$A_k=A'_k+A_k''$ with
\eqn
A_k'&=&-\sum_{{c_k}\le s\le d_{k,1}}\dfrac{m_{k,s}(m_{k,s}-1)}{2},\\
A''_k&=&\sharp\set{(u,v,j)}{1\le u<v\le s_k, c_k\le j\le d_{k,v},\;j<d_{k,u}}.
\eneqn
 Note that $A''_k$ is the largest degree of the Laurent polynomial
$\dim_q e(\nu^k) L^k$.

First we shall calculate $A'_k$.
We have $m_{k,s}=\sharp\set{u}{1\le u\le s_k,\,c_k\le s\le d_{k,u}}$.
Hence we have
\eqn
m_{k,s}(m_{k,s}-1)&=&\sharp\set{(u,v)}{1\le u\not=v\le s_k,\;
c_k\le s\le d_{k,u}, c_k \le s\le d_{k,v}}\\
&=&2\sharp\set{(u,v)}{1\le u<v\le s_k,\;
c_k\le s\le d_{k,u}, c_k \le s\le d_{k,v}}\\
&=&2\sharp\set{(u,v)}{1\le u<v\le s_k,\;
c_k\le s\le d_{k,v}},
\eneqn
because $d_{k,v} \le d_{k,u}$ for $1\le u<v \le s_k$.
It follows that
$$A'_k=-\sum_{1\le u<v\le s_k }(d_{k,v}- c_k +1).$$

Next we shall calculate $A''_k$.
We have
\eqn
&&A''_k=
\sharp\set{(u,v,j)}{1\le u<v\le s_k, c_k\le j\le d_{k,v}}\\
&&\hs{10ex}-\sharp\set{(u,v,j)}{1\le u<v\le s_k, j=d_{k,u}=d_{k,v}}.
\eneqn
The first term is equal to
\eqn
&&\sum_{1\le u<v\le c_k}(d_{k,v}-c_k+1)=-A'_k.
\eneqn
The second term is
$$\sharp\set{(u,v)}{1\le u<v\le s_k,\;d_{k,u}=d_{k,v}}=
\sharp\set{(i,j)}{1\le i<j\le t, \beta_i=\beta_j, a_i=c_k}.$$ Thus we
obtain $$A_k=-\sharp\set{(i,j)}{1\le i<j\le t, \beta_i=\beta_j,
a_i=c_k}$$ and hence
$$A=\sum_kA_k=-\sharp\set{(i,j)}{1\le i<j\le t, \beta_i=\beta_j}$$
as desired.
\end{proof}

\Prop \label{prop:irreducible to irreducible}
Let $M$ be a finite-dimensional  graded  simple $R(\ell)$-module and let
$$\big ((a_1,b_1), \ldots , (a_t,b_t) \big)$$
be the  ordered  multisegment associated with $M$.
Set
\eqn
&&\beta_k=\oep_{a_k}-\oep_{1+b_k}, \quad
d=\sum_{1\le i<j\le t,\,\beta_i\not=\beta_j}(\beta_i,\beta_j), \quad
L_k=L(a_k,b_k), \ \text{and} \\
&&r\seteq\rmat{L_1,\ldots, L_t}\col
L_1\circ\cdots\circ L_t\to q^d L_t\circ\cdots\circ L_1.
\eneqn
Then $M\simeq \Img (r)$.
\enprop
\Proof
By Propositions~\ref{prop:nonvan} and Lemma~\ref{lem:rmat.mult},
the morphism $r$ does not vanish and hence the result follows from
Lemma~\ref{lem:rmat.mult} (iii).
\QED

\Cor\label{cor:irred} Let $\{(a_k,b_k)\}_{1\le k\le t}$ be a sequence of segments.
If $L(a_j,b_j)\circ L(a_k,b_k)$ is simple for any $1\le j<k\le t$,
then $L(a_1,b_1)\circ\cdots \circ L(a_t, b_t)$ is simple.
\encor
\Proof
Under the assumption, $r_{L(a_j,b_j),L(a_k,b_k)}$ is an isomorphism
for any $1\le j<k\le t$.
 Hence $r\seteq r_{L(a_1,b_1),\ldots, L(a_t,b_t)}$ is an isomorphism so that $\Img (r)=q^{d}L(a_t,b_t) \circ \cdots \circ L(a_1,b_1)$.
By the above proposition, $q^d L(a_t,b_t) \circ \cdots \circ
L(a_1,b_1)$ is simple and so is $L(a_1,b_1) \circ \cdots \circ
L(a_t,b_t)$. \QED

\subsection{Properties of the functor $\F$}

 The {\em trivial representation} is the 1-dimensional $\UA$-module
on which $e_i$, $f_i$ act by $0$.
It is a unit object of the tensor category
$\UA\smod$.
For $k > N$ or $k < 0$, $V(\varpi_{k})$ is understood to be zero, and the modules
$V(\varpi_{0})$ and $V(\varpi_{N})$ are understood to be the trivial representation.

\begin{prop} \label{prop:image of fund. repns.}
Let $(a,b)$ be a segment with length $\ell\seteq b-a+1$.
 Then we have
\begin{align*}
\F(L(a,b)) \simeq V(\varpi_\ell)_{(-q)^{a+b}}\simeq\begin{cases}
0 & \text{if \/ $\ell >N$,} \\
V(\varpi_\ell)_{(-q)^{a+b}} & \text{if \/ $ 0\le \ell \le N $.}
\end{cases}
\end{align*}
\end{prop}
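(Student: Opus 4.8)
The plan is to induct on the length $\ell=b-a+1$ of the segment, building it up one box at a time and transporting the assertion through the exact tensor functor $\mathcal F$. For $\ell=0$, $L(a,a-1)$ is the rank one module over $R^J(0)=\cor$ and $\mathcal F(R^J(0))\simeq\cor$ is the trivial representation $V(\varpi_0)$ by Theorem~\ref{thm:conv to tensor}. For $\ell=1$, $L(a,b)=L(a)$ and Proposition~\ref{prop:image of tau}~(i) (specialized at $z=0$) gives $\mathcal F(L(a))\simeq V(\varpi_1)_{q^{2a}}=V(\varpi_1)_{(-q)^{a+b}}$, since $a=b$ and $q^{2a}=(-q)^{2a}$.

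For the inductive step assume the statement for all segments of length $<\ell$, with $\ell\ge2$. Apply $\mathcal F$ --- which is exact by Theorem~\ref{thm:exact} (for each $\beta$ the algebra $R^J(\beta)$ involves only finitely many vertices of the $A_\infty$ quiver, hence is a finite type $A$ KLR algebra of finite global dimension) and a tensor functor killing grading shifts by Theorem~\ref{thm:conv to tensor} --- to the four-term exact sequence of Proposition~\ref{prop:exact sequences}~(vii) for the pair of segments $(b,b)$ and $(a,b-1)$ (the linking hypothesis there reads $b=(b-1)+1$):
\[
0\to L(a,b)\to L(b)\circ L(a,b-1)\To[g] L(a,b-1)\circ L(b)\to L(a,b)\to0,\qquad g=\rmat{L(b),L(a,b-1)}.
\]
With the inductive hypothesis $\mathcal F(L(b))\simeq V(\varpi_1)_{(-q)^{2b}}$ and $\mathcal F(L(a,b-1))\simeq V(\varpi_{\ell-1})_{(-q)^{a+b-1}}$ this yields an exact sequence of $\UA$-modules
\[
0\to\mathcal F(L(a,b))\to V(\varpi_1)_{(-q)^{2b}}\tens V(\varpi_{\ell-1})_{(-q)^{a+b-1}}\To[\mathcal F(g)] V(\varpi_{\ell-1})_{(-q)^{a+b-1}}\tens V(\varpi_1)_{(-q)^{2b}}\to\mathcal F(L(a,b))\to0,
\]
so $\mathcal F(L(a,b))\simeq\ker\mathcal F(g)\simeq\Coker\mathcal F(g)$. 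If $\ell\ge N+2$ then $V(\varpi_{\ell-1})=0$ by induction, the middle terms vanish, and $\mathcal F(L(a,b))=0=V(\varpi_\ell)$. If $\ell=N+1$ then $V(\varpi_{\ell-1})=V(\varpi_N)$ is the trivial representation, $\mathcal F(g)$ is an endomorphism of the simple module $V(\varpi_1)_{(-q)^{2b}}$, nonzero by the identification below and hence invertible, so again $\mathcal F(L(a,b))=0$.

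The remaining case $2\le\ell\le N$ hinges on identifying $\mathcal F(g)$. Realizing $L(a,b-1)$ as a quotient of $L(a)\circ L(a+1)\circ\cdots\circ L(b-1)$ and combining Lemma~\ref{lem:Rr} with Proposition~\ref{prop:image of tau}~(ii) applied to the constituent one-vertex R-matrices $R_{L(i)_z,L(j)_{z'}}$ (all the relevant vertices $b,a,a+1,\dots,b-1$ are distinct, and $d_{b,a+k}=\delta(a+k=b+1)=0$ for $0\le k\le\ell-2$, so no vanishing $(z-z')^{d}$ factor is introduced), one sees that $\mathcal F(g)$ is a \emph{nonzero} scalar multiple of the specialization of $\Rnorm_{V(\varpi_1),V(\varpi_{\ell-1})}$ at $z_1=(-q)^{2b}$, $z_2=(-q)^{a+b-1}$; this specialization is regular because $z_2/z_1=(-q)^{-\ell}$ is not a zero of $d_{V(\varpi_1),V(\varpi_{\ell-1})}$. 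By Example~\ref{ex:R-matrix} with $k=1$,
\[
\Rnorm_{V(\varpi_1),V(\varpi_{\ell-1})}(z)=P_{\varpi_1+\varpi_{\ell-1}}+\frac{1-(-q)^{\ell}z}{z-(-q)^{\ell}}\,P_{\varpi_\ell},
\]
and at $z=(-q)^{-\ell}$ the second coefficient vanishes, so $\mathcal F(g)$ is a nonzero multiple of the $U_q(\mathfrak{sl}_N)$-equivariant projection onto the $V(\varpi_1+\varpi_{\ell-1})$-summand of $V(\varpi_1)\tens V(\varpi_{\ell-1})\simeq V(\varpi_\ell)\oplus V(\varpi_1+\varpi_{\ell-1})$. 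Therefore $\ker\mathcal F(g)$ is the $V(\varpi_\ell)$-summand of $V(\varpi_1)_{(-q)^{2b}}\tens V(\varpi_{\ell-1})_{(-q)^{a+b-1}}$; being a $\UA$-submodule whose restriction to $U_q(\mathfrak{sl}_N)$ is the irreducible $V(\varpi_\ell)$, it is an irreducible $\UA$-module of the form $V(\varpi_\ell)_\xi$. Finally $\xi=(-q)^{a+b}$: e.g.\ compute the $K_0$-eigenvalue on the $U_q(\mathfrak{sl}_N)$-highest weight vector of the $V(\varpi_\ell)$-summand inside $V(\varpi_1)_{q^{2a}}\tens\cdots\tens V(\varpi_1)_{q^{2b}}$ (which surjects onto $\mathcal F(L(a,b))$, since $L(a)\circ\cdots\circ L(b)$ surjects onto $L(a,b)$ by iterating Proposition~\ref{prop:exact sequences}~(vii)), or compare with the base case via the symmetric ``left-end'' recursion built from the segments $(a+1,b)$ and $(a,a)$.

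The main obstacle is the identification of $\mathcal F(g)$ in the third step: Proposition~\ref{prop:image of tau}~(ii) only computes $\mathcal F$ on R-matrices between one-vertex modules, so it must be propagated through the convolution defining $L(a,b-1)$ via Lemma~\ref{lem:Rr} and the Yang--Baxter multiplicativity and naturality of both families of R-matrices, and one must check that the resulting scalar does not vanish at the required specialization --- the distinctness of the vertices and the vanishing of the relevant $d_{ij}$'s is exactly what makes this go through, and is why the recursion is set up with a length one segment on one side. Everything else (the four-term sequence, the $U_q(\mathfrak{sl}_N)$-decomposition, reading off the kernel, and computing the evaluation parameter) is routine.
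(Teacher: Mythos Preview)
Your argument follows the paper's skeleton: induction on $\ell$, applying $\mathcal F$ to the four-term sequence of Proposition~\ref{prop:exact sequences}~(vi) (you cite (vii), but the case $a=b'+1$ is item (vi)) for the pair $(b,b)$, $(a,b-1)$. The substantive divergence is in how the kernel of $\mathcal F(g)$ is identified when $2\le\ell\le N$, and there your proof has a genuine gap.

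The paper does not compute $\mathcal F(g)$ via one-vertex R-matrices. It imports from \cite[Lemma~B.1]{AK} a parallel exact sequence of $\UA$-modules
\[
0\to V(\varpi_\ell)_{(-q)^{a+b}}\to V_{(-q)^{2b}}\otimes V(\varpi_{\ell-1})_{(-q)^{a+b-1}}\To[h] V(\varpi_{\ell-1})_{(-q)^{a+b-1}}\otimes V_{(-q)^{2b}}\to V(\varpi_\ell)_{(-q)^{a+b}}\to0
\]
with $h\ne0$, argues that $\mathcal F(g)\ne0$ (else the two middle modules would be isomorphic, which this sequence rules out), and then uses one-dimensionality of the relevant $\Hom$-space to conclude $\mathcal F(g)=c\,h$ for some $c\ne0$. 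Hence $\ker\mathcal F(g)\simeq V(\varpi_\ell)_{(-q)^{a+b}}$, with the spectral parameter supplied by \cite{AK} rather than computed. This is both shorter than threading Lemma~\ref{lem:Rr} and Proposition~\ref{prop:image of tau}~(ii) through the convolution, and it settles the evaluation parameter with no extra work.

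Your route reaches only $\ker\mathcal F(g)\simeq V(\varpi_\ell)_\xi$ for \emph{some} $\xi$, and the step ``compute the $K_0$-eigenvalue'' to pin down $\xi$ does not work: $K_0=q^{\mathsf s_0 h_0}$ acts through the classical weight in $P_{\cl}$ alone, which is the same for every $V(\varpi_\ell)_\xi$, so it carries no information about the spectral parameter. To recover $\xi$ you would have to track an $e_0$- or $f_0$-action explicitly (these are where the factor $z$ lives), or else feed in the \cite{AK} input after all. The ``left-end recursion'' alternative you mention reproduces the same undetermined parameter one step over rather than resolving it.

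For $\ell=N+1$ the paper also argues differently and avoids any nonvanishing claim for $\mathcal F(g)$: it observes that $\mathcal F(L(a,b))$ is simultaneously a quotient of $\mathcal F\bl L(a,b-1)\circ L(b)\br\simeq V_{(-q)^{2b}}$ and of $\mathcal F\bl L(a)\circ L(a+1,b)\br\simeq V_{(-q)^{2a}}$; these are non-isomorphic simple modules, so the common quotient must be zero.
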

\begin{proof}
We will show our assertion by induction on $\ell$. In the course of
the proof, we omit the grading. We write
\eqn &&
\rmat{(a,b),(a',b')}\col L(a,b)\circ L(a',b')\To L(a',b')\circ
L(a,b) \eneqn for $\rmat{L(a,b),L(a',b')}$.

When $\ell =1$, we have $\F(L(a)) \simeq V_{(-q)^{2a}}$
by Proposition \ref{prop:image of tau} (i).

\noindent Assume that $\ell \ge 2$.
Consider the following exact sequence in $R(\ell) \smod$
\begin{align*}
  0 \rightarrow L(a,b) \rightarrow L(b) \circ L(a,b-1)
  \To[{\rmat{(b),(a,b-1)}}] L(a,b-1) \circ L(b)
 \rightarrow L(a,b) \rightarrow 0
\end{align*}
given in Proposition \ref{prop:exact sequences} (vi). Applying the
exact functor $\F$ and using the induction hypothesis, we obtain an
exact sequence \eq &&\ba{l}
0\to \F(L(a,b))\to V_{(-q)^{2b}}\tens V(\varpi_{\ell-1})_{(-q)^{a+b-1}}\\[2ex]
\hs{10ex}\To[{\;\F(\rmat{(b),(a,b-1)})\;}]
V(\varpi_{\ell-1})_{(-q)^{a+b-1}}\tens V_{(-q)^{2b}
}\to\F(L(a,b))\to0.\ea
\eneq

Now assume that $\ell\le N$.
It is known that there exists an exact sequence (\cite[Lemma B.1]{AK})
\eq
&&\ba{l}
0\to V(\varpi_\ell)_{(-q)^{a+b}}
\to V_{{(-q)}^{2b}}\tens V(\varpi_{\ell-1})_{(-q)^{a+b-1}}\\[2ex]
\hs{10ex}\To[\;h\;]V(\varpi_{\ell-1})_{(-q)^{a+b-1}}\tens V_{(-q)^{2b}
}\to  V(\varpi_\ell)_{(-q)^{a+b}}\to0\ea
\eneq
such that $h$ is non-zero.
If $\F(\rmat{(b),(a,b-1)})$ vanishes, then
$V_{(-q)^{2b}}\tens V(\varpi_{\ell-1})_{(-q)^{a+b-1}}$ and
$V(\varpi_{\ell-1})_{(-q)^{a+b-1}}\tens V_{(-q)^{2b}}$ are isomorphic,
which is a contradiction.
Hence $\F(\rmat{(b),(a,b-1)})$ does not vanish.

\smallskip\noi
We know
$$\Hom_{\UA} ( V_{(-q)^{2b}}\tens V(\varpi_{\ell-1})_{(-q)^{a+b-1}},
V(\varpi_{\ell-1})_{(-q)^{a+b-1}}\tens V_{(-q)^{2b}})\simeq\cor.$$
Since $\F(\rmat{(b),(a,b-1)})$ does not vanish, it is equal to $h$
up to a constant multiple and hence $\F(L(a,b))$ is isomorphic to $
V(\varpi_\ell)_{(-q)^{a+b}}$. Thus we have proved the proposition
when $\ell\le N$.

Now assume that $\ell=N+1$. Then $\F(L(a,b-1))\simeq\F(L(a-1,b))\simeq \cor$.
Applying $\F$ to  the  epimorphism
$L(a,b-1)\circ L(b)\epito L(a,b)$,
$\F(L(a,b))$ is a quotient of
$V_{(-q)^{2b}}$. Similarly, applying
$\F$ to   the   epimorphism
$L(a)\circ L(a+1,b)\epito L(a,b)$,
$\F(L(a,b))$ is a quotient of
$V_{(-q)^{2a}}$. Since $V_{(-q)^{2b}}$ and $V_{(-q)^{2a}}$ are
simple modules and they are not isomorphic to each other, we conclude that
$\F(L(a,b))$ vanishes.

For $\ell>N+1$,
$\F(L(a,b))$ vanishes since
it is a quotient of
$$\F(L(a,a+N))\tens\F(L(a+N+1,b))\simeq0.$$
\QED

\Lemma\label{lem:intertwiners} Assume that two segments $(a,b)$ and
$(a',b')$ satisfy $(a,b)\ge (a',b')$. Set $\ell=b-a+1$,
$\ell'=b'-a'+1$, $c=(-q)^{a+b}$ and $c'=(-q)^{a'+b'}$. Then the
following statements hold.
\bnum
\item
$c'/{c}$ is not a zero of the denominator
$d_{V(\varpi_\ell),V(\varpi_{\ell'})}(z'/z)$ of
$\Rnorm_{V(\varpi_\ell),V(\varpi_{\ell'})}(z,z')$,
\item
the homomorphism
  $$\F(\rmat{L(a,b),L(a',b')}) \col
  V(\varpi_{\ell})_{c} \otimes V(\varpi_{\ell'})_{c'} \rightarrow
V(\varpi_{\ell'})_{c'} \otimes V(\varpi_{\ell})_{c} $$
   is a non-zero constant multiple of the normalized R-matrix
  $\Rnorm_{V(\varpi_{\ell}), V(\varpi_{\ell'})} (c,c')$.
\ee
\enlemma
\begin{proof}
(i) follows from \eqref{eq:zerR}
because $(b'-a'+1)-(b-a+1)\ge (a'+b')-(a+b)$.

\smallskip
\noi
By (i) and  Theorem \ref{th:zero},
the module
$V(\varpi_{\ell})_{c} \otimes V(\varpi_{\ell'})_{c'}$
is generated by the dominant extremal vector
$v_\ell\tens v_{\ell'}$.
Since
$$\dim \Big( V(\varpi_{\ell'})_{c'}
\otimes V(\varpi_{\ell})_c \Big)_{\cl (\varpi_{\ell} + \varpi_{\ell'})}=1,$$
any non-zero homomorphism from
 $ V(\varpi_{\ell})_c \otimes V(\varpi_{\ell'})_{c'}$ to
  $V(\varpi_{\ell'})_{c'} \otimes V(\varpi_{\ell})_c$
is unique up to a constant multiple. Hence it is enough to show that
$\F(\rmat{L(a,b),L(a',b')})$ does not vanish.

We may therefore assume that $r\seteq\rmat{L(a,b),L(a',b')}$
is not an isomorphism.
Then we have $a'<a\le b'<b$   or $a=b'+1$.
Applying
$\F$ to the exact sequences (v) or (vi)
in Proposition~\ref{prop:exact sequences},
we obtain an exact sequence:
\eqn
&&\hs{-3ex}  0 \rightarrow V(\varpi_{\ell_1})_{(-q)^{a'+b}} \otimes V(\varpi_{\ell_2})_{(-q)^{a+b'}}
  \rightarrow V(\varpi_{\ell})_{c}
\otimes V(\varpi_{\ell'})_{c'} \\
&&\hs{3ex} \To[{\;\F(r)\;}]
  V(\varpi_{\ell'})_{c'} \otimes V(\varpi_{\ell})_{c} \rightarrow
  V(\varpi_{\ell_1})_{(-q)^{a'+b}} \otimes V(\varpi_{\ell_2})_{(-q)^{a+b'}} \rightarrow 0,
\eneqn
where  $\ell_1 = b-a' +1$ and $\ell_2 = b'-a +1$.

Since $\Big( V(\varpi_{\ell_1})_{(-q)^{a'+b}} \otimes
V(\varpi_{\ell_2})_{(-q)^{a+b'}} \Big)_{\cl (\varpi_{\ell} + \varpi_{\ell'})} = 0$,
we deduce that $\F(r)$ is a non-zero homomorphism
and hence it is a non-zero constant multiple of the normalized R-matrix.
\end{proof}

The following theorem will play a crucial role in the rest of this section.
\begin{theorem} \label{thm:irreducible to irreducible}
Let $M$ be a finite-dimensional irreducible  graded  $R(\ell)$-module and
$$\big ((a_1,b_1), \ldots , (a_t,b_t) \big)$$
be the multisegment associated with $M$. Set $\ell_k=b_k-a_k+1$.
\bnum
\item If $\ell_k > N$ for some $1 \leq k \leq t$, then $\F(M) \simeq 0$.
\item If $\ell_k \leq N$ for all $1 \leq k \leq t$, then $\F(M)$ is irreducible.
\ee
\end{theorem}
\begin{proof}
If $\ell_k > N$ for some $k$, the assertion follows from Proposition
\ref{prop:image of fund. repns.}. Assume that $\ell_k \leq N$ for
all $1 \leq k \leq t$. We know that $M$ is isomorphic to the image
of $r\seteq\rmat{L(a_1,b_1),\ldots, L(a_t,b_t)}$. Set
$V_k=V(\varpi_k)$ and $c_k=(-q)^{a_k+b_k}$. Then
$\F(L(a_k,b_k))\simeq (V_k)_{c_k}$ and $c_{k'}/{c_k}$ is not a zero
of the denominator $d_{V_k,V_{k'}}(z'/z)$ of
$\Rnorm_{V_k,V_{k'}}(z,z')$ for $k<k'$ by
Lemma~\ref{lem:intertwiners}. Hence Theorem~\ref{th:zero} says that
the image of the R-matrix
$$R\col (V_1)_{c_1}\tens\cdots \tens (V_t)_{c_t}\to
(V_t)_{c_t}\tens\cdots \tens (V_1)_{c_1}$$ is  irreducible. On the
other hand, $\F(r)$ is equal to $R$ up to a constant multiple by
Lemma~\ref{lem:intertwiners}. Hence $\F(M)$ is irreducible.
\end{proof}

 \subsection{Quotient of  the category $R\gmod$}
Set $\As_{\ell} = R({\ell}) \gmod$ and set $\As = \soplus_{{\ell} \ge 0} \As_{\ell}$.
Similarly, we  define $\Ab_\ell$ and
$\Ab$ by $\Ab_\ell=\Modg(R(\ell))$ and $\Ab=\soplus_{\ell\in\Z_{\ge0}}\Ab_\ell$.
Then we have a functor $\mathcal F =\bigoplus_{\ell \ge 0} \F_\ell\col \Ab \rightarrow
\Mod(\UA)$,
where $\mathcal F_{\ell}$ is the functor from $\Ab_{\ell}$ to $\Mod(\UA)$ given in \eqref{eq:the functor}.

Let $\Ss$ be the smallest Serre subcategory
of $\As$ (see Appendix~\ref{app:Serre}) such that
\eq&&
\parbox{70ex}{
\begin{enumerate}
\item $\Ss$ contains $L(a, a+N)$ for any $a\in\Z$,
\item $X \circ Y, \ Y \circ X \in \Ss$
for all $X \in \As$ and $Y \in \Ss$.
\end{enumerate}}
\eneq
Note that $\Ss$ contains $L(a,b)$ if $b\ge a+N$.

Let us denote by $\As/ \Ss$ the quotient category of $\As$ relative to $\Ss$
and denote by $\mathcal Q\col \As \rightarrow \As/ \Ss$ the canonical functor.
Since $\F$ sends $\Ss$ to $0$,
the functor $\F \col \As \rightarrow
\UA \smod$ factors through $\mathcal Q$
by Theorem \ref{thm:quotient category} \eqref{q6}:
\eqn&&
\xymatrix@C=6ex@R=4ex{\As\ar[r]^{\mathcal{Q}}\ar[dr]_-{\F}
&\As/ \Ss\ar[d]^-{\F'}\\
&{\UA\smod}
}
\eneqn
for a functor $\F'\col \As/\Ss\to\UA \smod$.

\smallskip

Note that $\As$ and $\As/\Ss$ are tensor categories with the
convolution as tensor product. The module $R(0)\simeq \cor$ is a
unit object. Note also that $Q\seteq q R(0)$ is an invertible
central object of $\As/\Ss$ and $X\mapsto  Q\circ X\simeq X\circ Q$
coincides with the grading shift functor. The functors $\mathcal Q$,
$\F$ and $\F'$ are tensor functors.

\smallskip

Similarly, we define $\Sb$ as the smallest Serre subcategory of $\Ab$ such that
\eq&&
\parbox{70ex}{
\begin{enumerate}
\item $\Sb$ contains $L(a, a+N)$,
\item $X \circ Y, \ Y \circ X \in \Sb$
for all $X \in \Ab$, $Y \in \Sb$,
\item $\Sb$ is stable under (not necessarily finite) direct sums.
\end{enumerate}}
\eneq
Then we can easily see that $\Sb\cap \As=\Ss$ and hence we have
\eqn
&&\text{The functor $\As/\Ss\to\Ab/\Sb$ is fully faithful.}
\eneqn

\begin{prop}\hfill

\bna
\item If an object $X$ is simple in $\As / \Ss$, then
there exists a  simple object  $M$ in $\As$ satisfying

\bni
\item $\mathcal Q(M) \simeq X$,
\item $b_k-a_k+1 \leq N$ for $1 \leq k \leq r$, where
$\big ( (a_1,b_1), \ldots , (a_r,b_r) \big)$ is the multisegment associated with $M$.
\ee
\item Let $\big ( (a_1,b_1), \ldots , (a_r,b_r) \big)$  be the multisegment associated with a simple  object  $M$ in $\As$.
If $b_k-a_k+1 \leq N$ for $1 \leq k \leq r$, then
$\mathcal Q(M)$ is simple in $\As / \Ss$.
\ee
\begin{proof}
(a) If $X$ is simple in $\As / \Ss$, then there exists an irreducible module
$M \in \As$ such that $\mathcal Q(M) \simeq X$
by Proposition \ref{prop:simples in quotient cat.} (b).
Let $\big ( (a_1,b_1), \ldots , (a_r,b_r) \big)$ be the multisegment associated with $M$. Then
$ M \simeq \hd \big(L(a_1,b_1)\circ \cdots \circ L(a_r,b_r) \big)$
in $R(\ell) \smod$, by Proposition \ref{prop:multisegments}.
If $b_k-a_k+1 > N$ for some $1 \leq k \leq r$, then $\mathcal Q(M)=0$
by the definition of $\Ss$.
Since $X \simeq \mathcal Q(M)$ is simple, it is a contradiction.

\smallskip\noi
(b)\quad Since $M$ is irreducible, $\mathcal Q(M)$ is zero or simple in $\As / \Ss$.
If $ M \simeq \hd \big(L(a_1,b_1)\circ \cdots \circ L(a_r,b_r) \big)$ and $b_k-a_k +1 \leq N$ for all $1 \leq k \leq r$, then $\F(M) \not\simeq0$
by Theorem \ref{thm:irreducible to irreducible}.
It follows that $\mathcal Q(M) \not\simeq0$.
\end{proof}
\end{prop}

We obtain the following corollary as an immediate consequence.
\begin{corollary}
The functor $\F'\col\As/\Ss\to \UA\smod$ sends simple objects in
$\As/\Ss$ to simple objects in $\UA\smod$.
\end{corollary}

\subsection{The category $\T'_J$}

 Since   all the images of $L(a,a+N-1)$  under $\F'$  are isomorphic
to the trivial representation of $\UA$,
 we can localize  $\As/\Ss$  one step further by
using Appendix~ \ref{app:Local_tensor_categories}.

Set
\begin{equation}
L_a\seteq L(a, a+N-1)  \quad \text{and} \quad
  u_a\seteq u(a,a+N-1)\in L_a
\quad\text{for} \  a \in \Z.
\end{equation}
Then $\F(L_a)$ is isomorphic to the trivial representation of
$\UA$.

The following proposition will play a central role in the rest of this section.

\begin{prop}  Let $a,j\in\Z$ and set
\eqn
p&=&(\oep_a-\oep_{a+N},\al_j)-2\delta(a\le j\le a+N-1)\\
&=&-\delta_{j,a}-\delta_{j,a-1}-\delta_{j,a+N-1}-
\delta_{j,a+N}-2\delta(a<j<a+N-1).\eneqn
\bnum
    \item  The image of the morphism $R_{L_a, L(j)_z} \col L_a \circ L(j)_z \to
q^{p} L(j)_z \circ L_a$ is contained in
\eqn q^{p}z^{\delta({a\le j<a+N-1})}L(j)_z \circ L_a.\eneqn
\item  The image of the morphism $R_{L(j)_z, L_a } \col L(j)_z \circ L_a \to
q^{p}L_a \circ L(j)_z$ is contained in
\eqn q^{p}z^{\delta(a<j\le a+N-1)} L_a \circ L(j)_z.\eneqn
\item If $j \neq a-1,a+N $, then the morphisms
$$z^{-\delta(a\le j<a+N-1)}R_{L_a, L(j)_z}\col L_a \circ L(j)_z
\to  q^{\delta_{j,a}-\delta_{j,a+N-1}}L(j)_z \circ L_a$$
and
$$(-1)^{\delta( a < j \le a+N-1 )}z^{-\delta(a< j\le a+N-1)}R_{L(j)_z,L_a }\col
 q^{\delta_{j,a}-\delta_{j,a+N-1}}L(j)_z \circ L_a\to L_a \circ L(j)_z $$
are isomorphisms and the inverses to each other.
\item If $j=a-1$, then
we have  a commutative diagram with an exact row:
$$
\scalebox{.79}{\xymatrix@C=10ex{
0\ar[r]&{ L_a \circ L(a-1)_z\rule[-1.2ex]{0ex}{1.5ex}}
\ar[r]^-{R_{L_a, L(a-1)_z}}\ar@{^{(}->}[dr]&
q^{-1}L(a-1)_z\circ L_a\ar[r] \ar@{>->}[d]^{z^{-1}R_{L(a-1)_z, L_a}}
& q^{-1} L(a-1,a+N-1)\ar[r]&0\\
&&L_a\circ z^{-1}L(a-1)_z\,.}}
$$
\item  If $j=a+N$,
we have  a commutative diagram with an exact row:

\eqn&&\hs{-4ex}
\scalebox{.85}{\xymatrix@C=10ex{
0\ar[r]&{qL(a+N)_z \circ L_a\;\rule[-1.2ex]{0ex}{1.5ex}}\ar[r]^{qR_{L(a+N)_z,L_a}}\ar@{^{(}->}[dr]
& L_a \circ L(a+N)_z \ar[r]
\ar@{>->}[d] ^{-z^{-1}R_{L_a, L(a+N)_z}}&L(a,a+N)\ar[r]&0\\
&&qz^{-1}L(a+N)_z \circ L_a\,.} } \eneqn \ee \enprop
\Proof (i) and
(ii) immediately follow from Lemma~\ref{lem:expliciteR}.

\smallskip
\noi
(iii)\quad For any $j\in \Z$,
 set
$h_j(z)=\prod_{a\le k\le a+N-1, k\not=j}Q_{j,k}(z,0)$.
Then we can easily see  that
$$h_j(z)=\begin{cases}-z^2&\text{if $a<j<a+N-1$,}\\
(-1)^{\delta(j=a+N-1,a+N)}z&\text{if  $j=a-1,a,a+N-1,a+N$,}\\
1&\text{otherwise}
\end{cases}
$$
which can be rewritten as
$$h_j(z)=(-1)^{\delta(a < j \le a+N-1)}z^{\delta(a\le j<a+N-1)+\delta(a<j\le a+N-1)}$$
 for $j \ne a-1, a+N$.
Then Lemma~\ref{lem:ga}~  \eqref{ga6}  implies
\eq
&&
\ba{l}R_{L_a, L(j)_z}\circ R_{L(j)_z,L_a}=h_j(z)\id_{L(j)_z \circ  L_a},
\\[2ex]
 R_{L(j)_z,L_a}\circ R_{L_a, L(j)_z}=h_j(z)\id_{L_a \circ L(j)_z}.
\ea
\label{eq:RR=1}
\eneq
Hence we obtain (iii).

\medskip\noi
(iv)\quad By Lemma~\ref{lem:expliciteR}, we have
$$R_{L_a, L(a-1)_z }(u_a\tens u(a-1)_z)
=\tau_N\cdots \tau_1\bl u(a-1)_z\tens u_a\br.$$

\hs{-1ex} On the other hand, we have
$$(\tau_2\cdots\tau_N)\Bigl(\tau_N\cdots \tau_1\bl u(a-1)_z\tens u_a\br\Bigr)
=\tau_1\bl u(a-1)_z\tens u_a\br,$$ which implies
$$\Img(R_{L_a, L(a-1)_z })=R(  \oep_{a-1} - \oep_{a+N}  )\tau_1\bl u(a-1)_z\tens u_a\br.$$
 Note that we  have $\tau_1^2\bl u(a-1)_z\tens u_a\br=x_1\bl u(a-1)_z\tens u_a\br$.
 Therefore  we obtain
\eqn
&&\Ker\bl L(a-1)_z\circ L_a\to L(a-1,a+N-1)\br\\
&&\hs{2ex}=R(  \oep_{a-1} - \oep_{a+N} )x_1\bl u(a-1)_z\tens u_a\br
+R(  \oep_{a-1} - \oep_{a+N} )\tau_1\bl u(a-1)_z\tens u_a\br \\
&&\hs{2ex}=\Img(R_{L_a, L(a-1)_z }).
\eneqn
The other parts are derived from
$R_{L(a-1)_z, L_a}\circ R_{L_a,L(a-1)_z}=z \id$
which is obtained by \eqref{eq:RR=1}.

\smallskip\noi
(v) is proved similarly to (vi).
\QED

\medskip
Define an abelian group homomorphism
\begin{align*}
c_a \col \rootl_J \rightarrow \Z
\end{align*}
as
\begin{align}
    c_a(\alpha_j) \seteq (\oep_a+\oep_{a+N},\al_j)=
  \begin{cases}
0 & \text{if} \ j \neq a, a-1,a+N-1, a+N, \\
-1 & \text{if} \ j = a-1, \\
1 & \text{if} \ j = a, \\
-1 & \text{if} \ j = a+N-1, \\
1 & \text{if} \ j = a+N.
  \end{cases}
\end{align}
Then, by the preceding proposition,
we obtain a homomorphism
\eq
&&\ba{l}
z^{-\delta(a\le j < a+N-1)-\delta_{j,a+N}}
R_{L_a,L(j)_z}\col L_a\circ L(j)_z\to q^{c_a(\al_j)}
z^{-1}L(j)_z\circ L_a.
\ea\label{eq:int:shift}
\eneq
For $a,j\in\Z$, set
\eq \label{eq:faj}
&&f_{a,j}(z)=(-1)^{\delta_{j,a+N}}z^{-\delta(a\le j<a+N-1)-\delta_{j,a+N}}.
\eneq

Then we  obtain   the following corollary.
\Cor \label{cor:oldR} For any $a,j\in J$, the $R(N+1)$-module homomorphism
$$f_{a,j}(z)R_{L_a,L(j)_z}\col
L_a\circ L(j)_z\To
q^{c_a(\al_j)}z^{-1}L(j)_z\circ L_a$$
induces an isomorphism
$$f_{a,j}(z)R_{L_a,L(j)_z}\col
L_a\circ L(j)_z\To
q^{c_a(\al_j)} L(j)_z\circ L_a$$
in $\Ab/\Sb$.
\encor

\Rem
 To make Corollary~\ref{cor:oldR} hold,
it is enough to take
$$f_{a,j}(z)=cz^{-\delta(a\le j<a+N-1)-\delta_{j,a+N}}$$ for an arbitrary $c\in\cor^{\times}$.
We take $(-1)^{\delta_{j,a+N}}$ as $c$  so  that
(ii) and (iii) in Theorem~\ref{thm:L_a commutes with X} below hold.
\enrem

For $\beta\in \rtl_J^+$, we define commutative algebras
\eqn
&&  \bP_{\beta} \seteq\soplus_{\nu\in J^\beta}\cor[x_1,\ldots, x_\ell]e(\nu)\subset R(\beta), \\
&& K(\beta)\seteq\soplus_{\nu\in J^\beta}\cor[x_1^{\pm1},\ldots, x_\ell^{\pm1}]e(\nu), \text{\ and}\\
&& \KR(\beta)\seteq K(\beta)\tens_{ \bP_{\beta} }R(\beta)\supset  R(\beta),
\eneqn
where $\ell=|\beta|$.
 Set
\eqn
&&f_{a,\beta}=\sum_{\nu\in I^\beta}\prod_{k=1}^{\ell}f_{a,\nu_k}(x_k)e(\nu)
\in K(\beta)\subset \KR(\beta).
\eneqn
It belongs to the center of $\KR(\beta)$.
Hence we may consider $f_{a,\beta}$
as an $R(\beta)$-module endomorphism of
$\KR(\beta)$.

Since
$R(\beta)\simeq
\soplus_{\nu\in J^\beta}(L(\nu_1))_{z_1}\circ \cdots\circ (L(\nu_\ell))_{z_\ell}$,
we can apply the preceding proposition to study $L_a\circ R(\beta)$ and
$R(\beta)\circ L_a$.
Then Corollary~\ref{cor:oldR} yields the following proposition.

\Prop\label{prop:risom}
 For any $\beta\in\rtl_J^+$,
the $R(\oep_a-\oep_{a+N}+\beta)$-module homomorphism
$f_{a,\beta}R_{L_a, R_K(\beta)}\col L_a\circ \KR(\beta)\to
q^{c_a(\beta)}\KR(\beta)\circ L_a$
induces an isomorphism
$$L_a\circ R(\beta)\isoto q^{c_a(\beta)}R(\beta)\circ L_a$$
in $\Ab/\Sb$.
\enprop

\Proof By  Corollary~\ref{cor:oldR},   the assertion holds for
$|\beta|=1$. The general case immediately follows from this since
$\Ab/\Sb$ is a tensor category with the convolution $\circ$ as
tensor product by Proposition \ref{prop:quot tensor}. \QED

Note that the morphism
 $f_{a,\beta}R_{L_a,R(\beta)}\col L_a\circ R(\beta)\isoto q^{c_a(\beta)}R(\beta)\circ L_a$
in  $\Ab/\Sb$
commutes with the right action of $R(\beta)$.

\Lemma
Let $S$ be the automorphism of $P_J=\soplus_{a\in\Z}\Z\oep_a$
given by $S(\oep_a)=\oep_{a+N}$.
We define the bilinear form $B$ on $P_J$ by
\eq
&&B(x,y)=-\sum_{k>0}(S^kx,y)\qtext{for $x,y\in P_J$.}
\label{def:S}
\eneq
Then we have
$$c_a(x)=B(x,\oep_a-\oep_{a+N})-B(\oep_a-\oep_{a+N},x)$$
for any $x\in\rtl_J$.
\enlemma

\Proof
Set
 $\beta_a=\oep_a-\oep_{a+N}$.
Then we have
$$B(\beta_a,x)=-(\oep_{a+N},x).$$
On the other hand, $\sum_{k\in\Z}(x,S^k\beta_a)=0$ implies that
$$B(x,\beta_a)=-\sum_{k>0}(S^kx,\beta_a)
=-\sum_{k>0}(x,S^{-k}\beta_a)=\sum_{k\ge0}(x,S^k\beta_a)=(x,\oep_a).$$
\QED

For $\al\in\rtl^+_J$, set $\As_\al=R(\al)\gmod$ and
$\Ss_\al=\Ss\cap\As_\al$. Then $\Ss_\al$ is a Serre subcategory of
$\As_\al$ and we have
$\As/\Ss=\soplus_{\al\in\rtl^+_J}(\As/\Ss)_\al$, where
$(\As/\Ss)_\al=\As_\al/\Ss_\al$. Then Proposition~\ref{prop:risom}
yields an isomorphism
$$ q^{B(\beta_a,\al)} L_a\circ X\isoto  q^{B(\al,\beta_a)} X\circ L_a$$
in $\As/\Ss$  functorial in $X\in (\As/\Ss)_{\al}$.
\Def
We define the new tensor product
$\nconv\col\Ab/\Sb\times \Ab/\Sb\to\Ab/\Sb$ by
\eqn
X\nconv Y=q^{B(\al,\beta)} X\circ Y\simeq Q^{\tens B(\al,\beta)}\circ X\circ Y,
\eneqn
 where  $X\in(\Ab/\Sb)_{\al},$ $Y\in(\Ab/\Sb)_{\beta}$ and $Q=q\,\one$.
\edf Then $\Ab/\Sb$ as well as $\As/\Ss$ is endowed with a new
structure of tensor category by $\nconv$ as shown in
Appendix~\ref{app:twist}. With this tensor category structure,
Proposition~\ref{prop:risom} can be rephrased as follows.
\Lemma\label{lem:com} For any $a\in J$ and $\beta\in\rtl^+_J$, the
$\bl R(\oep_a-\oep_{a+N-1}+\beta),R(\beta)\br$- bimodule
homomorphism
$$f_{a,\beta}R_{L_a,\KR(\beta)}\col
L_a\nconv \KR(\beta)\To
\KR(\beta)\nconv L_a$$
induces an isomorphism
$$f_{a,\beta}R_{L_a,R(\beta)}\col
L_a\nconv R(\beta)\To
R(\beta)\nconv L_a$$
in $\Ab/\Sb$  which commutes
with  the right actions of $R(\beta)$.
\enlemma

\begin{theorem} \label{thm:L_a commutes with X}
The family $\{L_a\}_{a\in J}$
is a commuting family of central objects in $\As/\Ss$ \ro see
{\rm \S \ref{app:Local_tensor_categories}}\rf.
Namely,  the following statements  hold.
\bnum
\item
$L_a$ is a central object   in  $\As/\Ss$ \ro see
{\rm\S\,\ref{sec:central}}\rf; i.e., \bna
\item
$f_{a,j}(z)R_{L_a,L(j)_z}$ induces an isomorphism in $\As/\Ss$
$$R_a(X)\col L_a\nconv X\isoto X\nconv L_a$$
 functorial in  $X\in\As/\Ss$,
\item the diagram
$$\xymatrix@C=10ex
{L_a\nconv X\nconv Y\ar[r]^{R_a(X)\nconv Y}\ar[dr]_-{R_a(X\nconv Y)\hs{2ex} }&
X\nconv L_a\nconv Y\ar[d]^{X\nconv R_a(Y)}\\
& X\nconv Y \nconv L_a }
$$ commutes in $\As/\Ss$ for any $X,Y\in\As/\Ss$.
\ee
\item The  isomorphism
$R_a(L_a)\col L_a\nconv L_a\isoto L_a\nconv L_a$
coincides with $\id_{L_a\nconv L_a}$ in $\As/\Ss$.
\item
For $a,b\in\Z$,
 the  isomorphisms
\eqn
 R_a(L_b)\col L_a\nconv L_b\isoto L_b\nconv L_a \ \text{and} \ R_b(L_a)\col L_b\nconv L_a\isoto
L_a\nconv L_b
\eneqn

 in $\As/\Ss$ are  the inverses  to each other.
\ee
\enth

\medskip
\noi
\Proof (i)\,(a) follows from Lemma~\ref{lem:com} and the fact that
$L_a\nconv X\simeq\bl L_a\nconv R(\beta)\br\tens_{R(\beta)} X$
and
$X\nconv L_a\simeq\bl R(\beta)\nconv L_a\br\tens_{R(\beta)} X$.

\medskip

\noi
(i)\,(b)
\ For $\beta,\gamma\in\rootl_J^+$  ($|\beta|=\ell, |\gamma| =\ell'$),  we have
\eqn
&&\bl R(\beta) \nconv R_a(R(\gamma))\br \circ \bl R_a(R(\beta))\nconv R(\gamma)\br
\bl u_a\tensor e(\beta) \tensor e(\gamma)\br \\
&&\hs{5ex}= \bl R(\beta) \nconv R_a(R(\gamma))\br
\varphi_{\ell,N} f_{a,\beta}\bl e(\beta) \tensor u_a \tens e(\gamma)\br \\
&&\hs{5ex}=\varphi_{\ell,N} f_{a,\beta}
\bl e(\beta)\tens
\vphi_{\ell',N} f_{a,\gamma}(e(\gamma)\tensor u_a)\br\\
&&\hs{5ex}=\varphi_{\ell+\ell',N}
(f_{a,\beta}  \etens f_{a,\gamma})(e(\beta) \tensor e(\gamma) \tensor u_a)
\eneqn
and
\begin{align*}
&R_a(R(\beta)\nconv R(\gamma))(u_a \tensor e(\beta) \tensor e(\gamma))
= \varphi_{\ell+\ell',N} f_{a,\beta+\gamma} (e(\beta) \tensor e(\gamma) \tensor u_a).
\end{align*}
Since
$$f_{a,\beta+\gamma} e(\beta) \etens e(\gamma)=
(f_{a,\beta} \etens f_{a,\gamma})  e(\beta) \etens e(\gamma),$$
we obtain
$$R_a(R(\beta)\nconv R(\gamma))=\bl R(\beta) \nconv R_a(R(\gamma))\br
\cdot \bl R_a(R(\beta))\nconv R(\gamma)\br.$$
We obtain (i)\,(b) by applying
$\tens_{R(\beta)}X$ and $\tens_{R(\gamma)}Y$
for $X\in\As_\beta$ and $Y\in \As_\gamma$.

\bigskip
If $X \in (\AA /S)_\beta$, we have $R_a(X_z)|_{z=0} = R_a(X)$.
If $|\beta|=\ell$ and $x_1, \ldots, x_\ell$ act by $0$ on $X$, then we have
\begin{align*}
  R_a(X_z)(u_a \tensor x_z) = &
   f_{a,\beta}(z) R_{L_a,X_z}(u_a \tensor x_z)
\end{align*}
for $x \in X$, where $f_{a,\beta}(z) \seteq f_{a,\beta}|_{x_1=\cdots=x_\ell=z}$.

\vs{2ex}\noi
(ii)\quad
When  $X=L_a$,  we have $f_{a,\beta}(z)= z^{-(N-1)}$ and hence
$$R_a(L_a)=R_a((L_a)_z)|_{z=0}=r_{L_a,L_a}=\id_{L_a}$$
by Proposition \ref{prop:exact sequences} (i).

\smallskip\noi
(iii)\quad By (ii), we may assume that $a<b$.
It follows from Lemma \ref{lem:ga}~  \eqref{ga6}  that
$$R_{(L_b)_{z'}, (L_a)_z}\circ R_{(L_a)_z,(L_b)_{z'}}
=\prod_{ \substack{{a} \le i\le a+N-1, \\ {b} \le j\le b+N-1,\,i\not=j} }
Q_{ij}(z,z').$$

Set $\beta_a=\oep_{a}-\oep_{a+N}$
Then
$$
f_{a,\beta_b}(z'-z)\ R_{(L_a)_z,(L_b)_{z'}}
\col(L_a)_z\nconv (L_b)_{z'}\to
(L_b)_{z'}\nconv (L_a)_{z}$$
and
$$
f_{b,\beta_a}(z-z')\ R_{(L_b)_{z'},(L_a)_{z}} \col
(L_b)_{z'}\nconv (L_a)_{z}
\to (L_a)_z\nconv (L_b)_{z'}$$
specialize to
 $R_a(L_b) \colon L_a \nconv L_b \isoto L_b \nconv L_a $ and $R_b(L_a) \colon L_b \nconv L_a \isoto L_a \nconv L_b$.
 Note that $f_{a,\beta_b}(z)=\prod_{j=b}^{b+N-1} f_{a,j}(z)$.
Hence,  to prove our claim,  it is enough to show that
\eq
f_{a,\beta_b}(z'-z)f_{b,\beta_a}(z-z')
\prod_{\substack{{a} \le i\le a+N-1, \\ {b} \le j\le b+N-1,\,i\not=j}}
Q_{ij}(z,z')=1.\label{eq:ffQ}
\eneq

Set
\eqn
A(a,b)&=&\set{i}{a\le i+1\le a+N-1,\; b\le i\le b+N-1}\\
&=&\set{i}{a-1\le i\le a+N-2,\; b\le i\le b+N-1}.
\eneqn
Then we have
$$\prod_{\substack{{a} \le i\le a+N-1, \\ {b} \le j\le b+N-1,\,i\not=j}}
Q_{ij}(z,z')
=(z'-z)^{\sharp A(a,b)}(z-z')^{\sharp A(b,a)}.
$$
 Similarly,  set
$$B(a,b)=\set{i}{a\le i\le a+N,\;i\not=a+N-1,\;b\le i\le b+N-1}.$$
Then we have
$$f_{a,\beta_b}(z'-z)=
(-1)^{\delta(b\le a+N\le b+N-1)}(z'-z)^{-\sharp B(a,b)}.$$
 Therefore,  we obtain
\eqn\sharp A(a,b)-\sharp B(a,b)&=&\delta(b\le a-1\le b+N-1)-\delta(b\le a+N\le b+N-1)\\
&=&\delta(1\le a-b\le N)-\delta(1\le b-a\le N),
\eneqn
 which proves  \eqref{eq:ffQ}.
\end{proof}

By the preceding theorem, $\{(L_a, R_a)\}_{a \in J}$
forms a commuting family
of central objects in $(\AA / \Ss , \nconv)$
( \S\;\ref{app:Local_tensor_categories}).
Following Appendix \ref{app:Local_tensor_categories}, we localize $(\AA / \Ss , \nconv)$ by this commuting family.
Let us denote by  $\T'_J$
the resulting category $(\AA / \Ss)[L_a^{\nconv -1}\mid a\in J]$ .
Let $\Upsilon \colon \AA / \Ss \to \T'_J$ be the  projection functor.
We denote by $\T_J$ the tensor category
$(\As / \Ss)[L_a\simeq\one\mid a\in J]$ and by $\Xi \col
\T'_J \to \T_J$ the canonical functor (see \S\;\ref{app:graded}
and the remark below).
Thus we have a chain of  functors 
$$\As \To[\ {\mathcal Q}\ ]\As/\Ss\To[\ \Upsilon\ ] (\AA / \Ss)[L_a^{\nconv -1}\mid a\in J]
\To[\ \Xi\ ]
(\As / \Ss)[L_a\simeq\one\mid a\in J].$$

\Rem
Note that $\As$, $\As/\Ss$ and $\T'_J$ are $\rtl_J$-graded
(namely, $\T'_J$ has a decomposition $\T'_J=\soplus_{\al\in\rtl_J}(\T'_J)_\al$, etc.).
The category $\T_J$ is $\rtl_{J,N}$-graded   with
$\rtl_{J,N}\seteq\rtl_{J}/\sum_{a\in\Z}\Z\beta_a$,
 where $\beta_a=\eps_a-\eps_{a+N}$.
Note that $\rtl_{J,N}\simeq\soplus_{k=1}^{N-1}\Z\al_k$.

\enrem

\subsection{Rigidity of  the tensor categories  $\T'_J$ and $\T_J$}
In this subsection,
we will show that the tensor category $\T'_J$ is rigid;
i.e., every object in $\T'_J$ has a left dual and a right dual.
 The rigidity of  $\T_J$ follows from this fact.

Let $\ell$ be a  non-negative  integer and $a\in J$. We set
$\beta_a = \oep_a -\oep_{a+N}$ and $\gamma_a=\beta_a -\alpha_a=
\oep_{a+1}-\oep_{a+N}$.  Set
\begin{align*}
  K_\ell(a) \seteq e(\alpha_a,\gamma_a+\ell\beta_a) L_a^{\circ (\ell+1)}
\in R(\gamma_a+ \ell \beta_a) \gmod.
\end{align*}

By the shuffle lemma, we know   $$L_a^{\circ (\ell+1)} = e(\alpha_a, \gamma_a + \ell \beta_a) L_a^{\circ (\ell+1)}.$$
Hence $K_\ell(a)$ is isomorphic to $L_a^{\circ (\ell+1)}$ as a vector space.
For example, we have
$$K_0(a) \simeq L(a+1, a+N-1) \in R(\gamma_a) \gmod.$$
 Let
$$L_\ell(a) \seteq L(a)_z / z^{\ell+1} L(a)_z \in R(\alpha_a) \gmod.$$
We denote by $u_\ell(a)\in L_\ell(a)$ the image of $u(a)_z\in L(a)_z$.

\Prop \label{prop:rigid}
For $\ell\ge 0$,
let us denote by $z$
the  $R(\gamma_a+\ell \beta_a)$-module endomorphism of $K_\ell (a)$
given by the action of $x_1$ on $L_a^{\circ(\ell+1)}$.
Then we have
\bni
\item $z^{\ell+1}=0$,
\item $\Ker z = \Img z^{\ell} = R(\alpha_a, \gamma_a+\ell \beta_a)
  u_a^{\tensor (\ell+1)} \simeq K_0(a) \circ L_a^{\circ \ell}$,
\item $\Ker z^{\ell} = \Img z = R(\alpha_a, \gamma_a+ \ell \beta_a)
R(\ell \beta_a,\beta_a) u_a^{\tensor(\ell+1)} \simeq K_{\ell-1}(a) \circ L_a$.
\ee
\enprop
\Proof
Set
$u = u_a^{\tensor(\ell+1)} \in L_a^{\circ (\ell+1)}.$
Then
$$ R(\alpha_a,\gamma_a+\ell \beta_a) u  \simeq K_0(a) \circ L_a^{\circ \ell},$$
which is an irreducible $R(\gamma_a +\ell \beta_a)$-module
by Corollary~\ref{cor:irred}.
It is obvious that  $R(\alpha_a, \gamma_a + \ell \beta_a) u \subset \Ker z$.

In order to show the converse inclusion, let us prove
\begin{equation}
  \label{eq:nonzero elem in img z ell}
z^{\ell} \tau_1 \cdots \tau_{\ell N} \ u =
 (-1)^\ell (\tau_2 \cdots \tau_N) \ (\tau_{N+2} \cdots \tau_{2N}) \cdots
 (\tau_{(\ell-1) N+2} \cdots \tau_{\ell N}) \ u.
\end{equation}
If $\ell=0$, it is trivial.
Set $a(u,v) = \dfrac{u^\ell-v^\ell}{u-v}$.
Then we have
\begin{align*}
  x_1^\ell \tau_1 \tau_2 \cdots \tau_{\ell N}  \ u
&=\bl \tau_1 x_2^\ell -a(x_1,x_2)\br \tau_2 \cdots \tau_{\ell N} \ u \\
 &= \tau_1 \tau_2 \cdots \tau_N x_{N+1}^\ell \tau_{N+1} \cdots \tau_{\ell N} \ u
 - \tau_2 \cdots \tau_N a(0, x_{N+1}) \tau_{N+1} \cdots \tau_{\ell N} \ u.
\end{align*}
By induction on $\ell$, we obtain
$$x_{N+1}^\ell \tau_{N+1} \cdots \tau_{\ell N} \ u =
x_{N+1}(-1)^{\ell-1}(\tau_{N+2} \cdots \tau_{2N}) \cdots
 (\tau_{(\ell-1) N+2} \cdots \tau_{\ell N}) \ u=0,$$

\begin{align*}
  a(0,x_{N+1}) \tau_{N+1} \cdots \tau_{\ell N} \ u
  &= x_{N+1}^{\ell-1} \tau_{N+1} \cdots \tau_{\ell N} \ u \\
  &=(-1)^{\ell-1} (\tau_{N+2} \cdots \tau_{2N}) \cdots(\tau_{(\ell-1)N+2} \cdots \tau_{\ell N}) \ u,
\end{align*}
 from which we obtain  \eqref{eq:nonzero elem in img z ell}.

\smallskip
Since the right-hand side of \eqref{eq:nonzero elem in img z ell} is
a non-zero element of the simple $R(\gamma_a + \ell \beta_a)$-module
$R(\alpha_a, \gamma_a + \ell \beta_a) u$, we conclude that
$$R(\alpha_a, \gamma_a + \ell \beta_a) u \subset \Ker z \cap \Img z^{\ell}.$$

Consider the following sequence of homomorphisms
$$
 \bl \Ker z^{\ell+1} / \Ker z^{\ell} \br \stackrel{z}{\hookrightarrow}
\bl  \Ker z^{\ell} / \Ker z^{\ell-1} \br\stackrel{z}{\hookrightarrow}
\cdots
 \stackrel{z}{\hookrightarrow} \bl \Ker z^2 / \Ker z \br
 \stackrel{z}{\hookrightarrow} \Ker z.
$$
Since
$\Ker z^{\ell+1} / \Ker z^{\ell} \isoto[z^{\ell}] \Ker z \cap \Img z^\ell$,
we have
$$\dim \bl \Ker z^{k} / \Ker z^{k-1} \br \ge \dim \bl \Ker z^{\ell} / \Ker z^{\ell-1} \br \ge \dim K_0(a) \circ L_a^{\circ \ell}$$
for $1 \le k \le \ell+1$.
Because
\begin{align*}
  \dim K_0(a) \circ L_a^{\circ \ell} = \dfrac{((\ell+1)N-1) !}{(N-1)! (N!)^{\ell}}, \ \text{and} \quad
  &\dim K_\ell(a) = \dfrac{((\ell+1)N )!}{(N!)^{\ell+1}},
\end{align*}
we have
$$\dim \Ker z^{\ell+1} = \sum_{k=1}^{\ell+1} \dim \bl  \Ker z^{k} / \Ker z^{k-1} \br \ge
(\ell+1) \dim K_0(a) \circ L_a^{\circ \ell} = \dim K_\ell (a). $$
It follows that
\begin{align*}
  &K_\ell(a) = \Ker z^{\ell+1},  \\
  &\Ker z^{k} / \Ker z^{k-1} \isoto[z^{k-1}] \Ker z \ \text{for} \ 1 \le \ k \le \ell+1,  \\
  &\Ker z = \Img z^{\ell} \simeq K_0(a) \circ L_a^{\circ \ell}.
  \end{align*}
Hence we get the assertions (i) and (ii).

For (iii), observe that
\begin{align*}
\dim K_{\ell-1}(a) \circ L_a = \ell \ \dim K_0(a) \circ L_a^{\circ \ell} , \\
K_{\ell-1}(a) \circ L_a \simeq R(\alpha_a, \gamma_a + \ell \beta_a) R(\ell \beta_a , \beta_a) u.
\end{align*}
 On the other hand,
$z|_{K_{\ell-1}(a) \circ L_a}$ is induced by
$\bl z|_{K_{\ell-1}(a)} \br  \circ L_a $.
It follows that
$$R(\alpha_a, \gamma_a + \ell \beta_a) R(\ell \beta_a , \beta_a) u \subset \Ker z^{\ell}.$$
Comparing the dimensions, we have
$$R(\alpha_a, \gamma_a + \ell \beta_a) R(\ell \beta_a , \beta_a) u = \Ker z^{\ell}$$
as desired.
\QED
\Cor
There  exist  a surjective homomorphism
\eq \label{eq:rigid surj}
L_\ell(a) \circ K_\ell(a) \twoheadrightarrow L_a^{\circ (\ell+1)},
\eneq
and an injective homomorphism
\eq \label{eq:rigid inj}
 L_a^{\circ (\ell+1)}  \monoto
q^{-\ell-1} K_\ell(a) \circ L_\ell(a)
\eneq in $R\bl (\ell+1)\beta_a
\br \gmod$. \encor \Proof From Proposition \ref{prop:rigid} (i), we
have $x_1^{\ell+1}=0$ on $L_a^{\circ (\ell+1)}$. Hence we obtain
\eqref{eq:rigid surj}. Taking duals, we have by \eqref{eq:dualconv}
\eqn
 \bl L_a^{\circ (\ell+1)} \br^*  \monoto q^{(\gamma_a+\ell \beta_a, \alpha_a)} K_\ell(a)^* \circ L_\ell(a)^*.
\eneqn
Since
\eqn
\bl L_a^{\circ (\ell+1)}\br^* \simeq q^{\ell(\ell+1)} L_a^{\circ (\ell+1)}, \
 K_{\ell}(a)^* \simeq  q^{\ell(\ell+1)}K_{\ell}(a), \ \text{and} \ L_\ell(a)^*\simeq q^{-2\ell}L_\ell(a),
\eneqn
we obtain
\eqref{eq:rigid inj}.
\QED

Note that $B(\al_a,\gamma_a)=B(\al_a,\beta_a)=B(\beta_a,\beta_a)=1$.
Hence we have
\eqn &&L_a^{\nconv(\ell+1)}\simeq q^{\ell(\ell+1)/2}L_a^{\circ (\ell+1)}, \\
   &&L_\ell(a)\nconv K_\ell(a)\simeq q^{\ell+1}L_\ell(a)\circ K_\ell(a).
\eneqn
 From  \eqref{eq:rigid surj}, we have
\eqn
L_\ell(a) \nconv K_\ell(a)
 \epito   q^{\ell+1-\frac{\ell(\ell+1)}{2}}L_a^{\nconv(\ell+1)}.
\eneqn
Set
\eq \tK_\ell(a) \seteq q^{\frac{(\ell+1)(\ell-2)}{2}} K_\ell(a) \nconv L_a^{\nconv -(\ell+1)} \in \T'_J.\eneq
Then we obtain a morphism in $\T'_J$
\eqn
\eps_\ell \col L_\ell(a) \nconv \tK_\ell(a) \longrightarrow \one.
\eneqn
Similarly, from \eqref{eq:rigid inj}, we obtain a morphism
\eqn
\eta_\ell \col \one \longrightarrow \tK_\ell(a) \nconv  L_\ell(a).
\eneqn

\Th
\hfill
\bni
\item The object $\tK_\ell(a)$ is a right dual to $L_\ell(a)$
in the category $\T'_J$ and $(\eps_\ell,\eta_\ell)$ is a
quasi-adjunction  \ro see {\rm \S\;\ref{sec:A}}\rf. 
\item The category $\T'_J$  and  $\T_J$
are rigid tensor categories; i.e., every object   has a right dual
object and a left dual object. \ee \enth \Proof (i)\quad  We shall
prove it by the induction on $\ell$. By interpreting
$\tK_0(a)=L_0(a)=0$, the $\ell=0$ case  is  obvious. Assume that
$\ell>0$. By the definition, we have an exact sequence in $\AA$ \eqn
1 \longrightarrow q^{2\ell} L(a) \longrightarrow L_\ell(a)
\longrightarrow L_{\ell-1}(a) \longrightarrow 0. \eneqn

On the other hand, by Proposition \ref{prop:rigid} (ii) and (iii) we
have  an exact sequence \eq 0 \longrightarrow K_{\ell-1}(a) \circ L_a
\longrightarrow K_\ell(a) \stackrel{z^{\ell}}{\longrightarrow}
q^{-2\ell} K_0(a) \circ L_a^{\circ\ell} \longrightarrow
0.\label{eq:Kexact} \eneq Since $B(\gamma_a,\beta_a)=0$, we have
\eqn &&K_\ell(a)\nconv L_a\simeq q^\ell K_\ell(a)\circ L_a, \\
&&K_0(a) \nconv L_a^{\nconv\ell}\simeq
K_0(a) \circ L_a^{\nconv\ell}\simeq q^{(\ell-1)\ell/2}K_0(a) \circ L_a^{\circ\ell}.
\eneqn
Hence \eqref{eq:Kexact}  can be understood as
\eqn
&&0 \To q^{1-\ell}K_{\ell-1}(a) \nconv L_a \To K_\ell(a) \To[\ {z^\ell}\ ]
q^{-2\ell-(\ell-1)\ell/2} K_0(a) \nconv L_a^{\nconv\ell} \To 0.
\eneqn
Applying the functor $\nconv \bl q^{\frac{(\ell+1)(\ell-2)}{2}} L_a^{\nconv -(\ell+1)}\br $,
we have an exact sequence in $\T'_J$
\eqn
0 \longrightarrow \tK_{\ell-1}(a) \longrightarrow \tK_\ell(a) \longrightarrow q^{-2\ell}\tK_0(a)\longrightarrow 0.
\eneqn

We can easily see that the following diagrams are commutative:
\vskip 0.5em
$\scalebox{.93}{\xymatrix{
L_\ell(a) \nconv \tK_{\ell-1}(a)  \ar[r]  \ar[d]
& L_{\ell-1}(a) \nconv \tK_{\ell-1}(a) \ar[d]_{\eps_{\ell-1}} \\
L_\ell(a) \nconv \tK_\ell (a) \ar[r]^{\eps_{\ell}}
& \one \\
q^{2\ell} L(a) \nconv \tK_{\ell}(a)  \ar[r]  \ar[u]
& q^{2\ell} L(a) \nconv q^{-2\ell} \tK_0(a) \ar[u]^{\eps_0},}}
$\hs{2ex}
$
\scalebox{.93}{\xymatrix{
 \tK_{\ell-1}(a) \nconv L_{\ell-1}(a)  \ar[r]
& \tK_{\ell}(a) \nconv L_{\ell-1}(a)   \\
\one \ar[r]^{\eta_\ell} \ar[d]_{\eta_0} \ar[u]^{\eta_{\ell-1}}
&  \tK_\ell (a) \nconv  L_\ell(a) \ar[u] \ar[d]\\
q^{-2\ell} \tK_0(a) \nconv q^{2\ell} L(a) \ar[r]
& q^{-2\ell} \tK_{0}(a) \nconv L_\ell(a).
}}
$

\vskip 1em
Then the assertion follows by the induction on $\ell$ and Lemma \ref{lem:quasi adjunctions}.

\medskip

(ii) By (i), $L_\ell(a)$ has a right dual for every $a \in \Z$ and $\ell \in \Z_{\ge 0}$.
Hence an object of the form
\eqn
q^s L_{\ell_1}(a_1) \nconv \cdots \nconv L_{\ell_r} (a_r)\nconv S
\eneqn
has a right dual, where $S$ is
a tensor product of copies of  $L_a^{\nconv-1}$ ($a\in\Z)$.

Because every object $X$ in $\T'_J$ has a resolution
\eqn
P'\rightarrow P \rightarrow X \rightarrow 0,
 \eneqn
where $P'$ and $P$ are direct sums of  objects with the above form,
we conclude that $X$ also has a right dual.

Similarly, every object has a left dual.
Note that the left dual of $L(a)$ in $\T_J'$
is isomorphic to $q^{-1} L(a-N+1,a-1) \nconv L_{a-N+1}^{\nconv  -1}$.
\QED

Now we will show that the functor
$\F'\col \AA / \Ss \to \UA \smod$ factors through $\T_J$.
We need the following lemma.

\Lemma
For $b\in J$, set $V_k=V_{q^{2(b-k)}}$  $(1\le k\le N)$,
$W=V_{N}\tens V_{N-1}\tens\cdots \tens V_{1}$,
and choose an epimorphism
$\vphi\col W\to \cor$ in $\UA\smod$.
 Let
$$\Rnorm_{W,V_z}\col W\tens V_z\to V_z\tens W$$
be the R-matrix obtained by the composition of normalized R-matrices
$$V_{N}\tens\cdots \tens V_{1}\tens V_z
\To[\Rnorm_{V_1,V_z}]V_{N}\tens\cdots \tens V_{2}
\tens V_z\tens V_1\To\cdots
\To[\Rnorm_{V_N,V_z}]V_z\tens V_{N}\tens\cdots \tens V_{1},$$
and  let  $g(z)=\dfrac{q^{N-1}(z-q^{2(b-N)})}{z-q^{2(b-1)}}$.

Then we have a commutative diagram
\eq
\xymatrix{
W\tens V_z\ar[r]^{\Rnorm_{W,V_z}}\ar[d]_{\vphi\tens V_z}&V_z\tens W\ar[d]^{V_z\tens \vphi}\\
\cor\tens V_z\ar[r]^{g(z)}& V_z\tens \cor.
}\label{diag:RV}
\eneq
\enlemma
\Proof
Set $W'=V_{1}\tens V_{2}\tens\cdots \tens V_{N}$.
Then there exists a $\UA $-module homomorphism
$r\col W\to W'$ such that $\Img(r)\simeq \cor$.
 Thus   we have a commutative diagram
$$\xymatrix@C=10ex{
W\tens V_z\ar[r]^{\Rnorm_{W,V_z}}\ar[d]_{r\tens V_z}&V_z\tens W\ar[d]^{V_z\tens r}\\
W'\tens V_z\ar[r]^{\Rnorm_{W',V_z}}&V_z\tens W'.
}$$
Since $\Hom_{ \uqpg}(V_z,V_z)=\cor(z)$ and
$\Img(r)\simeq \cor$, there exists $g(z)\in \cor(z)$
such that the diagram \eqref{diag:RV}  is commutative.

Set $c_k=q^{2(b-k)}$ and
choose $\vphi$ such that
$\vphi((u_N)_{c_N}\tens\cdots\tens(u_1)_{c_1})=1$.
 Denote   by
$\Rnorm_k$  the normalized $R$-matrix
$$\Rnorm_k\col V_k\tens V_{k-1}\tens\cdots\tens V_{1}\tens V_z
\to  V_z\tens  V_k\tens V_{k-1}\tens\cdots\tens V_{1}$$
given inductively by $\Rnorm_{V_k, V_z }\cdot(V_k\tens  \Rnorm_{k-1} )$.

It is enough to show that
\eq
&& (V_z \tensor \vphi) \circ\Rnorm_N((u_N)_{c_N}\tens\cdots\tens
(u_1)_{c_1}\tens (u_1)_z)
=g(z)(u_1)_z.\label{eq:phig}
\eneq
We shall show
\eq
&&
\ba{l}\Rnorm_k\bl(u_k)_{c_k}\tens\cdots\tens
(u_1)_{c_1}\tens (u_1)_z\br\\
\hs{3ex}\in  \dfrac{q^{k-1}(z-c_k)}{z-c_1}(u_1)_z\tens (u_k)_{c_k}\tens\cdots\tens
(u_1)_{c_1}+\sum_{1<j\le N}
(u_j)_{ z }\tens  V_{k-1}\tens\cdots \tens V_{1}
\ea
\label{eq:Rk}
\eneq
by induction on $k$.
It is trivial if $k=1$. Assume that $k>1$.
Then by \eqref{eq: RV}, we have
\eqn
\Rnorm\bl(u_k)_{c_k}\tens (u_1)_z\br
&=&\dfrac{q(z-c_k)}{z-q^2c_k}(u_1)_z\tens (u_k)_{c_k}+
\dfrac{z(1-q^2)}{z-q^2c_k}(u_k)_z\tens (u_1)_{c_k}\\
&=&\dfrac{q(z-c_k)}{z-c_{k-1}}(u_1)_z\tens (u_k)_{c_k}+
\dfrac{z(1-q^2)}{z-q^2c_k}(u_k)_z\tens (u_1)_{c_k}
\eneqn
and
$$\Rnorm\bl(u_k)_{c_k}\tens (u_j)_z\br
\in \sum_{1<s\le N} (u_s)_{z}\tens V_{k}  \ \text{for}  \ j>1.$$
Hence we obtain \eqref{eq:Rk}.

 Applying $ V_z \tensor \vphi$, we obtain
$$ (V_z \tensor \vphi) \circ\Rnorm_N\bl(u_N)_{c_N}\tens\cdots\tens
(u_1)_{c_1}\tens (u_1)_z\br
\in g(z)(u_1)_z+\sum_{1<j\le N}\cor (u_j)_z,$$
 which yields  \eqref{eq:phig}.
\QED

\medskip
Now we will choose $\{c_{i,j}(u,v)\}_{i,j \in J}$ as  promised
in Remark \ref{rmk:cij}. For $r \in \Z$, set \eqn A_r(z) =
\begin{cases}
  1 & \text{if $r=0$,} \\
  q^{-r}(z+1-X(r)) & \text{otherwise,}
\end{cases}
\eneqn
where $X(r)=q^{2r}$.
 Set
\eqn
B_r(u,v)=
\begin{cases}
  1 & \quad \text{if $r \le 0$,} \\
q^{-r}(1+v)-q^r(1+u) & \quad \text{if} \ r \ge 1.
\end{cases}
\eneqn

Then, for $r \ge 1$, we have
\eq \label{eq:Br Ar}
B_r(0,z)=A_r(z) \quad \text{and} \quad  B_r(z,0)=-A_{-r}(z).
\eneq

\Th \label{thm:cij}

For $k \in \Z_{\ge 0}$ and  $i,j\in \Z$, set
\eqn
&&c_{k,0}(u,v) =
\prod_{\substack{0\le s\le k,\\s\equiv k\bmod N} }
\dfrac{B_{s}(u,v)B_{s-N}(u,v)}
{B_{s-1}(u,v)B_{s-N+1}(u,v)}, \\
&&
c_{i,j}(u,v) =\bc
 c_{ i-j,0}(u,v) & \text{for $j \le i$,}\\
c_{ j-i,0}(v,u)^{-1} & \text{for $j > i$.}\ec
\eneqn

Then the diagram \eqref{eq:Psi}  is commutative  for  the  functor $\F'\col\As/\Ss\to \UA\smod$
 and  the  commuting family of central  objects  $\{(L_a,R_a)\}_{a\in J}$. That is,
 the diagram
\eq &&\hs{2ex}
\ba{l}\xymatrix@C=8.5ex{
\F'(L_a\nconv M)\ar[d]^-{\F'(R_a(M))}\ar[r]^-\sim
&\F'(L_a)\tens\F'(M)\ar[r]^-{ g_a \tens \F'(M)}
&\cor\tens \F'(M)\ar[dr]\\
\F'(M\nconv L_a)\ar[r]^-\sim&\F'(M)\tens\F'(L_a)\ar[r]^-{\F'(M)\tens
g_a} &\F'(M)\tens  \cor  \ar[r]&\F'(M) }\ea\label{eq:1com} \eneq is
commutative  for any isomorphism $ g_a \col\F'(L_a)\isoto\cor$.
\enth

\Proof
First, one can easily check that $\{c_{i,j}(u,v)\}$ satisfies the condition \eqref{cond:cij}.
It is enough to show the commutativity of the diagram \eqref{eq:1com}
for $M=L(j)_z$. In this case, we have $\F'(M)\simeq V_\aff$.

Set $L=L(a)\circ L(a+1)\circ\cdots\circ L(a+N-1)$
and $W=\F'(L)$.
Then we have
$R_a(L(j)_z)=f_{a,j}(z)R_{L, L(j)_z}$. On the other hand,
Proposition~\ref{prop:image of tau} implies
$$\F'(R_{L, L(j)_z})=\prod_{a\le k\le a+N-1}P_{k,j}(0,z)
\Rnorm_{W,\F'(L (j)_z)}.$$
 The above lemma  implies that
$$\F'\bl R_a(L(j)_z)\br
=f_{a,j}(z)q^{N-1}\dfrac{Z-X(a)}{Z-X(a+N-1)}
\prod_{a\le k\le a+N-1}P_{k,j}(0,z)\id_{V_\aff},$$
where $ Z=z_{V}$ and $Z=X(j)(z+1)$.
Hence it is enough to show that
\eq&&f_{a,j}(z)q^{N-1}\dfrac{Z-X(a)}{Z-X(a+N-1)}
\prod_{a\le k\le a+N-1}P_{k,j}(0,z)=1.
 \label{eq:fajPkj}\eneq
Note that $P_{k,j}(0,z)=c_{k,j}(0,z)(-z)^{\delta(j=k+1)}$. Because
\eqn f_{a,j}(z)= (-1)^{\delta_{j,a+N}} z^{-\delta(a \le j <
a+N-1)-\delta_{j,a+N}}, \eneqn
it amounts to showing that
\eqn \label{eq:ck0z}&&\hs{-5ex}\ba{rcl}
\displaystyle\prod_{k=a}^{a+N-1} c_{k,j}(0,z)&=&
(-1)^{\delta(a+1\le j\le a+N -1)}q^{1-N}
\dfrac{\bl z+1-X(a-j+N-1)\br^{\delta(j\not=a+N-1)}}%
{\bl z+1-X(a-j)\br^{\delta(j\not=a)}}\\
&=&
(-1)^{\delta(a < j \le a+N-1)} \dfrac{A_{a-j+N-1}(z)}{A_{a-j}(z)}\ea
\eneqn
for all $a,j \in \Z$.
Since
$ c_{i+1,j+1}(u,v)=c_{i,j}(u,v) $
for all $i,j \in \Z$,  we have only  to show that
\eqn
\displaystyle\prod_{k=a}^{a+N-1} c_{k,0}(0,z)=(-1)^{\delta(a < 0  \le a+N-1)} \dfrac{A_{a+N-1}(z)}{A_{a}(z)}
\eneqn
for all $a \in \Z$.

It is straightforward to show that 
\eqn
\displaystyle\prod_{k=a}^{a+N-1} c_{k,0}(u,v)=
\bc
\dfrac{B_{a+N-1}(u,v)}{B_a(u,v)} & \quad \text{if $a \ge 0$,}\\
\dfrac{B_{-(a+N-1)}(v,u)}{B_{-a}(v,u)} & \quad \text{if $a<1-N$,}\\
\dfrac{B_{a+N-1}(u,v)}{B_{-a}(v,u)} & \quad \text{if $1-N\le a<0$.}
\ec
\eneqn
Then by \eqref{eq:Br Ar}, we  obtain  the desired result.
\QED

\begin{comment}
If $a\ge0$, then we have
\eqn
\prod_{k=a}^{a+N-1} c_{k,0}(u,v)&=&
\prod_{k=a}^{a+N-1}
\prod_{\substack{0\le s\le k,\\s\equiv k\bmod N}}
\dfrac{B_{s}(u,v)B_{s-N}(u,v)}
{B_{s-1}(u,v)B_{s-N+1}(u,v)}\\
&=&\prod_{\substack{0\le s\le a+N-1}}
\dfrac{B_{s}(u,v)}{B_{s-1}(u,v)}\dfrac{B_{s-N}(u,v)}{B_{s-N+1}(u,v)}
=\dfrac{B_{a+N-1}(u,v)}{B_{ -1 }(u,v)}\dfrac{B_{-N}(u,v)}{B_{a}(u,v)}.
\eneqn

If $a\le 1-N$, then we have
\begin{align*}
\displaystyle\prod_{k=a}^{a+N-1} c_{k,0}(u,v)
&=\displaystyle\prod_{k=a}^{a+N-1} c_{-k,0}(v,u)^{-1}
=\Big(\displaystyle\prod_{k=-(a+N-1)}^{-a} c_{k,0}(v,u)\Big)^{-1} \\
&=\Big(\dfrac{B_{-a}(v,u)}{B_{-(a+N-1)}(v,u)}\Big)^{-1}
=\dfrac{B_{-(a+N-1)}(v,u)}{B_{-a}(v,u)}.
\end{align*}

Finally, if $1-N  <   a<0$, then
\begin{align*}
&\displaystyle\prod_{k=a}^{a+N-1}  c_{k,0}(u,v)
=\displaystyle\prod_{k=a}^{-1} c_{-k,0}(v,u)^{-1}
\prod_{k=0}^{a+N-1} c_{k,0}(u,v) \\
&=\Big(\dfrac{B_{-a-1}(v,u)}{B_{-a}(v,u)} \dfrac{B_{-a-2}(v,u)}{B_{-a-1}(v,u)}
\cdots \dfrac{B_{0}(v,u)}{B_{1}(v,u)} \Big)
\Big(\dfrac{B_{0}(u,v)}{B_{-1}(u,v)} \dfrac{B_{1}(u,v)}{B_{0}(u,v)}
\cdots \dfrac{B_{a+N-1}(u,v)}{B_{a+N-2}(u,v)} \Big) \\
&=\dfrac{B_{a+N-1}(u,v)}{B_{-a}(v,u)},
\end{align*}
as desired.
\end{comment}

Hence,  Proposition~\ref{prop:P=1} implies that
the functor $\F' \colon \As/\Ss\to\UA\smod$ factors through
$\T_J$.
Consequently, we obtain a functor $\tF\col \T_J\to \UA\smod$
such that the following diagram quasi-commutes:
\begin{equation}
 \xymatrix@C=10ex{
\As\ar[r]^-{\mathcal Q}\ar[drr]_-{\F}&\As/\Ss\ar[r]^{\Upsilon }\ar[rd]^(.55){\F'}
&\T'_J\ar[d]\ar[r]^{ \Xi  }&\T_J\ar[dl]^-{\tF}\\
&&\UA\smod\,.}
\end{equation}

Moreover, by  Proposition~ \ref{prop:abelian finite2}, we obtain
\begin{prop}
  The functor $\widetilde{\mathcal F}$ is exact.
\end{prop}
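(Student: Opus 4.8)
The plan is to propagate exactness along the chain
$$\As \To[{\ \mathcal{Q}\ }] \As/\Ss \To[{\ \Upsilon\ }] \T'_J \To[{\ \Xi\ }] \T_J$$
and then read it off for $\tF$ using the factorization $\F'=\tF\circ\Xi\circ\Upsilon$, so that $\F=\tF\circ\Xi\circ\Upsilon\circ\mathcal{Q}$. The starting point is that $\F\col\As\to\UA\smod$ is exact: although the quiver $\Gamma_J$ attached to $R^J$ is of type $A_\infty$ rather than of finite type, for each $\beta\in\rtl_J^+$ only finitely many vertices of $\Gamma_J$ occur among the entries of the sequences in $J^\beta$, so $R^J(\beta)$ is the \KLR\ of a finite type $A$ quiver; by Proposition~\ref{pro:finite global dimension} it has finite global dimension, and hence $\F_\beta$ is exact by Theorem~\ref{thm:exact} (equivalently, Proposition~\ref{pro:projectivity} applied with $A=\mathbb{P}_\beta^\opp$ and $B=R^J(\beta)^\opp$ shows $\bV[\beta]$ is a flat $R^J(\beta)^\opp$-module).

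Next I would check that $\F'\col\As/\Ss\to\UA\smod$ is exact. The quotient functor $\mathcal{Q}$ is exact since $\Ss$ is a Serre subcategory, and the key point is that every short exact sequence $0\to X\to Y\to Z\to 0$ in $\As/\Ss$ is isomorphic to $\mathcal{Q}$ applied to a short exact sequence in $\As$: represent the monomorphism $X\to Y$ by a roof, replace $X$ by a subobject with kernel and cokernel in $\Ss$ to realize it as $\mathcal{Q}(g)$ for an honest morphism $g\col A\to B$, and note $\mathcal{Q}(\Ker g)=0$, $\mathcal{Q}(\Coker g)\simeq Z$, so the sequence is $\mathcal{Q}$ of $0\to A/\Ker g\to B\to\Coker g\to 0$. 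Since $\F=\F'\circ\mathcal{Q}$ and $\F$ is exact, this forces $\F'$ to be exact.

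Finally I would descend to $\T_J$. By Theorem~\ref{thm:L_a commutes with X} the family $\{(L_a,R_a)\}_{a\in J}$ is a commuting family of central objects of $(\As/\Ss,\nconv)$, and $\T_J=(\As/\Ss)[L_a\simeq\one\mid a\in J]$ is its localization from Appendix~\ref{app:Local_tensor_categories}; in that construction $\Xi\circ\Upsilon$ is exact and every object and every short exact sequence of $\T_J$ comes, up to isomorphism and grading shift, from $\As/\Ss$. By Theorem~\ref{thm:cij} together with Proposition~\ref{prop:P=1} the functor $\F'$ factors as $\tF\circ\Xi\circ\Upsilon$; feeding the exactness of $\F'$ into Proposition~\ref{prop:abelian finite2} then yields the exactness of $\tF$.

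The substantive ingredient, exactness of $\F$, is already in hand, so the real work is categorical bookkeeping: the main obstacle is isolating the precise forms of Proposition~\ref{prop:abelian finite2} and of the appendix localization lemmas needed to guarantee that exactness survives both the Serre quotient $\As\to\As/\Ss$ and the localization $\As/\Ss\to\T_J$ at the commuting family of central objects; concretely, one must know that objects and short exact sequences in the localized category can be lifted back to $\As/\Ss$. The finite-type-$A$ reduction used for $\F$ itself is routine once the finite-support phenomenon in each degree $\beta$ is observed.
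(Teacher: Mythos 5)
Your proposal is correct and follows essentially the same route as the paper: exactness of $\F$ (Theorem~\ref{thm:exact}, noting that each $R^J(\beta)$ only involves finitely many vertices of the $A_\infty$ quiver), descent to $\F'$ through the Serre quotient via Theorem~\ref{thm:quotient category}, and descent to $\tF$ through the localization via the lifting of exact sequences in Lemma~\ref{lem:P_i=1_exact sequences} together with Lemma~\ref{lem:P=1_exact} (the precise statement behind the paper's citation of Proposition~\ref{prop:abelian finite2}). The paper compresses all of this into a one-line appeal to the appendix; you have merely made the bookkeeping explicit.
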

\bigskip

\subsection{The Category $\mathcal C_J$} \label{subsec:CJ}
 \hfill

Recall that $\CC_\g$ denotes the category of finite-dimensional
integrable $\UA$-modules. Let $\mathcal C_J$ be the full subcategory
of $\CC_\g$ consisting of $\UA$-modules $M$ such that every
composition factor of $M$ appears as a composition factor of a
tensor product of modules of the form $V(\varpi_1)_{q^{2s}}$ ($s\in
J$).
By the definition, $\mathcal C_J$ is abelian and is stable under
taking submodules, quotients, extensions and tensor products.
Moreover, $\shc_J$ contains $V(\varpi_i)_{(-q)^{i+2s -1}}$ for $1\le
i\le N-1$ and $s\in \Z$. Hence $\tF$ can be considered as an exact
functor
$$\tF\col \T_J\to \mathcal C_J.$$
Note that the category $\mathcal C_J$
 coincides with  the category $\mathcal C_\Z$ in \cite{HL10}.

\Lemma \label{lem:simples in A/S}
Let $\bs=\bl (a_1,b_1), \ldots, (a_r,b_r) \br$
be an ordered multisegment such that $b_k-a_k+1 \le N$ for any $1 \le k \le r$.
Let $t$ be an integer such that $1 \le t \le r$ and $b_t- a_t +1 =N$.
Let us set $\bs'=\bl (a_k,b_k) \br_{k \neq t}$, and let $M(\bs)$ and $M(\bs')$ be the simple graded $R$-modules associated with $\bs$ and $\bs'$, respectively.

Then
$M(\bs') \circ L(a_t,b_t)$ is isomorphic to $M(\bs)$
in $\AA /\Ss$ up to a grading shift.
\enlemma
\Proof
In this proof, we omit the grading shift.  Set
\begin{align*}
  &L = L(a_1,b_1) \circ \cdots \circ L(a_{t-1},b_{t-1}), \\
  &L' = L(a_{t-1},b_{t-1}) \circ \cdots \circ L(a_1,b_1), \\
  &K = L(a_{t+1},b_{t+1}) \circ \cdots \circ L(a_r,b_r), \\
  &K' = L(a_r,b_r) \circ \cdots \circ L(a_{t+1},b_{t+1}).
\end{align*}
Then $M(\bs')$ is isomorphic to the image of $L \circ K \stackrel{f'}{\to} K' \circ L'$   and
$M(\bs)$ is isomorphic to the image of $L \circ L(a_t,b_t) \circ K \stackrel{f}{\to} K' \circ L(a_t,b_t) \circ L'$.
The homomorphism $f$ is decomposed into
\eqn
L \circ L(a_t,b_t) \circ K    &\To[{\hs{3ex} \phi\hs{3ex} }] &  L \circ K \circ L(a_t,b_t) \\
&\To[{\ f' \circ L(a_t,b_t)\ }] & K' \circ L' \circ L(a_t,b_t) \\
&\To[{\hs{2ex} \psi\hs{2ex} }] & K' \circ L(a_t,b_t) \circ L'.
\eneqn
Since $L(a_t,b_t) \circ K \to K \circ L(a_t,b_t)$ and $L' \circ L(a_t,b_t) \to L(a_t,b_t) \circ L'$
are isomorphisms in $\AA /\Ss$ by Proposition \ref{prop:exact sequences}, $\phi$ and $\psi$ are also
isomorphisms in $\AA /\Ss$.
Hence $M(\bs') \circ L(a_t,b_t)$ is isomorphic to $M(\bs)$ in $\AA /\Ss$.
\QED

\Cor \label{cor:C}
If $X$ be a simple object in $\AA /\Ss$, then $X \circ L_a$ is a simple object in $\AA /\Ss$ for any $a \in \Z$.
\encor

\Prop \hfill
\bni
\item The canonical functor
$\Omega  =  \Xi \circ \Upsilon   \colon \AA/\Ss \to \T_J$ sends simple objects to simple objects.

\item $\dim_\cor \Hom_{\T_J}(X,Y) < \infty$ for any $X,Y \in \T_J$.

\item
Let us denote by  $\Irr(\T_J)$ the set of the isomorphism classes of simple objects in $\T_J$.
 Define an equivalence relation $\sim$ on $\Irr(\T_J)$ by $X \sim Y$ if and only if
$X \simeq q^c Y$ in $\T_J$ for some integer $c$.
Let $\Irr(\T_J)_{q=1}$ be a set of representatives of elements in $\Irr(\T_J) / \sim $.

Then the set $\Irr(\T_J)_{q=1}$  is isomorphic to the set of ordered multisegments
$$\bs=\bl (a_1,b_1),\ldots (a_r,b_r) \br$$ satisfying
\begin{equation} \label{eq:segments<N}
  b_k -a_k+1 <N \ \text{for any} \
    1 \le k \le r.
\end{equation}
\item The functor $\tF\col \T_J \to \mathcal C_J$ induces a bijection between $\Irr(\T_J)_{q=1}$ and $\Irr(\mathcal C_J)$, the set of isomorphism classes of irreducible objects in $\mathcal C_J$.
\ee
\enprop
\Proof
(i)  follows from Corollary \ref{cor:C} and  Proposition \ref{prop:abelian finite2}.

\medskip
\noi
(ii) follows from Lemma \ref{lem:finite Hom}.

\medskip

\noi
(iii)\quad
By (i),  every element in $\Irr(T_J)_{q=1}$ is of the form
 $[\Omega( M(\bs))]$, for some ordered multisegment $\bs=\bl (a_1,b_1),\ldots (a_r,b_r) \br$.
 By Lemma \ref{lem:simples in A/S}, we can assume that $\bs$ satisfies \eqref{eq:segments<N}.
Hence the assignment
$\bs \mapsto [\Omega(M(\bs))] \in \Irr(T_J)_{q=1}$ is surjective.

For two multisegments $\bs_1$ and $\bs_2$ satisfying \eqref{eq:segments<N},  if $\Omega(M(\bs_1)) \simeq \Omega(M(\bs_2))$ in $\T_J$
up to  grading shift, then
$\F(M(\bs_1))\simeq \F(M(\bs_2))$ in $\UA\smod$
implies
$\bs_1=\bs_2$ by Theorem~\ref{th:zero} (iii).
Thus we obtain (iii).

\medskip
(iv)\quad By (iii),  for any simple object $X$ in $\T_J$, we have $X
\simeq (\Omega \cdot\mathcal Q) (q^c M(\bs))$ for some ordered
multisegment $\bs$ satisfying \eqref{eq:segments<N} and  some
integer $c$. Then $\tF(X) = \F(M(\bs))$ is irreducible by Theorem
\ref{thm:irreducible to irreducible}.

It is known that every irreducible module in $\mathcal C_J$ can be obtained
as the head of a tensor product of the form
$$V(\varpi_{i_1})_{(-q)^{c_1}}  \tensor \cdots \tensor V(\varpi_{i_r})_{(-q)^{c_r}}
$$
for some $\{c_k \in \Z\}_{1 \le k \le r}$
such that $c_k\equiv i_k-1\mod 2$ and that $(-q)^{c_k-c_j}$ is not the zero of
$d_{V(\varpi_{j}), V(\varpi_{k})}$ for $1\le j<k\le r$.
Moreover, such a sequence $\bl (i_1,c_1),\ldots(i_r, c_r) \br$
is unique up to a permutation (Theorem~\ref{th:zero} (iii)).

Set $a_k=\dfrac{c_k-i_k+1}{2}$ and $b_k=\dfrac{c_k+i_k-1}{2}$.
 By applying a permutation,  we may assume that the
multisegment $\bl(a_1,b_1),\ldots (a_r,b_r)\br$ is ordered. Note
that $(a_j,b_j)\ge(a_k,b_k)$ implies that $(-q)^{c_k-c_j}$ is not
the zero of $d_{V(\varpi_{j}), V(\varpi_{k})}$. Then we have $$\F(
L(a_1,b_1) \circ \cdots \circ L(a_r,b_r)) \simeq
V(\varpi_{i_1})_{(-q)^{c_1}}  \tensor \cdots \tensor
V(\varpi_{i_r})_{(-q)^{c_r}}$$ and $\F(M(\bs))$ is isomorphic to the
head of $V(\varpi_{i_1})_{(-q)^{c_1}}  \tensor \cdots \tensor
V(\varpi_{i_r})_{(-q)^{c_r}}$. Hence the assignment
$\Irr(\T_J)_{q=1} \ni X \mapsto [\tF(X)] \in \Irr(\mathcal C_J)$ is
bijective. \QED

Finally, we have established   one of the main theorems  of this paper.

\Th
The exact functor $\tF\col\T_J\to \mathcal C_J$ induces  a ring  isomorphism
\begin{equation*}
\phi_{\widetilde{\mathcal F}} \col K(\T_J)/(q-1)K(\T_J) \isoto K(\mathcal C_J).
\end{equation*}
\enth
Therefore $\T_J$ may be regarded as  a
$\Z$-graded lifting of the rigid
tensor category ${\mathcal C}_{J}$.

\vskip 0.5em

Recall that the ring $K(\mathcal C_J)$ admits interesting $t$-deformations (\cite{VV02, Nak04, Her04}).
In \cite{HL11}, Hernandez and Leclerc gave a presentation of $\mathcal K_t$, one of those $t$-deformations.
For each simple object $W$ of $\mathcal C_J$, there is a distinguished element
$\chi_{q,t}(W)$ in $\mathcal K_t$,
 and the $\chi_{q,t}(W)$'s form a basis of $\mathcal K_t$
as a $\C(t^{{1/2}})$-vector space. 
Set
$y_{i,0} \seteq \chi_{q,t}(V(\varpi_1)_{q^{2i}})$ and
set $y_{i,m+1}:=\chi_{q,t}(^*W)$, if $y_{i,m}=\chi_{q,t}(W)$ ($m \in \Z$).
Then $\mathcal K_t$ coincides with the $\C(t^{{1/2}})$-algebra  generated by
$\set{y_{i,m}}{i=1,\ldots,N-1, \ m \in \Z}$ with the defining relations
\begin{align}
&\label{eq:R1}\text{for every} \ m \in \Z,  \\
&\quad\nonumber \begin{cases}
y_{i,m} y_{j,m} =y_{j,m}y_{i,m} &\text{if $j \neq i+1, i-1$,} \\
y_{i,m}^2 y_{j,m}  -(t +t^{-1}) y_{i,m} y_{j,m} y_{i,m} + y_{j,m}^2 y_{i,m}=0
&\text{if $j = i+1$ or $i-1$,}
\end{cases} \\
&\label{eq:R2}\text{for every $m \in\Z$, and every 
$i,j \in \{1,2,\ldots, N-1\}$}, \\
&\quad \nonumber y_{i,m} y_{j,m+1} = t^{-2\delta_{i,j}+\delta(i= j+1)+\delta(i= j-1)} y_{j,m+1}y_{i,m}+\delta{(i=j)}(1-t^{-2}),\\
&\label{eq:R3}\text{for every} \ p > m+1, \text{and every} \ i,j \in \{1,2,\ldots, N-1\}, \\
&\quad  \nonumber y_{i,m} y_{j,p} = t^{(-1)^{p-m} (2\delta_{i,j}-\delta(i= j+1)-\delta(i= j-1))} y_{j,p} y_{i,m}.
\end{align}

Note that 
 our generator  $y_{i,m}$ is the element
$x^{\overleftarrow Q}_{i,m-1}$ given in \cite[\S7.1]{HL11}, associated with the quiver $\overleftarrow Q \ : \ 1 \leftarrow 2 \leftarrow \cdots \leftarrow N-1$.

 Set 
$K_{\C(q^{1/2})}(\mathcal T_J)=\C(q^{1/2}) \tens_{\Z[q^{\pm 1}]}K(\mathcal T_J)$. 

\Th \label{th:def}
There is  a  $\C$-algebra isomorphism 
$\psi\colon \mathcal K_t \rightarrow K_{\C(q^{1/2})}(\mathcal T_J)$ sending
\begin{align*}
  t^{1/2} \mapsto q^{-{1/2}}, \quad  y_{i,2k} \mapsto  [L(i+kN)], \quad  y_{i,2k+1} \mapsto  [q^{-1}L(i+kN+1, i+(k+1)N-1)].
\end{align*}
\enth
\Proof
In the proof, we denote by  $Z_{i,2k}$ and $Z_{i, 2k+1}$ 
the objects $L(i+kN)$ and $q^{-1}L(i+kN+1, i+(k+1)N-1)$), respectively.
Note that $Z_{i,m+1}$ is the right dual of $Z_{i,m}$. 
We denote by $z_{i,m}\in K(\mathcal T_J)$ the element $[Z_{i,m}]$.
It is straightforward to check that $\set{z_{i,m}}{i=1\ldots,N-1, \ m \in \Z}$ satisfy the relations
\eqref{eq:R1}--\eqref{eq:R3}.
For example, we have
\begin{align*}
  &z_{i,2k} \nconv z_{i,2k+1}
  =q^{B(\eps_i-\eps_{i+1}, \eps_{i+1}-\eps_{i+N})} z_{i,2k} \circ z_{i,2k+1}
  =q( z_{i,2k} \circ z_{i,2k+1}) \\
  =&-q^2 L_{i+kN} + q^2 z_{i,2k+1}\circ  z_{i,2k} +L_{i+kN} \\
  =&(1-q^2) \one + q^{2-B(\eps_{i+1}-\eps_{i+N},\eps_{i}-\eps_{i+1})}  z_{i, 2k+1} \nconv z_{i,2k}
  =(1-q^2) \one + q^2 z_{i,2k+1} \nconv z_{i,2k},
\end{align*}
which yields the relation \eqref{eq:R2} when $i=j$ and $m=2k$.
Here, the third equality follows 
from Proposition \ref{prop:exact sequences} (vi).
The other relations can be checked similarly.

Note that we have
\eq \label{eq:generate1}
\hs{3ex}\chi_{q,t}(V(\varpi_i)_{(-q)^p}) &=& \frac{t^{-1/2}}{1-t^{-2}}
\Big( \chi_{q,t}(V(\varpi_{i-1})_{(-q)^{p-1}})  \chi_{q,t}(V(\varpi_1)_{(-q)^{p+i-1}}) \\
&& \qquad -t^{-1} \chi_{q,t}(V(\varpi_1)_{(-q)^{p+i-1}}) \chi_{q,t}(V(\varpi_{i-1})_{(-q)^{p-1}}) \Big), \nonumber
\eneq
for all $(i,p) \in \{2,\ldots,N-1\} \times \Z$ such that 
$i \equiv p+1  \mod 2$.
Indeed, if $i=2$, then it is nothing but a consequence of \cite[Proposition 5.6]{HL11}.
The other cases can be also shown by similar arguments. 
 On the other hand, 
 Proposition \ref{prop:exact sequences} (vi) implies that 
 \eq \label{eq:generate2} 
[L(a,b)] = \frac{1}{1-q^2}
\Big( [L(a,b-1)] \nconv [L(b)] 
-q [L(b)] \nconv  [L(a,b-1)] \Big),
\eneq 
for every $a, b \in \Z$ 
 with $0 < b-a+1 <N$. For such a pair $a,b \in \Z$, 
assume that there exists $k \in \Z$ such that $1+kN \le a \le b \le N-1 + kN$.
Then by comparing \eqref{eq:generate1} with \eqref{eq:generate2}  and
arguing by induction on $b-a+1$, 
 we deduce that
\eq \label{eq:fund_seg}
\psi\Big(\chi_{q,t}(V(\varpi_{b-a+1})_{(-q)^{a+b}} \Big) = q^{\frac{b-a}{2}}[L(a,b)]
\eneq 
 if $1+kN \le a \le b \le N-1 + kN$ for some $k\in\Z$.
If there is no such a $k$, then we have 
\eq
&&\text{there exists $s\in\Z$ such that $a\le sN\le b$.}\label{eq:ab}
\eneq
Recall that there is a $\C$-algebra anti-automorphism $d'$ on $\mathcal K_t$ defined by
$d'(t^{1/2})= t^{-1/2}$,
$d'\Big(\chi_{q,t}(V(\varpi_i)_{(-q)^p}) \Big)= \chi_{q,t}(V(\varpi_{N-i})_{(-q)^{p+N}})$ (\cite[\S5.7, \S7.1]{HL11}).
On the other hand, from the duality, we obtain a ring anti-automorphism $d$ on $K(\mathcal T_J)$ given by
$d(q)=q^{-1}$, $d([X])=[^*X]$ 
$(X \in \mathcal T_J)$. 
Then we have 
$d \circ \psi = \psi \circ d'$, by the definition of $\psi$.
For $a,b \in \Z$ with $0 < b-a+1 <N$ and \eqref{eq:ab}, we have
\eqn
 &&\psi\Big(\chi_{q,t}\bl V(\varpi_{b-a+1})_{(-q)^{a+b}}\br\Big) 
=\psi \Big( d'\big(\chi_{q,t}\bl V(\varpi_{N-b+a-1})_{(-q)^{a+b-N}}\br \big) \Big) \\
&=&d\Big(\psi\big(\chi_{q,t}\bl V(\varpi_{N-b+a-1})_{(-q)^{a+b-N}}\br \big) \Big) 
=d\Big( q^{\frac{N-b+a-2}{2}}L(b+1-N,a-1)\Big) \\
&=&q^{\frac{-N+b-a+2}{2}} q^{-1} L(a,b)
=q^{\frac{-N+b-a}{2}} L(a,b),
\eneqn
where the third equality comes from \eqref{eq:fund_seg}.
Hence we have $$\psi(\chi_{q,t}(V(\varpi_{b-a+1})_{(-q)^{a+b}}))\equiv [L(a,b)]$$
for every $a,b \in \Z$ with $0 < b-a+1 < N$,  where $x \equiv y$ means $x= q^{m/2} y$  for some $m \in \Z$.
 Let $\mathcal K_{\C[t^{\pm 1/2}]}$ denote
the $\C[t^{\pm 1/2}]$-subalgebra of $\mathcal K_t$ generated by 
the $\chi_{q,t}(W)$'s, 
where $W$ ranges over 
the set of the isomorphism classes of simple objects of $\mathcal C_J$.
Because  
the $[L(a,b)]$'s generate the $\Z[q^{\pm 1}]$-algebra $K(\mathcal T_J)$,
the  restriction $\tilde\psi$ of $\psi$ to $\mathcal K_{\C[t^{\pm 1/2}]}$
gives the surjective map
$$\tilde\psi\col \mathcal K_{\C[t^{\pm 1/2}]}\epito
\C[q^{\pm1/2}]\tens_{\Z[q^{\pm 1 }]}K(\mathcal T_J).$$
Since  $\tilde\psi$ gives an isomorphism 
$$\mathcal K_{\C[t^{\pm 1/2}]}/(t^{1/2}-1)\mathcal K_{\C[t^{\pm 1/2}]}\simeq
\C\otimes_{\Z}K(\mathcal C_J) \isoto \C\otimes_{\Z} 
\bl K(\mathcal T_J) /(q-1)K(\mathcal T_J))\br$$
after specializing at $t^{1/2}=1$, the homomorphism 
$\tilde\psi$, as well as $\psi$, is an isomorphism. 
\QED

\appendix
\section{Localization}
In this section, we shall recall  the basic facts on  the localization of tensor categories.
Since the materials here are more or less known or elementary, we omit
most of the proofs.
\subsection{Tensor category}
Let us recall a {\em tensor category} (often called a
{\em monoidal category}).
 In this paper, we mainly consider
additive tensor categories.

A {\em tensor category} consists of the following data:
\bni
\item a category $\T$,
\item a bifunctor $\scbul\tens\scbul\col\T\times\T\to\T$,
\item  an isomorphism $a({X,Y,Z})\col(X\tens Y)\tens Z\isoto X\tens(Y\tens Z)$ which is
functorial in $X,Y,Z\in\T$,
\item an object $\one\in\T$ (called a {\em unit object}),
\item an isomorphism $\eps\col \one\tens\one\isoto \one$
\ee
satisfying the following axioms:
\bna
\item (the Pentagon axiom) the  following diagram  is commutative for any $X,Y,Z,W\in\T$:
\eq\label{diag:pentadiag}
&&\qquad\ba{c}\xymatrix@C=8ex{
{((X\tens Y)\tens Z)\tens W}\ar[d]_-{a(X,Y,Z)\tens W}
\ar[rr]^-{a(X\tens Y,Z,W)}
&&{(X\tens Y)\tens(Z\tens W)}\ar[dd]^-{a(X,Y,Z\tens W)}\\
{(X\tens (Y\tens Z))\tens W}\ar[d]_-{a(X,Y\tens Z,W)}&&\\
           {X\tens ((Y\tens Z)\tens W)}\ar[rr]_-{X\tens a(Y,Z,W)}
                       &&{X\tens (Y\tens (Z\tens W)),}
}\ea\eneq
\item the functors from $\T$ to $\T$ given by
$X\mapsto \one\tens X$ and $X\mapsto X\tens \one$ are fully faithful.
\ee
We refer \cite{KS}, for example, for the fundamental properties
of tensor categories.

\hs{-1.5ex} Note that the isomorphism
$\one\tens \one\tens X\isoto[\eps\tens X]\one\tens X$
induces a  canonical  isomorphism $\one\tens X\isoto X$.
Similarly, there is a canonical isomorphism $X\tens \one\isoto X$.

Note that a unit object $\one$ is unique up to a unique
isomorphism. Namely, for an object $Z$ and an isomorphism $e\col
Z\tens Z\to Z$, if the functor  $X\mapsto Z\tens X$ is an
auto-equivalence of $\T$, then there exists a unique isomorphism
$\vphi\col Z\isoto \one$ such that the diagram
$$\xymatrix@C=8ex{Z\tens Z\ar[r]^-{e}\ar[d]^{\vphi\tens\vphi}
&Z\ar[d]^{\vphi}\\
\one\tens\one\ar[r]^-{\eps}&\one}$$
is commutative.

\medskip
Let $\T$ and $\T'$ be tensor categories. A functor $F\col\T\to \T'$
is called a {\em tensor functor} if it is endowed with an
isomorphism $F(X\tens Y)\isoto F(X)\tens F(Y)$ functorial in
$X,Y\in\T$ and an isomorphism $F(\one)\isoto \one$ which make the
following diagrams commutative:

\eq&&
\ba{c}\xymatrix{
F(X\tens Y\tens Z)\ar[r]\ar[d]&F(X\tens Y)\tens F(Z)\ar[d]\\
F(X)\tens F(Y\tens Z)\ar[r]&F(X)\tens F(Y)\tens F(Z),
}\\[2ex]
\\[1ex]
\xymatrix{
F(\one\tens\one)\ar[r]\ar[d]&F(\one)\tens F(\one)\ar[r]&\one\tens \one\ar[d]\\
F(\one)\ar[rr]&&\one. } \ea\label{dia:tensf} \eneq For $X\in\T$ and
$n\in\Z_{\ge0}$, we write $X^{\tens n}=\underbrace{X\tens
\cdots\tens X}_{\text{$n$-times}}$.

We say that an object $X$ is {\em invertible} if
the functors $Z\mapsto X\tens Z$ and $Z\mapsto Z\tens X$
are equivalences of categories. If $X$ is invertible,  then
there exist
an object $Y$ and isomorphisms $f\col X\tens Y\isoto\one$ and $g\col Y\tens X\isoto \one$ such that
the diagrams
\eqn&&\xymatrix@C=7ex{X\tens Y\tens X\ar[r]^-{f\tens X}\ar[d]_{X\tens g}&
\one\tens X\ar[d]\\
X\tens\one\ar[r]&X}\qtext{and}\quad
\xymatrix@C=7ex{Y\tens X\tens Y\ar[r]^-{g\tens Y}\ar[d]_{Y\tens f}&
\one\tens Y\ar[d]\\
Y\tens\one\ar[r]&Y}
\eneqn
are commutative.
 The triple  $(Y,f,g)$ is unique up to a unique isomorphism.
We write  $Y=X^{\tens(-1)}$ so that one may define  $X^{\tens n}$
for any integer $n$.

We say that a tensor category $\T$ is an {\em additive tensor category}
if $\T$ is additive and $\tens$ is an additive bifunctor.
In the sequel, we consider an additive tensor category.

\subsection{Adjunction and Quasi-adjunction}\label{sec:A}
\begin{definition}
Let $\T$ be a tensor category with a unit object $\one$.
Let $(X,Y)$ be a pair of objects and let $\eps \col X \tensor Y \to \one$ and
$\eta \col \one  \to Y \tensor X$
be morphisms.
\bnum
\item
We say that $(\eps,\eta)$ is an {\em adjunction} and that $X$ is a
{\em left dual} to $Y$ and $Y$ is a {\em right dual} to $X$ if the
conditions $(a)$ and $(b)$ below are satisfied: \bna
\item the composition $X \simeq X \otimes \one  \To[X \tensor\, \eta] X \tensor Y \tensor X
\To[\eps \tensor X] \one  \tensor X \simeq X$ is equal to the identity of $X$.

\item the composition $Y \simeq \one \otimes Y  \To[\eta\, \tensor Y ] Y \tensor X \tensor Y
\To[Y \tensor\, \eps] \one  \tensor Y \simeq Y$ is  equal to the identity of $Y$.
\ee

\item
If the composition $X \otimes \one  \To[X \tensor \eta] X \tensor Y \tensor X
\To[\eps \tensor X] \one  \tensor X$
and $\one \otimes Y  \To[\eta \tensor Y ] Y \tensor X \tensor Y
\To[Y \tensor \eps] \one  \tensor Y$ are isomorphisms,
then we say that $(\eps,\eta)$ is a {\em quasi-adjunction}.
\ee
\end{definition}

\Lemma
Let $\T$ be a tensor category with a unit object $\one$ and let $\eps \col X \tensor Y \to \one$ be a morphism in $\T.$
Then the following conditions are equivalent.
\bna
\item There exists a morphism $\eta\col \one \to Y \tensor X$ such that
 $(\eps,\eta)$ is an adjunction.
\item There exists a morphism $\eta \col \one \to Y \tensor X$ such that
$(\eps,\eta)$ is a quasi-adjunction.
 Namely, the compositions
\begin{align*}
&f\col X \To[X \tensor \eta] X \tens Y \tens X \To[\eps \tensor X] X
\quad \text{and}\quad
g\col Y \To[\eta \tensor Y] Y \tensor X \tensor Y \To[Y \tensor \eps] Y
\end{align*}
are isomorphisms.

\item For any $V, W \in \T$, the composition
\eqn
\Hom_\T(V, Y \tensor W) \to \Hom_\T(X \tensor V, X \tensor Y \tensor W)
 \To[\eps\tens W] \Hom_\T(X \tensor V, W)
\eneqn
is a bijection.
\item For any $V, W \in \T$, the composition
\eqn
\Hom_\T(V, W \tensor X) \to \Hom_\T(V \tensor Y , W \tensor X \tensor Y)
\To[W\tens\eps] \Hom_\T(V \tensor Y , W)
\eneqn
is a bijection.
\ee
In this case, the morphism $\eta$ in {\rm(a)} is unique.

Moreover, if $(\eps,\eta)$ satisfies {\rm(b)},
then the following statements hold.
\bni
\item
We have $(g^{-1} \tensor X) \circ \eta = (Y \tensor f^{-1}) \circ \eta$ and
the pair $\bl \eps, (g^{-1} \tensor X) \circ \eta \br$ is an adjunction,
\item We have $\eps \circ (X \otimes g^{-1})= \eps \circ (f^{-1} \otimes Y)$ and
the pair $\bl \eps \circ(X \otimes g^{-1}), \eta \br$ is an adjunction.
\ee
\enlemma

Hence for an object $X$ of $\T$, a left dual (resp.\ a right dual)
of $X$ is unique up to a unique isomorphism if it exists.

\Lemma \label{lem:quasi adjunctions}
Let $\T$ be an abelian tensor category
 such that $\scbul\tens\scbul$ is an exact bifunctor.
Let
\eqn
0 \to X' \to X \to X'' \to 0, \quad 0 \to Y'' \to Y \to Y' \to 0
\eneqn
be exact sequences and morphisms
\begin{align*}
  \eps' \col X' \tensor Y' \to\one, \quad \eps\col X \tensor Y \to \one, \quad \eps'' \col X'' \tensor Y'' \to  \one, \\
  \eta' \col \one \to Y' \tensor X', \quad \eta \col \one \to Y \tensor X , \quad \eta'' \col \one \to Y'' \tensor X''
\end{align*}
are given so that the following the diagrams are commutative{\rm:}
\vskip 0.5em
$${\xymatrix{
X'\tensor Y \ar[r] \ar[d] & X' \tensor Y'  \ar[d]^{\eps'} \\
X \tensor Y \ar[r]^-{\eps} & \one \\
X \tensor Y'' \ar[u] \ar[r] & X'' \tensor Y'', \ar[u]_{\eps''}}
 \qquad
\xymatrix{
Y'\tensor X' \ar[r]  & Y' \tensor X  \ar[d]\\
\one \ar[u]^{\eta'} \ar[d]_{\eta''} \ar[r]^-{\eta} & Y \tensor X \ar[d] \\
Y'' \tensor X'' \ar[r] & Y \tensor X''.}}$$

Assume further that
$(\eps', \eta')$ and $(\eps'', \eta'')$ are quasi-adjunctions.
Then the pair $(\eps, \eta)$ is also a quasi-adjunction.
\enlemma

\Proof
We shall only show that the composition
$X\tens \one\to X\tens Y\tens X\to \one \tens X$ is an isomorphism.
Consider the following diagram with exact rows:
$$\scalebox{0.85}{
\xymatrix@C=8ex{
0 \ar[r] & \ar @{} [ddr]|*+[o][F-]{A} X' \tensor \one \ar[r] \ar[d] & X \tensor \one \ar[r] \ar[d]
 \ar @{} [ddr]|*+[o][F-]{B} & X'' \tensor \one \ar[r] \ar[d] & 0 \\
         & X' \tensor Y' \tensor X' \ar[d] & X \tensor Y \tensor X  \ar[d] & X'' \tensor Y'' \tensor X''  \ar[d] &  \\
0 \ar[r] & \one \tensor X' \ar[r] & \one \tensor X \ar[r]  &  \one\tensor X''  \ar[r] & 0.
}}
$$
If we show that $\xymatrix@C=0ex{ \ar @{} [r]|*+[o][F-]{A} &}$ and
$\xymatrix@C=0ex{ \ar @{} [r]|*+[o][F-]{B} &}$ are commutative, then
the composition of the middle vertical arrows is an isomorphism
because the one in the right and the one in the left are
isomorphisms. Hence it is enough to show the commutativity of the
squares. For example, the square $\xymatrix@C=0ex{ \ar @{}
[r]|*+[o][F-]{B} &}$ is commutative, because we have the following
commutative diagram.
$$
\scalebox{0.9}{\xymatrix{
&X \tensor \one \ar[r] \ar[ddl]_{X \tensor \eta} \ar[d]^{X \tensor \eta''}  \ &
 X'' \tensor \one \ar[d]^{X'' \tensor \eta''}\\
& X \tensor Y '' \tensor X'' \ar[r] \ar[d]
 & X'' \tensor Y''  \tensor X ''  \ar[ddl]^{\eps'' \tensor X''}\\
X \tensor Y \tensor X \ar[r] \ar[d]^{\eps \tensor X}
& X \tensor Y \tensor X'' \ar[d]_{\eps \tensor  X''}& \\
\one \tensor X  \ar[r]& \one \tensor X''. &
}}
$$
The commutativity of $\xymatrix@C=0ex{ \ar @{} [r]|*+[o][F-]{A} &}$ can be shown in a similar way.
\QED

\subsection{Central objects}\label{sec:central}
Let $\T$ be a tensor category.
A {\em central object} of $\T$ is an object $P$ of $\T$
equipped with an isomorphism
$$ R_P(X)\col P\tens X\isoto X\tens P$$
functorial in $X\in\T$ such that
\eq
&&\text{$\xymatrix{
P\tens X\tens Y\ar[r]_-{R_P(X)}\ar@/^3ex/[rr]^{R_P(X\tens Y)}
&X\tens P\tens Y\ar[r]_-{R_P(Y)}
&X\tens Y\tens P}$ commutes for any $X,Y\in\T$.}
\label{cond:central}
\eneq
 Remark that we don't assume that $R_P(P)=\id_{P\tens P}$.
 If $(P, R_{P})$ is a central object, then
the following diagram is necessarily commutative:
$$\xymatrix@C=15ex@R=4ex{P\tens\one\ar[r]^\sim_{R_P(\one)}\ar[rd]_-\sim&\one\tens P\ar[d]^\bwr\\
&P.}$$

If $(P_1, R_{P_1})$ and $(P_2, R_{P_2})$ are central objects,
then $P_1\tens P_2$ is a central object with
$$ R_{P_1\tens P_2}(X) \col P_1\tens P_2\tens X\isoto[{P_1\tens R_{P_2}(X)}]
P_1\tens X\tens P_2\isoto[{R_{P_1}(X)\tens P_2}] X\tens P_1\tens P_2.$$

  The category $\T_c$ of central objects in $\T$
has a canonical structure of  a tensor category.

\subsection{Commuting family of objects}\label{subsec:comm}
Let $\T$ be a tensor category. Consider a family of object
$\{P_i\}_{i\in I}$ in $\T$ and a family of isomorphisms $\{
B_{i,j}\col P_i\tens P_j\isoto P_j\tens P_i\}_{i,j\in I}$.
\Def\label{def: commfam} We say that  $ (\{P_i\}_{i\in
I},\{B_{i,j}\}_{i,j\in I})$ is a {\em commuting family} if the
isomorphisms $B_{i,j} (i,j \in I)$ satisfy the following conditions:

\begin{minipage}{0.9\textwidth}
\bna
\item $B_{i,i}=\id_{P_i\tens P_i}$ for any $i\in I$,
\item $B_{j,i} \circ B_{i,j}=\id_{P_i\tens P_j}$ for any $i,j\in I$,
\item the isomorphisms $\{B_{i,j}\}_{i,j\in I}$ satisfies the Yang-Baxter equation;
namely, the following diagram is commutative for any $i,j,k\in I$:
$$\xymatrix@R=1ex{
&P_i\tens P_j\tens P_k\ar[dl]_{B_{i,j}}\ar[dr]^{B_{j,k}}\\
P_j\tens P_i\tens P_k\ar[dd]_{B_{i,k}}&&P_i\tens  P_k\tens P_j\ar[dd]^{B_{i,k}}\\
\\
P_j\tens P_k\tens P_i\ar[dr]_{B_{j,k}}&&P_k\tens P_i\tens P_j\ar[dl]^{B_{i,j}}\\
&P_k\tens P_j\tens P_i.
 }$$
\ee
\end{minipage}

\edf

Let us denote by $\{e_i\}_{i\in I}$ the canonical basis of $\Z^{\oplus I}$.
If $ (\{P_i\}_{i\in I},\{B_{i,j}\}_{i,j\in I})$ is a  commuting family,
then we can find
\bnum
\item
an object $P^\al$ of $\T$ for any $\al\in \Zp^{\oplus I}$,
\item
an isomorphism $ P_i\isoto P^{e_i}$ for any $i\in I$,
\item an isomorphism
$f_{\al,\beta}\col P^\al\tens P^\beta\isoto P^{\al+\beta}$
for any $\al,\beta \in \Zp^{\oplus I}$,
\ee
satisfying the following conditions:

\bna
\item  $P^0$ is isomorphic to $\one$,
\item the diagram
$$\xymatrix@C=7ex@R=4ex{
P^\al\tens P^\beta\tens P^\gamma\ar[r]^{f_{\al,\beta}}\ar[d]_{f_{\beta,\gamma}}
&P^{\al+\beta}\tens P^\gamma\ar[d]^{f_{\al+\beta,\gamma}}\\
P^\al\tens P^{\beta+\gamma}\ar[r]^{f_{\al,\beta+\gamma}}&P^{\al+\beta+\gamma}
}
$$ is commutative for any $\al,\beta,\gamma\in \Zp^{\oplus I}$,
\item the diagram
$$\xymatrix@C=6ex{
P_i\tens P_j\ar[r]^-\sim\ar[d]_{B_{i,j}}
&P^{e_i}\tens P^{e_j}\ar[dr]^{f_{e_i,e_j}}\\
P_j\tens P_i\ar[r]^-\sim&P^{e_j}\tens P^{e_i}\ar[r]_{f_{e_j,e_i}}& P^{e_i+e_j}
}$$
is commutative for any $i,j\in I$.
\ee

Moreover, such an
$(\{P^\al\}_{\al\in\Zp^{\oplus I}}, \{f_{\al,\beta}\}_{\al,\beta\in\Zp^{\oplus I}})$
is unique up to a unique isomorphism.

More generally, we have the following lemma. \Lemma\label{lem:mor
comm} Let  $ (\{P_i\}_{i\in I},\{B_{i,j}\}_{i,j\in I})$ and  $
(\{P'_i\}_{i\in I},\{B'_{i,j}\}_{i,j\in I})$ be two commuting
families, and let $(\{P^\al\}_{\al\in\Zp^{\oplus I}},
\{f_{\al,\beta}\}_{\al,\beta\in\Zp^{\oplus I}})$ and
$(\{{P'}^\al\}_{\al\in\Zp^{\oplus I}},
\{f'_{\al,\beta}\}_{\al,\beta\in\Zp^{\oplus I}})$ be the
corresponding families as above. Let $\vphi_i\col P_i\to P'_i$
$(i\in I)$ be a family of morphisms such that the diagram
$$\xymatrix@C=13ex{
P_i\tens P_j\ar[d]^{B_{i,j}}\ar[r]^{\vphi_i \tens \vphi_j }&P'_i\tens P'_j
\ar[d]^{B'_{i,j}}\\
P_j\tens P_i\ar[r]^{\vphi_j\tens\vphi_i}&P'_j\tens P'_i
}$$
is commutative for any $i,j\in I$.
Then there exists a unique family of morphisms
$\vphi_\al\col P^\al\to {P'}^\al$ $(\al\in\Zp^{\oplus I})$
such that the diagram
$$\xymatrix@C=15ex{
P^\al\tens P^\beta\ar[d]^{f_{\al,\beta}}\ar[r]^{\vphi_\al\tens\vphi_\beta}&{P'}^\al\tens {P'}^\beta
\ar[d]^{f'_{\al,\beta}}\\
P^{\al+\beta}\ar[r]^{\vphi_{\al+\beta}}&{P'}^{\al+\beta}}$$
is commutative for any $\al,\beta\in \Zp^{\oplus I}$,
and
$\vphi_{e_i}=\vphi_i$ for any $i\in I$.
\enlemma

\subsection{Localization}
Let $\shc$ be a category.
Then the category $\Fct(\shc,\shc)$ of endofunctors
has a structure of a tensor category by $F\tens G= F\cdot G$, the composition
of functors.
Let $\{\Phi_i\}_{i\in I}$ be a commuting family of objects of $\Fct(\shc,\shc)$.
Then we can define $\Phi^\al\in \Fct(\shc,\shc)$ for
$\al\in \Zp^{\oplus I}$ and $\Phi^\al\cdot\Phi^\beta\isoto \Phi^{\al+\beta}$
as in the preceding subsection.

We define the category $\tC$ as follows.
The objects of $\tC$ are pairs $(X,\al)$ of
$X\in\shc$ and $\al\in\Z^{\oplus I}$.
The homomorphisms are defined by

$$\Hom_{\tC}\bl(X,\al),(Y,\beta) \br
=\indlim_{\substack{\gamma\in\Zp^{\oplus I}, \\
\gamma+\al,\,\gamma+\beta\in\Zp^{\oplus I}}}
\Hom_\shc(\Phi^{\gamma+\al}(X),
\Phi^{\gamma+\beta}(Y)).$$

Note that we have a well-defined inductive system in the above definition,
since $\{\Phi_i\}_{i\in I}$ is a commuting family.

The composition of morphisms in $\tC$ is defined in an evident way.

We define a functor
$\Upsilon\col \shc\to \tC$ by $X\mapsto (X,0)$.
For $\al\in \Z^{\oplus I}$, we define a functor
$\tPhi^\al\col\tC\to\tC$ by
$$(X,\beta)\mapsto (X,\beta+\al).$$
Then all the functors $\tPhi^\al$ are auto-equivalences.
Moreover, the diagram
$$\xymatrix{
{\shc}\ar[r]^{\Phi^\al}\ar[d]&{\shc}\ar[d]\\
{\tC}\ar[r]^{\tPhi^\al}&{\tC}}$$
is quasi-commutative for any $\al\in \Zp^{\oplus I}$.
We call $\tC$ the {\em localization of $\shc$ by
the commuting family $\{\Phi_i\}_{i\in I}$} and denote it by
$\shc[\Phi_i^{-1}\mid {i\in I}]$.

The following lemma  can be  easily verified.
\Lemma
Assume that $\shc$ is an abelian category and
the $\Phi_i$'s are exact functors. Then
\bnum
\item
$\shc[\Phi_i^{-1}\mid {i\in I}]$ is an abelian category and
the functor $\Upsilon \col\shc\to\shc[\Phi_i^{-1}\mid {i\in I}]$ is an exact functor.
\item For $X\in\shc$,
$\Upsilon(X) \simeq 0$ if and only if there exists $\al \in\Zp^{\oplus I}$
such that $\Phi^\al(X)\simeq 0$.
\ee
\enlemma

\subsection{Localization of tensor categories}
\label{app:Local_tensor_categories}

Now let $\T$ be a tensor category and let $\{(P_i, R_{P_i})\}_{i\in I}$
be a family of central objects in $\T$.
Set $$B_{i,j}=R_{P_i}(P_j)\col P_i\tens P_j\isoto P_j\tens P_i$$
for $i,j\in I$.
If $(\{P_i\}_{i\in I},\{B_{i,j}\}_{i,j\in I})$ is a commuting family of objects, we say that $\{(P_i, R_{P_i})\}_{i\in I}$ is a {\em commuting family of central objects}
(see also \cite{KKOP19A} for its generalization). 
 Note that it means that $\{(P_i, R_{P_i})\}_{i\in I}$ satisfies the conditions:
\bna
\item for any $i\in I$, $R_{P_i}$ satisfies \eqref{cond:central},
\label{cond:comm.f}
\item  $R_{P_j}(P_i)\circ R_{P_i}(P_j)=\id_{P_i\tens P_j}$  for any $i,j\in I$,
\label{cond2:comm.f}
\item  $R_{P_i}(P_i)=\id_{P_i\tens P_i}$ for any $i\in I$.
\ee

% If $\{(P_i, R_{P_i})\}_{i\in I}$ satisfies
%\eqref{cond:comm.f}, \eqref{cond2:comm.f} and
%\eq&&\text{$R_{P_i}(P_i)=\id_{P_i\tens P_i}$ for any $i\in I$,}
%\eneq
%then we say that it is a {\em strictly commuting family of central objects}.

For a commuting family $\{(P_i, R_{P_i})\}_{i\in I}$ of central objects,
let $\Phi_i\in\Fct(\T,\T)$ be the endofunctor defined by $X\mapsto X\tens P_i$.
We define the isomorphism $B^\Phi_{i,j}\col \Phi_i\Phi_j\isoto \Phi_j\Phi_i$
by $$\Phi_i\Phi_j(X)=X\tens P_j\tens P_i
\isoto[B_{j,i}]X\tens P_i\tens P_j=\Phi_j\Phi_i(X).$$
Then it is easy to see that
$\{\Phi_i\}_{i\in I}$ becomes a  commuting family of endofunctors.

Let $\tT=\T[P_i^{\tens -1}\mid i\in I]$ be the localization of $\T$ by $\{\Phi_i\}_{i\in I}$.
Hence we have
$\Ob(\tT)=\Ob(\T)\times \Z^{\oplus I}$
and
$$\Hom_{\tT}((X,\al),(Y,\beta))
=\indlim_{\substack{\gamma+\al,\,\gamma+\beta\in\Zp^{\oplus I},\\
\gamma\in\Zp^{\oplus I}}} \Hom_{\T}(X\tens
P^{\al+\gamma},Y\tens P^{\beta+\gamma}).$$

\bigskip
For any $\al\in \Zp^{\oplus I}$,
 we can define an isomorphism which is  functorial in $X$
$$ R^\al(X)\col P^\al \tens X\isoto X\tens P^\al$$
such that the  following diagrams are commutative  for any
$X,Y\in\T$:
$$
\xymatrix@C=10ex{
P^\al\tens X\tens Y\ar[r]_-{R^\al(X)  \tens Y }\ar@/^3ex/[rr]^{R^\al(X\tens Y)}
&X\tens P^\al\tens Y \ar[r]_-{ X\tens R^\al(Y)}
&X\tens Y\tens P^\al},$$
\vs{.5ex}
$$\xymatrix@C=10ex{
P^\al\tens P^\beta\tens X\ar[d]\ar[r]_-{P^\al\tens R^\beta(X)}
&P^\al\tens X\tens P^\beta\ar[r]_{R^\al(X)\tens P^\beta}&X\tens P^\al\tens P^\beta\ar[d]\\
P^{\al+\beta}\tens X\ar[rr]^{R^{\al+\beta}(X)}&&X\tens P^{\al+\beta},}$$
$$
\xymatrix@C=10ex{
P_i\tens X\ar[d]^{\bwr}\ar[r]^{R_{P_i}(X)}& X\tens P_i\ar[d]^{\bwr}\\
P^{e_i}\tens X\ar[r]^{R^{e_i}(X)}& X\tens P^{e_i}.}$$
Moreover, such  isomorphisms  $R^\al$ are unique.

 Indeed, $\{(P_i, R_{P_i})\}_{i\in I}$ is a commuting family in
the tensor category $\T_c$ of central objects of $\T$.

\smallskip
The category $\tT$ has a structure of tensor category as follows.

\noindent For $\al,\beta\in \Z^{\oplus I}$ and $X,Y\in\T$, we define
$$(X,\al)\tens (Y,\beta)=(X\tens Y,\al+\beta).$$
For $\al',\beta'\in \Z^{\oplus I}$ and $X',Y'\in\T$,
we define the map
\eqn
&&\Hom_{\tT}\bl(X,\al),(X',\al')\br\times
\Hom_{\tT}\bl(Y,\beta),(Y',\beta')\br\\
&&\hs{10ex}\to\Hom_\tT\bl(X\tens Y,\al+\beta),(X'\tens Y',\al'+\beta')\br\eneqn
by taking the inductive limit of the composition of the morphisms
below with respect to $\gamma,\gamma'\in\Zp^{\oplus I}$
\eqn
&&\Hom_\T( X\tens P^{\al+\gamma},X'\tens P^{\al'+\gamma})
\times
\Hom_\T(Y\tens P^{\beta+\gamma'},Y'\tens P^{\beta'+\gamma'})\\
&&\hs{10ex}\to
\Hom_\T(X\tens P^{\al+\gamma}\tens Y\tens P^{\beta+\gamma'},
X'\tens P^{\al'+\gamma}\tens Y'\tens P^{\beta'+\gamma'})\\
&&\hs{10ex}\simeq
\Hom_\T\bl X\tens  Y\tens  P^{\al+\gamma}\tens P^{\beta+\gamma'},
X'\tens  Y'\tens  P^{\al'+\gamma}\tens P^{\beta'+\gamma'}\br\\
&&\hs{10ex}\simeq
\Hom_\T\bl X\tens  Y\tens  P^{\al+\al'+\gamma+\gamma'},
X'\tens  Y'\tens  P^{\al'+\beta'+\gamma+\gamma'}\br\\
&&\hs{10ex}\to \Hom_{\tT}\bl(X\tens Y,\al+\beta),(X'\tens Y',\al'+\beta')\br.
\eneqn
It is easy to verify that $\tT$ becomes a tensor category.
Moreover, $X\mapsto (X,0)$ gives a tensor functor
$\Upsilon\col\T\to \tT$ such that the image of $P_i$ is an invertible object of $\tT$
for any $i\in I$.
We write $\T[P_i^{\tens-1}\mid i\in I]$ for
$\tT$.

\Lemma \label{lem:Pi_inv} Let $\T$ be a tensor category and let
$\{(P_i, R_{P_i})\}_{i\in I}$ be a commuting family of central
objects of $\T$. Let $\shc$ be another tensor category and
$\Psi\col\T\to \shc$ a tensor functor. Assume that $\Psi(P_i)$ is
invertible for any $i \in I$.
Then the functor $\Psi$ factors
through  $\T\To[\Upsilon]\T[P_i^{\tens-1}\mid i\in I]\To[\Psi']\shc$
with a tensor functor $\Psi'$.
Moreover, such a $\Psi'$ is unique up to a unique isomorphism.
\enlemma

\Prop \label{prop:abelian finite}
Let $(\T, \tens)$ be a tensor category and
let $\{(P_i, R_{P_i})\}_{i\in I}$
be a commuting family of central objects of $\T$.
Consider the following conditions.
\bna
\item $\T$ is an abelian category.
\item $ \tens$ is an exact bifunctor.
\item Any object of $\T$ has a finite length.
\item If $X$ is a simple object of $\T$, then $X \tens P_i$ is a simple object for any $i \in I$.
\ee
Set $\tT=\T[P_i^{\tens-1}\mid i\in I]$.
Then the following statements hold.
\bni
\item If  $(\T, \tens)$ satisfies   {\rm(a)} and {\rm(b)}, then $\tT$ is an abelian category and the functor $\Upsilon \col\T \to \tT$ is exact.
\item If  $(\T, \tens)$ satisfies   {\rm(a)--(d)}, then  $(\tT, \tens)$  satisfies {\rm(a)--(c)},
and the functor $\Upsilon  \col \T \to \tT$
sends simple objects to  simple objects.
 Conversely, every simple object of
$\tT$ is isomorphic to $\Upsilon(S) \tens P^\al$
for some $\al\in \Z^{\oplus I}$ and a simple
object $S$ of $\T$.
\ee
\enprop

For easy reference, we record the following lemma.
\Lemma \label{lem:finite Hom}
Let $\cor$ be a field and let $\mathcal C$ be a $\cor$-linear abelian category.
Assume that
\bna
\item any object of $\mathcal C$ has finite length,
\item $\dim_\cor \Hom_\shc (S,S)<\infty$
for any simple object $S$ in $\mathcal C$.
\ee
Then we have $\dim_\cor \Hom_\mathcal C(X,Y) < \infty$  for all  $X,Y \in \mathcal C$.
\enlemma

\subsection{Graded case}\label{app:graded}
Let $L$ be a $\Z$-module.
An additive tensor category $\T$ is called {\em$L$-graded}
if $\T$ has a decomposition $\T=\soplus\nolimits_{\la\in L}\T_\la$
such that $\tens$   induces a  bifunctor  $\T_\la\times \T_\mu\to \T_{\la+\mu}$
for any $\lam,\mu\in L$ and that $\one\in\T_0$.

Let  $\{(P_i, R_{P_i})\}_{i\in I}$
be a commuting family of central objects of $\T$
such that $P_i\in\T_{\la_i}$ for $\la_i\in L,  i  \in  I $.
Let $\ell\col\Z^{\oplus I}\to L$ be a homomorphism given by $\ell(e_i)=\la_i$ ($i \in I$).
Hence $P^\al$ belongs to $\T_{\ell(\al)}$ for any $\al\in\Zp^{\oplus I}$.

\medskip
Now we assume that $\ell\col \Z^{\oplus I}\to L $ is injective.
We will define a  tensor category $\T'$
and a tensor functor
$$ \Omega \col \T\to\T'$$
such that $ \Omega (P_i)\simeq \one$ for $i \in I$.
 We take $ \Ob(\T')=\Ob(\T)$  and
$$\Hom_{\T'}(X,Y)
=\indlim_{\substack{\al,\beta\in \Zp^{\oplus I},\\\la+\ell(\al)=\mu+\ell(\beta)}}
\Hom_{\T}(X\tens P^{\al}, Y\tens P^{\beta})$$
for $X\in \T_\la$ and $Y\in \T_\mu$.
If $\la-\mu$ is not in the image of $\ell\col \Z^{\oplus I}\to L$,
then we understand that
$\Hom_{\T'}(X,Y)=0$.
The tensor product of $X,Y\in\T'$ is  the same as  the one in $\T$.
 Then $ \cdot \tensor \cdot $ becomes a bifunctor  on $\T'$   as in the case of
$\tT=\T[P_i^{\tens-1}\mid i\in I]$.

Note that the category $\T'$ has a decomposition
$\T'=\soplus_{a\in \Coker(\ell)}\T'_a$.
We write
$\T'=\T[P_i\simeq\one\mid i\in I]$.

\Lemma\label{lem:P_i=1_exact sequences}
 Let $\T$ be an abelian $L$-graded tensor category and  $\{(P_i, R_{P_i})\}_{i\in I}$
be a commuting family of central objects in $\T$ as above.
Assume that the functor $\T\ni X\mapsto P_i\tens X$ is an exact functor
for  all  $i \in I$.
 Then the following statements hold.
\bnum
\item The functor $\Omega \col \T\to\T[P_i\simeq\one\mid i\in I]$
is exact.
\item $\Omega(P_i)$ is isomorphic to $\one$ for any $i\in I$.
\item Every exact sequence in $\T[P_i\simeq\one\mid i\in I]$
is isomorphic to the image of an exact sequence in $\T$.
\item The functor $ \Omega$  is decomposed into
$$\T\To[ \Upsilon ] \T[P_i^{\tens-1}\mid i\in I]\To[\Xi]\T[P_i\simeq\one\mid i\in I],$$
where $\Xi(X,\al)=X$  for $X \in \T, \al \in \Z^{\oplus I}$.
\ee
\enlemma

We have the  similar results to the one in
Proposition~\ref{prop:abelian finite}
for the category
$\T'=\T[P_i\simeq\one\mid i\in I]$ and the functor
$ \Omega  \col \T \to \T'$.

\Prop \label{prop:abelian finite2}
Let $(\T, \tens)$ and $\{(P_i, R_{P_i})\}_{i\in I}$
be as in {\rm Lemma~\ref{lem:P_i=1_exact sequences}}.
Consider the following conditions.
\bna
\item $\T$ is an abelian category.
\item $ \tens$ is an exact bifunctor.
\item  Every  object of $\T$ has  finite length.
\item If $X$ is a simple object of $\T$, then $X \tens P_i$ is a simple object for  all  $i \in I$.
\ee
Set $\T'=\T[P_i\simeq\one\mid i\in I]$.
Then the following statements hold.
\bni
\item If  $(\T, \tens)$ satisfies  {\rm(a)} and {\rm(b)}, then $\T'$ is an abelian category and the functor
$\Omega \col  \T \to \T'$ is exact.
\item If  $(\T, \tens)$ satisfies  {\rm(a)--(d)}, then $(\T', \tens)$  satisfies {\rm(a)--(c)},
and the functor $ \Omega  \col \T \to \T'$
sends simple objects to  simple objects.
 Conversely, every simple object of
 $\T'$ is the image of a simple object of $\T$  under $\Omega$.
\ee
\enprop

 The following proposition gives a characterization of
$\T[P_i\simeq\one\mid i\in I]$.

\Prop\label{prop:P=1} Let $\T$ be a tensor category and let
$\{(P_i,R_{P_i})\}_{i\in I}$
be a commuting family of central objects of $\T$
as in {\rm Lemma~\ref{lem:P_i=1_exact sequences}}.
Let $\shc$ be another tensor category and $\Psi\col\T\to \shc$
a tensor functor.
Assume that, for any $i\in I$, there exists  an isomorphism
$ g_i\col\Psi(P_i)\isoto\one$ such that the
following diagrams are commutative for any $X\in\T$:
\eq&&\hs{4ex}
\ba{l}\xymatrix@C=10ex{
\Psi(P_i\tens X)\ar[d]^{\Psi(R_{ P_i  }(X))}\ar[r]^-\sim&\Psi(P_i)\tens\Psi(X)
\ar[r]^-{g_i\tens \Psi(X)}
&\one\tens \Psi(X)\ar[dr]\\
\Psi(X\tens P_i)\ar[r]^-\sim&\Psi(X)\tens\Psi(P_i)\ar[r]^-{\Psi(X)\tens g_i}
&\Psi(X)\tens\one\ar[r]&\Psi(X).
}\ea\label{eq:Psi}
\eneq
Then $\Psi$ factors
as $ \T\To[\Omega]\T[P_i\simeq\one\mid i\in I]\To[\Psi']\shc$
with a tensor functor $\Psi'$.
Moreover, such a $\Psi'$ is unique up to a unique isomorphism.
\enprop

\Proof
Let $(\{P^\al\}_{\al\in\Zp^{\oplus I}}, \{f_{\al,\beta}\}_{\al,\beta\in\Zp^{\oplus I}})$
be as in \S\;\ref{subsec:comm}.
Taking $X=P_j$ in \eqref{eq:Psi}, the diagram
$$\xymatrix@C=10ex{
\Psi(P_i\tens P_j)\ar[r]^-\sim\ar[d]^{\Psi(R_{P_i}(P_j))}
&\Psi(P_i)\tens \Psi(P_j)
\ar[r]^-{g_i\tens g_j}&\one\tens\one\ar[d]^\id\\
\Psi(P_j\tens P_i)\ar[r]^-\sim
&\Psi(P_j)\tens \Psi(P_i)
\ar[r]^-{g_j\tens g_i}&\one\tens\one
}$$
is commutative.
Hence, Lemma~\ref{lem:mor comm} implies that there exists
a family of  morphisms
$g_\alpha\col\Psi(P^\alpha)\isoto \one$ ($\al\in \Zp^{\oplus I}$) such that
the following diagram commutes:
\eq&&\ba{l}\xymatrix@C=10ex{
\Psi(P^\al\tens P^\beta)\ar[r]^-\sim\ar[d]^{\Psi(f_{\al,\beta})}
&\Psi(P^\al)\tens \Psi(P^\beta)
\ar[r]^-{g_\al\tens g_\beta}&\one\tens\one\ar[d]^\sim\\
\Psi(P^{\al+\beta})\ar[rr]^{g_{\al+\beta}}&&\one.
}\ea\label{eq:Pone}
\eneq
We can also check that the diagram
\eq&&\hs{4ex}
\ba{l}\xymatrix@C=10ex{
\Psi(P^\al\tens X)\ar[d]^{\Psi(R_{ P^\al}(X))}\ar[r]^-\sim&\Psi(P^\al)
\tens\Psi(X)
\ar[r]^-{g_\al\tens \Psi(X)}
&\one\tens \Psi(X)\ar[dr]\\
\Psi(X\tens P^\al)\ar[r]^-\sim&\Psi(X)\tens\Psi(P^\al)
\ar[r]^-{\Psi(X)\tens g_\al} &\Psi(X)\tens\one\ar[r]&\Psi(X)
}\ea\label{eq:Psi1} \eneq
is commutative for any $X\in\T$ and $\al\in
\Zp^{\oplus I}$.

We shall define the functor $\Psi'\col \T'\seteq\T[P_i\simeq\one\mid
i\in I]\To\shc$ as follows. For $X\in\T$, we set $\Psi'(X)=\Psi(X)$.
For $X\in\T_\la$ and $Y\in\T_\mu$, we define $\Hom_{\T'}(X,Y)\to
\Hom_\shc(\Psi'(X),\Psi'(Y))$ as the inductive limit of \eqn
&&\Hom_\T(X\tens P^\al,Y\tens P^\beta)
\to \Hom_\shc\bl\Psi(X\tens P^\al),\Psi(Y\tens P^\beta)\br\\
&&\hs{7ex}
\simeq\Hom_\shc\bl\Psi(X)\tens \Psi(P^\al),\Psi(Y)\tens \Psi(P^\beta)\br
\isoto \Hom_\shc\bl\Psi(X)\tens \one,\Psi(Y)\tens \one\br.
\eneqn
Here the limit is taken over $\al,\beta\in \Zp^{\oplus I}$ such that
$\la+\ell(\al)=\mu+\ell(\beta)$.
It is easy to verify that $\Psi'$ is a well-defined functor.
For $X,Y\in\T$, we have an isomorphism
$$\Psi'(X\tens Y)=\Psi(X\tens Y)\isoto\Psi(X)\tens \Psi(Y)
=\Psi'(X)\tens\Psi'(Y).$$
Let us show that it is an isomorphism of
functors. In order to see this, it is enough to show that for
$X\in\T_\la$, $Y\in\T_\mu$ $X'\in\T_{\la'}$, $Y'\in\T_{\mu'}$, and
$f\in\Hom_{\T'}(X,X')$, $g\in\Hom_{\T'} (Y,Y')$, the diagram \eq
&&\ba{l}\xymatrix@C=15ex{
\Psi'(X\tens Y)\ar[r]^-{\Psi'(f\tens g)}\ar[d]^\sim&\Psi'(X'\tens Y')\ar[d]^\sim\\
\Psi'(X)\tens\Psi'(Y)\ar[r]^-{\Psi'(f)\tens\Psi'(g)}&\Psi'(X')\tens\Psi'(Y')
}\ea\label{eq:Psi'}
\eneq
is commutative.

Assume that $f$ is given by $\tf\in \Hom_\T(X\tens P^\al,X'\tens P^{\al'})$
and $g$ is given by $\tg\in \Hom_\T(Y\tens P^\beta,Y'\tens P^{\beta'})$.
We have two sequences of isomorphisms
\eqn
&&\Psi(X\tens P^\al\tens Y\tens P^\beta)
\isoto[R_{P^\al}(Y)]\Psi(X\tens Y\tens P^\al\tens P^\beta)
\isoto\Psi(X\tens Y\tens P^{\al+\beta})\\
&&\hs{40ex}\isoto\Psi(X\tens Y)\tens \Psi(P^{\al+\beta})
\isoto\Psi(X\tens Y)
\eneqn
and
\eqn
&&\Psi(X\tens P^\al\tens Y\tens P^\beta)
\isoto\Psi(X)\tens \Psi(P^\al)\tens \Psi(Y)\tens \Psi(P^\beta)\\
&&\hs{30ex}\isoto\Psi(X)\tens \one\tens \Psi(Y)\tens\one
\isoto\Psi(X)\tens\Psi(Y).
\eneqn
We denote the first composition by
$\vphi\col\Psi(X\tens P^\al\tens Y\tens P^\beta)\isoto\Psi'(X\tens Y)$
and the second one by
$\psi\col \Psi(X\tens P^\al\tens Y\tens P^\beta)\isoto\Psi'(X)\tens \Psi'(Y)$.

Similarly, we have two isomorphisms $\vphi'\col\Psi(X'\tens
P^{\al'}\tens Y'\tens P^{\beta'})\isoto\Psi'(X'\tens Y') $ and
$\psi'\col \Psi(X'\tens P^{\al'}\tens Y'\tens
P^{\beta'})\isoto\Psi'(X') \tens \Psi'(Y')$. We can easily see that
the following diagram is commutative:
$$\xymatrix{
\Psi'(X\tens Y)\ar[d]^{\Psi'(f\tens g)}&\Psi(X\tens P^\al\tens Y\tens P^\beta)
\ar[r]^-\sim_-{\psi}\ar[l]_-\sim^-\vphi\ar[d]^{\Psi(\tf\tens\tg)}
&\Psi'(X)\tens \Psi'(Y)
\ar[d]^{\Psi'(f)\tens\Psi'(g)}\\
\Psi'(X'\tens Y')&\Psi(X'\tens P^{\al'}\tens Y'\tens P^{\beta'})
\ar[r]^-\sim_-{\psi'}\ar[l]_-\sim^-{\vphi'}&\Psi'(X')\tens
\Psi'(Y'). }$$ On the other hand, \eqref{eq:Pone} and
\eqref{eq:Psi1} imply that $\psi$ coincides with the composition
$\Psi(X\tens P^\al\tens Y\tens P^\beta) \isoto[\vphi]\Psi(X\tens
Y)\isoto\Psi(X)\tens \Psi(Y)$, and a similar relation holds for
$\vphi'$ and $\psi'$. Hence we obtain the commutativity of
\eqref{eq:Psi'}. \QED
 \Lemma \label{lem:P=1_exact} Under the
conditions in the above  proposition, we  further assume that
$\shc$ is an abelian category and $\Psi$ is an exact functor. Then
the functor $\Psi'\col\T[P_i\simeq\one\mid i\in I]\to \shc$ is
exact. \enlemma
 \Rem
The commutativity of \eqref{eq:Psi}
does not depend on the choice of an isomorphism $g_i$  ($i \in I$).
Indeed, in a tensor category $\T$,
the diagram
$$\xymatrix{
X\tens\one\ar[r]^-{\sim}\ar[d]_{X\tens\vphi}
&X\ar[r]^-{\sim}&\one\tens X\ar[d]_{\vphi\tens X}\\
X\tens\one\ar[r]^-{\sim}&X\ar[r]^-{\sim}&\one\tens X
}
$$
 is  commutative for any $\vphi\in\End_\T(\one)$.
\enrem

\subsection{Variant} \label{sec:Variant}
 Let $\T$ be an abelian $L$-graded tensor category and  $\{(P_i, R_{P_i})\}_{i\in I}$
be a commuting family of central objects in $\T$ as in \S\,\ref{app:graded}.
Let
$(Q, R_Q)$ be an invertible central object of $\T$
such that $Q\in\T_0$.
We assume that
\eq&&
\parbox{70ex}{$R_Q(P_i)\col Q\tens P_i\isoto P_i\tens Q$ and
$R_{P_i}(Q)\col P_i\tens Q\isoto Q\tens P_i$ are inverse to each other
for any $i\in I$.}
\eneq
It is equivalent to saying that $\{(Q, R_Q),(P_i, R_{P_i})\mid i\in I\}$
is a commuting family of central objects.

Given a family of integers  $(s_i)_{i\in I}$, let
$s\col \Z^{\oplus I}\to \Z$ be the linear map defined by $s(e_i)=s_i$.
Then we can define  a category $\T'$ as follows.
We take $\Ob(\T')=\Ob(\T)$.
For $X\in\T_\la$ and $Y\in \T_\mu$,
$$\Hom_{\T'}(X,Y)
=\indlim_{\substack{\al,\beta\in \Zp^{\oplus I},\\\la+\ell(\al)=\mu+\ell(\beta)}}
\Hom_{\T}(X\tens P^{\al}\tens Q^{\tens-s(\al)}, Y\tens P^{\beta}\tens Q^{\tens-s(\beta)}).$$
Then we can easily see that $\T'$ is also a tensor category,
and there exists a tensor functor
$\Omega\col \T\to\T'$.
We have
$$\Omega(P_i)\simeq \Omega(Q^{\tens s_i}).$$
We denote $\T'$ by
$$\T\,[P_i\simeq Q^{\tens s_i}\mid i\in I].$$
Indeed, $\widetilde{P}_i\seteq P_i\tens Q^{\tens-s_i}$ form
a commuting family of central objects and
$\T'\simeq \T\,[\widetilde{P}_i\simeq \one\mid i\in I]$.

\subsection{Twisting of tensor structure}\label{app:twist}
\ As in  \S\,\ref{app:graded},
let $\T= \soplus\nolimits_{\la \in L} \T_\la$ be an
$L$-graded additive tensor category.
Let $(Q, R_Q)$ be an invertible central object of $\T$
such that $Q\in\T_0$.
Then $Q^{\otimes n}$ is a central object for all $n\in\Z$.

Let $B$ be a $\Z$-valued bilinear form on $L$.
We define an additive bifunctor
$\ntens$  on $\T$  by
$$X\ntens Y=Q^{\tens B(\la,\mu)}\tens X\tens Y\qtext{for $X\in\T_\la$ and $Y\in \T_\mu$.}$$
Then it is easy to see that $\ntens$ gives
a new tensor category structure on $\T$, where
the associativity is given by
\eqn
(X\ntens Y)\ntens Z
&\simeq& Q^{\tens B(\la+\mu,\zeta)}\tens Q^{\tens B(\la,\mu )}
\tens X\tens Y\tens Z\\
&\simeq& Q^{\tens B(\la,\mu)+B(\la+\zeta)+B(\mu,\zeta)}
\tens X\tens Y\tens Z\\
&\simeq& Q^{\tens B(\la,\mu+\zeta)}\tens
X\tens Q^{\tens B(\mu,\zeta)}
\tens Y\tens Z
\simeq X\ntens (Y\ntens Z)
\eneqn
for $X\in\T_\la$, $Y\in\T_\mu$, $Z\in \T_\zeta$.

We say that $(\T,\ntens)$ is the tensor category twisted by $Q$ and $B$.

\section{Quotient categories}\label{app:Serre}

\subsection{Serre category}

In this appendix, we recall the notion of the quotient category of
an abelian category by a subcategory. For more  details, see \cite[\S\;4.3]{Pop}.
 Let $\As$ be
an abelian category and let $\Ss$ be a {\em Serre subcategory} of $\As$\,; i.e.,
\eqn&&\parbox{80ex}{
 (i) $\Ss$ is a full subcategory of $\As$,

 (ii) $\Ss$ is stable  under  taking subobjects, quotients and extensions,
namely, for any exact sequence
$0 \rightarrow X' \rightarrow X \rightarrow X''\rightarrow 0$ in $\As$,
the middle term $X$ is in $\Ss$ if and only if $X'$ and $X''$ are in $\Ss$.
}
\eneqn
For two objects $X$ and $Y$ of $\As$, we have a directed set
$$\mathfrak L (X,Y) = \{(X',Y') \, | \, X' \subset X, Y' \subset Y,
X/X' \in \Ss, Y' \in \Ss\}$$
with the order given by
$$(X'_1,Y'_1) \leq (X', Y') \Leftrightarrow X' \subset X'_1, Y'_1 \subset Y'.$$
If $(X'_1,Y'_1) \leq (X', Y')$, we have a canonical homomorphism of abelian groups
$$\Hom_{\As}(X'_1, Y/ Y'_1) \rightarrow \Hom_{\As}(X', Y/ Y').$$

\noindent The {\em quotient category} $\As / \Ss$ is defined as follows:
\begin{enumerate}
\item The objects of $\As / \Ss$ are the same as the objects of $\As$.
\item For two objects $X$, $Y$ of $\As / \Ss$, the morphisms are given by
\eqn&&
  \Hom_{\As / \Ss}(X, Y) \seteq
  \varinjlim_{(X',Y') \in \mathfrak L (X,Y)} \Hom_{\As}(X', Y/Y').
\eneqn

\item
We refer the reader to \cite[\S\;4.3]{Pop} for
the definition of the composition
$\Hom_{\As / \Ss}(X, Y)\times \Hom_{\As / \Ss}(Y, Z)\to\Hom_{\As / \Ss}(X, Z)$
for $X,Y,Z\in\Ob(\As/  \Ss)$.
\end{enumerate}

 Then we  can define the functor $\mathcal Q \col \As \rightarrow \As / \Ss$.

\begin{theorem}[{\cite[\S\;4.3]{Pop}}] \label{thm:quotient category}
  Let $\Ss$ be a Serre subcategory of an abelian category $\As$.
  Then the following statements hold.
  \bnum
    \item The quotient category $\As / \Ss$ is abelian.
\item  For an object $X \in \As$,  $\mathcal Q(X)\simeq 0$
if and only if $X \in \Ss$.

    \item The functor $\mathcal Q \col \As \rightarrow \As / \Ss$ is exact.
\item Every  exact sequence in $\As / \Ss$ is isomorphic to the image of
an exact sequence in $\As$ under  $\mathcal Q$.
\item
 Let  $\mathcal B$ be an abelian category and let $\mathcal H\col \As \rightarrow \mathcal B$
be an exact functor such that
   $\mathcal H(X)\simeq0$ for all $X$ in  $\Ss$.
Then $\mathcal H $ factors through $\As / \Ss$ with an exact  functor
    $\overline{\mathcal H} \col \As / \Ss \rightarrow \mathcal B$.
Moreover, such an $\overline{\mathcal H}$ is unique up to isomorphism.
\label{q6}
   \item An additive functor
$\mathcal G\col \As /\Ss \rightarrow \mathcal B$ is exact
if and only if $\mathcal G \circ \mathcal Q
\col \As \rightarrow \mathcal B$ is exact.
  \end{enumerate}
\end{theorem}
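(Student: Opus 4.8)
Here is a plan following the classical Serre--Gabriel construction of a quotient abelian category, which is all that is needed since the assertions are standard (see \cite[\S\,4.3]{Pop}); I indicate only the skeleton. The first task is to check that $\As/\Ss$ is a well-defined additive category. For a fixed pair $X,Y$ the indexing poset $\mathfrak L(X,Y)$ is directed: given $(X'_1,Y'_1),(X'_2,Y'_2)$, the pair $(X'_1\cap X'_2,\ Y'_1+Y'_2)$ dominates both and still lies in $\mathfrak L(X,Y)$, because $\Ss$ is closed under subobjects, quotients and (finite) extensions. Hence $\Hom_{\As/\Ss}(X,Y)$ is a filtered colimit of abelian groups, so an abelian group; composition is defined on representatives and is checked to be biadditive and associative, and $\mathcal Q$ is an additive functor. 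Finite biproducts in $\As/\Ss$ are represented by finite biproducts in $\As$, so $\As/\Ss$ is additive.

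The engine for the remaining parts is a recognition lemma: if $\bar f\colon X\to Y$ is represented by an honest morphism $f\colon X'\to Y/Y'$ in $\As$ (with $X/X'\in\Ss$ and $Y'\in\Ss$), then $\bar f=0$ in $\As/\Ss$ iff $\Img f\in\Ss$, and $\bar f$ is an isomorphism iff $\Ker f\in\Ss$ and $\Coker f\in\Ss$; one proves this by factoring $f$ through its image and using that $\mathcal Q$ inverts any mono with cokernel in $\Ss$ and any epi with kernel in $\Ss$, together with $\mathcal Q(X')\isoto\mathcal Q(X)$ and $\mathcal Q(Y)\isoto\mathcal Q(Y/Y')$. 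Applying this to $\id_X$ gives (ii): $\mathcal Q(X)\simeq0$ iff $\mathcal Q(\id_X)=0$ iff $\Img(\id_X)=X\in\Ss$. For the abelian structure I would, given $\bar f$, represent it by an honest $g\colon X''\to Y''$ in $\As$ with $\mathcal Q$ inverting $X''\hookrightarrow X$ (cokernel in $\Ss$) and $Y''\hookrightarrow Y$ (so $Y/Y''\in\Ss$), and then verify that $\mathcal Q(\Ker g)$ and $\mathcal Q(\Coker g)$ are a kernel and a cokernel of $\bar f$ in $\As/\Ss$; finiteness of the colimit makes the universal properties routine. Exactness of $\mathcal Q$, part (iii), is then immediate from this description. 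The one genuinely delicate point --- the step I expect to be the main obstacle --- is showing that the canonical morphism $\operatorname{Coim}\bar f\to\Img\bar f$ is an isomorphism in $\As/\Ss$: one constructs it from the canonical isomorphism $\operatorname{Coim} g\isoto\Img g$ in $\As$ together with the comparison morphisms, and then checks, via the recognition lemma, that the relevant kernels and cokernels lie in $\Ss$. Granting this, $\As/\Ss$ is abelian, proving (i).

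For (iv), a short exact sequence $0\to A\to B\to C\to0$ in $\As/\Ss$ is first rewritten with honest representatives (using the recognition lemma); forming suitable pullbacks and pushouts inside $\As$ produces a complex in $\As$ whose image under $\mathcal Q$ is isomorphic to the given sequence, and exactness of the original forces (again by the recognition lemma) this complex to be exact in $\As$. The universal property (v) is proved by setting $\overline{\mathcal H}(X)=\mathcal H(X)$ and, for $\bar f$ represented by $f\colon X'\to Y/Y'$,
\[
\overline{\mathcal H}(\bar f)=\mathcal H(Y\twoheadrightarrow Y/Y')^{-1}\circ\mathcal H(f)\circ\mathcal H(X'\hookrightarrow X)^{-1},
\]
which makes sense because $\mathcal H$ is exact and kills $\Ss$, so $\mathcal H(X'\hookrightarrow X)$ and $\mathcal H(Y\twoheadrightarrow Y/Y')$ are isomorphisms; independence of the representative uses directedness of $\mathfrak L(X,Y)$, functoriality and additivity are formal, and exactness of $\overline{\mathcal H}$ follows from (iv) together with exactness of $\mathcal H$ on $\As$. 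Finally, for (vi): if $\mathcal G\circ\mathcal Q$ is exact then, since $\mathcal Q$ is exact, essentially surjective, and (by (iv)) every exact sequence of $\As/\Ss$ is the $\mathcal Q$-image of an exact sequence of $\As$, applying $\mathcal G$ to such a sequence shows $\mathcal G$ is exact; the converse is trivial since $\mathcal Q$ is exact.
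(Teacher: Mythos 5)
The paper gives no proof of this theorem: it is quoted verbatim from \cite[\S\;4.3]{Pop}, and the appendix explicitly states that proofs of these standard facts are omitted. Your sketch is a correct outline of exactly that classical Gabriel--Popescu construction (directedness of $\mathfrak L(X,Y)$ via $(X'_1\cap X'_2,\,Y'_1+Y'_2)$, the recognition lemma for zero morphisms and isomorphisms, kernels/cokernels via honest representatives, and the factorization formula for $\overline{\mathcal H}$), so it matches the source the paper relies on; the only point you leave implicit is the uniqueness of $\overline{\mathcal H}$ in (v), which is formal since $\mathcal Q$ is the identity on objects and every morphism of $\As/\Ss$ is of the form $\mathcal Q(\pi)^{-1}\circ\mathcal Q(f)\circ\mathcal Q(\iota)^{-1}$.
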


As for simple objects in $\As / \Ss$, we have the following proposition.
\begin{prop} \label{prop:simples in quotient cat.}
Let $\Ss$ be a Serre subcategory of an abelian category $\As$.
\bnum
\item If $X$ is simple in $\As$ and $X \notin \Ss$, then $\mathcal Q(X)$ is simple in $\As / \Ss$.
\item Assume that every object in $\As$ has finite length.
Then  every simple object $Y$ in $\As /\Ss$ is isomorphic to
$\mathcal Q(X)$ for a simple object $X$ in $\As$.
\item If $X_1$ and $X_2$ are simple objects of $\As$ and $\mathcal Q(X_1) \simeq \mathcal Q(X_2)\not\simeq0$ in $\As /\Ss$,
    then $X_1 \simeq X_2$ in $\As$.
\end{enumerate}
\end{prop}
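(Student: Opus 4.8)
The plan is to establish the three assertions in order, using only two basic properties of the quotient functor $\mathcal Q\col\As\to\As/\Ss$ recorded in Theorem~\ref{thm:quotient category}: it is exact, and $\mathcal Q(Z)\simeq 0$ if and only if $Z\in\Ss$. Because an exact functor commutes with the formation of kernels, cokernels and images, these two facts give at once that, for a morphism $f$ of $\As$, the morphism $\mathcal Q(f)$ is a monomorphism (resp.\ an epimorphism, an isomorphism) exactly when $\Ker f$ (resp.\ $\Coker f$, both of them) lies in $\Ss$; this dictionary will be used throughout. For \textup{(i)}, I would first establish that every subobject of $\mathcal Q(X)$ in $\As/\Ss$ is of the form $\mathcal Q(X')$ for a subobject $X'\subseteq X$ in $\As$. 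Indeed, given a monomorphism $\iota\col Y\to\mathcal Q(X)$, write $V$ for the underlying object of $Y$ and use the colimit defining $\Hom_{\As/\Ss}(V,X)$ to represent $\iota$ by a morphism $f\col V'\to X/X_0'$ of $\As$ with $V/V'\in\Ss$ and $X_0'\in\Ss$, so that $\mathcal Q(V')\simeq\mathcal Q(V)=Y$; then, taking $X_1\subseteq X$ to be the preimage of $\Img f\subseteq X/X_0'$, exactness of $\mathcal Q$ together with $X_0'\in\Ss$ yields $\mathcal Q(X_1)\simeq\mathcal Q(\Img f)=\Img(\mathcal Q(f))=\Img(\iota)=Y$, compatibly with the inclusions into $\mathcal Q(X)$. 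Granting this, the simplicity of $X$ means its only subobjects are $0$ and $X$, hence $\mathcal Q(X)$ has only the subobjects $0=\mathcal Q(0)$ and $\mathcal Q(X)$; and $\mathcal Q(X)\not\simeq 0$ since $X\notin\Ss$, so $\mathcal Q(X)$ is simple.

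For \textup{(ii)}, recall that $\mathcal Q$ is the identity on objects, so a simple object $Y$ of $\As/\Ss$ is $\mathcal Q(Z)$ for some $Z\in\As$. Using the finite-length hypothesis, I would choose a composition series $0=Z_0\subset Z_1\subset\cdots\subset Z_n=Z$ in $\As$ and apply the exact functor $\mathcal Q$ to obtain a filtration $0=\mathcal Q(Z_0)\subseteq\cdots\subseteq\mathcal Q(Z_n)=Y$ whose successive quotients are the $\mathcal Q(Z_i/Z_{i-1})$, each of which is $0$ or simple by \textup{(i)}. Since $Y$ is simple, every $\mathcal Q(Z_i)$ equals $0$ or $Y$; letting $j$ be minimal with $\mathcal Q(Z_j)=Y$ gives $\mathcal Q(Z_j/Z_{j-1})\simeq Y$ with $Z_j/Z_{j-1}$ simple in $\As$, as wanted.

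For \textup{(iii)}, let $\phi\col\mathcal Q(X_1)\isoto\mathcal Q(X_2)$ be an isomorphism. I would represent $\phi$ through the colimit description by a morphism $f\col X_1'\to X_2/X_2'$ of $\As$ with $X_1/X_1'\in\Ss$ and $X_2'\in\Ss$. Since $X_1$ is simple and $X_1\notin\Ss$ (because $\mathcal Q(X_1)\not\simeq 0$), necessarily $X_1'=X_1$; and $X_2'\neq X_2$, for otherwise $X_2/X_2'=0$ and then $\phi=0$, which is impossible as $\mathcal Q(X_1)\not\simeq 0$, so $X_2'=0$ by simplicity of $X_2$. Thus $\phi=\mathcal Q(f)$ for a genuine morphism $f\col X_1\to X_2$ of $\As$, and $f\neq 0$ since $\phi\neq 0$. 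Simplicity of $X_1$ then forces $\Ker f=0$ and simplicity of $X_2$ forces $\Img f=X_2$, so $f$ is an isomorphism in $\As$.

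The routine but slightly delicate part is the bookkeeping with the colimit presentation of $\Hom_{\As/\Ss}$: checking that a subobject of $\mathcal Q(X)$, or an isomorphism between $\mathcal Q(X_1)$ and $\mathcal Q(X_2)$, can be realized by an honest morphism of $\As$ after shrinking the source and passing to a quotient of the target, and that this realization is compatible with the formation of images. Once the standard dictionary relating $\mathcal Q$ to exactness and to membership in $\Ss$ is in place, no real obstacle remains, and the whole proof should be short.
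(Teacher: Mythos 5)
The paper gives no proof of this proposition (it is dismissed with ``Since the proofs are elementary, we omit them''), so there is nothing to compare against; judged on its own, your argument is correct and complete. Part (i) correctly reduces to showing that every subobject of $\mathcal Q(X)$ is of the form $\mathcal Q(X_1)$ for a subobject $X_1\subseteq X$, and the bookkeeping with the colimit presentation of $\Hom_{\As/\Ss}$ is done properly (the identifications $\mathcal Q(V')\simeq\mathcal Q(V)$ and $\mathcal Q(X/X_0')\simeq\mathcal Q(X)$ are exactly what makes $\Img(\mathcal Q(f))$ compute $\Img(\iota)$). Parts (ii) and (iii) are also fine. One small simplification you could have noted in (iii): when $X_1$ and $X_2$ are simple and not in $\Ss$, the directed set $\mathfrak L(X_1,X_2)$ collapses to the single pair $(X_1,0)$ — the only subobject $X_1'\subseteq X_1$ with $X_1/X_1'\in\Ss$ is $X_1$ itself, and the only subobject of $X_2$ lying in $\Ss$ is $0$ — so $\Hom_{\As/\Ss}(X_1,X_2)=\Hom_\As(X_1,X_2)$ on the nose and the isomorphism $\phi$ is literally $\mathcal Q(f)$ for an honest $f$, after which Schur's lemma finishes as you say.
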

\noindent Since the proofs are elementary, we omit them.

The following lemma is also elementary.

\Lemma
Let $\cor$ be a field and let $\As$ be a $\cor$-linear abelian category.
Assume the following conditions:
\bna
\item $\dim_\cor\Hom_{\As}(X,Y)<\infty$ for any $X,Y\in\As$,
\item any object of $\As$ has finite length.
\ee
Then, for any Serre subcategory $\Ss$ of $\As$,
the quotient category $\As/\Ss$ is a $\cor$-linear abelian category
satisfying $(a)$ and $(b)$.
\enlemma

The following proposition is easy to verify.
\Prop\label{prop:quot tensor}
Let $\As$ be an abelian tensor category such that
$\tens$ is an exact bifunctor
and let $\Ss$ be a Serre subcategory of $\As$.
 Assume the following condition:
\eq&&\text{
for any $X\in\As$ and $Y\in\Ss$,
$X\tens Y$ and $Y\tens X$ belong to $\Ss$.}
\eneq
Then $\As/\Ss$ has a structure of a tensor category such that
$\mathcal Q\col \As\to\As/\Ss$ is a tensor functor.
\enprop


\begin{thebibliography}{99}
\bibitem{AK} T. Akasaka and M. Kashiwara,
{\em Finite-dimensional representations of quantum affine algebras},
Publ. RIMS. Kyoto Univ., {\bf33} (1997), 839-867.

\bibitem{Ariki} S. Ariki, {\em On the decomposition numbers of the
Hecke algebra of $G(M,1,n)$}, J. Math. Kyoto
Univ. {\bf36} (1996), 789-808.





\bibitem{BZ77} I. N. Bernstein and A. V. Zelevinsky,
{\em Induced representations of reductive $p$-adic groups. I.}
 Ann. Sci. \'{E}cole. Norm. Sup. {\bf 10} (4) (1977), 441-472.

\bibitem{BK09}J. Brundan and A. Kleshchev, {\em Blocks of cyclotomic Hecke algebras
and Khovanov-Lauda algebras}, Invent. Math.,  {\bf178} (2009) 451--484.

\bibitem{CP94} V. Chari and A. Pressley,
{\em A guide to Quantum Groups}, Cambridge U. Press, Cambridge, 1994.


\bibitem{CP96} \bysame,
{\em Quantum affine algebras and affine Hecke algebras},
Pacific J. Math. {\bf 174} (2) (1996), 295-326.


\bibitem{Che}  I. V. Cherednik,
{\em A new interpretation of Gelfand-Tzetlin bases},
Duke Math. J., {\bf54} (1987), 563-577.



\bibitem{DO94}
E. Date and M. Okado,
{\em Calculation of excitation spectra of the spin model related with the vector representation of the quantized affine algebra of type $A^{(1)}_n$},
Internat. J. Modern Phys. A {\bf 9} (3) (1994), 399-417.

%\bibitem{GLS} 
%C. Gei{\ss}, B,  Leclerc   and  J. Schr\"{o}er,
%{\it Cluster structures on quantum coordinate rings},
%Selecta Math. (N. S.) {\bf19}, (2013), 337-397. 


\bibitem{GRV94} V. Ginzburg , N. Reshetikhin and E. Vasserot,
{\em Quantum groups and flag varieties},
A.M.S. Contemp. Math. {\bf 175} (1994), 101-130.

\bibitem{Her04}
 D. Hernandez,
{\it Algebraic approach to $q,t$-characters},
Adv. Math. {\bf 187} (2004), 1-52. 

\bibitem{HL10}
D. Hernandez and B. Leclerc,
{\em Cluster algebras and quantum affine algebras},
Duke Math. J. {\bf 154} (2) (2010), 265-341.



\bibitem{HL11}
  \bysame,
\emph{Quantum Grothendieck rings and derived Hall algebras},  
to appear in J. reine angew. Math.,
DOI: 10.1515/crelle-2013-0020.


\bibitem{Jimbo86} M. Jimbo,
{\em A $q$-analogue of $U(\mathfrak{gl}_{N+1})$, Hecke algebra,
and the Yang-Baxter equation}, Lett. Math.Phys. {\bf11}  (1986), 247-252.


\bibitem{Kac}
V. Kac,
{\em Infinite Dimensional Lie algebras},
3rd ed., Cambridge University Press, Cambridge, 1990.



\bibitem{KK12} S.-J. Kang and M. Kashiwara,
{\em Categorification of highest weight modules via Khovanov-
Lauda-Rouquier algebras}, Invent. Math. {\bf190} (2012), 699--742.


\bibitem{KP11} S.-J. Kang and E. Park,
{\em Irreducible modules over Khovanov-Lauda-Rouquier
algebras of type $A_n$ and semistandard tableaux},
J. Algebra {\bf339} (2011), 223--251.

\bibitem{Kas90} M. Kashiwara,
\newblock {\em Crystalizing the $q$-analogue
of universal enveloping algebras},
\newblock Commun. Math. Phys. {\bf 133} (1990), 249--260.

\bibitem{Kas91}
\bysame,
\newblock {\em On crystal bases of the $q$-analogue of
universal enveloping algebras},
\newblock Duke Math. J. {\bf 63} (1991), 465--516.

\bibitem{Kas94}
\bysame, \emph{Crystal bases of modified quantized enveloping
algebra}, Duke Math. J. {\bf 73} (1994), 383--413.

\bibitem{Kas02}
\bysame,
{\em On level zero representations of quantum affine algebras},
Duke. Math. J. {\bf112} (2002), 117--175.

\bibitem{KKOP19A}M. Kashiwara, M. Kim, Se-jin Oh and Euiyong Park,
{\em Localizations for quiver Hecke algebras}, arXiv:1901.09319\,v1.

\bibitem{KS}
M. Kashiwara and P. Schapira,
{\em Categories and sheaves},
Grundlehren der mathematischen Wissenschaften {\bf332}, Springer-Verlag, Berlin, 2006.

\bibitem{Kato12}
S.~Kato, \emph{Poincar\'{e}-Birkhoff-Witt bases and Khovanov-Lauda-Rouquier algebras},
Duke Math.~J.\ \textbf{163} (2014), no. 3, 619--663.


\bibitem{KL09}
M.~Khovanov and A. Lauda, \emph{A diagrammatic approach to
categorification of quantum groups
  {I}}, Represent. Theory \textbf{13} (2009), 309--347.

\bibitem{KL11}
\bysame, \emph{A diagrammatic approach to categorification of
  quantum groups {II}}, Trans. Amer. Math. Soc. \textbf{363} (2011),
  2685--2700.

\bibitem{Kim12} M. Kim,
{\em Khovanov-Lauda-Rouquier algebras and R-matrices}, Ph. D.
thesis, Seoul National University, 2012.

%\bibitem{Kimura12} Y. Kimura,
%\newblock {\it Quantum unipotent subgroup and dual canonical basis},
%\newblock Kyoto J. Math. {\bf52} (2) (2012), 277--331.

\bibitem{KMR12}
 A. S. Kleshchev, A. Mathas  and A. Ram,
 {\em Universal graded Specht modules for cyclotomic Hecke algebras},
 Proc. Lond. Math. Soc. (3) {\bf105} (2012), no. 6, 1245--1289.

\bibitem{LLT} A. Lascoux, B. Leclerc and J.-Y. Thibon, {\em Hecke
algebras at roots of unity and crystal bases of
quantum affine algebras}, Comm. Math. Phys.
{\bf181} (1996), 205-263.

\bibitem{LV11} A. Lauda and M. Vazirani,
{\em Crystals from categorified quantum groups},
Adv. Math., {\bf 228} (2011), 803-861.

\bibitem{Lus93} G. Lusztig,
{\em Introduction to Quantum Groups},
Birkh\"{o}ser, Boston, 1993.

\bibitem{McNa12} P. McNamara,
{\em Finite dimensional representations of Khovanov-Lauda-Rouquier algebras I: Finite Type},
J. reine angew. Math. \textbf{707} (2015), 103--124.
% Journal f,A|(Br die reine und angewandte Mathematik (Crelles Journal) 2015.707 (2015): 103-124.

\bibitem{Nak04}
 H. Nakajima, {\it Quiver varieties and $t$-analogue of $q$-characters of quantum affine algebras},
Ann. Math. {\bf160} (2004), 1057-1097. 

\bibitem{Pop}
N. Popescu, \emph{Abelian Categories with Applications to Rings and Modules}, L.M.S. Monographs \textbf{3}, London Mathematical Society,
1973.

   
\bibitem{R08}
R.~Rouquier, \emph{2-{K}ac-{M}oody algebras},   arXiv:0812.5023v1.

\bibitem{R11}
R.~Rouquier, {\em Quiver Hecke algebras and 2-Lie algebras},
 Algebra Colloq.  {\bf19}  (2012),  359--410. 

\bibitem{VV02}
M. Varagnolo and E. Vasserot, {\em Perverse sheaves and quantum Grothendieck rings, in Studies in memory of Issai Schur}, Prog. Math. {\bf210}, Birkh\"{a}user, 2002, 345-365.


\bibitem{VV09}
\bysame,
 \emph{Canonical bases and KLR
  algebras}, J. reine angew. Math. \textbf{659} (2011), 67--100.

\bibitem{Vaz02} M. Vazirani, \hs{-0.5ex}
{\em Parameterizing Hecke algebra modules: Bernstein-Zelevinsky multisegments, Kleshchev multipartitions, and crystal graphs},
Tranform. Groups {\bf 7} (3) (2002), 267--303.

\bibitem{Z80} A. V. Zelevinsky,
{\em Induced representations of reductive $p$-adic groups. II. On irreducible representations of $GL(n)$}, Ann. Sci. \'{E}cole. Norm. Sup. {\bf 13} (2) (1980), 165--210.
\end{thebibliography}
\end{document}